\documentclass[11pt]{amsart}
\usepackage{amsmath,amsfonts,amssymb,mathrsfs}

\usepackage[margin=1in]{geometry}
\usepackage{url}
\usepackage[shortlabels]{enumitem}

\usepackage[unicode,bookmarks]{hyperref}
\usepackage[dvipsnames]{xcolor}
\hypersetup{colorlinks=true,citecolor=NavyBlue,linkcolor=Brown,urlcolor=Orange}

\usepackage{tikz-cd,adjustbox}
\usepackage[arrow,curve,matrix]{xy}

\newcommand{\tensor}{\otimes}
\newcommand{\isom}{\simeq}

\newcommand{\C}{\mathbb{C}}

\newcommand{\Q}{\mathbb{Q}}
\newcommand{\Z}{\mathbb{Z}}
\newcommand{\A}{\mathbb{A}}
\renewcommand{\P}{\mathbb{P}}

\newcommand{\M}{\mathcal{M}}

\newcommand{\D}{\mathcal{D}}

\newcommand{\G}{\mathcal{G}}
\newcommand{\HH}{\mathbb{H}}
\newcommand{\I}{\mathcal{I}}
\renewcommand{\O}{\mathcal{O}}
\newcommand{\DR}{\mathrm{DR}}

\renewcommand{\L}{\mathcal{L}}
\newcommand{\codim}{\mathrm{codim}}
\newcommand{\w}{\omega}

\newcommand{\K}{\mathcal{K}}
\newcommand{\gr}{\mathrm{Gr}}
\newcommand{\Hom}{\mathcal{H}om}
\newcommand{\h}{\underline h}

\renewcommand{\sp}{\mathrm{sp}}

\newcommand{\DB}{\underline{\Omega}} 
\newcommand{\Sing}{\mathrm{Sing}}
\newcommand{\dual}{\mathbf{D}}
\newcommand{\Supp}{\mathrm{Supp}}
\newcommand{\Koz}{\textrm{Koz}}

\newcommand{\sm}{\smallsetminus}

\newcommand{\Pic}{\mathrm{Pic}}
\newcommand{\IC}{\mathrm{IC}}
\newcommand{\wt}{\widetilde}

\newcommand{\IH}{\mathit{IH}}
\newcommand{\X}{\mathcal{X}}

\numberwithin{equation}{section}

\newtheorem{thm}[equation]{Theorem}
\newtheorem{cor}[equation]{Corollary}
\newtheorem{lem}[equation]{Lemma}
\newtheorem{prop}[equation]{Proposition}

\theoremstyle{definition}
\newtheorem{defn}[equation]{Definition}

\theoremstyle{remark}
\newtheorem{rmk}[equation]{Remark}

\theoremstyle{plain}

\begin{document}

\title[Indeterminacy of period map and Griffiths heights for hypersurfaces]{Indeterminacy of period map and Griffiths heights for hypersurfaces}

\author{Sung Gi Park}
\address{Department of Mathematics, Princeton University, and School of Mathematics, Institute for Advanced Study, Princeton, NJ, USA}
\email{sp6631@princeton.edu \,\,\,\,\,\,sgpark@ias.edu}



\thanks{}

\subjclass[2020]{14B05, 14D07, 14F10, 14J10, 14L24}

\date{\today}

\begin{abstract}
We prove that the period map $\overline {\mathcal P}$, from the GIT moduli space of degree $d$ hypersurface $X$ in $\mathbb P^n$ to the Baily--Borel compactification of the period image associated to the middle cohomology $H^{n-1}_{\rm prim}(X,\mathbb Z)$, extends regularly to $X$ if and only if $X$ has at worst nodal singularities (resp. suspensions of slc surface singularities) when $n$ is even (resp. $n$ is odd). This verifies and generalizes Laza--O'Grady's prediction on the indeterminacy locus of the period map for quartic K3s to arbitrary hypersurfaces. For Calabi--Yau hypersurfaces, we prove that the birational map from the GIT moduli space to the BBH compactification extends regularly to $X$ if and only if $X$ has semi-log canonical singularities. The results apply to distinguish the Baily--Borel and BBH compactifications. In the process, we answer two questions of Mordant on the Griffiths height for families of hypersurfaces over curves, which are crucial to the proofs of the main results.
\end{abstract}

\maketitle

\makeatletter
\newcommand\@dotsep{4.5}
\def\@tocline#1#2#3#4#5#6#7{\relax
  \ifnum #1>\c@tocdepth 
  \else
    \par \addpenalty\@secpenalty\addvspace{#2}%
    \begingroup \hyphenpenalty\@M
    \@ifempty{#4}{%
      \@tempdima\csname r@tocindent\number#1\endcsname\relax
    }{%
      \@tempdima#4\relax
    }%
    \parindent\z@ \leftskip#3\relax
    \advance\leftskip\@tempdima\relax
    \rightskip\@pnumwidth plus1em \parfillskip-\@pnumwidth
    #5\leavevmode\hskip-\@tempdima #6\relax
    \leaders\hbox{$\m@th
      \mkern \@dotsep mu\hbox{.}\mkern \@dotsep mu$}\hfill
    \hbox to\@pnumwidth{\@tocpagenum{#7}}\par
    \nobreak
    \endgroup
  \fi}
\def\l@section{\@tocline{1}{0pt}{1pc}{}{\bfseries}}
\def\l@subsection{\@tocline{2}{0pt}{25pt}{5pc}{}}
\makeatother


\setcounter{tocdepth}{1}
\tableofcontents

\section{Introduction}\label{scn:intro}

The study of Hodge structures and period maps of hypersurfaces has played a central role in understanding their geometry and moduli. Denote by $\Gamma\backslash \D_{(n,d)}$ the period space associated to the primitive $\Z$-Hodge structure of the middle cohomology for smooth degree $d$ hypersurfaces in $\P^n$. Then the period map
$$
\Phi:\P|\O_{\P^n}(d)|\dashrightarrow\Gamma\backslash \D_{(n,d)},
$$
from the projective parameter space of hypersurfaces, descends to a map from the GIT moduli space
$$
\mathcal P: \overline\M^{\rm GIT}_{(n,d)} \dashrightarrow \Gamma\backslash \D_{(n,d)}.
$$
A priori, both period maps are defined over the locus of smooth hypersurfaces. The geometry of these maps has been studied extensively in the literature from many different perspectives. Griffiths proved the local Torelli theorem in \cite[Theorem 9.8]{Griffiths69}: the period map on the deformation space for a smooth hypersurface is a local immersion for $n\ge 2$, $d\ge 3$ and $(n,d)\neq(3,3)$. Donagi proved the generic Torelli theorem in \cite{Donagi83}, which was later improved by a number of papers, including \cite{CoxGreen, Voisin22}: the map $\mathcal P$ is birational onto its image for all but finitely many pairs $(n,d)$.

In several classical cases, the period space (associated to some cyclic cover if necessary) admits a natural arithmetic compactification, and the birational geometry of the compactified period map has been studied for decades; see, for instance, \cite{Shah80, Shah81, ACT02, LS07, Artebani09, Laza09, Looijenga09, Laza10, ACT11, WILDBAB, BL24}. The common theme in the literature is to describe the resolution of the period map and find a moduli-theoretic interpretation. As a first step, it is necessary to understand the indeterminacy locus of the period map.

For example, for quartic K3 surfaces, Ascher--DeVleming--Liu \cite{ADL23} gave an explicit interpolation between the GIT moduli space and the Baily--Borel compactification of the period space through wall-crossings in K-moduli, thereby verifying a prediction of Laza--O'Grady \cite{LOG18, LOG19}. More precisely, they resolved the indeterminacy of the period map by taking a blow-up and then a sequence of flips arising from wall-crossings for K-moduli of pairs. The locus of indeterminacy had been predicted by Laza and O'Grady: it is precisely the locus of quartic surfaces with singularities worse than semi-log canonical, and this is exactly the locus replaced in the wall-crossing construction of \cite{ADL23}.

In this paper, we determine the indeterminacy locus of the period map from the GIT moduli space of hypersurfaces to the Baily--Borel compactification of the period image constructed by Bakker, Filipazzi, Mauri, and Tsimerman \cite{BFMT25}. Our result holds in every dimension and degree, and generalizes the Laza--O'Grady prediction for quartic K3 surfaces.

\begin{thm}\label{thm:indeterminacy}
Let $(n,d)\neq(3,3)$, where $n\geq 2$ and $d\geq 3$. Let
$$
\overline {\mathcal P}:\overline\M^{\rm GIT}_{(n,d)} \dashrightarrow \overline \M^{\rm BB}_{(n,d)}
$$
be the period map from the GIT moduli space of degree $d$ hypersurfaces in $\mathbb P^n$ to the Baily--Borel compactification of the period image. Then the domain of definition of $\overline{\mathcal P}$ is exactly the following locus:
\begin{enumerate}
\item if $n$ is even, the locus of hypersurfaces with nodal singularities;
\item if $n$ is odd, the locus of hypersurfaces whose singularities are suspensions of semi-log canonical surface singularities.
\end{enumerate}
\end{thm}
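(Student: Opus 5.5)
The plan is to reduce regularity of $\overline{\mathcal P}$ at a point $[X]$ to a statement about one-parameter degenerations, and then read that statement off from the limiting mixed Hodge structure via the Griffiths height. Since $\overline\M^{\rm GIT}_{(n,d)}$ is normal and $\overline\M^{\rm BB}_{(n,d)}$ is projective, $\overline{\mathcal P}$ is defined at $[X]$ if and only if the limit of $\overline{\mathcal P}(X_t)$ in $\overline\M^{\rm BB}$ does not depend on the choice of germ of curve $\Delta\to \P|\O_{\P^n}(d)|$ through $X$ whose generic member is smooth. We would pass to a common resolution of the graph of the map and use Zariski's main theorem: the map is defined at $[X]$ exactly when the fiber of the graph over $[X]$ is a single point. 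The Baily--Borel limit of a one-parameter family is governed by the weight filtration of the limiting mixed Hodge structure. It is the boundary stratum indexed by $\gr^W_{\rm top}$ of the nilpotent orbit, with the point in that stratum given by the corresponding pure Hodge structure of lowest weight (for $n-1$ even) or by the appropriate graded piece. So the question becomes whether this graded piece is locally constant as the germ of the test curve varies.

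\textbf{Sufficiency.} Suppose $X$ is nodal when $n$ is even, or has suspensions of slc surface singularities when $n$ is odd. We would show that every smoothing $\mathcal X\to\Delta$ with special fiber $X$ has monodromy whose unipotent part $N$ acts trivially on the relevant graded piece, or more generally that the limit depends only on $X$. For nodes in odd relative dimension $n-1$, the vanishing cycles are isotropic with $N^2=0$. The Clemens--Schmid / vanishing-cycle sequence then shows that $\gr^W$ of the limit is computed from $H^{n-1}$ of the partial normalization (or from $IH^{n-1}(X)$), which is independent of the smoothing. For nodes in even relative dimension the monodromy is finite, so the period map extends into the interior or to a fixed point. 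In the odd-$n$ case, the suspension structure together with Thom--Sebastiani reduces the local vanishing cohomology to that of an slc surface singularity, whose spectrum has no contributions that would produce $N\neq0$ acting in a smoothing-dependent way. Concretely, we would show that the spectral numbers lie in the range forcing the Hodge-theoretic limit to be determined by $\gr$ of the mixed Hodge structure on $H^{n-1}(X)$. We would phrase this as: the Griffiths height (degree of the Hodge bundle $F^{\rm top}$ extension) computed along any test curve equals the contribution predicted by $X$ alone, with no extra multiplicity term. This is where the answers to Mordant's questions enter.

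\textbf{Necessity.} If $X$ has a worse singularity, we would construct two test curves through $[X]$ with different Baily--Borel limits. The first would be a general linear pencil $X+tG$. The second would be a pencil composed with a base change and a weighted deformation $X+t^aG'$ adapted to the spectrum, so that the nilpotent orbit changes, either by changing the rank of $N$ or by changing the graded piece. The detection tool is the Griffiths height formula: the height along a test curve is a sum of local terms determined by spectral data and the order of contact. If the locus is not slc/nodal, some spectral number lies outside the critical interval, and the local contribution then varies with the curve in a non-linear way incompatible with a regular map. We expect this step to be the main obstacle. It requires an exact local formula for the Griffiths height of a degeneration to a non-slc (respectively non-nodal) singularity, and a proof that its dependence on the test curve is genuinely non-constant. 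We would try first a weighted-homogeneous model computation via Steenbrink's spectrum for semi-quasihomogeneous singularities, and then deform to reduce the general case to it. The excluded cases $(3,3)$ and $d<3$ are precisely those where local Torelli fails, and we would use local Torelli to guarantee that the comparison map is quasi-finite near the smooth locus, as the argument needs.
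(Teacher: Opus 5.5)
There is a genuine gap, and it sits in the direction that carries the theorem, namely necessity. Your plan to exhibit two arcs through $[X]$ with different Baily--Borel limits needs an explicit description of the boundary of the compactification, and you do not have one. Your description of the limit point via a single lowest-weight graded piece is the Hermitian-symmetric picture. It is not available for the BFMT compactification of a non-Hermitian period image, which is only characterized as the ample model of the Griffiths bundle. The remark that local height contributions vary ``non-linearly'' with the curve is never turned into a criterion.

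The missing idea is global and numerical. Nodal hypersurfaces are GIT stable and exhaust the codimension-one points, so $\overline\M^{\rm GIT}_{(n,d)}$ is $\Q$-factorial of Picard rank one. Hence the reflexive hull of the pulled-back Griffiths bundle is $\O(F(n,d))$, with the constant fixed by Mordant's computation on a general pencil. The negativity lemma on the normalized graph then shows the following. For a single curve $C$ whose generic point lies in the domain, $\mathrm{ht}_{GK}\le F(n,d)\,\mathrm{ht}_{GIT}$, with equality iff $C$ misses the indeterminacy locus.

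So one general pencil $\{P+tQ=0\}$ decides the question and no second curve is needed. What is then required is an exact formula for its Griffiths height. When $X$ has isolated singularities the total space is smooth. The paper then writes $\mathrm{ht}_{GK}-F(n,d)$ as a sum over critical points of the Hodge numbers $h^{p,q}_{e^{-2\pi i\alpha}}(x)$ of the Milnor-fiber cohomology, weighted by explicit coefficients. This combines three inputs: Steenbrink's description of $\gr_F\overline{\mathcal V}^{[0,1)}$ with a Koszul/Du Bois Euler-characteristic computation for the lower height; the eigenvalue correction between the lower and unipotent extensions; and $\sum_x\mu(x)=(n+1)(d-1)^n$. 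After symmetrizing with the Hodge symmetries of the vanishing cohomology, every coefficient is $\le 0$. A coefficient vanishes only in Hodge degrees and eigenvalues which, via Saito's description of $\wt\alpha$ through $\gr_F\DR(\phi_{f,\lambda})$, force $\wt\alpha(X)\ge\lfloor n/2\rfloor$.

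Two further pieces are missing.

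First, for non-isolated singularities the total space of a general pencil is singular along $\Sing(X)\cap\{Q=0\}$, and no such formula is available. Your ``deform to the weighted-homogeneous case'' has to become a precise statement: inside the closed locus $\{\wt\alpha<\lfloor n/2\rfloor\}$, members with isolated singularities are dense. Openness of the domain of definition then finishes. This needs a real case analysis: the corank of the Hessian, the cubic and quartic parts after eliminating the nondegenerate quadratic variables, and dimension counts in the cubic case. It breaks down for $(n,d)=(3,4),(5,3)$, which the paper settles by citing Ascher--DeVleming--Liu and Laza. Your proposal does not see these exceptions.

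Second, your sufficiency argument is not a proof. For odd $n$ the allowed class contains suspensions of cusp singularities $T_{p,q,r}$, whose local monodromy has infinite order. So ``no spectral contributions producing $N\neq0$ in a smoothing-dependent way'' says nothing checkable. The paper avoids comparing limits altogether. A rational map from a normal variety is regular iff the fibers of a resolution of the base are contracted, so it suffices that the Griffiths bundle on a resolution is pulled back from $B$. This follows from three facts for fibers with $(\lfloor n/2\rfloor-1)$-Du Bois singularities:
\begin{enumerate}
\item $\gr_F^p\overline{\mathcal V}^{[0,1)}\cong R^{n-1-p}f_*\Omega^p_{\X/B}$ for $p\le\lfloor n/2\rfloor-1$, proved via Steenbrink's log complex, Chen's inversion of adjunction and the vanishing of $\gr_F^p\DR(\phi_{g,1}\Q^H_{\X})$;
\item these sheaves are locally free and commute with base change by Friedman--Laza;
\item polarization duality expresses $\G$ through exactly these graded pieces.
\end{enumerate}
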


By the generic Torelli theorem for hypersurfaces, which is expected to hold for all pairs $(n,d)$ in our range and is known in all but finitely many cases, the period image is birational to the GIT moduli space. Assuming generic Torelli, the Baily--Borel compactification $\overline{\M}^{\rm BB}_{(n,d)}$ constructed in \cite[Theorem 1.1]{BFMT25} provides an alternative compactification of an open moduli space of hypersurfaces.

Here, a nodal singularity means an ordinary double point. A suspension of a semi-log canonical surface singularity is a singularity locally analytically equivalent to a hypersurface
$$
x_1^2+\dots+x_{n-3}^2+f(x_{n-2},x_{n-1},x_n)=0
$$
in a germ $(\C^n,0)$, where the surface germ $\{f=0\}\subset (\C^3,0)$ is semi-log canonical. The domain of definition can be alternatively phrased in terms of higher Du Bois singularities. We prove in Lemma \ref{lem:classification of n/2 Du Bois} that a hypersurface $X\subset \P^n$ has nodal singularities (resp. suspensions of semi-log canonical surface singularities) if and only if its minimal exponent satisfies $\wt\alpha(X)\ge\frac{n}{2}$ when $n$ is even (resp. $\wt\alpha(X)\ge\frac{n-1}{2}$ when $n$ is odd). In particular, the domain of definition is the locus
$$
\left\{X\subset \mathbb P^n \mid X \text{ has $\left(\lfloor\frac{n}{2}\rfloor-1\right)$-Du Bois singularities}\right\}.
$$

In the Calabi--Yau case, that is $d=n+1$, the moduli space $\M^{\rm can}_{(n,n+1)}$ of Calabi--Yau hypersurfaces with canonical singularities admits a Hodge-theoretic compactification $\overline \M^{\rm BBH}_{(n,n+1)}$, obtained as the ample model for the Hodge line bundle in \cite[Corollary 1.8]{BFMT25}. On the other hand, Calabi--Yau hypersurfaces with canonical singularities are GIT stable \cite{Tian94}, and thus, $\M^{\rm can}_{(n,n+1)}$ is an open subset of the GIT moduli space $\overline\M^{\rm GIT}_{(n,n+1)}$. There exists a birational map
$$
\overline {\mathcal P}^{\rm BBH}:\overline\M^{\rm GIT}_{(n,n+1)} \dashrightarrow \overline \M^{\rm BBH}_{(n,n+1)},
$$
which factors through the Stein factorization $\overline\M^{\rm GIT}_{(n,n+1)}\dashrightarrow \left(\overline \M^{\rm BB}_{(n,n+1)}\right)^\nu$ of the period map $\overline {\mathcal P}$; see \cite[Remark 5.4]{BFMT25}. Note that
$$
\pi_{CY}:\left(\overline \M^{\rm BB}_{(n,n+1)}\right)^\nu\to \overline \M^{\rm BBH}_{(n,n+1)}
$$
is a morphism that can contract curves in the interior of the period domain; see \emph{loc. cit.} for details. We determine the domain of definition for $\overline {\mathcal P}^{\rm BBH}$ and prove that $\pi_{CY}$ is a divisorial contraction over $\M^{\rm can}_{(n,n+1)}$, which always contracts some divisor in $\left(\overline \M^{\rm BB}_{(n,n+1)}\right)^\nu$ that maps to the interior of the period domain for all $n\ge 4$.

\begin{thm}\label{thm:BBH indeterminacy}
Let $d=n+1$, where $n\geq 2$. Then the domain of definition of $\overline{\mathcal P}^{\rm BBH}$ is exactly the locus of hypersurfaces with semi-log canonical singularities.

Furthermore, for every $n\ge 4$, the morphism $\pi_{CY}$ is a divisorial contraction over the moduli space $\M^{\rm can}_{(n,n+1)}$ of Calabi--Yau hypersurfaces with canonical singularities, which contracts some exceptional divisor that generically maps to the interior of the period space.
\end{thm}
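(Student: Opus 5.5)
We will prove the two assertions of Theorem \ref{thm:BBH indeterminacy} separately. For both, the basic tool is a curve-by-curve valuative criterion. The target $\overline \M^{\rm BBH}_{(n,n+1)}$ is the ample model of the Hodge line bundle $\lambda$ (\cite[Corollary 1.8]{BFMT25}), and the source $\overline\M^{\rm GIT}_{(n,n+1)}$ is normal. So $\overline{\mathcal P}^{\rm BBH}$ is regular at a point $[X]$ if and only if the limit point is independent of the choice of one-parameter degeneration $\Delta^*\to \M^{\rm sm}$ with central fiber $X$. By the Hodge-theoretic description of $\overline \M^{\rm BBH}$, this amounts to the following: the limit mixed Hodge structure, restricted to the lowest Hodge piece $F^{n-1}$ (the holomorphic volume form), must define a well-defined point. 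Equivalently, the Griffiths height / degree of $\lambda$ along test curves must be computed by $X$ alone, with no correction term depending on the family. This is exactly where the Griffiths-height formulas for families of hypersurfaces over curves (the answers to Mordant's questions, which the paper proves later) enter.

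\emph{Step 1 (slc $\Rightarrow$ regular).} Suppose $X$ is a Calabi--Yau hypersurface with slc singularities; then $K_X\sim 0$. For any one-parameter family $\mathcal X\to \Delta$ with central fiber $X$ and smooth generic fiber, inversion of adjunction makes $(\mathcal X,X)$ lc, so the family is a KSBA/K-type lc degeneration of Calabi--Yau varieties. For such degenerations the limit of the $(n-1,0)$-part is governed by the log canonical places of $X$. Concretely, $\lambda$ extends with fiber $H^0(X,\omega_X)$, and the Hodge norm of the extended volume form has the canonical asymptotics determined by the dual complex of $X$. Thus the BBH point, which records the $\mathrm{Gr}^W$ of the limit MHS on the $F^{n-1}$ part, depends only on $X$ (up to the boundary stratum determined by the lc centers). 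Using the normality of the GIT quotient and a Zariski main theorem argument, we then conclude that the rational map is regular along the slc locus.

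\emph{Step 2 (non-slc $\Rightarrow$ indeterminate).} If $X$ is not slc, then $\mathrm{lct}$ is less than $1$ along some place. We will produce two families through $X$: a generic pencil $X+tY$, and a second family obtained by a weighted base change along the non-lc valuation. In both the volume form acquires a pole, and we will show the limiting BBH points differ. The first attempt is to compute the Griffiths height (the order of the Hodge norm of the volume form) via the minimal exponent/log canonical threshold formula. This gives an order depending on the direction of approach, through the jumping numbers of the family's total space rather than those of $X$. An alternative is to exhibit a positive-dimensional family of limits in $\overline \M^{\rm BBH}$ (the limits vary with $Y$), which contradicts regularity. I expect this step to be the main obstacle: one must control the limit MHS for non-lc degenerations precisely, presumably via the Griffiths height formula and a semistable reduction analysis.

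\emph{Step 3 (divisorial contraction).} For $n\ge4$, the strategy is to take $X$ with an isolated canonical but not rational-double-point-like singularity, for instance a suspension in the $n$-dimensional setting that is canonical but not $(\lfloor n/2\rfloor-1)$-Du Bois. By Theorem \ref{thm:indeterminacy}, $\overline{\mathcal P}$ is undefined at $[X]$, whereas by Step 1 $\overline{\mathcal P}^{\rm BBH}$ is defined there. Resolving $\overline{\mathcal P}$ near the locus of such $X$ (which forms a divisor in $\M^{\rm can}$ once we use, e.g., a single $A_k$-type or ordinary singularity of high multiplicity in a codimension-one stratum) yields a divisor $E$ in $(\overline \M^{\rm BB})^\nu$ mapping onto that locus. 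Generically, $E$ lies in the interior of the period space, since the limit MHS is pure for canonical degenerations whose monodromy is finite on $H^{n-1}$ after base change. The map $\pi_{CY}$ contracts $E$ because the BBH point is constant in the fiber. Since $\pi_{CY}$ is birational over $\M^{\rm can}$ with exceptional locus of codimension one in the normal variety, it is a divisorial contraction there.
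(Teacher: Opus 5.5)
\textbf{There are genuine gaps.} Step 2, the ``only if'' direction, has no working argument, and Step 3 contains a false claim.

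\textbf{Step 2.} Comparing BBH limits along two arcs would need a description of the points of $\overline\M^{\rm BBH}_{(n,n+1)}$. That space is only defined as the ample model of the Hodge bundle, so your claim that a BBH point ``records $\gr^W$ of the limit on $F^{n-1}$'' is not available. Nor does pole order varying with the arc detect indeterminacy, since Hodge-bundle degrees also vary from curve to curve for regular maps.

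The missing idea is a numerical criterion that makes a single test curve decisive. $\overline\M^{\rm GIT}_{(n,n+1)}$ is $\Q$-factorial of Picard rank one (Proposition \ref{prop:GIT is Q factorial picard rank 1}), so the reflexive hull of the pulled-back Hodge bundle is $\O(\alpha)$. The negativity-lemma argument of Lemma \ref{lem:Mordant Q1 numerical criterion} then gives $\mathrm{ht}_{CY}\le\alpha\,\mathrm{ht}_{GIT}$ for every curve generically in the domain, with equality iff the whole curve lies in the domain. A general pencil gives $\alpha=1$ (Theorem \ref{thm:Mordant question for Calabi Yau}). Since the other fibers of a general pencil $\X=\{P+tQ=0\}$ through $X$ are nodal, everything reduces to showing $\mathrm{ht}_{CY}<1$ for this pencil exactly when $X$ is not Du Bois.

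For that you need three inputs:
\begin{enumerate}
\item $\X$ has rational singularities (Lemma \ref{lem:general pencil is rational}), so Koll\'ar's theorem gives $\mathrm{ht}_{CY,+}=\deg f_*\omega_{\X/\P^1}=1$.
\item The specialization map identifies $\gr_F^0H^{n-1}(X,\C)$ with $\gr_F^0$ of the unipotent part of the limit.
\item $\gr_F^0H^{n-1}(X,\C)=0$ precisely when $X$ is not Du Bois, by duality using $\omega_X\isom\O_X$ and Kov\'acs--Schwede injectivity.
\end{enumerate}
For non-Du Bois $X$, the one-dimensional $(n-1,0)$-part of the limit then lies in a non-unipotent eigenspace, and Proposition \ref{prop:CY height via LMHS} gives $\mathrm{ht}_{CY}<\mathrm{ht}_{CY,+}=1=\mathrm{ht}_{GIT}$. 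This is the precise form of your ``fractional pole'' intuition. Note that the relevant invariant is the CY height, not the Griffiths height.

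\textbf{Step 1.} The same computation gives $\mathrm{ht}_{CY}=1$ for Du Bois $X$, which is how the paper obtains this direction. Your dual-complex heuristic is not a proof. It could alternatively be completed via a Du Bois analogue of Proposition \ref{prop:Hodge pieces for higher Du Bois family} (the Hodge bundle on a resolution of the base is pulled back from the base) together with Lemma \ref{lem:extendability via resolution}.

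\textbf{Step 3.} Hypersurfaces with a non-nodal singularity (e.g.\ an $A_k$ point, $k\ge2$) do not form a divisor in $\M^{\rm can}_{(n,n+1)}$. The discriminant is irreducible with nodal general member, so such loci have codimension at least two. In any case, an exceptional divisor of the birational morphism $\pi_{CY}$ over the normal variety $\M^{\rm can}_{(n,n+1)}$ cannot map onto a divisor. Normality alone also does not make the exceptional locus divisorial, since small contractions exist.

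The paper instead uses $\Q$-factoriality: $\M^{\rm can}_{(n,n+1)}$ is open in the $\Q$-factorial GIT quotient, so the exceptional locus of $\pi_{CY}$ over it has pure codimension one. It is nonempty because $\pi_{CY}^{-1}([X])$ is positive dimensional whenever $X$ is canonical (so $\overline{\mathcal P}^{\rm BBH}$ is defined at $[X]$) but outside the domain of $\overline{\mathcal P}$. Examples are $x_1^2+\dots+x_{n-1}^2+x_n^3$ for $n$ even and $x_1^2+\dots+x_{n-2}^2+x_{n-1}^4+x_n^5$ for $n$ odd, whose minimal exponents lie in $(1,\lfloor n/2\rfloor)$. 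This is where $n\ge4$ enters.

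Finally, ``generically maps to the interior'' requires choosing $X$ so that the pencil's local monodromy is finite. Canonicity does not guarantee this: for even $n\ge4$, suspended cusp singularities $T_{pqr}$ are canonical but carry a monodromy Jordan block. The paper gets finiteness from weighted homogeneity via Milnor's theorem and then applies Griffiths' removable singularity theorem.
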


In particular, the last statement extends the discussion in \cite[7.3.2]{BFMT25} to arbitrary Calabi--Yau hypersurfaces.

For a hypersurface singularity, semi-log canonicity is equivalent to the condition that the log canonical threshold of its defining equation is $1$. As a consequence, the above theorem solves \cite[Question 8.4]{Park25} when $d=n+1$.

By Theorem \ref{thm:indeterminacy}, we obtain the domain of definition of the period map $\mathcal P$ by restricting the target to the period space $\Gamma\backslash \D_{(n,d)}$, from which we deduce a Torelli-type theorem for mildly singular Calabi--Yau hypersurfaces:

\begin{cor}\label{cor:indeterminacy}
Let $(n,d)\neq(3,3)$, where $n\geq 2$ and $d\geq 3$. Then the domain of definition of the period map ${\mathcal P}: \overline\M^{\rm GIT}_{(n,d)} \dashrightarrow \Gamma\backslash \D_{(n,d)}$ is exactly the following locus:
\begin{enumerate}
\item if $n$ is even, the locus of smooth hypersurfaces;
\item if $n$ is odd, the locus of hypersurfaces with simple ADE singularities.
\end{enumerate}
Furthermore, for Calabi--Yau hypersurfaces (i.e. $d=n+1$), the Stein factorization of the period map $\mathcal P$ to its period image is an open embedding on the domain of definition.
\end{cor}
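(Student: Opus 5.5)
Corollary \ref{cor:indeterminacy} follows from Theorem \ref{thm:indeterminacy} together with a description of which points of the domain of definition land in the interior $\Gamma\backslash\D_{(n,d)}\subset\overline\M^{\rm BB}_{(n,d)}$. The domain of definition of $\mathcal P$ is the open set
$$U=\overline{\mathcal P}^{-1}\bigl(\Gamma\backslash\D_{(n,d)}\bigr)\cap\mathrm{Dom}(\overline{\mathcal P}).$$
This holds because the Baily--Borel compactification is projective, so a morphism to the interior is the same as a morphism to $\overline\M^{\rm BB}$ whose image avoids the boundary. The reduction uses normality of the GIT quotient and the fact that $\Gamma\backslash\D$ is open in its compactification. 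Hence we only need to decide, for $X$ nodal ($n$ even) or with suspended slc surface singularities ($n$ odd), when the extended period point is interior.

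The criterion is monodromy. Take a one-parameter smoothing $\mathcal X\to\Delta$ of $X$ through a generic curve in the parameter space. By the construction of the extension, the limit point of $\overline{\mathcal P}$ lies in the interior if and only if the local monodromy on $H^{n-1}_{\rm prim}$ is finite. For isolated hypersurface singularities this reduces to the Milnor-fiber monodromy and Picard--Lefschetz theory.

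When $n$ is even, $X$ has dimension $n-1$ odd and the vanishing cycles are skew. Each node contributes a Picard--Lefschetz transvection $T(x)=x\pm\langle x,\delta\rangle\delta$. This transvection is unipotent of infinite order as soon as $\delta\neq0$ in $H^{n-1}$, which is exactly the case for $d\ge3$, since the vanishing cycle is nonzero by irreducibility of the monodromy. So only smooth $X$ map to the interior.

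When $n$ is odd, $X$ has even dimension and the Milnor-lattice monodromy of an isolated singularity is quasi-unipotent. For a suspension of a surface singularity $f$, the limit mixed Hodge structure of the Milnor fiber is pure exactly when $f$ is an ADE (du Val) singularity. In that case, after a base change of finite order, the family admits a simultaneous resolution (Brieskorn), so the monodromy is finite. For the remaining slc cases (simple elliptic, cusp, degenerate/normal crossing), the spectrum contains the value $\frac{n-1}{2}$ with a nontrivial weight filtration. Equivalently, $\wt\alpha(X)=\frac{n-1}{2}$ exactly, so there is infinite-order monodromy and the limit lies on the boundary. This step uses the minimal-exponent characterization (ADE iff $\wt\alpha>\frac{n-1}{2}$) together with a compatibility of the local vanishing cohomology with the global $H^{n-1}_{\rm prim}$. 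That compatibility requires that the local nilpotent part does not die globally, which I would check via the Clemens--Schmid / vanishing cycle sequence. I expect this to be the main obstacle, especially when there are several singular points.

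For the Calabi--Yau statement, restrict to the locus of ADE-singular (odd $n$) or smooth (even $n$) hypersurfaces with $d=n+1$. Here the Stein factorization of $\mathcal P$ is quasi-finite by local Torelli (via simultaneous resolution for ADE). Combined with generic Torelli, this gives a birational quasi-finite map to a normal variety, hence an open embedding by Zariski's main theorem. The remaining input is injectivity: two such hypersurfaces with the same period point are isomorphic. I would obtain this by noting that $\pi_{CY}$ contracts nothing over $\M^{\rm can}$ on this locus (from Theorem \ref{thm:BBH indeterminacy} and the divisor description), so the separatedness of the GIT moduli space finishes the argument.
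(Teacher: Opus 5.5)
Your reduction is the same one the paper makes via Griffiths' removable singularity theorem: the domain of $\mathcal P$ is the part of the domain of $\overline{\mathcal P}$ that maps to the interior, and this is detected by finiteness of the local monodromy of a smoothing. Your Picard--Lefschetz argument for even $n$ is a fine classical substitute for the citation of \cite[Corollary 8.1(1)]{Park25}.

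The gap is in the odd-$n$ direction, which is the real content of the corollary beyond Theorem~\ref{thm:indeterminacy}. You must show that a hypersurface with $\wt\alpha(X)=\frac{n-1}{2}$ (i.e.\ $(\frac{n-3}{2})$-Du Bois but not simple ADE) has infinite monodromy. The step ``$\wt\alpha(X)=\frac{n-1}{2}$ exactly, so there is infinite-order monodromy'' does not follow, and the local mechanism you have in mind cannot work. Take the suspended simple elliptic germ $x_1^2+\dots+x_{n-3}^2+x^3+y^3+z^3$. It is weighted homogeneous, so its Milnor monodromy has finite order and $N=0$ on the vanishing cohomology. Its Milnor fiber MHS fails to be pure of weight $n-1$ only because the eigenvalue-$1$ part (spectral numbers $\frac{n-1}{2},\frac{n+1}{2}$) sits in weight $n$. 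So non-purity of the local MHS is not the same as infinite monodromy, and there is no ``local nilpotent part'' that could survive globally. Any global $N\neq0$ must come from the unipotent part of $H^{n-1}(\X_\infty)$, where $N=\mathrm{var}\circ\mathrm{can}$ passes through the global cohomology of $X$. That global check is exactly what you defer to an unspecified Clemens--Schmid argument. In addition, suspensions of non-normal slc germs (normal crossings, pinch points, degenerate cusps) have non-isolated singular loci. Your Milnor-fiber and spectrum discussion does not address these, and you give no reduction to the isolated case.

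The missing idea is the paper's global Hodge-theoretic argument.
\begin{enumerate}
\item Suppose some smoothing had finite monodromy. Then $H^{n-1}(\X_\infty,\Q)$ would be pure of weight $n-1$.
\item Since $\gr_F^p(\sp^{n-1})$ is an isomorphism for $p\le\frac{n-3}{2}$ \cite[Theorem 11.1]{Park25}, the subspace $W_{n-2}H^{n-1}(X,\Q)\subset\ker\sp^{n-1}$ must vanish. Indeed, by Hodge symmetry a nonzero MHS of weights $\le n-2$ has $\gr_F^p\neq0$ for some $p\le\frac{n-3}{2}$. Hence $H^{n-1}(X,\Q)$ would be pure.
\item The paper contradicts this by proving $\h^{\frac{n-3}{2},\frac{n+1}{2}}(X)\neq I\h^{\frac{n-3}{2},\frac{n+1}{2}}(X)=\h^{\frac{n+1}{2},\frac{n-3}{2}}(X)$. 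The inputs are $(\frac{n-5}{2})$-rationality, the RHM defect object $\K_X^\bullet$, and the surjection $\O_X(-n-1+\frac{n-1}{2}d)\to F_{\frac{n-3}{2}}\dual\K_X^\bullet\isom\O_S(-n-1+\frac{n-1}{2}d)$, which is nonzero on global sections.
\end{enumerate}
This treats every $(\frac{n-3}{2})$-liminal $X$ at once, isolated or not, with any number of singular points.

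Two smaller points. First, Brieskorn's simultaneous resolution exists only for $n=3$. So for $n\ge5$ neither your finiteness argument at ADE points nor your ``local Torelli via simultaneous resolution'' is supported as stated. Finiteness there follows instead, for example, from weighted homogeneity together with the absence of the eigenvalue $1$. Second, for the Calabi--Yau statement you need neither local nor generic Torelli. The map $\overline{\mathcal P}^{\rm BBH}$ is an isomorphism over $\M^{\rm can}_{(n,n+1)}$ and factors through the Stein factorization of $\overline{\mathcal P}$, and the domain of $\mathcal P$ lies in $\M^{\rm can}_{(n,n+1)}$. This is the paper's argument, and it makes your separate injectivity step unnecessary.
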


Here, a simple ADE singularity is an isolated singularity locally analytically equivalent to a suspension of an ADE surface singularity. Note that (1) is proven in \cite[Corollary 8.1(1)]{Park25}, which is well known to experts. Additionally, the local Torelli theorem for smooth hypersurfaces implies that the Stein factorization of the period map is an open embedding on the smooth locus, for $n\ge 2$, $d\ge 3$ and $(n,d)\neq(3,3)$.

A key ingredient in determining the indeterminacy locus of the period map is provided by affirmative answers to the questions posed by Mordant \cite{Mordant25}. In \emph{loc. cit.}, Mordant studies families of hypersurfaces over curves and compares two natural height functions: the Griffiths height, defined as the degree of the Griffiths bundle, and the GIT height, defined as the degree of the GIT polarization pulled back from the GIT moduli space.

\smallskip
\noindent
{\bf Definition.} Let $H_\eta\subset \P^n_{C_\eta}$ be a smooth hypersurface of degree $d$ over the generic point $C_\eta$ of a smooth projective curve $C$. Consider a generically defined polarized $\Z$-Hodge structure $\mathbb H^{n-1}(H_\eta/C_\eta)$ on an open subset of $C$. The \emph{Griffiths height} $\mathrm{ht}_{GK}(\mathbb H^{n-1}(H_\eta/C_\eta))$ is the degree of the Griffiths bundle associated to $\mathbb H^{n-1}(H_\eta/C_\eta)$. Consider the GIT moduli morphism $C\to \overline \M^{\rm GIT}_{(n,d)}$. The \emph{GIT height} $\mathrm{ht}_{GIT}(H_\eta)$ is the degree of the GIT polarization $\O_{\overline \M^{\rm GIT}_{(n,d)}}(1)$ on $C$.

\smallskip

See Section \ref{sec:Griffiths and GIT height} for precise definitions.

For a vector bundle $E$ of rank $n+1$ on $C$ and a family of hypersurfaces $H$ over $C$ defined as a smooth divisor in a projective bundle $\P_C(E)$, Mordant \cite[Theorem 1.1]{Mordant25} computes the Griffiths height and the GIT height, and proves the equality
$$
\mathrm{ht}_{GK}(\mathbb H^{n-1}(H_\eta/C_\eta))=F(n,d)\,\mathrm{ht}_{GIT}(H_\eta)
$$
when every fiber of $H\to C$ has at worst nodal singularities. The constant $F(n,d)$ is defined by
\begin{equation}
\label{eqn:constant F}
F(n,d)=\begin{cases}
\frac{n+1}{24d^2}\left[(d-1)^n(d^2n-d^2-2dn-2)+2(d^2-1)\right] & \text{when}\; n \text{ is odd}\\
\frac{n+1}{24d^2}\left[(d-1)^n(d^2n+2d^2-2dn-2)-2(d^2-1)\right] & \text{when}\; n \text{ is even.}
\end{cases}
\end{equation}
We prove inequalities that attain equality as above when the period map is defined on the image of $C$ in the GIT moduli space, thereby answering questions in \cite[Section 5.3]{Mordant25}.

\begin{thm}\label{thm:Mordant question and indeterminacy}
Let $(n,d)\neq(3,3)$, where $n\geq 2$ and $d\geq 3$. Let $H_\eta\subset \P^n_{C_\eta}$ be a smooth hypersurface of degree $d$ over the generic point $C_\eta$ of a smooth projective curve $C$. Then we have
$$
\mathrm{ht}_{GK}(\mathbb H^{n-1}(H_\eta/C_\eta))\le F(n,d)\,\mathrm{ht}_{GIT}(H_\eta),
$$
and the equality holds if and only if the image of the moduli morphism $C\to\overline\M^{\rm GIT}_{(n,d)}$ is contained in the domain of definition of $\overline{\mathcal P}$.

Furthermore, there exists a positive constant $\epsilon(n,d)$ such that the converse inequality holds:
$$
\mathrm{ht}_{GK}(\mathbb H^{n-1}(H_\eta/C_\eta))\ge \epsilon(n,d)\,\mathrm{ht}_{GIT}(H_\eta).
$$
\end{thm}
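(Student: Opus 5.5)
The plan is to reduce both inequalities to a local comparison at the finitely many points of $C$ where the fibers are singular. First, after a finite base change (which multiplies both heights by the same degree), I would assume $C\to\overline\M^{\rm GIT}_{(n,d)}$ lifts to a family $H\subset\P_C(E)$ of GIT semistable hypersurfaces, with $H$ polystable over any point of $C$ that maps into the domain of definition. I would also assume the local monodromies of $\mathbb H^{n-1}(H_\eta/C_\eta)$ are unipotent. Mordant's computation in \cite[Theorem 1.1]{Mordant25} expresses $F(n,d)\,\mathrm{ht}_{GIT}(H_\eta)$ as the degree of a line bundle $\lambda$ on $C$, built from determinants of the Hodge bundles $R^{n-1-p}\pi_*\Omega^p$ of the family $H\to C$ computed via Griffiths' residue description. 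The Griffiths bundle, by contrast, is built from the Deligne canonical extensions $F^p\mathcal V_{\rm can}$. Both are formed from the same Hodge filtration over the smooth locus, so
$$
F(n,d)\,\mathrm{ht}_{GIT}(H_\eta)-\mathrm{ht}_{GK}(\mathbb H^{n-1}(H_\eta/C_\eta))=\sum_{c\in C}\delta_c ,
$$
where $\delta_c$ is a weighted sum of lengths comparing the residue-defined Hodge bundles with $\gr_F$ of the canonical extension at $c$.

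The first main step is to show $\delta_c\ge 0$, with equality exactly when the fiber $H_c$ has $(\lfloor n/2\rfloor-1)$-Du Bois singularities, i.e.\ $\wt\alpha(H_c)\ge\lfloor n/2\rfloor$ by Lemma \ref{lem:classification of n/2 Du Bois}. For this I would view the total space $H$ as a one-parameter degeneration and use the $V$-filtration / Hodge module description of the nearby cycles. The residue forms on $H$ give sections of $F^p\mathcal V_{\rm can}$ twisted by a power of the uniformizer that is controlled by the minimal exponent of $H_c$. Concretely, the inclusion $F^p(\text{residue lattice})\subset F^p\mathcal V_{\rm can}$ should hold, and be an equality in the relevant range $p\ge\lceil (n-1)/2\rceil$, precisely when the fiber is $k$-Du Bois for $k=\lfloor n/2\rfloor-1$. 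This would use the known characterization that $k$-Du Bois means $\Omega^p_{H_c}\to\DB^p_{H_c}$ is an isomorphism for $p\le k$, so the Hodge bundles of $\pi_*$ commute with base change. Because the weights in the Griffiths bundle are positive, each length defect contributes strictly. This proves the upper inequality. The equality case then says every point of $C$ with a singular fiber lies in the locus of Theorem \ref{thm:indeterminacy}.

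For the "if and only if" with the domain of definition, I would prove the equality criterion directly and deduce the indeterminacy description from it. If the image of $C$ avoids the bad locus, all $\delta_c=0$. Conversely, if some point has a fiber that is not $(\lfloor n/2\rfloor-1)$-Du Bois, I need $\delta_c>0$. The obstacle is that the GIT representative over $c$ is only determined up to the polystable degeneration, so $\delta_c$ must be computed for the polystable fiber, and one must check that it does not depend on the chosen semistable lift. I expect this step to be the main difficulty. I would handle it with a local computation in which strict positivity comes from a nonzero Hodge-theoretic jump: some spectral number below the threshold forces a nontrivial $\gr_V$ piece in the relevant Hodge degree.

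For the converse inequality, $\delta_c$ is bounded above by a constant times the local GIT contribution. I would argue that the multiplicity of the singular fiber, measured by the order of vanishing of the GIT invariants, dominates the lengths $\delta_c$ through semicontinuity of spectra on the finitely many strata of the bounded family. Equivalently, $\mathrm{ht}_{GK}$ is the degree of a line bundle $L$ pulled back from $\overline\M^{\rm BB}_{(n,d)}$ through a resolution of $\overline{\mathcal P}$, and $L$ is big and nef. Then $\epsilon\,\O(1)\le L$ away from the exceptional locus, which handles curves not contained in the indeterminacy locus; since $C_\eta$ is smooth, $C$ is never contained in it. To obtain a uniform $\epsilon$, I would write $L=\mu^*\O(1)-\sum a_iE_i$ on a resolution $\mu$ and use the bound $\sum a_i E_i\cdot\tilde C\le M\,\mu^*\O(1)\cdot\tilde C$. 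That bound follows from $\O(1)$ being ample together with finiteness of the strata, after perturbing $\epsilon$ via a Kodaira-lemma decomposition $L\sim_\Q A+N$ with $N$ effective and supported on the boundary.
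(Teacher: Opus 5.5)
\textbf{There is a genuine gap in the first part, and it is structural.}

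The equality criterion in the statement concerns the abstract domain of definition of $\overline{\mathcal P}$. Your local defect $\delta_c$ is instead designed to vanish exactly at $(\lfloor n/2\rfloor-1)$-Du Bois fibers. Identifying these two loci is Theorem~\ref{thm:indeterminacy}, which the paper deduces \emph{from} this theorem. Even then it needs:
\begin{enumerate}
\item the pencil formula of Theorem~\ref{thm:Griffiths height of pencil}, which relies on the global count $\sum_x\mu(x)=(n+1)(d-1)^n$ and the symmetries of $h^{p,q}_\lambda$;
\item the reduction to isolated singularities in Proposition~\ref{prop:reduction to isolated sing};
\item the literature for $(3,4)$ and $(5,3)$.
\end{enumerate}
So the strict positivity $\delta_c>0$ at non-Du Bois (possibly non-isolated) fibers of an arbitrary family, which you defer as ``the main difficulty,'' is essentially the whole content. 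Your plan also never proves the other half: that a curve mapping into the domain of definition attains equality.

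The local setup is not in place either. Mordant's formula identifies $\mathrm{ht}_{GK}$ with $F(n,d)\,\mathrm{ht}_{GIT}$ only for families with nodal fibers. So you have no line bundle $\lambda$, built from Hodge bundles, whose degree is $F(n,d)\,\mathrm{ht}_{GIT}$ for an arbitrary semistable family. Outside the Du Bois range (cf.\ Proposition~\ref{prop:Hodge pieces for higher Du Bois family}), $R^{n-1-p}\pi_*\Omega^p_{H/C}$ need not be locally free or comparable to $\gr_F^p$ of the canonical extension.

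The inclusion you propose also points the wrong way. With positive weights, $F^p(\text{residue})\subset F^p\mathcal V_{\rm can}$ would give $F(n,d)\,\mathrm{ht}_{GIT}\le \mathrm{ht}_{GK}$. In top degree, with unipotent monodromy and $\wt H\to H$ a resolution, the actual inclusion is $F^{n-1}\mathcal V_{\rm can}=\pi_*\omega_{\wt H/C}\subseteq \pi_*\omega_{H/C}$.

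\textbf{The missing idea is that no local Hodge theory is needed.}

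By Proposition~\ref{prop:GIT is Q factorial picard rank 1}, $\overline\M^{\rm GIT}_{(n,d)}$ is $\Q$-factorial of Picard rank one. Hence the reflexive hull of $\overline{\mathcal P}^*\O_{\overline\M^{\rm BB}_{(n,d)}}(1)$ is $\O_{\overline\M^{\rm GIT}_{(n,d)}}(\alpha)$ for some $\alpha\in\Q$. Since $\overline{\mathcal P}$ is defined in codimension one (at nodal hypersurfaces), a general pencil maps into the domain of definition, and Mordant's nodal computation forces $\alpha=F(n,d)$.

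Lemma~\ref{lem:Mordant Q1 numerical criterion} then finishes the argument. Let $\mu\colon\wt X\to X$ be the normalized graph and $D=\wt f^*L_Y-\mu^*L_X$. Then $D$ is $\mu$-nef and $\mu$-exceptional, so the negativity lemma gives $-D\ge 0$. Lemma~\ref{lem:extendability via resolution} shows $\Supp(-D)=\mu^{-1}(X\sm X^o)$. Hence $\deg\wt\phi^*D\le 0$, with equality if and only if $\phi(C)\subset X^o$.

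\textbf{Converse inequality.} Your Kodaira-lemma idea is close to Lemma~\ref{lem:Mordant Q2} and can be made to work, but two things must be added.
\begin{enumerate}
\item You must invoke local Torelli; this is where $(n,d)\neq(3,3)$ enters. It makes the Stein factorization of $\overline{\mathcal P}$ an open embedding on $\M^{\rm sm}_{(n,d)}$, so the effective part $N$ can be chosen disjoint from it, e.g.\ via $\mathbb B_+(L)=\mathrm{Null}(L)$.
\item The bound you state, $\sum a_iE_i\cdot\wt C\le M\,\mu^*\O(1)\cdot\wt C$, gives nothing unless $M<1$. What you need is a bound by $M\,L\cdot\wt C$. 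The paper obtains this from global generation of $\L_Y^N\otimes\I_Z$ for $Z=Y\sm U$, together with $kE\ge -D$.
\end{enumerate}
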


It is an interesting question to find an optimal constant $\epsilon(n,d)$ for the converse inequality.

Throughout the text, a variety is a reduced separated scheme of finite type over $\C$. We use superscripts for a decreasing filtration $F^\bullet$ and subscripts for an increasing filtration $F_\bullet$, related by $F^p=F_{-p}$.

\smallskip
\noindent
{\bf Acknowledgements.} I thank Kenny Ascher, Ben Bakker, Brad Dirks, Hyunsuk Kim, János Kollár, Radu Laza, Yongnam Lee, Mircea Musta\c t\u a, Mihnea Popa, Duc Vo, Chenyang Xu, and Ruijie Yang for valuable discussions. Part of this work was completed during my visit to KIAS as a June E Huh Visiting Fellow. This research was supported by the Oswald Veblen Fund at IAS. AI tools were used only to improve the writing and to search for references.

\section{Preliminaries}
\label{sec:preliminaries}

\subsection{Deligne's canonical extension and Griffiths bundle}
\label{sec:Deligne's canonical extension and Griffiths}

Let $S$ be a smooth variety with a polarized variation of $\Z$-Hodge structures $\mathbb V=(\mathbb V_\Z,F^\bullet, Q)$ of weight $w$. Here, $\mathbb V_\Z$ is a $\Z$-local system on $S$, $F^\bullet$ is a (decreasing) Hodge filtration on $\mathcal V:=\mathbb V_\Z\otimes_\Z\O_S$, and $V_\Z$ denotes a fixed lattice underlying $\mathbb V_\Z$ equipped with the polarization $Q$. For convenience, we assume that the Hodge filtration is effective, namely $\gr_F^{p}\mathcal V=0$ for $p<0$.

Let $\overline S$ be a log smooth compactification of $S$, i.e., a smooth proper variety containing $S$ as an open subvariety such that $D:=\overline S\sm S$ is a simple normal crossing divisor. Then the flat bundle $(\mathcal V,\nabla)$ admits Deligne's lower canonical extension to a vector bundle $\overline{\mathcal V}^{[0,1)}$ on $\overline S$, equipped with a logarithmic connection
$$
\overline \nabla:\overline {\mathcal V}^{[0,1)}\to\overline {\mathcal V}^{[0,1)}\otimes_{\O_{\overline S}}\Omega_{\overline S}(\log D) 
$$
whose residues along $D$ have eigenvalues in $[0,1)$ and whose restriction to $S$ is $(\mathcal V,\nabla)$. We omit the superscript $[0,1)$ and write $\overline {\mathcal V}$ if the local monodromy along $D$ is unipotent. See \cite[Chapter II.5]{Deligne70} or \cite[Theorem 5.2.17]{HTT} for details.

By Schmid \cite{Schmid73}, the Hodge filtration $F^\bullet\mathcal V$ extends to a filtration by subbundles $F^\bullet\overline{\mathcal V}^{[0,1)}$ of Deligne's lower canonical extension $\overline{\mathcal V}^{[0,1)}$. Following \cite[(7.13)]{Griffiths70}, we define the lower Griffiths bundle.

\begin{defn}
\label{defn:lower Griffiths bundle}
In the above setting, we define the \emph{lower Griffiths bundle} on $\overline S$ by
$$
\G\left(\overline{\mathcal V}^{[0,1)},F^\bullet\right):=\bigotimes_{p=1}^w \det\left(F^p\overline{\mathcal V}^{[0,1)}\right)\isom\bigotimes_{p=1}^w \det\left(\gr_F^p\overline{\mathcal V}^{[0,1)}\right)^{\otimes p}.
$$
When the variation of Hodge structures is clear from the context, we simply denote this line bundle by $\G^{[0,1)}_{\overline S}$. If the local monodromy of $\mathbb V_\Z$ along $D$ is unipotent, we call it the \emph{Griffiths bundle} on $\overline S$ and denote it by $\G_{\overline S}$.
\end{defn}

By Kawamata's covering trick \cite[Theorem 17]{Kawamata81}, there exists a smooth variety $\overline T$ and a finite flat morphism $\pi:\overline T\to \overline S$ such that $\pi^{-1}(D)$ is a simple normal crossing divisor and the local monodromy of $\pi^*\mathbb V_\Z$ is unipotent. Due to the canonicity of Deligne's extension, the Griffiths bundle $\G_{\overline T}$ descends to a class
$$
\G_{\overline S}\in \mathrm{Pic}(\overline S)\otimes_\Z \Q,
$$
which we call the \emph{Griffiths $\Q$-bundle} (or simply, the \emph{Griffiths bundle}) on $\overline S$. This class is independent of the choice of the covering.

For later use, we record a duality between the graded Hodge pieces of Deligne's canonical extension induced by the polarization.

\begin{lem}
\label{lem:Griffiths bundle alternative}
Let $S$ be a smooth variety and $\mathbb V=(\mathbb V_\Z,F^\bullet, Q)$ be a polarized variation of Hodge structures of weight $w$. Assume $\gr^p_F\mathcal V=0$ for $p<0$ and the local monodromy of $\mathbb V_\Z$ along $D:=\overline S\sm S$ is unipotent. Then we have
$$
\Hom_{\O_{\overline S}}(\overline {\mathcal V},\O_{\overline S})\isom \overline{\mathcal V},\quad \Hom_{\O_{\overline S}}(\gr_F^p\overline {\mathcal V},\O_{\overline S})\isom \gr_F^{w-p}\,\overline{\mathcal V}
$$
and
$$
\G_{\overline S}\isom_\Q \bigotimes_{p=0}^{\lfloor\frac{w-1}{2}\rfloor}\det\left(\gr_F^p\overline {\mathcal V}\right)^{\otimes -w+2p}.
$$
\end{lem}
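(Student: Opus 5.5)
The plan has three steps. First, extend the polarization to Deligne's canonical extension. Second, use the extended pairing to identify the graded Hodge pieces with duals of one another. Third, reorganize the tensor product defining $\G_{\overline S}$ using this duality.

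\textbf{Step 1: self-duality of $\overline{\mathcal V}$.} The polarization $Q$ is a flat, nondegenerate pairing $\mathcal V\otimes\mathcal V\to\O_S$. The dual bundle $\mathcal V^\vee$ carries the dual flat connection, whose local monodromy is again unipotent. By the functoriality and uniqueness of Deligne's canonical extension (eigenvalues of residues equal to $0$), the canonical extension of $\mathcal V^\vee$ is $\Hom_{\O_{\overline S}}(\overline{\mathcal V},\O_{\overline S})$. Indeed, the dual of a logarithmic connection with nilpotent residues again has nilpotent residues.

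The isomorphism $\mathcal V\isom\mathcal V^\vee$ induced by $Q$ is flat. By uniqueness it extends to an isomorphism of canonical extensions $\overline{\mathcal V}\isom\Hom(\overline{\mathcal V},\O_{\overline S})$. Equivalently, $Q$ extends to a perfect pairing on $\overline{\mathcal V}$. Concretely, in a local frame of multivalued flat sections twisted by $\exp(-\sum \log z_i N_i)$, the pairing is constant, since $Q(N\cdot,\cdot)+Q(\cdot,N\cdot)=0$.

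\textbf{Step 2: duality of the graded pieces.} On $S$, the Hodge–Riemann relations say that $F^p$ and $F^{w-p+1}$ are mutual annihilators under $Q$. This gives a perfect pairing $\gr_F^p\mathcal V\otimes\gr_F^{w-p}\mathcal V\to\O_S$.

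By Schmid, the extended $F^p\overline{\mathcal V}$ are subbundles. The condition $Q(F^p\overline{\mathcal V},F^{w-p+1}\overline{\mathcal V})=0$ holds on the dense open $S$, so it holds on $\overline S$. The induced pairing $\gr_F^p\overline{\mathcal V}\otimes\gr_F^{w-p}\overline{\mathcal V}\to\O_{\overline S}$ is perfect on $S$; we must also show it is perfect along $D$.

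\textbf{Main obstacle: perfectness along $D$.} This is the step I expect to be the real difficulty. The plan is to reduce it to a rank count. The annihilator of $F^p\overline{\mathcal V}$ under the perfect pairing of Step 1 is a subbundle of rank $\mathrm{rk}\,\overline{\mathcal V}-\mathrm{rk}\,F^p$. It contains $F^{w-p+1}\overline{\mathcal V}$, which has the same rank, since Hodge numbers are symmetric.

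Both are subbundles, and the quotient of the annihilator by $F^{w-p+1}\overline{\mathcal V}$ is torsion-free of rank $0$. This uses that $F^{w-p+1}\overline{\mathcal V}$ is saturated, which holds because it is a subbundle. Hence the annihilator equals $F^{w-p+1}\overline{\mathcal V}$ on all of $\overline S$. This is exactly fiberwise perfectness of the graded pairing, and gives $\Hom(\gr_F^p\overline{\mathcal V},\O)\isom\gr_F^{w-p}\overline{\mathcal V}$.

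Alternatively, one can use the description of Schmid's limiting filtration on the fibre over $D$: there $F_{\lim}$ satisfies the same orthogonality $Q(F^p_{\lim},F^{w-p+1}_{\lim})=0$, because the limit Hodge filtration lies in the compact dual.

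\textbf{Step 3: the formula for $\G_{\overline S}$.} Step 2 gives $\det\gr_F^{w-p}\overline{\mathcal V}\isom(\det\gr_F^p\overline{\mathcal V})^{-1}$. Write
\[
\G_{\overline S}\isom\bigotimes_{p=0}^{w}\det(\gr_F^p\overline{\mathcal V})^{\otimes p},
\]
where the $p=0$ term is trivial, and pair the index $p$ with $w-p$. Each pair with $p<w/2$ contributes
\[
\det(\gr_F^p\overline{\mathcal V})^{\otimes p}\otimes\det(\gr_F^p\overline{\mathcal V})^{\otimes -(w-p)}=\det(\gr_F^p\overline{\mathcal V})^{\otimes(2p-w)}.
\]
If $w$ is even, the middle term $p=w/2$ is self-dual, so its determinant is $2$-torsion. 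Its contribution $\det(\gr_F^{w/2}\overline{\mathcal V})^{\otimes w/2}$ is therefore torsion and vanishes after tensoring with $\Q$; this is why the statement is an isomorphism $\isom_\Q$.

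Collecting the terms with $p$ ranging over $0,\dots,\lfloor\frac{w-1}{2}\rfloor$ gives the stated formula.
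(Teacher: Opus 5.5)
Your proposal is correct and follows essentially the same route as the paper. You extend $Q$ to a perfect pairing on $\overline{\mathcal V}$ by functoriality and uniqueness of Deligne's extension (the paper checks this through the explicit residue $-(A^k)^T$ of the dual frame), and then extend the graded duality using that the Hodge filtration extends uniquely as subbundles. Your annihilator/saturation rank count is an explicit form of that uniqueness, and your observation that $\det(\gr_F^{w/2}\overline{\mathcal V})$ is $2$-torsion for even $w$ correctly explains the $\isom_\Q$, which the paper leaves implicit.
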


\begin{proof}
The polarization
$$
Q:\mathbb V_\Q\otimes_\Q\mathbb V_\Q\to \Q_S
$$
is a perfect pairing, hence induces an isomorphism
$$
\Hom_{\O_S}(\mathcal V,\O_S)\isom\mathcal V
$$
of vector bundles with flat connections.

In suitable local coordinates, suppose the local defining equation of $D$ near a point $s\in D$ is $x_1\cdots x_d=0$. Assume that the rank of $\mathbb V$ is $r$. From the local description of Deligne's canonical extension (for instance, see the proof of \cite[Theorem 5.2.17]{HTT}), there exist $r$ by $r$ nilpotent complex matrices $A^k=\{A^k_{ij}\}$ for all $1\le k\le d$ satisfying
$$
\overline \nabla e_j=\sum_{1\le i\le r, 1\le k\le d}A_{ij}^ke_i\otimes \frac{dx_k}{x_k},
$$
for some basis $\{e_1,\dots,e_r\}$ of $\overline{\mathcal V}$ near $s\in D$. The local monodromy matrix along $x_k=0$ is $\exp(-2\pi\sqrt{-1}A^k)$.

Recall that the polarization $Q=\langle\;,\;\rangle:\mathcal V\otimes\Hom_{\O_S}(\mathcal V,\O_S)\to \O_S$ is compatible with flat connections. For the dual basis $\{e_1^*,\dots,e_r^*\}$ of $\Hom_{\overline S}(\overline{\mathcal V},\O_{\overline S})$, we have
$$
d\langle e_i, e_j^*\rangle=\langle \overline \nabla e_i, e_j^*\rangle+\langle e_i, \overline \nabla e_j^*\rangle=0,
$$
since $\langle e_i, e_j^*\rangle$ is identically $0$ or $1$. Hence, we have
$$
\overline \nabla e_j^*=\sum_{1\le i\le r, 1\le k\le d}-A_{ji}^ke_i\otimes \frac{dx_k}{x_k}.
$$
This implies that $\Hom_{\overline S}(\overline{\mathcal V},\O_{\overline S})$ is Deligne's canonical extension of $\Hom_{\O_S}(\mathcal V,\O_S)$, with local residue matrix $-(A^k)^T$ along $x_k=0$. Therefore, we have an isomorphism
$$
\Hom_{\O_{\overline S}}(\overline {\mathcal V},\O_{\overline S})\isom \overline{\mathcal V}.
$$

Note that the isomorphism $\Hom_{\O_S}(\mathcal V,\O_S)\isom\mathcal V$ induced by the polarization $Q$ is also compatible with the Hodge filtration
$$
\Hom_{\O_S}(\gr_F^p\mathcal V,\O_S)\isom\gr_F^{w-p}\mathcal V.
$$
Since the Hodge filtration extends uniquely as a filtration by subbundles to Deligne's canonical extension, the above isomorphism extends to
$$
\Hom_{\O_{\overline S}}(\gr_F^p\overline {\mathcal V},\O_{\overline S})\isom \gr_F^{w-p}\,\overline{\mathcal V}.
$$
As an immediate consequence, the last isomorphism for the Griffiths bundle $\G_{\overline S}$ follows.
\end{proof}

\subsection{Baily--Borel compactification of period image}
\label{sec:Baily--Borel}

As in the setting of Section \ref{sec:Deligne's canonical extension and Griffiths}, consider a smooth variety $S$ with a polarized variation of $\Z$-Hodge structures $(\mathbb V_\Z,F^\bullet, Q)$. Associated to these are a period domain $\D$ and an arithmetic subgroup $\Gamma\subset \mathrm{Aut}(V_\Z,Q)$ containing the monodromy group. The associated period map is
$$
\Phi:S\to \Gamma\backslash \D.
$$

Period maps arise naturally in a wide range of moduli problems, and understanding their geometry has been a central theme. This includes, for instance, local and global Torelli theorems, as well as the study of the birational geometry of period maps. Although the results of this section apply more generally, we restrict our attention to the case
$$
\Gamma=\mathrm{Aut}(V_{\mathbb Z},Q),
$$
so that $\Gamma\backslash \D$ is the classifying space of Hodge structures of the prescribed type.

After establishing the complex analyticity of the period map, Griffiths \cite[Conjecture 10.5]{Griffiths70Q} conjectured that the closure of its image is algebraic, or more precisely, quasi-projective. This conjecture was proved by Bakker, Brunebarbe, and Tsimerman \cite{BBT23}. More recently, Bakker, Filipazzi, Mauri, and Tsimerman \cite{BFMT25} constructed a Baily--Borel compactification of the period image.

\begin{thm}[{\cite[Theorems 1.1 and 1.2]{BFMT25}}]
\label{thm:BFMT BB}
The period image $\overline{\Phi(S)}$ admits a Baily--Borel compactification $\overline M^{\rm BB}$, with an ample $\Q$-line bundle $\O_{\overline M^{\rm BB}}(1)$. For any log smooth compactification $\overline S$ of $S$, the period map extends to a morphism
$$
\overline \Phi:\overline S\to \overline M^{\rm BB},
$$
and the pullback $\overline \Phi^*\O_{\overline M^{\rm BB}}(1)$ is isomorphic to the Griffiths bundle $\G_{\overline S}$ on $\overline S$.
\end{thm}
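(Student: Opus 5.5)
We would reduce the statement to a single positivity claim: on any log smooth compactification $\overline S$, the Griffiths $\Q$-bundle $\G_{\overline S}$ of Definition~\ref{defn:lower Griffiths bundle} is semiample. Granting this, we set
$$
\overline M^{\rm BB}:=\mathrm{Proj}\bigoplus_{m\ge 0}H^0\left(\overline S,\G_{\overline S}^{\otimes m}\right),
$$
with $m$ running over sufficiently divisible integers and $\O_{\overline M^{\rm BB}}(1)$ the tautological ample $\Q$-line bundle. Then $\overline\Phi$ is the associated morphism, and $\overline\Phi^*\O(1)\isom_\Q \G_{\overline S}$ holds by construction.

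The first step is reduction to unipotent monodromy and independence of choices. By Kawamata's covering trick, we pass to a finite flat $\pi:\overline T\to\overline S$ with unipotent local monodromy. Since $\G_{\overline S}$ is defined by descent of $\G_{\overline T}$, semiampleness upstairs descends to $\overline S$ by taking norms of sections. The Griffiths bundle is functorial: for a morphism $f:\overline S'\to\overline S$ of log smooth compactifications of $S$ (or of a cover), Deligne's extension and Schmid's extended Hodge filtration pull back, so $f^*\G_{\overline S}\isom\G_{\overline S'}$ up to torsion. Any two compactifications are dominated by a third, hence the section rings, and therefore $\overline M^{\rm BB}$, are independent of $\overline S$. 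On $S$ itself, $\G$ is the Griffiths line bundle, which is positive along directions where the period map is immersive. This yields bigness on the image, and the Stein factorization of $\overline\Phi|_S$ is the period image $\Phi(S)$, open in $\overline M^{\rm BB}$.

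The second step is the boundary analysis. For a boundary stratum $D_I$ we use the nilpotent orbit and $\mathrm{SL}_2$-orbit theorems of Schmid and Cattani--Kaplan--Schmid. They show that $\G_{\overline S}|_{D_I}$ equals, up to torsion, the Griffiths bundle of the variation of Hodge structures on $\gr^W$ of the limiting mixed Hodge structure, which is pure and polarized. Hence $\G_{\overline S}$ is nef, and its restriction to every stratum is again a Griffiths bundle. By induction on dimension, each such restriction is semiample, with its Stein factorization recording the limiting graded Hodge structures. This gives a set-theoretic candidate for $\overline M^{\rm BB}$: the image of $S$ together with the period images of the graded limit variations on the strata.

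The main obstacle is upgrading nefness plus semiampleness on every stratum to global semiampleness, since sections on strata need not lift. We would first try to construct the contraction directly. Consider the map sending a point of $\overline S$ to its graded limit Hodge structure in a Satake-type union of quotients $\Gamma_I\backslash\D_I$. Using o-minimal (definable) GAGA as in Bakker--Brunebarbe--Tsimerman, we would show it is a proper definable map whose image is algebraic, with $\G$ numerically trivial exactly on its fibers. Then $\G$ is the pullback of a line bundle that is ample on the image, which we would verify by Nakai--Moishezon stratum by stratum using the inductive positivity. Alternatively, one could apply a Keel-type criterion in characteristic zero, replacing it by Kollár's gluing of semiample line bundles along the stratification when the relevant finite equivalence relations are shown to be effective. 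This step is where we expect the real difficulty.
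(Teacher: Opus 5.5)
There is a genuine gap, and it is exactly the step you flag. The paper does not prove Theorem~\ref{thm:BFMT BB}; it imports it from \cite{BFMT25}. The substance of that result is the global semiampleness of $\G_{\overline S}$, which your outline leaves open.

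Your preliminary steps are sound:
descent along Kawamata covers via norms, independence of $\overline S$, nefness, and semiampleness of each $\G_{\overline S}|_{\overline{D_I}}$ by induction (granting that Deligne extensions, $W(N_I)$ and $F^\bullet$ restrict compatibly to $\overline{D_I}$). Combined with the $\mathrm{Proj}$ construction, however, they only reformulate the theorem.

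Neither proposed route supplies the missing step.
\begin{enumerate}
\item Keel's criterion (a nef $L$ is semiample once $L|_{\mathbb E(L)}$ is) is a positive-characteristic theorem and fails over $\C$. A trivialization of $\G$ along a positive-dimensional fiber of the putative contraction need not extend to infinitesimal neighborhoods of that fiber. Invoking Koll\'ar-type gluing merely restates this unless you prove that sections on the strata lift.
\item The same obstruction sits inside the o-minimal route. Even given an algebraic contraction $\overline S\to T$ on whose fibers $\G$ is numerically trivial, $\G$ descends to $T$ only if it is torsion on a neighborhood of each fiber. Over the finite-monodromy locus this is automatic, since there $\G$ is pulled back from $\Gamma\backslash\D$ up to torsion. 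Along the infinite-monodromy boundary, including closures of positive-dimensional fibers of $\Phi|_S$ that meet it, you give no Hodge-theoretic argument.
\item That route also has no starting point yet. When $\D$ is not Hermitian symmetric, the Satake-type union $\bigsqcup_I\Gamma_I\backslash\D_I$ carries no a priori structure of a (definable) complex analytic space in which the extended period map is holomorphic. So definable GAGA and the algebraicity-of-images results of \cite{BBT23} have nothing to act on until such a structure, or a substitute for it, is constructed.
\end{enumerate}

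Two smaller points. First, the image $\Phi(S)$ need not be open in $\overline M^{\rm BB}$. The interior is the closure $\overline{\Phi(S)}\subset\Gamma\backslash\D$, which also receives boundary points of $\overline S$ with finite local monodromy; for example, in this paper, hypersurfaces with simple ADE singularities for $n$ odd map into $\Gamma\backslash\D$ by Corollary~\ref{cor:indeterminacy}. Part of what must be shown is that points with infinite local monodromy never map into this interior. Second, since $\overline S$ is normal, $\mathrm{Proj}$ of its section ring is normal. Your construction therefore compactifies only the normalization of $\overline{\Phi(S)}$, consistent with the paper writing $(\overline\M^{\rm BB}_{(n,n+1)})^\nu$ for the Stein factorization of $\overline{\mathcal P}$.
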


Here, a $\mathbb Q$-line bundle on $X$ means a class in $\Pic(X)\otimes_{\mathbb Z}\mathbb Q$. This compactification is functorial and satisfies an analogue of the Borel extension property (see \emph{loc. cit.} for further details).

For a family $f:\X\to S$ of Calabi--Yau varieties with canonical singularities polarized by a relatively ample line bundle $\L$ on $\X$, the Hodge bundle $\Lambda_S:=f_*\omega_{\X/S}$ on $S$ extends naturally to the Hodge bundle $\Lambda_{\overline S}$ on $\overline S$. This bundle can be used to construct a different compactification of a moduli space of Calabi--Yau varieties.

\begin{thm}[{\cite[Corollaries 1.8 and 7.9]{BFMT25}}]
\label{thm:BFMT BBH}
Let $M$ be the coarse moduli space of polarized Calabi--Yau varieties with canonical singularities, and let $M^\nu$ denote its normalization. Then $M^\nu$ admits a unique normal compactification $\overline M^{\rm BBH}$ with the following properties:
\begin{enumerate}
\item The Hodge bundle $\Lambda_{M^\nu}$ extends to an ample $\Q$-line bundle $\O_{\overline M^{\rm BBH}}(1)$ on $\overline M^{\rm BBH}$.
\item For any family $f:\X\to S$ of polarized Calabi--Yau varieties with canonical singularities and any log smooth compactification $\overline S$ of $S$, the moduli map $\Phi_{\rm CY}: S\to M$ extends to a morphism $\overline \Phi_{\rm CY}:\overline S\to \overline M^{\rm BBH}$ and the pullback $\overline \Phi_{\rm CY}^*\O_{\overline M^{\rm BBH}}(1)$ is isomorphic to the Hodge bundle $\Lambda_{\overline S}$ on $\overline S$.
\end{enumerate}
\end{thm}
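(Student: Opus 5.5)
This is the cited result of Bakker--Filipazzi--Mauri--Tsimerman, so the plan is to rebuild $\overline M^{\rm BBH}$ as the ample model of the extended Hodge bundle on a log smooth compactification of a finite cover of $M$.

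First I replace $M$ by a smooth variety $S$ carrying a universal family. Concretely, take a resolution of a finite cover of the relevant Hilbert scheme locus with a level structure, so that there is a family $f:\X\to S$ whose moduli map $S\to M^\nu$ is finite onto its image and dominant. I then choose a log smooth compactification $\overline S$ after Kawamata's covering trick \cite[Theorem 17]{Kawamata81}, so that the local monodromy of $R^wf_*\Z$ is unipotent. Since the fibers are Calabi--Yau with canonical singularities, $f_*\omega_{\X/S}$ is the top Hodge piece $F^w\mathcal V$ of the variation on $H^w$, where $w=\dim$ of the fibers. Accordingly I define $\Lambda_{\overline S}:=F^w\overline{\mathcal V}$ inside Deligne's canonical extension, using Schmid's extension of the Hodge filtration. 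By Kawamata--Kollár this agrees with the direct image of the relative dualizing sheaf of a semistable model. With this definition $\Lambda_{\overline S}$ is independent of choices and compatible with pullback along generically finite maps between log smooth compactifications, by the canonicity of Deligne's extension exactly as in the descent of the Griffiths $\Q$-bundle.

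The central step is to show that $\Lambda_{\overline S}$ is semiample. It is nef by Griffiths curvature or Fujita--Kawamata positivity. For semiampleness I would mimic the proof of Theorem \ref{thm:BFMT BB}. The line bundle $F^w$ is the Hodge line bundle of the ``top piece'' period map $\overline S\to \P(\overline{\mathcal V})$ modulo $\Gamma$, i.e. the map recording only the line $F^w\subset V_\C$. One shows that this map factors, near the boundary, through the limiting top Hodge line along each stratum, using the nilpotent orbit theorem and definability in $\R_{\rm an,exp}$ as in \cite{BBT23}. One then produces enough sections via definable GAGA / Borel-type extension on each boundary stratum and glues them by induction on strata. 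I expect this to be the main obstacle: unlike the full Griffiths bundle, $F^w$ can be trivial on curves along which the variation is non-constant. So the fibers of the resulting morphism are not controlled by the period map alone, and semiampleness needs a separate argument along the boundary, where the limiting top piece sits in the correct weight graded quotient.

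Once semiampleness holds, I set $\overline M^{\rm BBH}:=\mathrm{Proj}\bigoplus_k H^0(\overline S,\Lambda_{\overline S}^{\otimes k})$, normalized, and then descend along the finite cover to $M^\nu$, i.e. take invariants under the level-structure group. Property (1) on the interior uses the ampleness of the Hodge bundle on $M^\nu$. This follows from Viehweg--Kollár positivity: $\Lambda$ is nef and big on every subvariety of $M$, since on a positive-dimensional family of Calabi--Yau varieties with non-isotrivial moduli the top piece varies (Kollár's bigness via $\omega$ and Bogomolov--Tian--Todorov). Hence the ample model map is an open embedding on $M^\nu$.

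Property (2) follows by pulling back. Given any family over $S'$ with log smooth $\overline S'$, I pass to a common log smooth cover dominating both $\overline S'$ and a finite base change of $\overline S$. Functoriality of $\Lambda$ and normality of $\overline M^{\rm BBH}$ (rigidity lemma plus Zariski's main theorem) then give the extension $\overline \Phi_{\rm CY}$ with $\overline \Phi_{\rm CY}^*\O(1)\isom\Lambda_{\overline S'}$. Uniqueness holds because any normal compactification satisfying (1) and (2) is recovered as $\mathrm{Proj}$ of the section ring of $\Lambda$ pulled back to $\overline S$.
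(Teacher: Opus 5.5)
The paper does not prove this statement. It quotes it from \cite{BFMT25}, and the only structural feature it records (via \cite[Remark 5.4]{BFMT25}) is that $\overline M^{\rm BBH}$ is dominated by $(\overline M^{\rm BB})^\nu$ through the morphism $\pi_{CY}$.

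Your outline has a genuine gap at the step that carries essentially all the content: semiampleness of $\Lambda_{\overline S}$. You flag it as the main obstacle but do not resolve it, and the mechanism you propose cannot work as stated. There is no usable ``top-piece period map modulo $\Gamma$'': the space of possible lines $F^w\subset V_\C$ is in general not a Hermitian symmetric domain, and $\Gamma$ does not act properly discontinuously on it. Already for weight $3$ with $h^{3,0}=1$ and $h^{2,1}=m-1\ge 1$, the image of $\D$ under $F^\bullet\mapsto F^3$ is the open set of lines $[v]$ with $i^3Q(v,\bar v)>0$, for a Hermitian form of signature $(m,m)$. This is $Sp(2m,\mathbb R)/(U(1)\times Sp(2m-2,\mathbb R))$, whose isotropy is noncompact, so there is no analytic (let alone definable or algebraic) quotient to which nilpotent-orbit, definability or Borel-extension arguments could be applied.

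The other half of your plan, producing sections on each boundary stratum and gluing them by induction on strata, is exactly where a new idea is required. In characteristic $0$, a nef (even nef and big) line bundle whose restriction to the boundary or null locus is semiample need not be semiample, since Keel's criterion fails. So you need an actual mechanism for lifting sections from the boundary, which you do not supply. You correctly note that $\Lambda$ can be numerically trivial along curves on which the variation moves; Theorem \ref{thm:BBH indeterminacy} exhibits divisors contracted by $\pi_{CY}$ that map to the interior of the period space. But nothing in your sketch then controls the fibres of the would-be morphism. The factorization through $(\overline M^{\rm BB})^\nu$ recorded in the paper suggests controlling $\Lambda$ through the Baily--Borel compactification of the full period image (Theorem \ref{thm:BFMT BB}), rather than through a map recording only $F^w$.

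Several interior steps are also asserted rather than proved.
\begin{enumerate}
\item Since the fibres may be singular, $R^wf_*\Z$ is not a variation of Hodge structure on all of $S$, so ``$f_*\omega_{\X/S}=F^w\mathcal V$'' only holds on the smooth-fibre locus. That the canonical extension across the singular-fibre locus inside $S$ recovers $f_*\omega_{\X/S}$ is a real input: canonical fibres contribute nothing to the top piece. Compare the $\gr_F^0$ specialization argument in the proof of Theorem \ref{thm:Mordant question for Calabi Yau}.
\item A resolution of a cover of a Hilbert-scheme locus is not finite over $M$, because of $PGL$-orbits. Moreover, a merely dominant, generically finite $S$ does not surject onto $M$, which you need for the open-embedding claim.
\item Even granting that $\Lambda$ is nef and big on every subvariety of $M$, this does not make the ample model an open embedding on $M^\nu$. $\Lambda$-trivial curves can leave $S$ (closures of isotrivial families, e.g. $PGL$-orbit closures) and run through $\overline S\sm S$. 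You must therefore rule out that distinct points of $M^\nu$ are joined by chains of $\Lambda$-trivial curves through the boundary. That requires a separatedness and limit argument (Matsusaka--Mumford together with an analysis of $\Lambda$-trivial degenerations), which is absent.
\end{enumerate}
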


Throughout this paper, we write $\overline \M^{\rm BB}_{(n,d)}$ for the Baily--Borel compactification of the period image of the moduli space of degree $d$ hypersurfaces in $\P^n$, and $\overline \M^{\rm BBH}_{(n,n+1)}$ for the BBH compactification in the Calabi--Yau case. We analyze the difference between the two compactifications $\overline \M^{\rm BB}_{(n,n+1)}$ and $\overline \M^{\rm BBH}_{(n,n+1)}$ by giving a complete description of the indeterminacy loci in Section~\ref{sec:BB versus BBH for Calabi--Yau}.

\subsection{Griffiths height and GIT height for hypersurfaces}
\label{sec:Griffiths and GIT height}

Returning to the setting of Section \ref{sec:Deligne's canonical extension and Griffiths}, when $S$ is a smooth curve, the \emph{Griffiths height}, introduced and studied by Mordant \cite{Mordant26, Mordant25}, is defined as the degree of the Griffiths bundle. Recall that a smooth curve admits a unique smooth projective compactification.

\begin{defn}
Let $C$ be a smooth projective curve, and let $\mathbb V_\eta$ be a polarized variation of $\Z$-Hodge structures defined on a nonempty Zariski open subset of $C$. We define its \emph{Griffiths height} and \emph{lower Griffiths height}, respectively, by
$$
\mathrm{ht}_{GK}(\mathbb V_\eta):=\deg_{C}\G_{C},\quad \mathrm{ht}_{GK,-}(\mathbb V_\eta):=\deg_{C}\G_{C}^{[0,1)}.
$$
\end{defn}

The definitions of the Griffiths bundle and the lower Griffiths bundle immediately imply
$$
\mathrm{ht}_{GK}(\mathbb V_\eta)\ge \mathrm{ht}_{GK,-}(\mathbb V_\eta).
$$
In Section \ref{sec:LMHS and Griffiths}, we determine the difference between these two heights explicitly in terms of the limiting mixed Hodge structures of $\mathbb V_\eta$ at the boundary points.

For example, given a family of polarized smooth projective varieties over a nonempty Zariski open subset of $C$, we have a polarized variation of Hodge structures induced by their primitive middle cohomology. Hence, we may consider the Griffiths height associated to this family.

For a family of hypersurfaces, we may consider the \emph{GIT height} induced by the GIT moduli space, which was also studied by Mordant \cite{Mordant26, Mordant25} in comparison with the Griffiths height.

\begin{defn}
Let $C_\eta$ be the generic point of a smooth projective curve $C$, and let $H_\eta\subset \P^n_{C_\eta}$ be a smooth hypersurface of degree $d\ge 3$ over $C_\eta$. We define its \emph{GIT height} by
$$
\mathrm{ht}_{GIT}(H_\eta):=\deg_C\overline\Phi_{\rm GIT}^*\O_{\overline \M^{\rm GIT}_{(n,d)}}(1),
$$
where $\overline\Phi_{\rm GIT}:C\to \overline \M^{\rm GIT}_{(n,d)}$ is the GIT moduli morphism and $\O_{\overline \M^{\rm GIT}_{(n,d)}}(1)$ is the GIT polarization.
\end{defn}

Recall that the GIT moduli space of degree $d$ hypersurfaces in $\P^n$ is constructed by the GIT quotient
$$
\overline \M^{\rm GIT}_{(n,d)}:=\P|\O_{\P^n}(d)|/\!/ PGL(n+1)
$$
and the polarization $\O_{\P|\O_{\P^n}(d)|}(1)$ descends to the GIT polarization $\O_{\overline \M^{\rm GIT}_{(n,d)}}(1)$ as a $\Q$-line bundle. Since a smooth hypersurface of degree $d\ge 3$ is GIT stable by \cite{Mumford94}, we have the GIT moduli morphism
$$
\Phi_{\rm GIT}:C_\eta\to \overline \M^{\rm GIT}_{(n,d)}
$$
that extends regularly to $C$ by the properness of the GIT moduli space.

Note that a smooth hypersurface $H_\eta\subset \P^n_{C_\eta}$ induces a polarized variation of Hodge structure $\mathbb H^{n-1}_{\rm prim}(H_\eta/C_\eta)$ on a nonempty Zariski open subset of $C$ associated to the primitive middle cohomology. Since $\mathbb H^{n-1}_{\rm prim}(H_\eta/C_\eta)$ and $\mathbb H^{n-1}(H_\eta/C_\eta)$ are either equal or differ by a rank $1$ local system of pure Hodge-Tate type, we have
$$
\mathrm{ht}_{GK}(\mathbb H^{n-1}_{\rm prim}(H_\eta/C_\eta))=\mathrm{ht}_{GK}(\mathbb H^{n-1}(H_\eta/C_\eta)).
$$

In \cite[Theorem 1.1]{Mordant25}, under additional assumptions on the family $H\to C$ of hypersurfaces, Mordant proved the equality
$$
\mathrm{ht}_{GK}(\mathbb H^{n-1}(H_\eta/C_\eta))=F(n,d)\,\mathrm{ht}_{GIT}(H_\eta)
$$
for the constant $F(n,d)$ described in \eqref{eqn:constant F}.

\subsection{Minimal exponents and higher Du Bois singularities}
\label{sec:minimal exponent and higher Du Bois}

Let $X$ be a complex variety. Deligne \cite{Deligne74} and Du Bois \cite{DB81} defined the associated filtered de Rham complex $(\DB_X^\bullet, F^\bullet)$ quasi-isomorphic to the constant sheaf $\C_X$. When $X$ is proper, this complex defines the Hodge filtration of the mixed Hodge structure on the singular cohomology $H^\bullet(X,\Q)$ of $X$. The \emph{$p$-th Du Bois complex} $\DB^p_X$ of $X$ is defined as the shifted graded piece
$$
\DB^p_X : = \gr^p_F\, \DB^\bullet_X[p].
$$
Comparing this complex with the sheaf of $p$-forms allows us to define higher Du Bois singularities, which were developed recently as a refinement of Du Bois singularities. We focus our attention on the case of local complete intersection singularities, following \cite{JKSY, MOPW, MP22, FL24}.

\begin{defn}\label{defn:higher singularities}
Let $m\ge 0$ be an integer and let $X$ be a variety with local complete intersection singularities. We say $X$ has \emph{$m$-Du Bois singularities} if the natural morphisms $\Omega^p_X \to \DB^p_X$ are isomorphisms for all $0\le p\le m$.
\end{defn}

Here, $\Omega_X^p$ denotes the $p$-th exterior power of the sheaf of Kähler differentials on $X$.

Saito \cite{Saito93} defined the \emph{minimal exponent}, a singularity invariant which is closely related to the notion of higher Du Bois singularities. For a holomorphic function $f\in \O_{\C^n,0}$ on a germ $(\C^n,0)$ vanishing at the origin, there exist nonzero polynomials $b(s)\in \C[s]$ and $P(s)\in \D_{\C^n,0}[s]$ satisfying
$$
P(s)f^{s+1}=b(s)f^s
$$
formally in $\O_{\C^n,0}[\frac{1}{f},s]\cdot f^s$, where $\D_{\C^n,0}$ is the ring of differential operators on the germ $(\C^n,0)$. The \emph{Bernstein--Sato polynomial} $b_f(s)$ of $f$ is defined as the monic polynomial of minimal degree that generates the principal ideal in $\C[s]$ of all polynomials $b(s)$ satisfying the above equation. The \emph{minimal exponent} $\wt \alpha_{0}(f)$ is defined as the negative of the greatest root of the reduced Bernstein--Sato polynomial $\wt b_f(s):=b_f(s)/(s+1)$. It is well known that $s+1$ is always a factor of $b_f(s)$, and that $b_f(s)=s+1$ if and only if the hypersurface $\left\{f=0\right\}$ is smooth. In this case, we set $\wt \alpha_{0}(f)=+\infty$.

For a variety $X$ with hypersurface singularities, the minimal exponent of $X$ is defined as
$$
\wt \alpha(X):=\min_{x\in X} \wt\alpha_0(f)
$$
where $f$ is a local defining equation for the germ $(X,x)$ embedded in the germ $(\C^n,0)$. This invariant is independent of the choice of an embedding.

In \cite{JKSY, MOPW}, an equivalent criterion for higher Du Bois singularities was established in terms of the minimal exponent.

\begin{thm}
\label{thm:higher singularities via minimal exponent}
Let $X$ be a variety with hypersurface singularities. Then $X$ has $m$-Du Bois singularities if and only if the minimal exponent $\wt \alpha(X)\ge m+1$.
\end{thm}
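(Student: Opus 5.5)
Since both conditions are local analytic, we reduce to a germ: $X=\{f=0\}\subset Y=(\C^n,0)$ with $f$ reduced. The plan is to express both sides in terms of one object, the filtered $\D_Y$-module $\O_Y(*X)$ underlying the mixed Hodge module $j_*\Q^H_U[n]$, where $U=Y\sm X$. We compare its Hodge filtration $F_\bullet$ with the pole order filtration
$$
P_k\O_Y(*X)=\O_Y\cdot f^{-k-1}\qquad (k\ge 0).
$$
One always has $F_k\subseteq P_k$.

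\textbf{Step 1: minimal exponent side.} We invoke Saito's description of $F_\bullet\O_Y(*X)$ through the $V$-filtration along $f$, in the form proved by Saito and Musta\c t\u a--Popa:
$$
\wt\alpha_0(f)\ge p+1 \iff F_k\O_Y(*X)=P_k\O_Y(*X)\ \text{ for all } 0\le k\le p.
$$
The mechanism is as follows. The element $f^{-k-1}$ lies in $F_k$ exactly when $\partial_t^k\delta$ lies in $V^{\ge 1}$ modulo the relevant terms of the graph embedding. Via the $b$-function, this is controlled by the roots of $\wt b_f$ that are $>-(p+1)$. I would take this as the input from the theory of the $V$-filtration, with minor bookkeeping to pass from $f$ to $\wt b_f$.

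\textbf{Step 2: Du Bois side.} Since $Y$ is smooth, $\DB^p_Y=\Omega^p_Y$. The Du Bois complex of the pair gives an exact triangle
$$
\DB^p_{Y,X}\to\Omega^p_Y\to\DB^p_X\xrightarrow{+1}.
$$
Here $\DB^p_{Y,X}$ is computed, by Grothendieck duality and Saito's theory, from the graded de Rham complex $\gr^F_{p-n}\DR(\O_Y(*X))$. Concretely, $\DB^p_{Y,X}\isom R\Hom(\gr^F_{p-n}\DR\O_Y(*X),\omega_Y)[\cdot]$, or equivalently it is directly the complex
$$
\Omega^p_Y\otimes F_0\to\Omega^{p+1}_Y\otimes F_1\to\cdots
$$
up to twist and shift. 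Replacing $F_k$ by $P_k$ in these complexes yields Koszul-type complexes built from $\Omega^\bullet_Y$, $f$ and $df$. Because $f$ is a nonzerodivisor, these compute exactly $\Omega^p_Y(\log X)$-type objects whose cone against $\Omega^p_Y$ is $\Omega^p_X$ in degree $0$. Hence, if $F_k=P_k$ for $k\le p$, then $\Omega^p_X\to\DB^p_X$ is a quasi-isomorphism. Applying this for all $p\le m$ proves the direction $\wt\alpha(X)\ge m+1 \Rightarrow$ $m$-Du Bois.

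\textbf{Step 3: converse, the main obstacle.} We argue by induction on $k\le m$. Assume $\Omega^k_X\isom\DB^k_X$, and inductively $F_j=P_j$ for $j<k$. From the triangle, the complex built with $F$ is then quasi-isomorphic to the one built with $P$. Looking at the top term of the complex, i.e. the spot where $F_k$ versus $P_k$ enters first, and using that the other terms already agree, we get that the inclusion $F_k\subseteq P_k$ induces an isomorphism on the relevant cohomology sheaf. This forces $F_k=P_k$. The delicate point is exactly this extraction. We must check that the $\gr^F$ de Rham complex has its only possibly nonzero cohomology in the expected degree, so that a discrepancy $P_k/F_k\neq0$ cannot be cancelled by other terms. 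I would handle this by a depth argument: $\Omega^\bullet_X$ for lci $X$ is computed by a Koszul complex that is exact in a range, and a nonzero cokernel $P_k/F_k$ would produce nonzero higher cohomology of $\DB^k_X$ (or a failure of reflexivity) contradicting the isomorphism. Combined with Step 1, this gives $\wt\alpha(X)\ge m+1$.
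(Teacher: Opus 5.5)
The paper does not prove this statement; it quotes it from \cite{JKSY, MOPW}. Your overall strategy is essentially the route taken in the cited works: compare the Hodge filtration $F_\bullet$ on $\O_Y(*X)$ with the pole order filtration $P_\bullet$, use Saito and Musta\c t\u a--Popa ($\wt\alpha_0(f)\ge p+1$ iff $F_k=P_k$ for $k\le p$) on the minimal exponent side, and apply Grothendieck duality to $\gr^F_{p-n}\DR(\O_Y(*X))$ on the Du Bois side. As written, however, Step~2 rests on a false claim and Step~3 is not carried out.

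\textbf{Step 2.} Modulo the subcomplex $\Omega^{n-p}_Y[p]$, the complex $\gr^P_{p-n}\DR(\O_Y(*X))$ is $\Omega^{n-p}_Y|_X\to\cdots\to\Omega^n_Y|_X$ with differential $df\wedge$, twisted by powers of $\O_X(X)$. Its dual is $\O_X\to\Omega^1_Y|_X\to\cdots\to\Omega^p_Y|_X$, again with differential $df\wedge$. This dual resolves $\Omega^p_X=\Omega^p_Y|_X/df\wedge\Omega^{p-1}_Y|_X$ if and only if the Koszul complex of $\partial_1f,\dots,\partial_nf$ on $\O_X$ is exact in degrees $<p$, i.e.\ iff $\codim_X\Sing(X)\ge p$. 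That $f$ is a nonzerodivisor plays no role here. For $p=1$ the condition is just reducedness of $X$, but for $p\ge 2$ it is a real restriction; it fails, e.g., for $\{xy=0\}\times\C^{n-2}$ and $p=2$.

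The missing ingredient is the estimate $\wt\alpha(X)\le\frac12\codim_Y\Sing(X)$. You can get it, e.g., from Proposition~\ref{prop:weight-minimal exponent inequality} at a general point of a top-dimensional component of $\Sing(X)$: put weight $1$ on the normal coordinates and $0$ along the component, where $f$ lies in the square of the component's ideal. This estimate turns $\wt\alpha(X)\ge p+1$ into $\codim_X\Sing(X)\ge 2p+1$.

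Your ``equivalently'' in Step~2 is also wrong. The complex $\Omega^p_Y\otimes F_0\to\Omega^{p+1}_Y\otimes F_1\to\cdots$, or its graded version $\gr^F_{-p}\DR(\O_Y(*X))$, computes the log complex $\DB^p_Y(\log X)$. Its cone against $\Omega^p_Y$ is dual to $\DB^{n-p}_X$, not related to $\DB^p_X$. Only the dual of $\gr^F_{p-n}\DR$ sees $\DB^p_X$.

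\textbf{Step 3.} The same estimate is what makes the converse work, and the difficulty is not where you locate it. In the induction, case $k-1$ gives $\wt\alpha(X)\ge k$. Hence $F_j=P_j$ for $j<k$ and $\codim_X\Sing(X)\ge 2k-1\ge k$ (for $k=1$ this is just reducedness). So the dual of the cone of $\Omega^{n-k}_Y[k]\to\gr^P_{k-n}\DR(\O_Y(*X))$ is a shift of $\Omega^k_X$.

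Since the lower pieces agree, the cokernel of $\gr^F_{k-n}\DR(\O_Y(*X))\hookrightarrow\gr^P_{k-n}\DR(\O_Y(*X))$ is the single sheaf $\omega_Y\otimes P_k/F_k$ in degree $0$. By the octahedral axiom, it is also the cone of the induced map between the cones of the two maps out of $\Omega^{n-k}_Y[k]$. Dualizing gives a morphism $\Omega^k_X\to\DB^k_X$ whose cone is a shift of the Grothendieck dual of $P_k/F_k$.

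This morphism is the canonical one. It is compatible with the maps from $\Omega^k_Y$, and such a morphism is unique because $\Omega^k_Y\to\Omega^k_X$ is surjective and $\DB^k_X\in D^{\ge 0}$. Hence $\Omega^k_X\to\DB^k_X$ is an isomorphism iff $R\Hom(P_k/F_k,\O_Y)=0$ iff $F_k=P_k$.

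There is no cancellation to rule out, and no depth or reflexivity argument on $\DB^k_X$ is needed. What your sketch lacks is the codimension bound and the identification of the comparison map with the canonical morphism.
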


For a proper family of varieties with Du Bois singularities, the higher direct images of the structure sheaf are locally free and compatible with base change. Friedman and Laza \cite{FL24} generalized this property for varieties with local complete intersection higher Du Bois singularities. We restrict our attention to hypersurface singularities.

\begin{thm}[{\cite[Theorem 1.2]{FL24}}]
\label{thm:higher Du Bois higher direct image}
Let $f:\X\to B$ be a flat proper family of varieties with $m$-Du Bois hypersurface singularities. Then the higher direct image sheaves
$$
R^qf_*\Omega_{\X/B}^p
$$
of the relative sheaf of $p$-th Kähler differentials are locally free and compatible with base change for all $0\le p\le m$ and $q\ge 0$.
\end{thm}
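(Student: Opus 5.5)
Since this result is quoted from \cite[Theorem 1.2]{FL24}, we only indicate how we would prove it. The plan follows the classical Du Bois--Jarraud argument for Du Bois families, applied to the truncated de Rham complex $\Omega^{\le m}$ in place of $\O$.

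\emph{Reductions.} The statement is local on $B$, so we may replace $B$ by the spectrum of a local ring at a point $b$. By Grothendieck's cohomology and base change theorem, it then suffices to show that the base change maps
$$
H^q(\X_A,\Omega^p_{\X_A/A})\to H^q(X_b,\Omega^p_{X_b})
$$
are surjective for every Artinian thickening $A$ of $b$ and all $0\le p\le m$, $q\ge 0$. Two preliminary facts are needed.
\begin{enumerate}
\item Kähler differentials commute with base change, so $\Omega^p_{\X/B}|_{X_b}\isom\Omega^p_{X_b}$.
\item The sheaves $\Omega^p_{\X/B}$ are flat over $B$ for $p\le m$. Here we use that $m$-Du Bois hypersurface singularities satisfy $\codim\Sing(X_b)\ge 2m+1$. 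For an lci this codimension bound controls the depth of $\Omega^p$ via the Koszul-type resolution of $\Omega^p_{X_b}$ by locally free sheaves, which gives the required depth and hence flatness over Artinian bases.
\end{enumerate}
We expect (2) to be the main technical obstacle. We would first try to prove it by checking the vanishing of $\mathrm{Tor}_1$ against the residue field, using the exactness of the conormal Koszul complex in the range $p\le m$.

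\emph{Hodge-theoretic input on the fibre.} By Definition \ref{defn:higher singularities}, the filtered complex $\Omega^{\le m}_{X_b}$ is isomorphic to $\DB^\bullet_{X_b}/F^{m+1}$. Deligne's theory for the proper variety $X_b$ gives two consequences.
\begin{enumerate}
\item The spectral sequence of $\Omega^{\le m}_{X_b}$ for the stupid filtration degenerates at $E_1$.
\item The natural map $H^k(X_b,\C)\to \HH^k(X_b,\Omega^{\le m}_{X_b})$ is surjective, being the projection $H^k\to H^k/F^{m+1}$.
\end{enumerate}

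\emph{Deformation argument.} Over $A$, consider the composite
$$
H^k(X_b,\C)\otimes A \isom H^k(\X_A, A_{\X_A})\to \HH^k(\X_A,\Omega^{\le m}_{\X_A/A})\to \HH^k(X_b,\Omega^{\le m}_{X_b}).
$$
The first isomorphism holds because $\X_A$ is topologically $X_b$. The composite is surjective by step (2) of the fibre input, so the restriction map $\HH^k(\X_A,\Omega^{\le m}_{\X_A/A})\to\HH^k(X_b,\Omega^{\le m}_{X_b})$ is surjective.

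We then compare lengths. By the flatness from the reductions and the degeneration on the special fibre, the $E_1$-terms over $A$ have length at most $\ell(A)\dim H^q(X_b,\Omega^p_{X_b})$. Surjectivity from $H^k(X_b,\C)\otimes A$, which is free, forces the hypercohomology over $A$ to be free with fibre $\HH^k(X_b,\Omega^{\le m}_{X_b})$. Equality of lengths then forces degeneration over $A$ as well. Hence each $H^q(\Omega^p_{\X_A/A})$ surjects onto $H^q(\Omega^p_{X_b})$.

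Finally, induction on $p$ handles the graded pieces, using the exact sequences relating $\Omega^{\le p}$ and $\Omega^{\le p-1}$. This yields the required surjectivity for all $0\le p\le m$, and hence local freeness and base change.
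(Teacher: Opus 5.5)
The paper does not prove this statement; it is imported from \cite[Theorem 1.2]{FL24}. Your outline is essentially the argument used there: $B$-flatness of $\Omega^p_{\X/B}$ via the conormal Koszul complex, the identification $\Omega^{\le m}_{X_b}\isom \DB^\bullet_{X_b}/F^{m+1}$, and a Du Bois--Jarraud type argument on $\Omega^{\le m}_{\X_A/A}$ over Artinian thickenings.

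The argument works, but one inference is wrong as written. A quotient of a free $A$-module need not be free, so surjectivity from $H^k(X_b,\C)\otimes A$ does not by itself make $\HH^k(\X_A,\Omega^{\le m}_{\X_A/A})$ free. What the surjectivity actually gives you is that the restriction maps $\HH^k(\X_A,\Omega^{\le m}_{\X_A/A})\to \HH^k(X_b,\Omega^{\le m}_{X_b})$ are onto for \emph{every} $k$. The complex $\Omega^{\le m}_{\X_A/A}$ is bounded, with $A$-flat terms and $A$-linear differentials, and its reduction modulo $\mathfrak m_A$ is $\Omega^{\le m}_{X_b}$. So its $R\Gamma$ is represented by a bounded complex of finite free $A$-modules. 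The cohomology-and-base-change lemma, using surjectivity in degrees $k$ and $k-1$, then shows that $\HH^k(\X_A,\Omega^{\le m}_{\X_A/A})$ is free with fibre $\HH^k(X_b,\Omega^{\le m}_{X_b})$. This is the input your length comparison needs, and from there your argument goes through.

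Two smaller remarks.

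First, in step (2) it is not a depth estimate that gives flatness. Locally $\X=\{G=0\}$ in a smooth $B$-scheme $\mathcal Y$. The relative complex $\O_\X\to\Omega^1_{\mathcal Y/B}|_\X\to\cdots\to\Omega^p_{\mathcal Y/B}|_\X$, with differential $dG\wedge$, has $B$-flat terms. On $X_b$ it restricts to a truncated Koszul complex on the partials of $G|_{\mathcal Y_b}$ over the Cohen--Macaulay ring $\O_{X_b}$. That complex is a resolution of $\Omega^p_{X_b}$ as soon as $\codim\Sing X_b\ge p$, and the local flatness criterion then gives $B$-flatness of $\Omega^p_{\X/B}$. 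So the bound $2m+1$ is more than enough.

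Second, the final induction on $p$ is superfluous. You have the length equality $\sum_{p+q=k}\ell(E_1^{p,q})=\ell(A)\sum_{p+q=k}h^q(\Omega^p_{X_b})$ together with the termwise bounds. These already give $\ell(H^q(\Omega^p_{\X_A/A}))=\ell(A)h^q(\Omega^p_{X_b})$ for each $(p,q)$. Filtering by a composition series of $A$, this equality forces $H^q(\Omega^p_{\X_A/A})\to H^q(\Omega^p_{X_b})$ to be surjective.
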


This is used crucially to prove the extendability of the compactified period map under the assumptions on higher Du Bois singularities of fibers. See Section \ref{sec:extendability for higher Du Bois}.

On the other hand, to prove that the extendability is precisely equivalent to a certain higher Du Bois singularity condition, we need the following technical description of the minimal exponent in terms of the vanishing cycle functor. It is formulated using Saito's theory of mixed Hodge modules \cite{Saito88, Saito90} and is stated in \cite{Saito17}; see also \cite[Proposition 1.3]{MSY24}.

\begin{thm}
\label{thm:minimal exponent via vanishing cycle}
Let $\Q^H_{\C^n,0}[n]$ be the Hodge module associated to the shifted constant sheaf $\Q_{\C^n,0}[n]$ on the germ $(\C^n,0)$. Denote by $\phi_{f}\Q^H_{\C^n,0}[n]$ the mixed Hodge module associated to the vanishing cycle $\phi_f\Q_{\C^n,0}[n]$. For $\lambda\in \C^*$ with absolute value $1$, let
$$
\phi_{f,\lambda}\Q^H_{\C^n,0}[n]
$$
be the filtered D-module associated to the generalized $\lambda$-eigenspace of $\phi_f\Q_{\C^n,0}[n]$ with respect to the monodromy operator induced by $f:(\C^n,0)\to (\C,0)$. Then we have
$$
\wt\alpha_0(f)=\min\left\{\alpha\in \Q\mid\lambda=e^{-2\pi i \alpha},
\gr_F^{\lfloor n-\alpha\rfloor}\DR\left(\phi_{f,\lambda}\Q^H_{\C^n,0}[n]\right)\neq 0\right\}.
$$
\end{thm}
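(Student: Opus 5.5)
The plan is to reduce the statement to Saito's description of the minimal exponent through the microlocal $V$-filtration on the graph embedding $B_f$. We translate the vanishing of $\gr_F\DR$ of the vanishing-cycle module into the vanishing of Hodge pieces $F_p\gr_V^\beta B_f$, and then read off these Hodge pieces from $\wt\alpha_0(f)$.

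First, set up the graph embedding $i_f:(\C^n,0)\to(\C^{n+1},0)$, $x\mapsto(x,f(x))$, with coordinate $t$ on the last factor.
\begin{enumerate}
\item Put $B_f:=i_{f,+}\O=\O[\partial_t]\delta$. Its Hodge filtration is $F_{p}B_f=\bigoplus_{i\le p}\O\,\partial_t^i\delta$, up to the standard shift by $1$.
\item Let $V^\bullet$ be the Kashiwara--Malgrange filtration along $t=0$. Normalize it so that $t\partial_t-\beta$ is nilpotent on $\gr_V^\beta$.
\item By Saito's construction of mixed Hodge modules, for $\lambda=e^{-2\pi i\alpha}$ with $\alpha\in(0,1]$, the filtered $\D$-module $\phi_{f,\lambda}\Q^H_{\C^n,0}[n]$ is $(\gr_V^\alpha B_f,F)$, with the Hodge filtration induced from $F_\bullet B_f$ after the usual shift by one.
\end{enumerate}

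Second, relate the nonvanishing of $\gr_F^{k}\DR(M)$ to the lowest nonzero Hodge piece of $M$. The de Rham complex $\DR(M)=[M\to M\otimes\Omega^1\to\cdots\to M\otimes\Omega^n][n]$ has graded pieces $\gr_F^{k}\DR(M)=[\gr^F_{-k}M\to\gr^F_{-k+1}M\otimes\Omega^1\to\cdots]$. If $p_0$ is the smallest index with $F_{p_0}M\neq0$, then $\gr_F^{k}\DR(M)$ vanishes for $-k+n<p_0$. At $-k+n=p_0$ it reduces to $F_{p_0}M\otimes\omega$ placed in a single degree, which is nonzero. So the minimum in the statement is controlled by the smallest $p$ with $F_p\gr_V^\alpha B_f\neq0$, as $\alpha$ ranges over $(0,1]$ with the matching eigenvalue.

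Third, compute these first Hodge pieces. I would invoke Saito's theorem from \emph{Hodge ideals and microlocal $V$-filtration}. It says that $\wt\alpha_0(f)$ is the largest $\beta$ with $\delta\in\wt V^\beta\wt B_f$, where $\wt B_f=\O[\partial_t,\partial_t^{-1}]\delta$ is the microlocalization. It also says that $\partial_t^{-1}$ shifts $\wt V$ by one. Writing $\wt\alpha_0(f)=k+\alpha$ with $\alpha\in(0,1]$, we get $\partial_t^{j}\delta\in V^{\alpha}$ modulo $V^{>\alpha}$ precisely when $j\ge k$. We then need
$$F_p\gr_V^\alpha B_f=\mathrm{image}\Big(\sum_{i\le p}\O\,\partial_t^i\delta\cap V^\alpha\Big).$$
Together with the minimality of $\wt\alpha_0$, this gives that the first nonzero Hodge piece of $\gr_V^\alpha B_f$ sits in degree $p=k$ (suitably shifted). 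For every other $\alpha'$ with $\alpha'>\alpha$ or $\alpha'\equiv\alpha$ it sits in degree at least $k$, with strict inequality in the appropriate ordering. Tracking the shifts converts "$p=k$" into nonvanishing of $\gr_F^{\lfloor n-\wt\alpha_0\rfloor}\DR$. For any smaller candidate $\alpha'$ the same index vanishes, which yields the claimed minimum.

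The main obstacle is the third step. It needs the strictness statement that $F_p$ on $\gr_V^\alpha B_f$ is generated by the images of $\O\,\partial_t^i\delta$ with $i\le p$, together with the exact comparison between the $V$-filtration and the microlocal $V$-filtration. Both come from Saito's theory, and I would cite them rather than reprove them.

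The remaining difficulty is bookkeeping of conventions:
\begin{itemize}
\item the shift by one in the Hodge filtration of $i_{f,+}$;
\item increasing versus decreasing indices;
\item $\lambda=e^{-2\pi i\alpha}$ versus $e^{2\pi i\alpha}$;
\item the boundary case $\alpha=1$, corresponding to $\lambda=1$, where one uses $\phi_{f,1}$ rather than $\psi_{f,1}$.
\end{itemize}
These need to be checked against \cite[Proposition 1.3]{MSY24}.
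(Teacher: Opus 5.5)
This is essentially the paper's argument. Both use Saito's microlocal characterization of $\wt\alpha_0(f)$, an identification of the lowest Hodge pieces of $\phi_{f,\lambda}\Q^H_{\C^n,0}[n]$ with microlocal $V$-graded pieces, and the fact that $\gr_F\DR$ first becomes nonzero at the lowest Hodge index, where it is a single sheaf. The only difference is that the paper quotes \cite[(1.3.8)]{Saito17} together with the identification $\gr_{\wt V}^\alpha\O_{\C^n,0}\isom\gr^F_{\lceil\alpha\rceil}\mathcal V_f^{\{\lambda\}}$ from \cite[Corollaries 1.2 and 1.3]{MSY24}. You instead re-derive the bottom-degree part of that identification from $B_f$.

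Two points in your third step must be fixed, and the first is more than bookkeeping. You flag it, but it changes what the step claims.

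(i) Use the normalization in which $\partial_t t-\beta$ is nilpotent on $\gr_V^\beta$; this is the one in which $\delta\in\wt V^{\wt\alpha_0(f)}\wt B_f$, and your $t\partial_t$ normalization shifts every index by one. Realize $\phi_{f,\lambda}$ as $\gr_V^\beta B_f$ with $\beta\in[0,1)$, so that $\phi_{f,1}=\gr_V^0B_f$. This is the range where $\gr_V^\beta B_f\isom\gr_{\wt V}^\beta\wt B_f$ and $V^{>\beta}B_f=B_f\cap\wt V^{>\beta}\wt B_f$. With $\beta=1$ your claim is false. Indeed, $\gr_V^1B_f$ is $\psi_{f,1}$, and whenever $\wt\alpha_0(f)\ge1$ the class of $\delta$ in it is nonzero in the lowest Hodge degree. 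For an ordinary double point in $\C^4$, where $\wt\alpha_0=2$, the first Hodge piece of $\gr_V^1B_f$ is therefore in degree $0$, not $k=1$.

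(ii) Write $\wt\alpha_0(f)=k+\beta$ with $\beta\in[0,1)$. Then $\partial_t^j\delta\in V^\beta B_f$ if and only if $j\le k$, and its class in $\gr_V^\beta$ is nonzero only for $j=k$; it is not ``$j\ge k$''.

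For the other eigenvalues you need the following inequality. Let $F'$ index $B_f$ by powers of $\partial_t$ and let $\beta'\in[0,1)$; then $F'_p\gr_V^{\beta'}B_f\neq0$ implies $p+\beta'\ge\wt\alpha_0(f)$. To see this, take a representative $\sum_{i\le p}g_i\partial_t^i\delta$ of a nonzero class, which exists by definition of the induced filtration. It lies in $\wt V^{\wt\alpha_0(f)-p}$, because $\wt V$ is $\O$-stable and $\partial_t$ lowers the index by one. By (i) it does not lie in $\wt V^{>\beta'}$, which forces $\wt\alpha_0(f)-p\le\beta'$.

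Finally, converting $p+\beta'$ into the index $\lfloor n-\alpha\rfloor=n-\lceil\alpha\rceil$ of the statement uses a relative shift. The Hodge filtration on $\phi_{f,1}=\gr_V^0B_f$ is shifted by one relative to that on $\phi_{f,\lambda}=\gr_V^{\beta}B_f$ for $\beta\in(0,1)$. This is forced by $\mathrm{can}=\partial_t\colon\psi_{f,1}\to\phi_{f,1}$ being a filtered morphism. That shift is exactly what makes $\lceil\alpha\rceil$ uniform in $\lambda$.
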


Note that the underlying filtered D-module of $\phi_{f}\Q^H_{\C^n,0}[n]$ admits a direct sum decomposition
$$
\bigoplus_{|\lambda|=1}\phi_{f,\lambda}\Q^H_{\C^n,0}[n]
$$
into generalized eigenspaces with respect to the monodromy operator \cite[(2.2.6)]{Saito90}. The above description of the minimal exponent is a reformulation of \cite[(1.3.8)]{Saito17}, explained below.

\begin{proof}
By Saito \cite[(1.3.8)]{Saito17}, we have
$$
\wt\alpha_0(f)=\min\left\{\alpha\in \Q\mid\gr_{\wt V}^\alpha\O_{\C^n,0}\neq 0\right\}.
$$
By \cite[Corollaries 1.2 and 1.3]{MSY24}, we have
$$
\gr_{\wt V}^\alpha\O_{\C^n,0}=\gr^F_{\lceil \alpha\rceil}\mathcal V_f^{\{\lambda\}}\quad \mathrm{where}\quad \lambda=e^{-2\pi i \alpha}.
$$
Here, $\mathcal V_f^{\{\lambda\}}$ denotes the underlying left filtered D-module on $(\C^n,0)$ associated to $\phi_{f,\lambda}\Q^H_{\C^n,0}[n]$. For a fixed $\lambda$, we have
$$
\min\left\{p\in \Z\mid\gr^F_p\mathcal V_f^{\{\lambda\}}\neq 0\right\}=\min\left\{p\in \Z\mid\gr^F_{p-n}\DR\left(\phi_{f,\lambda}\Q^H_{\C^n,0}[n]\right)\neq 0\right\}
$$
from the definition of the graded de Rham functor for filtered D-modules. Therefore, we have
$$
\wt\alpha_0(f)=\min\left\{\alpha\in \Q\mid \lambda=e^{-2\pi i \alpha}, \gr^F_{\lceil \alpha\rceil}\mathcal V_f^{\{\lambda\}}\neq 0\right\}
$$
which is equivalent to the desired formula, since $\lceil \alpha\rceil-n=-\lfloor n-\alpha\rfloor$.
\end{proof}

In the main body of the paper, we repeatedly use the Thom--Sebastiani theorem for minimal exponents.

\begin{thm}[{\cite[Proposition 0.7]{Saito94}}]
\label{thm:Thom-Sebastiani for minimal exponents}
Let $f_1\in \O_{\C^{n_1},0}$ and $f_2\in \O_{\C^{n_2},0}$ be nonzero holomorphic functions on germs $(\C^{n_1},0)$ and $(\C^{n_2},0)$ vanishing at the origin. Then
$$
\wt\alpha_0(f_1\oplus f_2)=\wt\alpha_0(f_1)+\wt\alpha_0(f_2)
$$
where $f_1\oplus f_2\in \O_{\C^{n_1+n_2},0}$ is a holomorphic function on $(\C^{n_1+n_2},0)$ vanishing at the origin.
\end{thm}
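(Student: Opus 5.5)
The plan is to work with Saito's characterization $\wt\alpha_0(f)=\min\{\alpha\mid \gr^\alpha_{\wt V}\O_{\C^n,0}\neq 0\}$, already used in the proof of Theorem \ref{thm:minimal exponent via vanishing cycle}, where $\wt V$ is the microlocal $V$-filtration. This reduces the statement to a multiplicativity property of the microlocal $V$-filtration under external products.

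\textbf{Step 1: microlocal modules as a tensor product.} For $f\in\O_{\C^n,0}$, consider the algebraic microlocalization
$$
\wt{\mathcal B}_f=\O_{\C^n,0}[\partial_t,\partial_t^{-1}]\,\delta(t-f)
$$
of the graph embedding module. It carries the microlocal $V$-filtration $\wt V^\bullet$ along $t=0$, indexed by $\Q$. Multiplication by $\partial_t$ shifts it by $-1$, and $\gr_{\wt V}^\alpha$ carries a nilpotent action of $\partial_t t-\alpha$. For $g=f_1\oplus f_2$ on $\C^{n_1}\times\C^{n_2}$, the key observation is the identity $\delta(t-f_1-f_2)=\delta(t-f_1)*\delta(t-f_2)$ (convolution in $t$). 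After Fourier transform in $t$, i.e. in the variable $\partial_t^{-1}$, convolution becomes a tensor product. This gives an isomorphism of $\D$-modules over the ring $\C[\partial_t,\partial_t^{-1}]$:
$$
\wt{\mathcal B}_{g}\isom \wt{\mathcal B}_{f_1}\boxtimes_{\C[\partial_t,\partial_t^{-1}]}\wt{\mathcal B}_{f_2},
$$
and under it the operator $\partial_t t$ acts as $\partial_t t\otimes 1+1\otimes\partial_t t$.

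\textbf{Step 2: the $V$-filtration is the convolution filtration.} I would show that $\wt V^\gamma\wt{\mathcal B}_g=\sum_{\alpha+\beta=\gamma}\wt V^\alpha\wt{\mathcal B}_{f_1}\boxtimes\wt V^\beta\wt{\mathcal B}_{f_2}$. Call the right-hand side $U$; the method is to verify that $U$ satisfies the defining properties of the microlocal $V$-filtration, after which uniqueness identifies the two.
\begin{enumerate}
\item $U$ is exhaustive and is a good filtration, since each factor is.
\item $\partial_t^{-1}$ shifts $U$ by $+1$, because $\partial_t^{-1}$ acts diagonally over the base ring.
\item On $\gr^\gamma_U\isom\bigoplus_{\alpha+\beta=\gamma}\gr^\alpha\boxtimes\gr^\beta$, the operator $\partial_t t-\gamma=(\partial_t t-\alpha)\otimes1+1\otimes(\partial_t t-\beta)$ is a sum of commuting nilpotents, hence nilpotent.
\end{enumerate}

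\textbf{Step 3: read off the minimal exponent.} Since $\O$ embeds as $\O\,\delta$, I restrict attention to $\O_{\C^{n_1+n_2}}\delta=(\O_{\C^{n_1}}\delta)\boxtimes(\O_{\C^{n_2}}\delta)$ and compare the induced filtrations.
\begin{enumerate}
\item The inclusion $\wt V^{\alpha}\boxtimes\wt V^{\beta}\subset \wt V^{\alpha+\beta}$ gives $\wt\alpha_0(g)\ge\wt\alpha_0(f_1)+\wt\alpha_0(f_2)$.
\item For the reverse inequality, take classes $u_i\in\O\delta$ that are nonzero in $\gr^{\alpha_i}_{\wt V}$, where $\alpha_i=\wt\alpha_0(f_i)$. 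By the graded decomposition in Step 2, $u_1\otimes u_2$ has nonzero class in $\gr^{\alpha_1+\alpha_2}$.
\end{enumerate}
The convention $+\infty$ for smooth germs is consistent with this: if $f_1$ is smooth, then $\wt{\mathcal B}_{f_1}$ has no $\wt V$-jumps from $\O$ beyond the trivial ones, so both sides are $+\infty$.

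\textbf{Main obstacle.} The delicate point is Step 2, specifically establishing strictness. I need to show that the graded pieces of $U$ really decompose as an external tensor product, i.e. that no cancellation occurs across different pairs $(\alpha,\beta)$, and, in Step 3, that $\O\delta$ meets $U$ compatibly. I would handle this with local freeness over $\C[\partial_t,\partial_t^{-1}]$ and splittings of the $\wt V$-filtrations by generalized eigenspaces of $\partial_t t$. If that becomes too cumbersome, the fallback is to pass through Theorem \ref{thm:minimal exponent via vanishing cycle} and the Thom--Sebastiani theorem for vanishing cycle mixed Hodge modules. There, $\phi_g$ is a join of $\phi_{f_1}$ and $\phi_{f_2}$, with eigenvalues multiplying and Hodge levels adding, up to a shift depending on whether $\alpha_1+\alpha_2$ crosses an integer. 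Tracking that shift is where the floors in that theorem must be handled carefully.
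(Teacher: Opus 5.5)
The paper gives no proof of this statement. It is quoted from \cite[Proposition 0.7]{Saito94}, so your argument has to stand on its own, and at present it does not.

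Step 1 is fine in spirit. The Fourier transform turns $\delta(t-f_1)*\delta(t-f_2)$ into a tensor product over $\C[\partial_t^{\pm1}]$, on which $\partial_tt$ acts as a derivation. You also need a completed tensor product here, since $\O_{\C^{n_1}}\otimes_\C\O_{\C^{n_2}}\neq\O_{\C^{n_1+n_2}}$.

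Step 2, however, is itself a Thom--Sebastiani theorem for the microlocal $V$-filtration, at least as strong as the statement you want. Your verification of it breaks already at item (1), not only at the strictness issue you flag.

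\begin{itemize}
\item To identify $U$ with $\wt V$ by the usual $b$-function argument, you need a finiteness input. Either $U^\gamma$ is finitely generated over $V^0$ of the microlocal ring on $\C^{n_1+n_2}$, or at least $U^\gamma\subset\wt V^{\beta}\wt{\mathcal B}_g$ for some $\beta$, so that the comparison can start.
\item ``Since each factor is'' does not give this. The module $\wt V^\alpha\wt{\mathcal B}_{f_1}$ is finitely generated over a ring containing $t$ and $\partial_tt$. But because $[\partial_t,t]=1$, the operators $t\otimes1$ and $\partial_tt\otimes1$ are not well defined on $\wt{\mathcal B}_{f_1}\otimes_{\C[\partial_t^{\pm1}]}\wt{\mathcal B}_{f_2}$: applied to $\partial_tx\otimes y=x\otimes\partial_ty$, $t\otimes1$ would give two values differing by $x\otimes y$. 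Only the sums $t\otimes1+1\otimes t$ and $\partial_tt\otimes1+1\otimes\partial_tt$ act.
\item So generators of the factors do not yield finitely many generators of $U^\gamma$. An element like $((\partial_tt)^ju)\otimes v$ can be reduced by the $b$-function of $f_1$ only modulo $\partial_t^{-k}$-translates of elements of the same kind.
\item The algebraic microlocalization has no $\partial_t^{-1}$-adic completeness to make this reduction terminate. Its $\wt V$ is not even separated: $\wt V^\alpha\wt{\mathcal B}_f=\wt{\mathcal B}_f$ for all $\alpha$ when $f$ is smooth.
\end{itemize}

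Consequently, neither the inclusion $\wt V^\alpha\boxtimes\wt V^\beta\subset\wt V^{\alpha+\beta}$ used for ``$\ge$'' nor the nonvanishing of the class of $u_1\otimes u_2$ used for ``$\le$'' is proved. Both are exactly the unproved content of Step 2.

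There are also some smaller points.

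\begin{itemize}
\item The decomposition $\gr^\gamma_U\cong\bigoplus_{\alpha+\beta=\gamma}\gr^\alpha\boxtimes\gr^\beta$ overcounts. Since $\partial_t^{-k}x\otimes\partial_t^ky=x\otimes y$, the $(\alpha,\beta)$ and $(\alpha+k,\beta-k)$ summands are identified, so the sum should run over $\alpha$ modulo $\Z$. The separate operators $(\partial_tt-\alpha)\otimes1$ only make sense after fixing such representatives.
\item Step 3 needs only the single element $\delta_g=\delta_{f_1}\otimes\delta_{f_2}$. The characterization via $\gr_{\wt V}\O$ is equivalent to $\wt\alpha_0(f)=\max\{\alpha\mid\delta\in\wt V^\alpha\wt{\mathcal B}_f\}$, so how $\O\delta$ meets $U$ plays no role.
\item Your fallback is the more standard route, but it is not a formality either. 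It needs the Thom--Sebastiani isomorphism for vanishing cycles as \emph{filtered} $\D$-modules, including the precise eigenvalue-dependent shift of the Hodge filtration, together with a K\"unneth formula for $\gr_F\DR$. The topological join description alone says nothing about the Hodge levels entering Theorem~\ref{thm:minimal exponent via vanishing cycle}, and that compatibility with $F$ is where the real content lies.
\end{itemize}
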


\subsection{Condition $D_m$ and higher rational singularities}
\label{sec:D_m and higher rational}

Let $X$ be an equidimensional complex variety and $D^b{\rm MHM}(X)$ be the derived category of mixed Hodge modules on $X$, defined by Saito \cite{Saito88, Saito90}. By \cite{Saito00}, the $p$-th Du Bois complex of $X$ is naturally isomorphic to the (shifted) graded de Rham complex of the object $\Q_X^H[\dim X]\in D^b{\rm MHM}(X)$ associated to the constant sheaf $\Q_X$:
$$
\DB^p_X\isom\gr_F^{p}\DR(\Q^H_X[\dim X])[p-\dim X].
$$
Analogously, the $p$-th intersection Du Bois complex $I\DB^p_X$ of $X$ is the (shifted) graded de Rham complex of the pure Hodge module $\IC^H_X\in D^b{\rm MHM}(X)$ associated to the intersection complex of $X$:
$$
I\DB^p_X := \gr_F^{p}\DR(\IC_X^H)[p-\dim X].
$$
The natural morphism
$$
\gamma_X: \Q^H_X[\dim X]\to \IC^H_X
$$
in $D^b{\rm MHM}(X)$ induces a natural morphism
$$
\gamma_p: \DB^p_X \to I\DB^p_X
$$
in $D^b_{\rm coh}(X,\O_X)$.

\begin{defn}
We say $X$ satisfies \emph{Condition $D_m$} if $\gamma_p$ is an isomorphism for all $0\le p\le m$. 
\end{defn}

This condition was introduced and studied extensively in \cite{PP25a, PP25b}; see also \cite{DOR25}, where it is described in terms of the invariant $\rm{HRH}$. The notion of $m$-rational singularities was introduced by Friedman and Laza in \cite{FL24}, in terms of the Grothendieck dual of the Du Bois complex. Chen, Dirks, and Musta\c{t}\u{a} subsequently proved that $m$-rational lci singularities are equivalent to $m$-Du Bois lci singularities satisfying $D_m$. With a slight abuse of chronology, we take this characterization as the definition of $m$-rational singularities.

\begin{defn}[{\cite[Theorem 3.1]{CDM24}}] Let $X$ be a variety with local complete intersection singularities. We say $X$ has \emph{$m$-rational singularities} if it has $m$-Du Bois singularities and satisfies $D_m$.
\end{defn}

Lastly, we record a technical proposition on the vanishing of graded de Rham complexes of the vanishing cycle functor, under a suitable higher singularity condition.

\begin{prop}
\label{prop:vanishing grdr for vanishing cycle}
Let $f:\X\to \Delta$ be a projective one-parameter degeneration with a reduced special fiber $X$. If $X$ has $m$-Du Bois lci singularities and $\X\sm X$ has $m$-rational lci singularities, then
$$
\gr_F^p\DR(\phi_{f,1}\Q_\X^H)=0\quad\text{for all}\quad p\le m.
$$
\end{prop}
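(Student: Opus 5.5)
The plan is to obtain the vanishing from the distinguished triangle relating restriction, unipotent nearby cycles and unipotent vanishing cycles. I will show that the comparison map $i^*\to\psi_{f,1}$ becomes a quasi-isomorphism after applying $\gr_F^p\DR$ for $p\le m$. The statement is local on $X$, because $\phi_{f,1}\Q^H_\X$ is supported on $X$, so I will freely shrink $\Delta$ and work near points of $X$. Let $N=\dim \X$ and $i:X\hookrightarrow \X$. Then $\Q^H_X[N-1]=i^*\Q^H_\X[N][-1]$, and there is a distinguished triangle in $D^b{\rm MHM}(X)$
$$
\Q^H_X[N-1]\to \psi_{f,1}\Q^H_\X[N-1]\to \phi_{f,1}\Q^H_\X[N-1]\xrightarrow{+1},
$$
where only the unipotent parts appear, because the non-unipotent parts of $i^*$ vanish. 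Since $\gr_F^p\DR$ is exact on $D^b{\rm MHM}$ by strictness, it suffices to prove that
$$
\gr_F^p\DR(\Q^H_X[N-1])\to\gr_F^p\DR(\psi_{f,1}\Q^H_\X[N-1])
$$
is an isomorphism for all $p\le m$, after fixing the shift conventions.

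\textbf{Step 1 (upgrading the hypothesis on $\X$).} First I show that $\X$ is $m$-rational in a neighbourhood of $X$, not only on $\X\sm X$. Locally $X\subset\X$ is a Cartier divisor, and $X$ has $m$-Du Bois lci singularities. An inversion-of-adjunction statement for the minimal exponent of lci ideals then gives $m$-rationality of $\X$ near $X$. Concretely, embed $\X$ locally as a complete intersection in a smooth germ. Since adding the equation $f$ to the ideal lowers the minimal exponent by at most $1$, the lci version of Theorem~\ref{thm:higher singularities via minimal exponent} shows that $\X$ is $m$-Du Bois with room to spare. The Chen--Dirks--Musta\c t\u a characterization then gives Condition $D_m$. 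With $D_m$ for $\X$, the cone of $\gamma_\X:\Q^H_\X[N]\to\IC^H_\X$ has vanishing $\gr_F^p\DR$ for $p\le m$. Since $\psi_{f,1}$ and $\phi_{f,1}$ preserve the relevant range of Hodge degrees (they are compatible with $F$ via the $V$-filtration), this lets me replace $\Q^H_\X[N]$ by the pure Hodge module $M=\IC^H_\X$ when computing $\gr_F^p\DR\,\phi_{f,1}$ for $p\le m$.

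\textbf{Step 2 (identifying both sides with $\Omega^p_X$).} On the left, Step~1 and the $m$-Du Bois hypothesis give $\gr_F^p\DR(\Q^H_X[N-1])\isom\DB^p_X[\ast]\isom\Omega^p_X[\ast]$ for $p\le m$. On the right, I plan to use Theorem~\ref{thm:higher Du Bois higher direct image} together with the local invariant cycle/specialization picture. The sheaves $\Omega^p_{\X/\Delta}$ are flat over $\Delta$ and restrict to $\Omega^p_X$ on $X$. The low Hodge pieces of $\psi_{f,1}$ are computed by $\gr_V^0$ of the filtered $\D$-module of $M$ along $t=f$. Because $\X$ is $m$-rational, for $p\le m$ these pieces equal $F_{\bullet}\,\mathcal M/tF_{\bullet}\mathcal M$, i.e. $F_\bullet V^1=F_\bullet V^{>0}$ and $F$-jumps in $(0,1)$ do not occur in low degrees. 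Restricting $\gr_F^p\DR(M)\isom\Omega^p_\X[\ast]$ along $t$ then yields $\Omega^p_\X\otimes\O_X=\Omega^p_{\X/\Delta}|_X\oplus(\text{correction from }df)$. I would check that the correction matches the $\Omega^{p-1}$-piece of the nearby-cycle de Rham complex, so that the map to $\gr_F^p\DR\psi_{f,1}$ is the identity on $\Omega^p_X$.

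The main obstacle is Step~2: controlling $F_p\cap V^\alpha$ for $0<\alpha\le1$ in Hodge degrees $\le m$ on an lci (non-smooth) ambient space. If the direct $V$-filtration argument becomes unwieldy, I would instead argue locally via a hypersurface (Thom--Sebastiani-type) reduction. Write $X$ locally by the ideal $(g_1,\dots,g_r,f)$ in a smooth germ, and use Theorem~\ref{thm:minimal exponent via vanishing cycle} in its lci generalization. This should show directly that the unipotent vanishing cycle has no $F$-graded de Rham pieces in degrees $\le m$, by converting the bound $\wt\alpha(X)\ge m+1$ (suitably shifted by the codimension) into $\gr_F^{\lfloor N-\alpha\rfloor}\DR(\phi_{f,1})=0$ for integers $\alpha$ in the relevant range.
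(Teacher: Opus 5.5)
\textbf{There is a genuine gap: Step 2 carries all the content and is not proved.}

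What is fine: your Step 1 matches the paper's first step. Chen's inversion of adjunction makes $\X$ $m$-rational, hence $D_m$ holds on $\X$. Replacing $\Q^H_\X[N]$ by $\IC^H_\X$ inside $\gr_F^p\DR\,\phi_{f,1}$ for $p\le m$ is legitimate, because $\dual\phi_{f,1}\cong\phi_{f,1}\dual$ and $\phi_{f,1}$ does not lower the first nonzero Hodge index. Reducing to the claim that $\gr_F^p\DR(\Q^H_X[N-1])\to\gr_F^p\DR(\psi_{f,1}\Q^H_\X[N-1])$ is an isomorphism is also correct, but it only restates the proposition.

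The reason you give in Step 2 is false. $m$-rationality of $\X$ cannot force the low Hodge pieces of the nearby cycles to be the naive restriction $F_\bullet/tF_\bullet$. Take $\X=\C^2$, which is $m$-rational for every $m$:
\begin{itemize}
\item For $f=x^2+y^3$, the $V$-filtration jumps at $5/6$ already in the lowest Hodge degree.
\item For $f=xy$ and $m=1$ (so $X$ is a node, $0$- but not $1$-Du Bois), one has $\phi_{f,1}\Q^H_\X[2]=\Q(-1)$, hence $\gr_F^1\DR\,\phi_{f,1}\neq 0$.
\end{itemize}
So any identification of $\gr_F^p\DR\,\psi_{f,1}$ with $\Omega^p_X$ must use the $m$-Du Bois hypothesis on $X$ essentially. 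Your sentence ``I would check that the correction matches'' is exactly the missing argument. Note also that jumps in $(0,1)$ concern only $\psi_{f,\lambda}$ with $\lambda\neq 1$, which do not appear in your triangle.

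Your fallback does not close the gap either. Theorem~\ref{thm:minimal exponent via vanishing cycle} is about $\phi_g$ of the constant Hodge module of a \emph{smooth} germ along the defining equation $g$. It does not describe $\phi_f\Q^H_\X$ for a singular lci $\X$, and singular $\X$ is exactly the case needed in Step 2 of Proposition~\ref{prop:Hodge pieces for higher Du Bois family}. Even for smooth $\X$, the bound $\wt\alpha(X)\ge m+1$ gives vanishing of $\gr_F^q\DR(\phi_{f,1}\Q^H_\X[N])$ only for $q\ge N-m$, the top range. To reach $p\le m$ you would still need the duality $\dual\phi_{f,1}\Q^H_\X[N]\cong\phi_{f,1}\Q^H_\X[N](N)$, which exchanges $\gr_F^q$ and $\gr_F^{N-q}$. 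That does give a proof when $\X$ is smooth, but nothing in your proposal handles singular $\X$.

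The paper closes the gap with an input you never use. $m$-Du Bois singularities are $(m-1)$-rational \cite[Corollary 1.2]{CDM24}, so $X$ satisfies $D_{m-1}$. With $D_m$ on $\X$ and $D_{m-1}$ on $X$ in hand, the paper invokes \cite[Proposition 11.2]{Park25} to obtain the vanishing of $\gr_F^p\DR(\phi_{f,1}\Q^H_\X)$ for $p\le m$.
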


Here, $\phi_{f,1}:D^b{\rm MHM}(\X)\to D^b{\rm MHM}(X)$ is the generalized 1-eigenspace vanishing cycle functor in the derived category of mixed Hodge modules \cite[Theorem 0.1]{Saito90}.

\begin{proof}
By Chen's inversion of adjunction \cite[Theorem 1.2]{Chen25} for higher singularities, $\X$ has $m$-rational singularities. In particular, $\X$ satisfies $D_m$. Since $m$-Du Bois singularities imply $(m-1)$-rational singularities \cite[Corollary 1.2]{CDM24}, $X$ satisfies $D_{m-1}$. Therefore, \cite[Proposition 11.2]{Park25} applies and we obtain the desired vanishing.
\end{proof}

\section{Extendability of period map for higher Du Bois singularities}
\label{sec:extendability for higher Du Bois}

Consider a family $f:(\X,\L)\to B$ of polarized projective varieties over a normal variety $B$ whose general fiber is smooth of dimension $n-1$. Here, $\L$ is the class of a relatively ample line bundle on $\X$ over $B$. Over the open locus of $B$ where the fibers are smooth, we have a polarized variation of $\Z$-Hodge structures associated to the primitive middle cohomologies of the fibers. Hence, we have a generically defined period map
$$
\Phi_f: B\dashrightarrow\Gamma\backslash \D.
$$
Under the assumption that the fibers of $f$ have higher Du Bois singularities, the compactified period map to the Baily--Borel compactification of the period image
$$
\overline{\Phi_f}:B\dashrightarrow \overline{M_f}^{\rm BB}
$$
is regular:

\begin{thm}
\label{thm:extendability}
In the above setting, the period map $\overline{\Phi_f}$ to the Baily--Borel compactification of the period image is regular if every fiber has $(\lfloor\frac{n}{2}\rfloor-1)$-Du Bois hypersurface singularities.
\end{thm}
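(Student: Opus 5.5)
The idea is to show that the pullback of $\O_{\overline{M_f}^{\rm BB}}(1)$ to a resolution of $B$ is itself pulled back from $B$, and then descend the extended period map. Set $m:=\lfloor\frac n2\rfloor-1$ and $w:=n-1$, and observe that $\lfloor\frac{w-1}{2}\rfloor=\lfloor\frac{n-2}{2}\rfloor=m$. By Lemma \ref{lem:Griffiths bundle alternative}, after a covering making the monodromy unipotent, the Griffiths bundle is
$$\G\isom_\Q\bigotimes_{p=0}^{m}\det(\gr_F^p\overline{\mathcal V})^{\otimes -w+2p}.$$
So only the Hodge pieces $\gr_F^p$ with $p\le m$ matter, which is exactly the range controlled by the $m$-Du Bois hypothesis. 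On the other side, Theorem \ref{thm:higher Du Bois higher direct image} makes the sheaves $E^p:=R^{w-p}f_*\Omega^p_{\X/B}$ locally free and compatible with base change on all of $B$ for $p\le m$. (Here we pass to the primitive part by removing the Hodge--Tate summand coming from powers of $\L$.) The candidate line bundle on $B$ is therefore $\bigotimes_{p\le m}\det(E^p)^{\otimes -w+2p}$.

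\textbf{Step 1 (reduction to resolutions).} Take a log resolution $\pi:\overline S\to B$ that is an isomorphism over the smooth-fiber locus $S$ and whose boundary $D$ is snc. By Theorem \ref{thm:BFMT BB}, $\Phi_f\circ\pi$ extends to a morphism $\overline\Phi:\overline S\to\overline{M_f}^{\rm BB}$ with $\overline\Phi^*\O(1)\isom\G_{\overline S}$. Since $B$ is normal, the fibers of $\pi$ are connected by Zariski's main theorem. Hence it suffices to prove that $\G_{\overline S}$ is $\Q$-linearly trivial on every fiber of $\pi$: then $\overline\Phi$ contracts each connected fiber to a point, because $\O(1)$ is ample, and so it descends to a morphism on $B$ (by normality, e.g.\ via the Stein factorization of $(\pi,\overline\Phi)$). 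For this it is enough to construct an isomorphism $\G_{\overline S}\isom_\Q\pi^*\bigotimes_{p\le m}\det(E^p)^{\otimes-w+2p}$.

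\textbf{Step 2 (identifying the Hodge pieces).} I will show that the canonical map $\pi^*E^p\to\gr_F^p\overline{\mathcal V}$, which is an isomorphism over $S$, extends to an isomorphism on $\overline S$ for $p\le m$, after a finite cover $\overline T\to\overline S$ making the monodromy unipotent. Both sides are vector bundles on a smooth variety, so it suffices to check the isomorphism at general points of each boundary divisor, i.e.\ over a disc $\Delta\to\overline S$ meeting $D$ transversally at a general point. There we have a one-parameter degeneration $\X_\Delta\to\Delta$ with reduced special fiber $X_0$ that has $m$-Du Bois hypersurface singularities and smooth nearby fibers. Then:
\begin{enumerate}[(a)]
\item $\Omega^p_{X_0}\isom\DB^p_{X_0}$ gives $E^p|_0=\gr_F^pH^{w}(X_0)$.
\item Proposition \ref{prop:vanishing grdr for vanishing cycle} gives $\gr_F^p\DR(\phi_{f,1}\Q^H_{\X})=0$ for $p\le m$; here $\X\sm X_0$ is smooth, hence $m$-rational.
\item Taking hypercohomology of the triangle $i^*\Q^H\to\psi_{f,1}\to\phi_{f,1}$ on the graded $F^p$-pieces, the specialization $\gr_F^pH^w(X_0)\to\gr_F^pH^w_{\lim}$ is an isomorphism for $p\le m$.
\item By Schmid/Steenbrink, $\gr_F^pH^w_{\lim}$ is the fiber at $0$ of $\gr_F^p$ of the Deligne extension.
\end{enumerate}

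\textbf{Main obstacle.} The hardest part is the non-unipotent eigenvalues. Proposition \ref{prop:vanishing grdr for vanishing cycle} only treats $\phi_{f,1}$, but we need the comparison with the canonical extension after a base change $t=s^k$. That base change alters the total space, so its singularities and the $m$-rational hypothesis are no longer available. My first attempt would be to show directly that the eigenvalue-$\lambda\neq1$ parts contribute nothing to $\gr_F^p$ for $p\le m$. The tool is Theorem \ref{thm:minimal exponent via vanishing cycle} together with $\wt\alpha\ge m+1$ from Theorem \ref{thm:higher singularities via minimal exponent}, which forces those graded pieces of $\phi_{f,\lambda}$ to vanish in the relevant range. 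With that in hand, the lower canonical extension $\overline{\mathcal V}^{[0,1)}$ and the unipotent extension on $\overline T$ agree on $\gr_F^p$ for $p\le m$, and Step 2 goes through after pullback to $\overline T$.

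A secondary point is checking the isomorphism away from generic points of $D$. This follows by reflexivity: a morphism of vector bundles on a smooth variety that is an isomorphism off codimension two is an isomorphism. Combining Steps 1 and 2 with Lemma \ref{lem:Griffiths bundle alternative} then gives the regularity of $\overline{\Phi_f}$.
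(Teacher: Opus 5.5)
Your overall architecture matches the paper's: resolve $B$, use Theorem~\ref{thm:BFMT BB}, use Lemma~\ref{lem:Griffiths bundle alternative} so that only $\gr_F^p$ with $p\le m$ matters, then use base change of $R^{w-p}f_*\Omega^p_{\X/B}$. The gap is in the step you single out as the main obstacle, where the tool you propose does not apply.

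Theorem~\ref{thm:minimal exponent via vanishing cycle} computes $\gr_F\DR$ of $\phi_h\Q^H_{\C^n,0}[n]$ for a function $h$ on a \emph{smooth} germ. The total space $\X_\Delta$ over a disc transverse to a boundary divisor of a resolution is in general singular at points of $\Sing(X_0)$. The image curve in $B$ can be singular or tangent to bad directions, and $\X$ itself need not be smooth. For instance, in the family $\{P+u^2Q=0\}$ over the $u$-line, every singular point of $X_0$ is a singular point of the total space. In that situation $\phi_{f,\lambda}\Q^H_{\X_\Delta}$ is not the vanishing-cycle module of a hypersurface in a smooth ambient space, and $\wt\alpha(X_0)\ge m+1$ tells you nothing about it through that theorem. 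Even when $\X_\Delta$ is smooth, the theorem only kills the top pieces $\gr_F^q$ with $q\ge n-1-m$. You would still need the Hodge and weight symmetries of the limit to move this to $p\le m$ on the conjugate eigenvalue.

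The obstacle is in fact illusory, and seeing this closes the gap. Your premise that base change destroys the hypotheses is false. Proposition~\ref{prop:vanishing grdr for vanishing cycle} does not assume the total space is $m$-rational. It assumes only two things: the special fiber is reduced with $m$-Du Bois lci singularities, and the complement of the special fiber is $m$-rational. The $m$-rationality of the total space is then an output, via Chen's inversion of adjunction. Both hypotheses survive $t=s^k$. For $g:\X'=\X_\Delta\times_{\Delta_t}\Delta_s\to\Delta_s$, the fiber over $s=0$ is scheme-theoretically $X_0$, and $\X'\sm X_0$ is \'etale over $\X_\Delta\sm X_0$, hence smooth. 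Choose $k$ so that all monodromy eigenvalues of $\psi_f$ are $k$-th roots of unity. Then $\psi_{g,1}=\psi_g$ computes the whole limit, so your steps (a)--(d) run directly over discs in $\overline T$ and no eigenvalue analysis is needed.

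This is exactly the paper's order of operations. It takes the Kawamata cover $T\to\wt B$ first, notes that the pulled-back family $\X_T\to T$ still has all fibers $m$-Du Bois, and applies Proposition~\ref{prop:Hodge pieces for higher Du Bois family} to it. That proposition also avoids discs. It identifies $\gr_F^p\overline{\mathcal V}$ with $R^{n-1-p}g_*\Omega^p_{\X_T/T}$ using Steenbrink's log complex on a resolution $\eta:\wt\X\to\X$ of the total space, reflexivity of $\Omega^p_{\X/B}$, and the vanishing $R^{>0}\eta_*\Omega^p_{\wt\X/B}(\log E)=0$. This produces the bundle morphism across the boundary at the same time. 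Your pointwise specialization argument would still have to construct that morphism: you need the restricted map to extend over $0$ and agree with the specialization map there.
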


By Theorem \ref{thm:higher singularities via minimal exponent}, the condition of having $(\lfloor\frac{n}{2}\rfloor-1)$-Du Bois hypersurface singularities is equivalent to the condition that the minimal exponent is at least $\lfloor\frac{n}{2}\rfloor$. In fact, there is an explicit classification of such singularities.

\begin{lem}
\label{lem:classification of n/2 Du Bois}
For $n\ge 3$ and a hypersurface singularity $\left\{h=0\right\}\subset (\C^n,0)$ at the origin, the minimal exponent $\wt\alpha_0(h)\ge\frac{n-1}{2}$ if and only if $\{h=0\}$ is a suspension of a semi-log canonical surface singularity. Furthermore, $\wt\alpha_0(h)\ge\frac{n}{2}$ if and only if $\{h=0\}$ is an ordinary double point.
\end{lem}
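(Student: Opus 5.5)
The plan is to reduce to a small number of variables with the splitting lemma and then compute with Thom--Sebastiani (Theorem \ref{thm:Thom-Sebastiani for minimal exponents}). We may assume $\{h=0\}$ is singular at the origin, so $h\in\mathfrak m^2$. Let $r$ be the corank of the Hessian of $h$ at $0$. The splitting (Morse) lemma with parameters gives a local analytic coordinate change after which
$$h=x_1^2+\dots+x_{n-r}^2+g(y_1,\dots,y_r),\qquad g\in\mathfrak m^3 .$$
This form holds for arbitrary germs, isolated or not. Since $\wt\alpha_0(x^2)=\frac12$, Theorem \ref{thm:Thom-Sebastiani for minimal exponents} gives
$$\wt\alpha_0(h)=\frac{n-r}{2}+\wt\alpha_0(g).$$

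The key input is the upper bound $\wt\alpha_0(g)\le r/\mathrm{mult}_0(g)$, which is the analogue for minimal exponents of the classical bound $\mathrm{lct}\le r/\mathrm{mult}$. It is known, for instance from Musta\c t\u a--Popa's Hodge ideal theory, or by semicontinuity together with restriction to the tangent cone. With $\mathrm{mult}_0(g)\ge 3$ it gives
$$\wt\alpha_0(h)\le \frac{n}{2}-\frac{r}{6}.$$
I expect this to be the main obstacle to cite or justify cleanly, especially for non-isolated $g$ and in the degenerate case $r=1$. The case $r=1$ can be handled by hand, since then $g=y^k$ with $k\ge3$ and $\wt\alpha_0(g)=1/k$.

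For the second statement: $\wt\alpha_0(h)\ge\frac n2$ forces $r=0$, so $h$ is a nondegenerate quadratic in suitable coordinates, i.e.\ an ordinary double point. Conversely, for a node $\wt\alpha_0(h)=n/2$ by Thom--Sebastiani.

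For the first statement: $\wt\alpha_0(h)\ge\frac{n-1}{2}$ forces $r\le 3$. Since $n\ge3$, we can absorb $3-r$ of the squares into $g$ and write $h=x_1^2+\dots+x_{n-3}^2+f(x_{n-2},x_{n-1},x_n)$, a suspension of the surface germ $\{f=0\}\subset(\C^3,0)$. By Thom--Sebastiani, $\wt\alpha_0(h)=\frac{n-3}{2}+\wt\alpha_0(f)$, so the condition becomes $\wt\alpha_0(f)\ge1$. By Lichtin--Koll\'ar, $\mathrm{lct}_0(f)=\min\{\wt\alpha_0(f),1\}$, so this is equivalent to $\mathrm{lct}_0(f)=1$. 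As recalled in the introduction, for a hypersurface germ $\mathrm{lct}_0(f)=1$ is equivalent to $\{f=0\}$ being semi-log canonical, via inversion of adjunction for the pair $(\C^3,\{f=0\})$.

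The converse direction is the same computation read backwards. If $h$ is a suspension of an slc surface germ $f$, then $\wt\alpha_0(f)\ge1$, and Thom--Sebastiani gives $\wt\alpha_0(h)\ge\frac{n-1}{2}$. One small point to check is that the suspension form in the statement (exactly $n-3$ squares) is consistent with the normalization above; it is, because any extra squares can be absorbed into $f$.
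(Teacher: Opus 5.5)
Your argument is correct and is essentially the paper's proof. It uses the same ingredients: the splitting lemma, Thom--Sebastiani, the bound $\wt\alpha_0\le n/\mathrm{mult}_0$, $\mathrm{lct}_0=\min\{\wt\alpha_0,1\}$ and inversion of adjunction. The only difference is organization: the paper applies the bound to $h$ inductively, forcing $\mathrm{mult}_0(h)=2$ and splitting off one square at a time, while you split off all $n-r$ squares at once and bound the residual $g$. The bound you flag as the main obstacle is exactly \cite[Theorem E(3)]{MP20}, valid for arbitrary (possibly non-isolated) singular germs, so cite it rather than semicontinuity, which when degenerating $g$ to its leading form only gives a lower bound on $\wt\alpha_0(g)$; also add one line for $g=0$, where $\wt\alpha_0(h)=\frac{n-r}{2}$ directly.
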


\begin{proof} Firstly, we prove the equivalence for $\wt\alpha_0(h)\ge\frac{n-1}{2}$.
We proceed by induction on $n$. When $n=3$, we have $\wt\alpha_0(h)\ge 1$ if and only if the log canonical threshold of $h$ is $1$ near the origin (see \cite[Theorem 10.6]{Kollar97}). By inversion of adjunction, this is equivalent to $\{h=0\}$ having semi-log canonical singularities near the origin by \cite[Theorem 4.9]{Kollar13}.

Suppose the claim is true for a hypersurface singularity in $(\C^{n-1},0)$ at the origin and $n\ge 4$. Now, for a hypersurface singularity $\left\{h=0\right\}\subset (\C^n,0)$ at the origin, we have
$$
\wt\alpha_0(h)\le \frac{n}{\mathrm{mult}_0(h)}
$$
by \cite[Theorem E(3)]{MP20}. Hence, $\mathrm{mult}_0(h)=2$ and the rank of the Hessian matrix is at least $1$ at the origin. This implies that $\left\{h=0\right\}$ is locally analytically isomorphic to the hypersurface singularity
$$
\{H(x_1,\dots,x_{n-1})+x_{n}^2=0\}\subset(\C^n,0)
$$
at the origin. By the Thom--Sebastiani theorem (Theorem \ref{thm:Thom-Sebastiani for minimal exponents}) and the induction hypothesis, $\left\{H=0\right\}\subset (\C^{n-1},0)$ is a suspension of a semi-log canonical surface singularity, which completes the proof.

The equivalence for $\wt\alpha_0(h)\ge\frac{n}{2}$ is proved in the same way. See also \cite[Corollary 6.3]{DM23} for a slightly different proof.
\end{proof}

To show that some rational map is regular on $B$, it suffices to prove that the induced map on $\wt B$ is regular, where $\mu:\wt B \to B$ is a proper birational modification, and every fiber of $\mu$ is contracted. We include a short proof of this fact.

\begin{lem}
\label{lem:extendability via resolution}
Let $B$ and $M$ be varieties, with $B$ normal. Let $\Phi:B\dashrightarrow M$ be a rational map. Suppose there exists a proper birational morphism $\mu:\wt B \to B$ such that the induced map $$\wt\Phi:=\Phi\circ\mu:\wt B\to M$$ is regular. Then $\Phi$ is regular if and only if, for every closed point $b\in B$, the fiber $\mu^{-1}(b)$ is mapped by $\wt\Phi$ to a closed point in $M$.
\end{lem}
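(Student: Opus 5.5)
The plan is to prove both directions directly, using Zariski's Main Theorem for the harder one. The forward direction is immediate. If $\Phi$ extends to a morphism $\Phi:B\to M$, then $\wt\Phi$ and $\Phi\circ\mu$ are morphisms agreeing on a dense open subset of $\wt B$. Since $M$ is separated and $\wt B$ is reduced, they agree everywhere. Hence $\wt\Phi(\mu^{-1}(b))=\{\Phi(b)\}$ is a single point for every closed point $b$.

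For the converse, I would pass to the graph. Consider the morphism
$$
(\mu,\wt\Phi):\wt B\to B\times M,
$$
and let $\Gamma\subset B\times M$ be its image. Since $\mu$ is proper, $(\mu,\wt\Phi)$ is proper, so $\Gamma$ is closed. It is irreducible, being the image of the irreducible $\wt B$; I would reduce to $B$ irreducible, or argue componentwise, since $B$ normal means its connected components are irreducible. Moreover $\Gamma$ is the closure of the graph of $\Phi$ over the open set $U\subset B$ where $\mu$ is an isomorphism and $\Phi$ is regular.

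Now let $p:\Gamma\to B$ be the first projection. It is proper, as $\Gamma$ is the image of the proper $\wt B$ over $B$ and is closed in $B\times M$. It is birational, since it is an isomorphism over $U$. Its fibers are
$$
p^{-1}(b)=\{b\}\times\wt\Phi(\mu^{-1}(b)),
$$
which by hypothesis is a single point, so $p$ is quasi-finite. A proper quasi-finite morphism is finite. Thus $p$ is finite and birational onto the normal variety $B$, and Zariski's Main Theorem gives that $p$ is an isomorphism: the coordinate ring of $\Gamma_{\rm red}$ is a finite extension of $\O_B$ inside its fraction field, hence equals $\O_B$ by normality. Give $\Gamma$ the reduced structure for this.

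Finally, $\Phi$ agrees with $\mathrm{pr}_M\circ p^{-1}:B\to M$ on $U$, so this morphism is the required regular extension. The only delicate step is the finiteness-plus-normality application of Zariski's Main Theorem, and in particular checking that $\Gamma$ with its reduced structure is birational to $B$. That check is routine, because $\mu$ is an isomorphism over a dense open subset of $B$.
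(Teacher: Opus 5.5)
Your proof is correct, but it takes a different route from the paper.

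The paper argues locally at a single point. If $\wt\Phi(\mu^{-1}(b))=\{m\}$, it picks an affine open $\mathrm{Spec}\,A\ni m$. It then uses properness of $\mu$ to find an affine neighborhood $\mathrm{Spec}\,R$ of $b$ with $\mu^{-1}(\mathrm{Spec}\,R)\subset\wt\Phi^{-1}(\mathrm{Spec}\,A)$. Finally, it uses that the regular functions on $\mu^{-1}(\mathrm{Spec}\,R)$ are exactly $R$ (normality plus birationality, i.e.\ $\mu_*\O_{\wt B}=\O_B$) to factor $\wt\Phi$ through a ring map $A\to R$.

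You instead close up the graph, show that $p:\Gamma\to B$ is proper and quasi-finite, hence finite, and conclude from normality that the finite birational map $p$ is an isomorphism. Both arguments rest on the same normality input. Yours additionally needs ``proper $+$ quasi-finite $\Rightarrow$ finite'', which is a form of Zariski's Main Theorem.

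One line is missing. Your hypothesis only concerns closed points, so you should justify quasi-finiteness over non-closed points. It suffices to note that the set of points of $\Gamma$ isolated in their fibers is open and contains every closed point of $\Gamma$, hence is all of $\Gamma$.

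The paper's version has the advantage of being manifestly pointwise: it shows $\Phi$ is regular near any single $b$ whose fiber is contracted. That is exactly how the lemma is used later, in the proof of Lemma~\ref{lem:Mordant Q1 numerical criterion}. Your argument yields this too, but only after replacing $B$ by the open set $B\sm p(\Gamma\sm\Gamma')$, where $\Gamma'\subset\Gamma$ is the open locus where $p$ is quasi-finite. In exchange, your graph construction makes the extension $\mathrm{pr}_M\circ p^{-1}$ and the forward direction completely explicit.
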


\begin{proof}
It suffices to prove the converse. Suppose the fiber $\mu^{-1}(b)$ is contracted to a point $m\in M$ by $\wt \Phi$. We prove that there exists a neighborhood $U$ of $b\in B$ such that $\Phi$ is regular on $U$.

Let $\mathrm{Spec}\, A$ be an affine neighborhood of the point $m\in M$. Since $\mu$ is proper, the image $\mu(\wt B\sm \wt\Phi^{-1}(\mathrm{Spec}\, A))$ is closed and does not contain $b$. Hence, there exists an affine neighborhood $\mathrm{Spec}\, R$ of $b\in B$ such that $$\mu^{-1}(\mathrm{Spec}\, R)\subset \wt\Phi^{-1}(\mathrm{Spec}\, A).$$
Note that the morphism $\wt\Phi:\mu^{-1}(\mathrm{Spec}\, R)\to \mathrm{Spec}\, A$ is equivalent to the ring homomorphism
$$
A\to \Gamma(\mu^{-1}(\mathrm{Spec}\, R)).
$$
Since $B$ is normal and $\mu$ is birational, the global section ring $\Gamma(\mu^{-1}(\mathrm{Spec}\, R))$ is isomorphic to $R$. Hence, we have a regular morphism $\mathrm{Spec}\, R\to \mathrm{Spec}\, A$, which implies that $\Phi$ is regular on $\mathrm{Spec}\, R$.
\end{proof}

As shown later in this section, Lemma \ref{lem:extendability via resolution} reduces the proof of Theorem \ref{thm:extendability} to the case where we have a semistable reduction over $B$. We prove the following key isomorphism of Hodge-graded pieces of Deligne's canonical extension under the assumption on higher Du Bois singularities of the fibers.

\begin{prop}
\label{prop:Hodge pieces for higher Du Bois family}
Let $f:\X\to B$ be a family of projective varieties over a smooth variety $B$, and $D\subset B$ be a simple normal crossing divisor such that $f$ is smooth over $B\sm D$. Suppose every fiber of $f$ has $m$-Du Bois local complete intersection singularities for some nonnegative integer $m\ge 0$. Denote by $(\overline{\mathcal V}^{[0,1)}, F^\bullet)$ Deligne's lower canonical extension of the polarizable variation of Hodge structure $(R^kf_*\Z_{\X})|_{B\sm D}$ to $B$. Then there exists an isomorphism
$$
\gr_F^p\overline{\mathcal V}^{[0,1)}\isom R^{k-p}f_*\Omega_{\X/B}^p
$$
for all $0\le p\le m$.
\end{prop}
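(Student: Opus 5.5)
Since the fibers are $m$-Du Bois lci, Theorem \ref{thm:higher Du Bois higher direct image} (in its lci form from \cite{FL24}) makes the sheaves $R^{k-p}f_*\Omega^p_{\X/B}$ for $0\le p\le m$ locally free and compatible with base change. Over $B\sm D$ they are the graded Hodge pieces $\gr_F^p\mathcal V$. So both sides of the claimed isomorphism are vector bundles on $B$ that agree on $B\sm D$, and the task is to identify the extensions. Since two reflexive sheaves agreeing off codimension two coincide, it suffices to produce the isomorphism at the generic points of the components of $D$. We may therefore restrict to a general curve, or a disc $\Delta$, meeting $D$ transversally at one point. Base change compatibility lets us pass to a one-parameter degeneration $f:\X\to\Delta$ with special fiber $X$ having $m$-Du Bois lci singularities.

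To construct the comparison map globally, I would use the relative logarithmic de Rham picture. The relative Du Bois complex $\DB^\bullet_{\X/B}$ is filtered, and its relative hypercohomology carries the Hodge filtration. When $D$ is a normal crossing divisor and one passes to a log resolution or semistable model, this hypercohomology computes Deligne's lower canonical extension $\overline{\mathcal V}^{[0,1)}$ together with Schmid's filtration; this is the Steenbrink/Kollár approach, in which $R^kf_*$ of a relative log de Rham complex gives the canonical extension with eigenvalues in $[0,1)$. The natural morphism $\Omega^p_{\X/B}\to\DB^p_{\X/B}$ then induces a map
$$
R^{k-p}f_*\Omega^p_{\X/B}\to \gr_F^p\overline{\mathcal V}^{[0,1)}
$$
that is an isomorphism over $B\sm D$.

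To check that this map is an isomorphism at a point of $D$, I would restrict to the special fiber and compare ranks. The fiber of the left side is $H^{k-p}(X,\Omega^p_X)$ by base change, and this equals $H^{k-p}(X,\DB^p_X)=\gr_F^pH^k(X)$ because $X$ is $m$-Du Bois. The fiber of the right side is $\gr_F^p$ of the limiting mixed Hodge structure $\psi_f$, via the $[0,1)$-part of nearby cycles. The specialization map $H^k(X)\to H^k_{\lim}$ fits in the triangle with the vanishing cycles. On $\gr_F^p$, the vanishing of $\gr_F^p\DR(\phi_f\Q^H_\X)$ for $p\le m$ would make $\gr_F^p$ of specialization an isomorphism. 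Proposition \ref{prop:vanishing grdr for vanishing cycle} gives this vanishing for the eigenvalue-one part, since the total space is smooth off $X$ and hence trivially $m$-rational there. Then $\dim \gr_F^pH^k(X)=\dim\gr_F^p\psi$, so the fibers have equal dimension. Combined with local freeness, the map is an isomorphism, because a generically isomorphic map of bundles of equal rank whose fiber map is an isomorphism at the point is an isomorphism.

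The main obstacle is the non-unipotent eigenvalues $\lambda\ne1$. Proposition \ref{prop:vanishing grdr for vanishing cycle} only covers $\phi_{f,1}$, while $\overline{\mathcal V}^{[0,1)}$ involves all eigenvalues. For $\lambda\neq 1$ there is no map from $H^k(X)$ at all; instead one needs $\gr_F^p\psi_{f,\lambda}=\gr_F^p\phi_{f,\lambda}=0$ for $p\le m$. I would try to obtain this from the minimal exponent bound $\wt\alpha\ge m+1$ (Theorem \ref{thm:higher singularities via minimal exponent}) together with the description in Theorem \ref{thm:minimal exponent via vanishing cycle}, applied locally where $\X$ is smooth or a hypersurface. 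I would also need to track the correct indexing of the $[0,1)$ lattice against the $V$-filtration, which I expect to be the delicate part.
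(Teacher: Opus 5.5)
Your reduction to a disc meeting $D$ transversally at a general point is fine and matches the paper's first step. The core of the argument, however, has a gap.

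The equality of fiber dimensions you derive carries no information. $R^{k-p}f_*\Omega^p_{\X/B}$ and $\gr_F^p\overline{\mathcal V}^{[0,1)}$ are vector bundles of the same rank $h^{p,k-p}$ that agree off $D$, so their fibers at $0$ automatically have equal dimension. A generically isomorphic map between them can still fail to be an isomorphism (multiplication by $t$ on $\O_\Delta$). Everything therefore rests on the special-fiber map of your comparison morphism being an isomorphism. You obtain this only by declaring that map to be $\gr_F^p(\sp)$, which is not justified and is essentially the content of the proposition:
\begin{itemize}
\item The comparison map is never really constructed. The relative Du Bois complex you invoke is not known to compute $\overline{\mathcal V}^{[0,1)}$. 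The concrete version is $\Omega^p_{\X/B}\to R\eta_*\Omega^p_{\wt\X/B}(\log E)$ on a log resolution, combined with Steenbrink's isomorphism $\gr_F^p\overline{\mathcal V}^{[0,1)}\cong R^{k-p}\wt f_*\Omega^p_{\wt\X/B}(\log E)$.
\item You would then have to show that $H^{k-p}(X,\Omega^p_X)\to\HH^{k-p}(E,\Omega^p_{\wt\X/\Delta}(\log E)|_E)$ coincides with graded specialization. Here $\wt f^*D$ is in general non-reduced, so Steenbrink's semistable description of the limit does not apply directly.
\item Moreover $\sp$ lands in the unipotent part, while the fiber of $\overline{\mathcal V}^{[0,1)}$ involves all eigenvalues.
\end{itemize}

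For the non-unipotent eigenvalues, your route through Theorem \ref{thm:minimal exponent via vanishing cycle} fails as stated. That theorem concerns a function on a smooth germ, whereas the total space over a transverse disc is typically singular at points of $X$ (e.g.\ $x_1^2+\dots+x_n^2=s^2$) and is only $m$-rational. This step can in fact be settled with ingredients you already have:
\begin{itemize}
\item Local freeness and base change, together with $\Omega^p_X\cong\DB^p_X$, give $\dim\gr_F^pH^k(X)=h^{p,k-p}$.
\item Proposition \ref{prop:vanishing grdr for vanishing cycle} with the nearby/vanishing triangle gives $\gr_F^pH^k(X)\cong\gr_F^pH^k(X_\infty)_1$.
\item Invariance of limiting Hodge numbers gives $\sum_\lambda\dim\gr_F^pH^k(X_\infty)_\lambda=h^{p,k-p}$.
\end{itemize}
Hence $\gr_F^pH^k(X_\infty)_\lambda=0$ for $\lambda\neq1$ and $p\le m$; this is the same count the paper uses for $p=0$ in Theorem \ref{thm:Mordant question for Calabi Yau}. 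The first gap remains, however.

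The paper sidesteps it by proving the local statement $R\eta_*\Omega^p_{\wt\X/B}(\log E)\cong\Omega^p_{\X/B}$ for $p\le m$, in two parts:
\begin{itemize}
\item The $\eta_*$ part follows from reflexivity of $\Omega^p_{\X/B}$, using $m$-rationality of $\X$ obtained by inversion of adjunction.
\item The vanishing of $R^i\eta_*$ for $i>0$ uses the Koszul filtration, which reduces it to $R^i\eta_*\Omega^m_{\wt\X}(\log E)=0$. This is read off from $\gr_F^m\DR$ of the triangle $\iota_*\iota^!\Q^H_\X\to\Q^H_\X\to j_*\Q^H_{\mathcal U}$, where $\gr_F^m\DR(\iota^!\Q^H_\X)\cong\Omega^{m-1}_{f^*(D)}[-m-1]$ by Proposition \ref{prop:vanishing grdr for vanishing cycle}.
\end{itemize}
Only $\psi_{g,1}$ and $\phi_{g,1}$ enter. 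Steenbrink's theorem and the Leray spectral sequence then give the isomorphism with no fiberwise comparison and no eigenvalue bookkeeping.
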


Here, we do not specify the polarization, since Deligne's lower canonical extension is independent of the choice of a polarization. The hypersurface singularity assumption for $f$ is not necessary provided that $m$-Du Bois singularities are defined appropriately for the argument below.

\begin{proof}
By Theorem \ref{thm:higher Du Bois higher direct image}, the sheaf $R^{k-p}f_*\Omega_{\X/B}^p$ is locally free for all $0\le p\le m$. Since $\gr^p_F\overline{\mathcal V}^{[0,1)}$ is also locally free, it suffices to prove the isomorphism
$$
\gr_F^p\overline{\mathcal V}^{[0,1)}\isom R^{k-p}f_*\Omega_{\X/B}^p
$$
on the open subset $B^o\subset B$ with $\codim_B B\sm B^o\ge 2$. Indeed, the isomorphism on $B^o$ extends to $B$ by taking the reflexive hulls of both sides.

There exists an open subset $B^o\subset B$ with $\codim_B B\sm B^o\ge 2$ such that $D|_{B^o}$ is smooth. Consider a resolution
$$
\eta:\wt \X_{B^o}\to\X_{B^o}
$$
and the induced morphism
$$
\wt f=f\circ\eta:\widetilde{\mathcal X}_{B^o}\to B^o
$$
such that the set-theoretic preimage $\wt f^{-1}(D)=E$ is a reduced simple normal crossing divisor. We may assume that $\eta$ is an isomorphism over $B\sm D$. Furthermore, we may restrict $B^o\subset B$ with $\codim_B B\sm B^o\ge 2$ such that $\wt f|_{E_I}:E_I\to D$ is smooth for every stratum $E_I:=\cap_{i\in I} E_i$ of $E=\cup_{i} E_i$. Here, $E_i$ is an irreducible component of $E$. Hence, we assume $B^o=B$ and omit the subscript $B^o$ everywhere.

From the local analytic description, we have a natural inclusion of the logarithmic sheaves of Kähler differentials
$$
\wt f^*\Omega_B(\log D)\to \Omega_{\wt \X}(\log E)
$$
such that the cokernel sheaf of relative log differentials $\Omega_{\wt\X/B}(\log E)$ is locally free. By Steenbrink \cite{Steenbrink75} (see also Zucker \cite{Zucker84}), it is well known that
$$
\gr_F^p\overline{\mathcal V}^{[0,1)}\isom R^{k-p}\wt f_*\Omega_{\wt\X/B}^p(\log E)
$$
for all $p\ge 0$. Therefore, it suffices to prove the following claim, using the Leray spectral sequence for the composition $\wt f=f\circ \eta$.

\smallskip
\noindent
\textit{Claim.} There exists a natural isomorphism $R\eta_*\Omega_{\wt\X/B}^p(\log E)\isom\Omega_{\X/B}^p$ for all $0\le p\le m$.
\smallskip

This isomorphism is equivalent to two separate statements: $\eta_*\Omega_{\wt\X/B}^p(\log E)\isom\Omega_{\X/B}^p$ and $R^i\eta_*\Omega_{\wt\X/B}^p(\log E)=0$ for $i>0$.

\smallskip
\noindent
\textit{Step 1.} Isomorphism of sheaves $\eta_*\Omega_{\wt\X/B}^p(\log E)\isom\Omega_{\X/B}^p$ for all $0\le p\le m$.
\smallskip

Note that $\X$ has $m$-rational singularities. Indeed, for each point $b\in B$, the fiber $\X_b$ has $m$-Du Bois singularities. Consider nested general $i$-dimensional plane sections $H_i$ of the germ $(B,b)$:
$$
b\in H_1\subset\dots\subset H_{\dim B-1}\subset B.
$$
Here, $H_i$ is a general smooth Cartier divisor of $H_{i+1}$. Then the restricted family $\X_{H_1}\to H_1$ is smooth away from $b\in H_1$ and the fiber $\X_b$ has $m$-Du Bois singularities. Hence, $\X_{H_1}$ has $m$-rational singularities by Chen's inversion of adjunction \cite[Theorem 1.2]{Chen25}. Therefore, the iterative application of the inversion of adjunction \cite[Theorem 1.2]{CDMO24} implies that $\X$ has $m$-rational singularities.

The relative sheaf of $p$-th differentials $\Omega_{\X/B}^p$ is a reflexive sheaf on $\X$ for all $0\le p\le m$ by \cite[Corollary 2.3]{FL24} and the fact that 
$$
2\dim \Sing(\X_b)+1\le \dim \X_b
$$
for every fiber $\X_b$ (see \cite[Corollary 3.40]{MP22}). We have the following commutative diagram
\begin{displaymath}
\xymatrix{
{0} \ar[r] & {f^*\Omega_{B}} \ar[r] \ar[d] & {\Omega_{\X}} \ar[r] \ar[d]  & {\Omega_{\X/B}} \ar[r] \ar[d] & {0} \\
{0} \ar[r] & {\eta_*\wt f^*\Omega_{B}(\log D)} \ar[r] & {\eta_*\Omega_{\wt\X}(\log E)} \ar[r] & {\eta_*\Omega_{\wt\X/B}(\log E)} \ar[r] & 0
}
\end{displaymath}
Note that the first row is exact because $f$ is generically smooth, and the second row is exact because this is the pushforward of the short exact sequence
\begin{equation}
\label{eqn:log SES}
0\to \wt f^*\Omega_B(\log D)\to \Omega_{\wt \X}(\log E)\to \Omega_{\wt\X/B}(\log E)\to 0
\end{equation}
and $R\eta_*\wt f^*\Omega_{B}(\log D)=\eta_*\wt f^*\Omega_{B}(\log D)$. Recall that $\X$ has rational singularities. 

In particular, we have a map $\Omega_{\X/B}\to\eta_*\Omega_{\wt\X/B}(\log E)$ which is an isomorphism on $\X\sm \Sing(f)$. Here, $\Sing(f)$ denotes the singular locus of the morphism $f$. In fact, we may choose $\eta$ to be an isomorphism over $\X\sm \Sing(f)$.
Hence, for $0\le p\le m$, we have an inclusion
$$
\Omega^p_{\X/B}\subset\eta_*\Omega^p_{\wt\X/B}(\log E)
$$
which is an isomorphism away from a codimension $2$ subset of $\X$. The reflexivity of $\Omega^p_{\X/B}$ implies the desired isomorphism.

\smallskip
\noindent
\textit{Step 2.} Vanishing of higher direct images $R^i\eta_*\Omega_{\wt\X/B}^p(\log E)=0$ for all $i>0$ and $0\le p\le m$.
\smallskip

Consider the Koszul filtration associated to the short exact sequence \eqref{eqn:log SES}:
$$
\Koz^q \, \Omega_{\wt\X}^p(\log E):= \mathrm{image}\, \left[\wt f^*\Omega_{B}^{q}(\log D)\tensor \Omega_{\wt\X}^{p-q}(\log E)\to \Omega_{\wt\X}^p(\log E)\right].
$$
Then we have the isomorphism
$$
\Koz^q/\Koz^{q+1}\, \left(\Omega_{\wt\X}^p(\log E)\right)\isom \wt f^*\Omega_{B}^{q}(\log D)\tensor \Omega_{\wt\X/B}^{p-q}(\log E)
$$
for all integers $q\ge 0$. We proceed by induction on $m$ to prove the vanishing of higher direct images: we assume $R^i\eta_*\Omega_{\wt\X/B}^p(\log E)=0$ for all $i>0$ and $0\le p\le m-1$, and prove the vanishing for $p=m$.

This assumption implies that
$$
R^i\eta_*\left[\Koz^q/\Koz^{q+1}\, \left(\Omega_{\wt\X}^m(\log E)\right)\right]=0
$$
for all $q>0$ and $i>0$. Since this vanishing for $q=0$ is equivalent to
$$
R^i\eta_*\Omega_{\wt\X/B}^m(\log E)=0
$$
for all $i>0$, the spectral sequence argument shows that it suffices to prove
$$
R^i\eta_*\Omega_{\wt\X}^m(\log E)=0
$$
for all $i>0$.

Denote by $\wt j:\wt\X\sm E\to \wt\X$ and $j:\X\sm f^*(D)\to \X$ the open embeddings. Note that $\eta:\wt \X\to \X$ induces an isomorphism between $\wt\X\sm E$ and $\X\sm f^*(D)$. Let $\mathcal U:=\X\sm f^*(D)$. By Saito \cite{Saito00}, we have
$$
\Omega_{\wt\X}^m(\log E)\isom \gr_F^m\DR(\wt j_*\Q_{\mathcal U}^H)[m].
$$
Since the graded de Rham functor commutes with proper (derived) pushforward (see, for example, \cite[Proposition 2.8]{Saito00} or \cite[(2.3.7)]{Saito88}), we have
\begin{equation}
\label{eqn:grdr on U for resolution}
R\eta_*\Omega_{\wt\X}^m(\log E)\isom \gr_F^m\DR(j_*\Q_{\mathcal U}^H)[m].   
\end{equation}
Since the claim is a local statement, it suffices to prove the desired vanishing in a neighborhood of every point $x\in f^*(D)\subset \X$. Let $g=0$ be the local defining equation of the Cartier divisor $f^*(D)\subset \X$ near $x$.

Denote by $\iota:f^*(D)\to \X$ the closed embedding. Then we have the distinguished triangles
\begin{gather*}
\iota^*\Q_\X^H[-1]\to \psi_{g,1}\Q_{\X}^H\to\phi_{g,1}\Q_{\X}^H\xrightarrow{+1},\\
\iota^!\Q_\X^H\to\phi_{g,1}\Q_{\X}^H\to \psi_{g,1}\Q_{\X}^H(-1)\xrightarrow{+1}
\end{gather*}
by \cite[Corollary 2.24]{Saito90}. Note that $f:f^*(D)\to D$ is a family of varieties with $m$-Du Bois singularities. The same argument as at the beginning of Step 1 implies that $f^*(D)$ has $m$-Du Bois singularities.

Applying the graded de Rham functor and Proposition \ref{prop:vanishing grdr for vanishing cycle}, we obtain the following isomorphism
$$
\gr_F^{m-1}DR(\iota^*\Q_\X^H)[-1]\isom\gr_F^{m-1}\DR(\psi_{g,1}\Q_\X^H)\isom\gr_F^m\DR(\iota^!\Q_\X^H)[1].
$$
Recall that
$$
\gr_F^{m-1}DR(\iota^*\Q_\X^H)\isom\gr_F^{m-1}DR(\Q_{f^*(D)}^H)\isom\Omega^{m-1}_{f^*(D)}[-m+1]
$$
where the second isomorphism follows from the fact that $f^*(D)$ has $m$-Du Bois singularities. Hence, we obtain
$$
\gr_F^m\DR(\iota^!\Q_\X^H)\isom\Omega^{m-1}_{f^*(D)}[-m-1].
$$
Applying the graded de Rham functor to the distinguished triangle \cite[(4.4.1)]{Saito90}
$$
\iota_*\iota^!\Q_\X^H\to \Q_\X^H\to j_*\Q_{\mathcal U}^H\xrightarrow{+1},
$$
we obtain the following distinguished triangle
$$
\Omega^{m-1}_{f^*(D)}[-m-1]\to \Omega^{m}_{\X}[-m]\to \gr_F^m\DR(j_*\Q_{\mathcal U}^H)\xrightarrow{+1}.
$$
Therefore, we have the desired vanishing of higher direct images by \eqref{eqn:grdr on U for resolution}.
\end{proof}

\begin{proof}[Proof of Theorem \ref{thm:extendability}]
Let $Z:=\Sing(B)\cup \mathrm{Discr}(f)\subset B$ be the union of the singular locus of $B$ and the discriminant locus (i.e., the set of points whose fibers are singular) of the family $f$. Consider a resolution $\mu:\wt B\to B$ such that $D:=\mu^{-1}(Z)$ is a simple normal crossing divisor. Then the compactified period map 
$$
\wt\Phi_f:\wt B\to \overline{M_f}^{\rm BB}
$$
is everywhere regular by Theorem \ref{thm:BFMT BB} (or one may consider a resolution $\mu$ such that the period map $\overline{\Phi_f}$ is resolved).

By Lemma \ref{lem:extendability via resolution}, it suffices to show that the fiber $\mu^{-1}(b)$ is contracted to a point by $\wt\Phi_f$. By Theorem \ref{thm:BFMT BB}, this is equivalent to showing that the Griffiths bundle $\G_{\wt B}$ is numerically trivial when restricted to $\mu^{-1}(b)$.

Denote by $\wt f:\wt\X\to \wt B$ the family pulled back from $f:\X\to B$. By Kawamata's covering trick \cite[Theorem 17]{Kawamata81}, there exists a smooth variety $T$ and a finite flat morphism $\pi:T\to \wt B$ such that $\pi^{-1}(D)$ is a simple normal crossing divisor and the local monodromy of 
$$
\pi^*\left(R^{n-1}f_*\Z_\X\right)|_{T\sm \pi^{-1}(D)}
$$
is unipotent along $\pi^{-1}(D)$. Denote by $g:\X_T\to T$ the family pulled back from $f$, and $(\overline{\mathcal V}, F^\bullet)$ Deligne's canonical extension to $T$.
By Proposition \ref{prop:Hodge pieces for higher Du Bois family}, we have
$$
\gr_F^p\overline{\mathcal V}\isom R^{n-1-p}g_*\Omega_{\X_T/T}^p
$$
for all $0\le p\le \lfloor\frac{n}{2}\rfloor-1$. Hence, the associated Griffiths bundle is
$$
\G_{T}\isom_\Q\bigotimes_{p=0}^{\lfloor\frac{n}{2}\rfloor-1}\det\left(R^{n-1-p}g_*\Omega_{\X_T/T}^p\right)^{\otimes -n+1+2p}
$$
by Lemma \ref{lem:Griffiths bundle alternative}. From the definition of the Griffiths bundle, we have
$$
\G_T\isom_\Q\pi^*\G_{\wt B}
$$
and by Theorem \ref{thm:higher Du Bois higher direct image}, we have
$$
\G_{T}\isom_\Q\bigotimes_{p=0}^{\lfloor\frac{n}{2}\rfloor-1}(\mu\circ\pi)^*\det\left(R^{n-1-p}f_*\Omega_{\X/B}^p\right)^{\otimes -n+1+2p}.
$$
This implies that $\G_T$ is numerically trivial on $(\mu\circ\pi)^{-1}(b)$, and thus, $\G_{\wt B}$ is numerically trivial on $\mu^{-1}(b)$. This completes the proof.
\end{proof}

The same strategy as in the proof above shows that the period map associated to the second cohomology is regular if every fiber has Du Bois singularities. In particular, this applies to KSB-moduli spaces.

\begin{cor}
Let $\overline M_{n,v}^\nu$ be the normalization of an irreducible component of the KSB-moduli space of varieties with dimension $n$ and volume $v$. If the general KSB-stable variety $X$ of $\overline M_{n,v}^\nu$ is smooth, then the associated period map to the Baily--Borel compactification of the period image associated to the primitive second cohomology $H^{2}_{\rm prim}(X,\mathbb Z)$
$$
\overline {\mathcal P}_{n,v}^2:\overline M_{n,v}^\nu \to \overline M^{\rm BB}
$$
is regular.
\end{cor}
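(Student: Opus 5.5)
The plan is to run the argument for Theorem \ref{thm:extendability} again, with the variation of Hodge structure on $H^2$ in place of $H^{n-1}$ and the Du Bois condition playing the role of the $m$-Du Bois condition.

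\emph{Setup.} Over $\overline M_{n,v}^\nu$ we take a family $f:\X\to B$ of KSB-stable varieties. If necessary we first pass to a finite cover $B\to\overline M_{n,v}^\nu$ that carries a universal family (for instance from the Hilbert scheme chart, quotiented by the relevant group), with $B$ normal. Regularity on such a cover descends, because the map $B\to\overline M_{n,v}^\nu$ is finite and surjective and $\overline M_{n,v}^\nu$ is normal: a rational map from a normal variety whose pullback to a finite cover is regular, and constant on the fibers of the cover, is regular. This follows from Lemma \ref{lem:extendability via resolution}-type arguments, or by taking the Stein factorization of the graph.

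Every fiber of $f$ is slc, hence Du Bois by Kollár--Kovács. As in the proof of Theorem \ref{thm:extendability}, choose a resolution $\mu:\wt B\to B$ such that the discriminant together with $\Sing(B)$ becomes a simple normal crossing divisor $D$. By Theorem \ref{thm:BFMT BB}, the map $\wt\Phi:\wt B\to\overline M^{\rm BB}$ is regular. By Lemma \ref{lem:extendability via resolution}, it then suffices to show that $\G_{\wt B}$ is numerically trivial on each fiber $\mu^{-1}(b)$.

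\emph{Hodge pieces.} The weight of the variation is $w=2$, so Lemma \ref{lem:Griffiths bundle alternative} gives
\[
\G\isom_\Q\det(\gr_F^0\overline{\mathcal V})^{\otimes -2}.
\]
Only the $p=0$ piece enters. We pass to a Kawamata cover $T\to\wt B$ with unipotent monodromy. The needed input is
\[
\gr_F^0\overline{\mathcal V}\isom R^2g_*\O_{\X_T}.
\]
For this we would prove the $m=0$ case of Proposition \ref{prop:Hodge pieces for higher Du Bois family} in the Du Bois (not necessarily lci) setting. The relevant ingredients are:
\begin{enumerate}
\item Du Bois's theorem that for a flat proper family with Du Bois fibers, $R^qf_*\O_\X$ is locally free and commutes with base change. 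This replaces Theorem \ref{thm:higher Du Bois higher direct image} at $p=0$.
\item Steenbrink's identification $\gr_F^0\overline{\mathcal V}\isom R^2\wt g_*\O_{\wt\X}$ for a log resolution $\wt\X$ of $\X_T$ with snc preimage $E$.
\item The isomorphism $R\eta_*\O_{\wt\X}\isom\O_{\X}$ in codimension-one neighborhoods of the base.
\end{enumerate}

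\emph{Main obstacle.} Item 3 is where I expect the main difficulty, because the total space is no longer lci and the vanishing-cycle argument of Step 2 is not available. My first attempt would bypass it as follows. Restrict to a general curve $C$ through a point of $D$. The family $\X_C\to C$ has a Du Bois special fiber, so by inversion of adjunction for Du Bois singularities (Kovács--Schwede) the pair $(\X_C,\X_0)$ is close to Du Bois, and in fact $\X_C$ is slc. Steenbrink's theorem that Du Bois special fibers give $\gr_F^0$ of the limit equal to $H^i(\X_0,\O)$ then yields the fiberwise equality of ranks and the comparison map. Reflexivity and local freeness on both sides upgrade this fiberwise equality, valid away from codimension $2$, to an isomorphism over $T$.

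\emph{Conclusion.} We obtain
\[
\G_T\isom_\Q(\mu\circ\pi)^*\det(R^2f_*\O_\X)^{\otimes-2},
\]
a line bundle pulled back from $B$. It is therefore numerically trivial on $(\mu\circ\pi)^{-1}(b)$, which gives regularity. Primitivity does not matter, since the primitive part differs from the full $H^2$ by Hodge–Tate classes, exactly as remarked in Section \ref{sec:Griffiths and GIT height}.
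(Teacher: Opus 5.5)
Your reduction follows the paper: run the proof of Theorem~\ref{thm:extendability} with $w=2$, so that Lemma~\ref{lem:Griffiths bundle alternative} leaves only $\det(\gr_F^0\overline{\mathcal V})^{\otimes -2}$; replace Theorem~\ref{thm:higher Du Bois higher direct image} by Du Bois's base change theorem; then descend. The gap is in how you handle item 3.

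Your bypass does not prove $\gr_F^0\overline{\mathcal V}\isom R^2g_*\O_{\X_T}$. These are two locally free extensions across the boundary of the same bundle on the smooth locus, so their fiber ranks agree everywhere automatically. What must be shown is that they are the \emph{same} subsheaf of the pushforward from the complement, near the generic points of the boundary. A twist $\mathcal E\mapsto\mathcal E(D_i)$ by a boundary component preserves all fiber ranks but changes the determinant, and the determinant is exactly what enters $\G_T$. So a ``fiberwise equality of ranks'' from Steenbrink's theorem, followed by reflexivity and local freeness, gives nothing: reflexivity only extends an isomorphism of sheaves you have already constructed in codimension one. To make Steenbrink do the work, you would need an explicit sheaf map, e.g.\ $R^2g_*\O_{\X_T}\to R^2\wt g_*\O_{\wt\X}$, and you would need to identify its fiber at boundary points with $\gr_F^0(\sp)$; you do neither. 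The intermediate conclusion that $\X_C$ is slc is also the wrong one: slc total spaces need not satisfy $R\eta_*\O_{\wt\X}=\O$ (a cone over an elliptic curve is lc but not rational).

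The missing observation is that item 3 is easy at $p=0$. For a resolution $\eta$, the isomorphism $R\eta_*\O_{\wt\X}\isom\O_{\X_T}$ says exactly that $\X_T$ has rational singularities. Kov\'acs--Schwede's inversion of adjunction gives this: a Du Bois Cartier divisor whose complement has rational singularities forces rational singularities near it. Apply it to $\X_b\subset\X_{H_1}$ for a general curve $H_1$ through $b$, where the complement is smooth near $\X_b$. Then climb nested general sections of the base as in Step~1 of Proposition~\ref{prop:Hodge pieces for higher Du Bois family}, using that rational implies Du Bois. For $m=0$ the Claim in that proposition is just $R\eta_*\O_{\wt\X}\isom\O_{\X}$, so neither Step~2's vanishing-cycle argument nor the lci hypothesis is needed.

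This is the paper's argument. It notes that Proposition~\ref{prop:Hodge pieces for higher Du Bois family} holds for non-lci Du Bois families via this inversion of adjunction. It then runs the proof of Theorem~\ref{thm:extendability} verbatim on the normalized parameter space $B^\nu$ of pluricanonically embedded stable varieties, which carries the universal family. Finally it descends by $PGL(N+1)$-invariance, since $\overline M^\nu_{n,v}=B^\nu/\!/PGL(N+1)$. This also makes your finite cover unnecessary. As you state it, ``the Hilbert chart quotiented by the group'' is the moduli space itself, not a finite cover carrying a family, though your Zariski-main-theorem descent would be fine given such a cover.
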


\begin{proof}
Note that the KSB-moduli space $\overline M_{n,v}$ is the geometric quotient $B/\!/ PGL(N+1)$ (see e.g. \cite[Theorem 8.62]{Kollar23}). Here, $B$ parametrizes pluricanonically embedded KSB-stable varieties in $\P^N$. Denote by $B^\nu$ the normalization of the parameter space associated to the irreducible component of the KSB-moduli space. Then $\overline M_{n,v}^\nu$ is the geometric quotient $B^\nu/\!/ PGL(N+1)$. From the proof of Theorem \ref{thm:extendability} and from the fact that KSB-stable varieties have Du Bois singularities (see e.g. \cite{KK10}), the compactified period map
$$
\overline \Phi^2_{n,v}: B^\nu\to\overline M^{\rm BB}
$$
is regular. Indeed, Proposition \ref{prop:Hodge pieces for higher Du Bois family} applies to families with (not necessarily lci) Du Bois singularities, since the general form of inversion of adjunction can be applied (see \cite{KS16} and \cite[Corollary 3.10]{MSS17}). Then the proof of Theorem \ref{thm:extendability} works verbatim in the setting of second cohomology. Additionally, this map is $PGL(N+1)$-invariant, and therefore descends to $\overline M_{n,v}^\nu$.
\end{proof}

Given an explicit moduli problem, it is an interesting question to give a concrete description of the compactified period map $\overline {\mathcal P}_{n,v}^2$. For instance, the period map from the KSB-moduli compactification for degree $d\ge 5$ surfaces in $\P^3$ is birational by the generic Torelli \cite{Donagi83}. Hence, $\overline {\mathcal P}_{2,d(d-4)^2}^2$ is a regular birational map on the irreducible component generically parametrizing surfaces in $\P^3$, but we do not have a good understanding of this period map.

\section{Numerical criterion for indeterminacy: Questions of Mordant}
\label{sec:numerical criterion}

By construction, the period map
$$
\mathcal P: \overline\M^{\rm GIT}_{(n,d)} \dashrightarrow \Gamma\backslash \D_{(n,d)}
$$
is regular on the open subset $\M^{\rm sm}_{(n,d)}$ parametrizing smooth hypersurfaces. Theorem \ref{thm:extendability} and Lemma \ref{lem:classification of n/2 Du Bois} imply that the compactified period map
$$
\overline {\mathcal P}:\overline\M^{\rm GIT}_{(n,d)} \dashrightarrow \overline \M^{\rm BB}_{(n,d)}
$$
is regular on the locus described in Theorem \ref{thm:indeterminacy}. Hence, it remains to prove the converse: $\overline{\mathcal P}$ does not extend beyond this locus. In this section, we prove Theorem \ref{thm:Mordant question and indeterminacy}, which is later used to determine the indeterminacy locus by computing the Griffiths and GIT heights for families of hypersurfaces over curves (see Section \ref{sec:Griffiths and GIT height} for definitions).

The proof of Theorem \ref{thm:Mordant question and indeterminacy} combines a key geometric property of the GIT moduli space with a numerical criterion for the indeterminacy locus. The former is established in the following.

\begin{prop}
\label{prop:GIT is Q factorial picard rank 1}
For $n\ge 2$ and $d\ge 3$, the GIT moduli space $\overline\M^{\rm GIT}_{(n,d)}$ is a $\Q$-factorial klt Fano variety with Picard rank $1$.
\end{prop}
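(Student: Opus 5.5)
Write $V:=\P|\O_{\P^n}(d)|=\P^N$ and $G:=SL(n+1)$, which acts on $V$ through $PGL(n+1)$ and has the same invariant rings. The plan is to get each property of $\overline\M^{\rm GIT}_{(n,d)}=V/\!/G$ from general facts about quotients of the semistable locus $V^{ss}$ by a reductive group, all coming from $V$ being a projective space.

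\emph{Klt.} $\overline\M^{\rm GIT}_{(n,d)}=\mathrm{Proj}\,\C[W]^G$ with $W=H^0(\P^n,\O(d))$. By Hochster--Roberts, $\C[W]^G$ is Cohen--Macaulay. By Boutot's theorem, a quotient of a variety with rational singularities by a reductive group again has rational singularities. Granting $\Q$-Gorenstein (which follows from $\Q$-factoriality below), this gives klt. Alternatively, use the theorem of Zhuang and Braun--Greb--Langlois--Moraga that good quotients of klt varieties, in particular of the affine cone $W$, are klt (of Fano type), and then pass to Proj.

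\emph{Picard rank 1 and $\Q$-factoriality.} The stable locus is nonempty, since smooth hypersurfaces are stable \cite{Mumford94}. So there is a $G$-invariant open subset $V^s\subset V^{ss}$ on which the quotient $\pi: V^s\to V^s/G$ is geometric. I would then apply Drezet's/Kempf's descent. Let $L$ be a line bundle on an open subset of the quotient. Its pullback is a $G$-linearized line bundle on an open subset of $V^{ss}$. If the complement of $V^s$ in $V^{ss}$ has codimension $\ge2$, this extends to all of $V$. Since $\Pic(V)=\Z$ and $G=SL(n+1)$ has no nontrivial characters, the $G$-linearized Picard group of $V$ is $\Z$, generated by $\O(1)$. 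Therefore every Weil divisor class on the quotient is, up to torsion, a rational multiple of the descended polarization $\O(1)$. This gives $\mathrm{Cl}\otimes\Q=\Q$, hence both $\Q$-factoriality and Picard rank $1$. To handle stabilizers, one uses that a $G$-linearized bundle descends after taking a power that kills stabilizer actions; these stabilizers are finite on $V^s$, so rationally this is harmless.

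\emph{Codimension issues.} This is the main obstacle: one needs $\codim(V^{ss}\setminus V^s)\ge2$, and one needs the $\pi$-exceptional divisors to be handled. Instead of codimension of strictly semistable points, I would argue directly on the quotient. For $Y:=V/\!/G$ with $\pi:V^{ss}\to Y$, take a prime divisor $\Delta\subset Y$. Its preimage $\pi^{-1}(\Delta)$ is a $G$-invariant divisor of $V^{ss}$, hence has the form $\{F=0\}\cap V^{ss}$ for some $F$ of degree $e$. $G$-invariance of the divisor, together with $G$ having no characters, makes $F$ an invariant. Therefore $m\Delta\sim_{\Q}$ a multiple of $\O_Y(1)$, using that $\pi^*$ of the section $F$ vanishes exactly along $\pi^{-1}(\Delta)$ with some multiplicity. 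Taking closure in $Y$ is fine because $\pi^*$ is injective on Weil divisors: $\pi$ is surjective and affine. This shows $\mathrm{Cl}(Y)\otimes\Q=\Q\cdot\O_Y(1)$, which gives $\Q$-factoriality and Picard rank $1$ simultaneously, and it is essentially the classical fact that $\C[W]^G$ is a UFD up to torsion.

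\emph{Fano.} Once klt, $\Q$-factoriality and Picard rank $1$ are known, it remains to show $-K_Y$ is ample, i.e.\ a positive multiple of $\O_Y(1)$. Away from codimension $2$, where the quotient is geometric with generically trivial stabilizers (true for $d\ge3$, $n\ge2$), $K_{V^s}=\pi^*K_Y$. So $K_Y$ corresponds to the $G$-linearized bundle $K_V=\O(-N-1)$, which has negative degree. I would double-check that ramification divisors from nontrivial stabilizers along divisors do not spoil this. If some exist, I would compare via Hurwitz, which only adds effective corrections to $-\pi^*K_Y$. Alternatively, I would cite the Fano-type statement for quotients of the cone.
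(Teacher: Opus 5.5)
Your proof of $\Q$-factoriality and Picard rank one is correct, and it takes a different route from the paper. The paper argues in three steps:
\begin{enumerate}
\item nodal hypersurfaces are GIT stable \cite{Park25}, so $\M^{\rm nodal}_{(n,d)}$ has quotient singularities;
\item a pencil argument shows that one-nodal hypersurfaces are dense in the discriminant, so the complement of $\M^{\rm nodal}_{(n,d)}$ has codimension $\ge 2$;
\item $N\pi^*D$ is extended from the nodal locus of the parameter space to an invariant form on all of it.
\end{enumerate}
You instead take an arbitrary prime divisor $\Delta\subset Y$. In effect you reprove the classical fact that the invariants of a connected group without characters, acting on a UFD, again form a UFD.

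One point needs tightening. A priori $\pi^{-1}(\Delta)$ need not be pure of codimension one, so let $F$ define the closure of its divisorial part. This part is nonempty, because fibers of $\pi$ have dimension at least $\dim V^{ss}-\dim Y$, so any component dominating $\Delta$ is a divisor. Since $F$ is invariant, $\{F=0\}\cap V^{ss}$ is $\pi$-saturated, and this forces the zero locus of $F$ on $Y$ to be exactly $\Delta$.

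Your argument needs no hypersurface-specific input and treats all $(n,d)$ uniformly. This includes plane cubics, where nodal cubics are only strictly semistable. The paper's route has a side benefit: it shows that $\M^{\rm nodal}_{(n,d)}$ contains every codimension-one point, which is reused in the proof of Theorem~\ref{thm:Mordant question and indeterminacy}.

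Two of your other steps do not work as written.

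\textbf{Klt via Boutot.} The inference ``rational singularities plus $\Q$-Gorenstein gives klt'' is false. Rational surface singularities are always $\Q$-factorial, while klt surface singularities are exactly the quotient singularities, and many rational surface singularities are not quotient singularities. Index one would suffice. The same divisor argument on the affine cone shows that $\C[W]^{SL(n+1)}$ is a UFD, hence Gorenstein since it is Cohen--Macaulay. Boutot's theorem, together with the fact that rational Gorenstein singularities are canonical, then makes the affine quotient canonical. You would still have to pass from the cone to $\mathrm{Proj}$.

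\textbf{Direct Fano argument.} The claim that $\pi$ is a geometric quotient outside a codimension-two subset of $Y$ is unproved. It is precisely the hypersurface-specific input the paper uses, and it fails for $(n,d)=(2,3)$, where the strictly semistable locus maps to a point of $Y\cong\P^1$. Without it, your computation on the stable image determines $K_Y$ only up to divisors supported on the image of the strictly semistable locus. Those divisors are themselves positive multiples of $\O_Y(1)$ and enter with unknown coefficients, so no sign follows.

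The clean conclusion is your stated alternative, which is what the paper does. By \cite{BGLM24} or \cite{Zhuang24}, $Y$ is of Fano type: there is $\Delta\ge 0$ with $(Y,\Delta)$ klt and $-(K_Y+\Delta)$ ample. Then $\Q$-factoriality makes $Y$ itself klt, and Picard rank one makes the big divisor $-K_Y=-(K_Y+\Delta)+\Delta$ ample.
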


\begin{proof}
We prove that $\overline\M^{\rm GIT}_{(n,d)}$ is $\Q$-factorial with Picard rank $1$. Denote by $V:=H^0(\P^n,\O(d))$ the vector space of homogeneous degree $d$ polynomials in $n+1$ variables. By \cite[Theorem A]{Park25} (see also \cite{Mordant24, He25}), every nodal hypersurface (i.e., a hypersurface with at worst ordinary double point singularities) is GIT stable. In particular, the moduli space $\M^{\rm nodal}_{(n,d)}$ of (at worst) nodal hypersurfaces has quotient singularities, and we have an open embedding
$$
\M^{\rm nodal}_{(n,d)}\subset\overline\M^{\rm GIT}_{(n,d)}.
$$
Note that every singular hypersurface is a limit of hypersurfaces with a single node. Indeed, let $\left\{P=0\right\}\subset\P^n$ be a hypersurface singular at a point $x\in \P^n$. Choose a general hypersurface $\left\{Q=0\right\}\subset\P^n$ of the same degree having a node at $x\in \P^n$. Then the pencil
$$
\left\{P+tQ=0\right\}\subset \P^n\times\A^1_t
$$
has $\left\{P=0\right\}$ as a special fiber. For general $t$, the hypersurface $\{P+tQ=0\}$ has a unique singularity, which is an ordinary double point.

This implies that the set of nodal hypersurfaces is an open dense subset in
$$
\overline\M^{\rm GIT}_{(n,d)}\sm\M^{\rm sm}_{(n,d)}
$$
where $\M^{\rm sm}_{(n,d)}$ is the moduli space of smooth hypersurfaces. Hence, $\M^{\rm nodal}_{(n,d)}$ contains every codimension $1$ point of $\overline\M^{\rm GIT}_{(n,d)}$.

Consider the following diagram:
\begin{displaymath}
\xymatrix{
{(\P V)^{\rm nodal}} \ar[r] \ar[d] & {(\P V)^{\rm ss}} \ar[d]^-{\pi}\\
{\M^{\rm nodal}_{(n,d)}} \ar[r] & {\overline\M^{\rm GIT}_{(n,d)}}
}
\end{displaymath}
where $(\P V)^{\rm ss}$ (resp. $(\P V)^{\rm nodal}$) is the parameter space of GIT semistable (resp. nodal) hypersurfaces, and
$$
\pi:(\P V)^{\rm ss}\to \overline\M^{\rm GIT}_{(n,d)}
$$
is the GIT quotient morphism. Note that the horizontal maps are open embeddings. For any effective divisor $D\subset \overline\M^{\rm GIT}_{(n,d)}$, there exists a sufficiently divisible $N>0$ such that $ND$ is Cartier on $\M^{\rm nodal}_{(n,d)}$. Hence, the section
$$
\O_{\M^{\rm nodal}_{(n,d)}}\to \O_{\M^{\rm nodal}_{(n,d)}}(ND)
$$
pulls back to a section of a line bundle
$$
\O_{(\P V)^{\rm nodal}}\to \O_{(\P V)^{\rm nodal}}(N\pi^*D).
$$
Additionally, the line bundle $\O_{(\P V)^{\rm nodal}}(N\pi^*D)$ is isomorphic to a line bundle
$$
\O_{(\P V)^{\rm nodal}}(N')
$$
for some positive integer $N'$. This line bundle extends to $\O_{\P V}(N')$, and thus, the divisor $N\pi^*D$ is a $PGL(n+1)$-invariant degree $N'$ section of $\P V$. This section descends to $\overline\M^{\rm GIT}_{(n,d)}$, that is, $ND$ is a divisor $\Q$-linearly equivalent to
$$
\O_{\overline\M^{\rm GIT}_{(n,d)}}(N').
$$
In other words, any (effective) divisor $D$ is $\Q$-linearly equivalent to $\O_{\overline\M^{\rm GIT}_{(n,d)}}(\alpha)$ for some rational number $\alpha$. Therefore, the GIT moduli space $\overline\M^{\rm GIT}_{(n,d)}$ is a $\Q$-factorial variety with Picard rank $1$. Consequently, it is klt Fano by \cite{BGLM24} or \cite{Zhuang24}.
\end{proof} 

Given a rational map of projective varieties from $X$ to $Y$, the next two lemmas establish inequalities relating the degrees of maps $C\to X$ and $C\to Y$ for a curve $C$. We apply these to the compactified period map $\overline {\mathcal P}$.

\begin{lem}
\label{lem:Mordant Q1 numerical criterion}
Let $X$ be a normal projective variety and $Y$ be a projective variety with an ample $\Q$-line bundle $\L_Y$. Let $f:X\dashrightarrow Y$ be a rational map and $X^o\subset X$ be the domain of definition of $f$. Suppose the pullback $\Q$-line bundle $f^*\L_Y|_{X^o}$ on $X^o$ is $\Q$-Cartier on $X$, and denote by
$$
\L_X\in \Pic(X)\otimes_{\mathbb Z}\mathbb Q
$$
its reflexive hull. Then for any smooth projective curve $C$ and a morphism $\phi:C\to X$ such that the generic point $C_\eta$ is mapped to $X^o$, we have
$$
\deg_C(f\circ\phi)^*\L_Y\le \deg_C \phi^*\L_X
$$
and the equality holds if and only if $\phi(C)\subset X^o$.
\end{lem}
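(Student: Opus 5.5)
Resolve the indeterminacy of $f$ and compare the pullback of $\L_Y$ with the pullback of $\L_X$. Let $\Gamma\subset X\times Y$ be the closure of the graph of $f|_{X^o}$, and let $\mu:W\to X$ be a resolution of $\Gamma$ (normal, say smooth), with $g:W\to Y$ the induced morphism. Then $\mu$ is proper and birational, and it is an isomorphism over $X^o$. This holds because $X$ is normal and the graph over $X^o$ is already closed there. Since $\phi(C_\eta)\in X^o$, the map $\phi$ lifts uniquely to $\tilde\phi:C\to W$ by the valuative criterion, and $f\circ\phi=g\circ\tilde\phi$ as morphisms from $C$. So the claim becomes
$$\deg_C\tilde\phi^*g^*\L_Y\le \deg_C\tilde\phi^*\mu^*\L_X.$$

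\textbf{Step 1 (negativity lemma).} Write $\mu^*\L_X=g^*\L_Y+E$ as $\Q$-Cartier classes on $W$. Over $X^o$ the two sides agree, since $\L_X|_{X^o}=f^*\L_Y|_{X^o}$. Hence $E$ can be represented by a $\Q$-divisor supported on $\mu^{-1}(X\sm X^o)$. Concretely, choose $m$ divisible enough and a section $s$ of $m\L_Y$ not vanishing identically on the image; then $E=\mu^*\overline{\mathrm{div}(f^*s)}-\mathrm{div}(g^*s)$ as divisors, up to the factor $1/m$. The divisor $-E$ is $\mu$-nef, because $g^*\L_Y$ is nef and $\mu^*\L_X$ is $\mu$-trivial. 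By the negativity lemma, $E$ is effective. This uses that $\mu_*E=0$: indeed $\mu_*g^*\L_Y$ equals the reflexive hull $\L_X$, as both are Weil divisors agreeing off codimension $\ge2$, and $X$ is normal.

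\textbf{Step 2 (the inequality).} Since $\tilde\phi(C)\not\subset \mathrm{Supp}\,E$, we get $\deg\tilde\phi^*E\ge 0$, which gives the inequality. If $\phi(C)\subset X^o$, then $\tilde\phi(C)$ avoids $\mathrm{Supp}\,E\subset\mu^{-1}(X\sm X^o)$, so equality holds.

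\textbf{Step 3 (strictness off $X^o$).} Conversely, suppose $\phi(c)=x\notin X^o$. I must show $\tilde\phi(c)\in\mathrm{Supp}\,E$, and this is the main obstacle. I need to show that $\mathrm{Supp}\,E$ is exactly $\mu^{-1}(X\sm X^o)$, meaning every point of every fiber over the indeterminacy locus is covered.

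First I use Lemma \ref{lem:extendability via resolution}, applied with $M=Y$. Since $x\notin X^o$, the fiber $\mu^{-1}(x)$ is not contracted by $g$, so it contains a curve $\gamma$ with $g^*\L_Y\cdot\gamma>0$. Then $E\cdot\gamma=-g^*\L_Y\cdot\gamma<0$, so $\gamma\subset\mathrm{Supp}\,E$ and $E\neq0$ over $x$.

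Next I use the refined negativity lemma: for $-E$ $\mu$-nef with $\mu_*E=0$, for each $x$ the fiber $\mu^{-1}(x)$ is either contained in $\mathrm{Supp}\,E$ or disjoint from it. This is the connectedness statement in the negativity lemma (e.g. Koll\'ar--Mori 3.39 in its strong form, or Birkar's version), valid after passing to a neighborhood of $x$. Since the fiber meets $\mathrm{Supp}\,E$ by the previous paragraph, the whole fiber lies in $\mathrm{Supp}\,E$. In particular $\tilde\phi(c)\in\mathrm{Supp}\,E$.

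Since $\tilde\phi(C)\not\subset\mathrm{Supp}\,E$, the point $c$ contributes positively to $\deg\tilde\phi^*E$. Therefore the inequality is strict.

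If the strong form is unavailable, I would instead cut down by general hyperplanes through $x$ to reduce to a surface-type argument. Alternatively, I would use that $E$ is $\mu$-anti-nef and apply Zariski's main theorem connectedness to $\mu^{-1}(x)$.
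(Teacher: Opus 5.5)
Your proposal is correct and follows essentially the same route as the paper. Both pass to the graph, apply the negativity lemma (Koll\'ar--Mori 3.39, whose part (2) is exactly the fiberwise dichotomy you call the ``strong form'') to $E=\mu^*\L_X-g^*\L_Y$, and use Lemma~\ref{lem:extendability via resolution} to force every fiber over $X\sm X^o$ into $\Supp E$.

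One small correction: take $W$ to be the normalization of $\Gamma$, as the paper does, rather than a smooth resolution. Only then is $\mu$ an isomorphism over the possibly singular $X^o$, and only then is the lift $\tilde\phi$ canonically defined when $\phi(C)\subset\Sing X^o$.
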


A priori, the induced map $f\circ\phi:C\dashrightarrow Y$ is only a rational map, which extends regularly.

\begin{proof}
Let $\wt X$ be the normalization of the graph of $f$. Denote by $\mu:\wt X\to X$ and $\wt f:\wt X\to Y$ the associated projective morphisms. Note that $\mu$ is an isomorphism over $X^o$. Hence,
$$
\mu^*\L_X \quad\text{and}\quad \wt f^*\L_Y
$$
are isomorphic over $X^o$, and the difference
$$
D:= \wt f^*L_Y-\mu^*L_X
$$
is a $\Q$-Cartier divisor supported on $\wt X\sm\mu^{-1}(X^o)$ (by abuse of notation, we also use $L_X$ and $L_Y$ to denote $\Q$-divisors that are $\Q$-linearly equivalent to $\L_X$ and $\L_Y$). Since $L_Y$ is an ample divisor on $Y$, the divisor $D$ is $\mu$-nef. Therefore, by the negativity lemma \cite[Lemma 3.39]{KM98}, $-D$ is effective and for every closed point $x\in X\sm X^o$, either
$$
\mu^{-1}(x)\subset \Supp (-D) \quad\text{or}\quad \mu^{-1}(x)\cap \Supp (-D)=\emptyset.
$$

If $\mu^{-1}(x)\cap \Supp (-D)=\emptyset$, then $\mu^{-1}(x)$ is contracted to a point by $\wt f$. Indeed, $-D$ is numerically trivial on $\mu^{-1}(x)$, and thus, $\wt f^*L_Y$ is numerically trivial on $\mu^{-1}(x)$. Then by Lemma~\ref{lem:extendability via resolution}, the point $x$ is contained in the domain of definition of $f$: $x\in X^o$. This is a contradiction. Therefore, we have
$$
\Supp (-D)=\mu^{-1}(X\sm X^o).
$$

Let $\wt\phi:C\to \wt X$ be the morphism that lifts $\phi$. Since the generic point of $C$ is mapped to $X^o$, we have
$$
\deg_C(f\circ\phi)^*\L_Y-\deg_C \phi^*\L_X=\deg_C\wt\phi^*D\le0
$$
and the equality holds if and only if $\wt\phi(C)$ does not intersect $\Supp (-D)$. This completes the proof.
\end{proof}

\begin{lem}
\label{lem:Mordant Q2}
Let $X$ be a $\Q$-factorial normal projective variety with Picard rank $1$, and let $Y$ be a projective variety with an ample $\Q$-line bundle $\L_Y$. Let $f:X\dashrightarrow Y$ be a birational map, and $\L_X$ be the reflexive hull of the pullback $f^*\L_Y$. Suppose there exists an open subset $U\subset X$ such that $f|_U:U\to Y$ is an open embedding. Then there exists $\epsilon>0$ such that for any smooth projective curve $C$ and a morphism $\phi:C\to X$ with $\phi(C)\cap U\neq \emptyset$, we have
$$
\epsilon\deg_C \phi^*\L_X\le\deg_C(f\circ\phi)^*\L_Y.
$$
\end{lem}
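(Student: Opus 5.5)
Since $X$ is $\Q$-factorial of Picard rank $1$, the plan is to compare $\phi^*\L_X$ with the degree of $\phi^*$ of an effective divisor on $\wt X$ that is controlled by $\wt f^*\L_Y$. As in the proof of Lemma \ref{lem:Mordant Q1 numerical criterion}, let $\wt X$ be the normalization of the graph of $f$, with projections $\mu:\wt X\to X$ and $\wt f:\wt X\to Y$. Then $\mu$ is an isomorphism over $X^o\supset U$, and $\wt f$ is birational. Since $\L_X$ is the reflexive hull of $f^*\L_Y$ and $X$ is $\Q$-factorial, $\L_X$ is $\Q$-Cartier. Because $f$ is birational, $\L_X$ is big (its sections pull back from those of $\L_Y$). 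Since the Picard rank is $1$, $\L_X$ is in fact ample: every $\Q$-Cartier class is numerically a rational multiple of an ample generator, and bigness forces a positive multiple.

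The key step is to produce an effective $\Q$-divisor $E$ on $\wt X$ with $\Supp E\subset\wt X\sm\wt f^{-1}(f(U))$ and an $\epsilon>0$ such that $\wt f^*\L_Y-\epsilon\,\mu^*\L_X\sim_\Q E$. To get this, I would use that $\wt f^*\L_Y$ is big and nef on $\wt X$. By Kodaira's lemma applied on $\wt X$ (or directly on $Y$), write $m\,\wt f^*\L_Y\sim A+E'$ with $A$ ample on $\wt X$ and $E'$ effective. The point is to choose $E'$ so that it avoids the open set where $\wt f$ is an isomorphism onto its image, namely $\wt f^{-1}(f(U))\cong U$. Concretely, take the complement $Z=Y\sm f(U)$. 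For large $k$, the sheaf $\L_Y^{k}\otimes\I_Z$ is globally generated off $Z$, and the sections of $\wt f^*\L_Y^k$ vanishing on $\wt f^{-1}(Z)$ to high order, after twisting by $\mu^*\L_X^{-1}$, should still be nonzero. More cleanly: $\mu^*\L_X$ is pulled back from $X$, and $\wt f^*\L_Y^{k}\otimes\mu^*\L_X^{-1}$ has a nonzero section for $k\gg0$ by bigness. Its divisor $E$ restricted to $\wt f^{-1}(f(U))\cong U$ is a divisor on $U$ in the class $k f^*\L_Y-\L_X=(k-1)\L_X|_U$. The freedom to choose the section lets us make $E$ not contain any given point, but not avoid $U$ entirely. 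So instead I would argue curve by curve: it suffices that $\phi(C)\not\subset\Supp E$. Since $\phi(C)$ meets $U$, and we may pick finitely many sections whose divisors have no common point in $U$ (global generation of $\wt f^*\L_Y^{k}\otimes\mu^*\L_X^{-1}$ over $U$ for $k\gg0$, available because $\L_Y$ is ample and $f|_U$ is an open embedding), some $E$ does not contain $\wt\phi(C)$.

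With such $E$, for a lift $\wt\phi:C\to\wt X$ (which exists since $C$ is a smooth curve and the generic point maps into $U$), we get $k\deg(f\circ\phi)^*\L_Y-\deg\phi^*\L_X=\deg\wt\phi^*E\ge0$. This gives the claim with $\epsilon=1/k$, and $k$ is independent of $C$. The main obstacle is the global generation over $U$ of $\wt f^*\L_Y^k\otimes\mu^*\L_X^{-1}$ with uniform $k$. I would handle it on $Y$: regard $\mu^*\L_X$ via $f(U)\subset Y$ as a $\Q$-line bundle on the open set $f(U)$. Then extend its inverse-twisted version using ampleness of $\L_Y$ (Serre vanishing/generation for a coherent extension of $\mu_*$-data pushed to $Y$) to get sections of $\L_Y^{k}\otimes\mathcal F$, where $\mathcal F$ is a coherent extension of $\L_X^{-1}|_U$ to $Y$. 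These are globally generated for $k\gg0$, and they pull back to the required sections.
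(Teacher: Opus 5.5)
Your argument is correct, but it takes a different route from the paper.

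\textbf{The paper's route.} The paper reuses the structure of Lemma~\ref{lem:Mordant Q1 numerical criterion}. By the negativity lemma, $-D=\mu^*L_X-\wt f^*L_Y$ is effective and supported on $\wt X\sm U$. After further blow-ups one has $\wt f^{-1}\I_Z\cdot\O_{\wt X}=\O_{\wt X}(-E)$ with $Z=Y\sm U$. Global generation of $\L_Y^N\otimes\I_Z$ then makes $\wt f^*\L_Y^N\otimes\O_{\wt X}(-E)$ nef. Choosing $k$ with $kE\ge -D$ gives $\deg\phi^*\L_X\le(kN+1)\deg(f\circ\phi)^*\L_Y$.

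\textbf{Your route.} You show directly that $M:=\wt f^*\L_Y^{k}\otimes\mu^*\L_X^{-1}$ (after clearing denominators) has no base points on $\wt f^{-1}(f(U))$. Hence $\deg\wt\phi^*M\ge0$ for every curve meeting $U$, and $\epsilon=1/k$. This avoids the negativity lemma, the principalization of $\I_Z$, and the comparison $kE\ge -D$, so it is more self-contained. The paper's version has the advantage of reusing the divisor $D$ already analyzed for the equality case.

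\textbf{The step to make precise: the choice of $\mathcal F$.} An arbitrary coherent extension of $\L_X^{-1}|_U$ to $Y$ need not come with a morphism $\wt f^*\mathcal F\to\mu^*\L_X^{-1}$. Without one, its sections need not pull back to sections of $M$. Take $\mathcal F:=\wt f_*\mu^*\L_X^{-1}$, which is coherent because $\wt f$ is proper. Then:
\begin{enumerate}
\item By the projection formula, $H^0(Y,\L_Y^k\otimes\mathcal F)=H^0(\wt X,M)$.
\item By Serre's theorem, $\L_Y^k\otimes\mathcal F$ is globally generated for $k\gg0$.
\item The counit $\wt f^*\mathcal F\to\mu^*\L_X^{-1}$ is an isomorphism over $\wt f^{-1}(f(U))$, because $\wt f^{-1}(f(U))=\mu^{-1}(U)$. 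Indeed, the inverse of $\wt f|_{\mu^{-1}(U)}$ is a section of the separated map $\wt f^{-1}(f(U))\to f(U)$. So $\mu^{-1}(U)$ is open and closed in the irreducible space $\wt f^{-1}(f(U))$.
\end{enumerate}
With this choice, $M$ is generated at every point of $\mu^{-1}(U)$, as you need.

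Finally, your observation that $\L_X$ is ample is never used. Like the paper, you only need $\L_X$ to be $\Q$-Cartier.
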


\begin{proof}
As in the proof of Lemma \ref{lem:Mordant Q1 numerical criterion}, we consider the normalization $\wt X$ of the graph of $f$ and use the same notations for induced maps. By abuse of notation, we consider $U$ as an open subset of $X$, $Y$, and $\wt X$. Let $Z:=Y\sm U$, equipped with the reduced scheme structure, and let $\I_Z$ be the ideal sheaf of $Z$ in $Y$. From the proof of Lemma \ref{lem:Mordant Q1 numerical criterion}, we have
$$
\Supp(-D)\subset \wt f^{-1}(Z)
$$
where $D:= \wt f^*L_Y-\mu^*L_X$. Upon taking further blow-ups, we may assume
$$
\wt f^*\I_Z=\O_{\wt X}(-E)
$$
for some effective Cartier divisor $E$, satisfying $\Supp(E)=\wt X\sm U$.

Let $N$ be a sufficiently large integer such that
$$
\L_Y^N\otimes\I_Z
$$
is globally generated. This implies that
$$
\wt f^*\L_Y^N\otimes\O_{\wt X}(-E)
$$
is nef. We choose an integer $k\ge 0$ such that $kE\ge -D$. Then, for any morphism $\phi:C\to X$ with $\phi(C)\cap U\neq \emptyset$, we have
$$
(kN+1)\deg_C(f\circ \phi)^*\L_Y\ge k\deg_C \wt\phi^*(E)+\deg_C(f\circ \phi)^*\L_Y
$$
where $\wt\phi:C\to \wt X$ is the induced morphism that lifts $\phi$. Furthermore,
$$
k\deg_C \wt\phi^*(E)+\deg_C(f\circ \phi)^*\L_Y\ge \deg_C \wt\phi^*(-D)+\deg_C(f\circ \phi)^*\L_Y=\deg_C \phi^*\L_X.
$$
Taking $\epsilon=\frac{1}{kN+1}$ completes the proof.
\end{proof}

Now, we prove Theorem \ref{thm:Mordant question and indeterminacy}, thereby answering questions of Mordant \cite[Section 5.3]{Mordant25}.

\begin{proof}[Proof of Theorem \ref{thm:Mordant question and indeterminacy}]
Note that the compactified period map
$$
\overline {\mathcal P}:\overline\M^{\rm GIT}_{(n,d)} \dashrightarrow \overline \M^{\rm BB}_{(n,d)}
$$
is defined over a codimension $1$ point. Consider a general pencil
$$
\X:=\{P+tQ=0\}\subset \P^n\times\P_t^1
$$
over a rational curve $\P^1_t$. Then the GIT moduli image $\overline\Phi_{\rm GIT}(\P^1_t)$ is contained in the domain of definition of $\overline {\mathcal P}$.

By Proposition \ref{prop:GIT is Q factorial picard rank 1}, the reflexive hull of the pullback of the Griffiths bundle $\O_{\overline \M^{\rm BB}_{(n,d)}}(1)$ is a rational multiple of the GIT polarization $\O_{\overline\M^{\rm GIT}_{(n,d)}}(1)$:
$$
\O_{\overline\M^{\rm GIT}_{(n,d)}}(\alpha)\isom_\Q \left[\overline {\mathcal P}^*\O_{\overline \M^{\rm BB}_{(n,d)}}(1)\right]^{**} \quad \text{for}\quad \alpha\in\Q.
$$
From the construction of $\overline \M^{\rm BB}_{(n,d)}$ (see Theorem \ref{thm:BFMT BB}) and the computation of Mordant \cite[Theorem 1.1]{Mordant25}
$$
\mathrm{ht}_{GK}(\mathbb H^{n-1}(\X_\eta/\P^1_\eta))=F(n,d)\,\mathrm{ht}_{GIT}(\X_\eta)
$$
for a general pencil, we obtain
$$
\alpha=F(n,d).
$$
Hence, Lemma \ref{lem:Mordant Q1 numerical criterion} proves the first part of the theorem.

The second part of the theorem follows from Lemma \ref{lem:Mordant Q2} and the local Torelli theorem \cite[Theorem 9.8]{Griffiths69}. Indeed, denote by $\M^{\rm sm}_{(n,d)}$ the moduli space of smooth hypersurfaces. Then by the local Torelli, the period map restricted to $\M^{\rm sm}_{(n,d)}$ is an open embedding upon taking a Stein factorization of $\overline {\mathcal P}$. Therefore, Lemma \ref{lem:Mordant Q2} applies to the Stein factorization of $\overline {\mathcal P}$ with respect to the open subset $\M^{\rm sm}_{(n,d)}$, proving the existence of the desired positive constant $\epsilon(n,d)$.
\end{proof}

We record a numerical criterion for the extension of the period map in a form that is more suitable for explicit computation.

\begin{cor}
\label{cor:numerical criterion for pencil}
Let $(n,d)\neq(3,3)$, where $n\geq 2$ and $d\geq 3$. Let $X\subset \P^n$ be a degree $d$ hypersurface with a defining equation $\{P=0\}$. Then the following are equivalent:

\noindent
(1) The period map $\overline \Phi:\P |\O_{\P^n}(d)|\dashrightarrow\overline \M^{\rm BB}_{(n,d)}$ extends at $[X]\in\P |\O_{\P^n}(d)|$.

\noindent
(2) The hypersurface $X$ is GIT semistable and the period map $\overline {\mathcal P}:\overline\M^{\rm GIT}_{(n,d)}\dashrightarrow\overline \M^{\rm BB}_{(n,d)}$ extends at $[X]\in\overline\M^{\rm GIT}_{(n,d)}$.

\noindent 
(3) For a general degree $d$ hypersurface $\{Q=0\}\subset \P^n$, the pencil
$$
\X:=\{P+tQ=0\}\subset \P^n\times\P_t^1
$$
satisfies
$$
\mathrm{ht}_{GK}(\mathbb H^{n-1}(\X_\eta/\P^1_\eta))=F(n,d).
$$
\end{cor}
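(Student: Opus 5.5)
We prove (1)$\Leftrightarrow$(2) and (2)$\Leftrightarrow$(3) separately.

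\emph{(1)$\Leftrightarrow$(2).} The map $\overline\Phi$ on $\P|\O_{\P^n}(d)|$ restricted to the semistable locus $(\P V)^{\rm ss}$ is the composition $\overline{\mathcal P}\circ\pi$, where $\pi:(\P V)^{\rm ss}\to\overline\M^{\rm GIT}_{(n,d)}$ is the GIT quotient.

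If $X$ is semistable and $\overline{\mathcal P}$ extends at $\pi([X])$, then $\overline\Phi$ extends at $[X]$. Conversely, suppose $\overline\Phi$ extends at $[X]$.

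First, $X$ must be semistable. If $X$ is unstable, take a one-parameter subgroup $\lambda$ destabilizing $X$. The orbit $\lambda(t)\cdot Y$ of a general smooth hypersurface $Y$ near $X$ can be arranged so that its limit is $X$. Since $\overline\Phi$ is $PGL$-invariant, it is constant along these orbits. The degree computation of Theorem~\ref{thm:Mordant question and indeterminacy} and the positivity $\deg \G>0$ on non-isotrivial families should then give a contradiction. Alternatively, and I expect this to be the cleaner route, one shows that the period map near an unstable point must contract a family that is not contracted near a semistable point. I anticipate this unstable case is the main subtlety. Concretely, I would use the fact that closures of orbits of smooth hypersurfaces through unstable points also meet orbits with different periods, so no continuous extension exists.

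Second, given semistability, extension descends along $\pi$. Let $\overline{\mathcal P}$ be resolved by $\mu:\wt M\to\overline\M^{\rm GIT}$. Any curve in $\mu^{-1}(\pi[X])$ lifts, after a base change, to a curve in the normalized graph of $\overline\Phi$ over a neighborhood of $[X]$; this uses properness of $\pi$ near the closed orbit and the fact that $\pi$ is a good quotient. Because $\overline\Phi$ is regular at $[X]$, and hence $PGL$-invariantly near the closed orbit in $\overline{PGL\cdot X}$, the lifted curve is contracted. Lemma~\ref{lem:extendability via resolution} then gives regularity of $\overline{\mathcal P}$.

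\emph{(2)$\Leftrightarrow$(3).} For general $Q$, the pencil consists of stable hypersurfaces for generic $t$. Its members other than $X$ (at $t=0$) lie in the codimension-one nodal locus, which is contained in the domain of definition by Theorem~\ref{thm:extendability} and Lemma~\ref{lem:classification of n/2 Du Bois}. The moduli curve $\phi:\P^1_t\to\overline\M^{\rm GIT}$ thus meets the indeterminacy locus at most at $[X]$, and only if $X$ is semistable; if $X$ is unstable, then $\phi(0)$ is some other point.

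Since $\O_{\P V}(1)$ descends to the GIT polarization and the pencil is a line, $\mathrm{ht}_{GIT}=1$. Theorem~\ref{thm:Mordant question and indeterminacy} then says equality $\mathrm{ht}_{GK}=F(n,d)$ holds iff $\phi(\P^1)$ lies in the domain of definition. If $X$ is semistable, this means exactly that $\overline{\mathcal P}$ extends at $[X]$.

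For unstable $X$, I would show that the strict inequality $\mathrm{ht}_{GK}<F(n,d)$ holds. To do this, I would view the line $\P^1_t\subset\P V$ and apply Lemma~\ref{lem:Mordant Q1 numerical criterion} directly on $\P V$, which is $\Q$-factorial of Picard rank $1$. Since the pullback of $\G$ is $F(n,d)\,\O(1)$ there, the equality case forces $\overline\Phi$ to be defined at $[X]$, and this links (3) to (1) regardless of stability. Combining this with (1)$\Rightarrow$ semistability closes the cycle.
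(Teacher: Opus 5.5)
\textbf{Gap: the proposal never shows that (1) or (3) forces $X$ to be GIT semistable.}

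Your (1)$\Leftrightarrow$(3) is the paper's argument: Lemma~\ref{lem:Mordant Q1 numerical criterion} on $\P|\O_{\P^n}(d)|$, where the reflexive hull of $\overline\Phi^*\O_{\overline\M^{\rm BB}_{(n,d)}}(1)$ is $\O(F(n,d))$, the pencil is a line, and its other members are nodal. Your (2)$\Leftrightarrow$(3) for semistable $X$ is also fine. The step you flag as ``the main subtlety'' is the one that is missing, and your sketches for it do not work as stated.

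\begin{itemize}
\item A limit $\lim_{t\to 0}\lambda(t)\cdot[Y]$ along a one-parameter subgroup is always $\lambda$-fixed: it is the extreme-weight part of $Y$. So it cannot reach an unstable $X$ that has no $\mathbb{G}_m$-symmetry.
\item The claim that orbit closures of smooth hypersurfaces with different periods pass through every unstable point is asserted, not proved.
\item In your descent argument, $PGL$-invariance of the domain of $\overline\Phi$ only gives regularity along the orbit of $[X]$. It does not give regularity near the closed orbit in its closure.
\end{itemize}

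Your (2)$\Leftrightarrow$(3) paragraph also contains a false claim: $\mathrm{ht}_{GIT}(\X_\eta)=1$ does not hold for every pencil. If $X$ is unstable, every $PGL(n+1)$-invariant section of $\O_{\P V}(k)$ with $k>0$ vanishes at $[X]$. Hence the linear system defining the moduli map $\P^1_t\to\overline\M^{\rm GIT}_{(n,d)}$ has a base point at $t=0$, and the degree drops strictly below $1$.

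This strict drop is exactly the missing ingredient, and it is what the paper uses. By \cite[Theorem 3.4]{Mordant25}, $\mathrm{ht}_{GIT}(\X_\eta)\le 1$, with equality if and only if every fiber of the pencil is semistable. Then (3)$\Rightarrow$(2) is immediate from Theorem~\ref{thm:Mordant question and indeterminacy}: $F(n,d)=\mathrm{ht}_{GK}\le F(n,d)\,\mathrm{ht}_{GIT}\le F(n,d)$, so both inequalities are equalities.
\begin{itemize}
\item Equality in $F(n,d)\,\mathrm{ht}_{GIT}\le F(n,d)$ gives $\mathrm{ht}_{GIT}=1$, hence $X$ is semistable.
\item Equality in $\mathrm{ht}_{GK}\le F(n,d)\,\mathrm{ht}_{GIT}$ says the moduli image of $\P^1_t$, and in particular the image of $[X]$, lies in the domain of $\overline{\mathcal P}$.
\end{itemize}
Combined with your (1)$\Leftrightarrow$(3) and the trivial (2)$\Rightarrow$(1), this closes the cycle. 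No orbit-closure or curve-lifting argument along the quotient map is needed.
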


\begin{proof}
We first prove the equivalence between (1) and (3). As in the proof of Theorem \ref{thm:Mordant question and indeterminacy}, we have
$$
\O_{\P |\O_{\P^n}(d)|}(F(n,d))\isom_\Q \left[\overline \Phi^*\O_{\overline \M^{\rm BB}_{(n,d)}}(1)\right]^{**}
$$
upon taking the reflexive hull. Note that the compactified period map extends over the locus of nodal hypersurfaces by Theorem \ref{thm:extendability}. Since every fiber $\X_t$ is a nodal hypersurface when $t\neq0$, the moduli image
$$
\P^1_t\sm\{t=0\}\to \P |\O_{\P^n}(d)|
$$
is contained in the domain of definition of $\overline \Phi$. Therefore, by Lemma \ref{lem:Mordant Q1 numerical criterion}, the hypersurface $[X]$ is contained in the domain of definition of $\overline \Phi$ if and only if
$$
\mathrm{ht}_{GK}(\mathbb H^{n-1}(\X_\eta/\P^1_\eta))=F(n,d).
$$
It is immediate from the construction of a pencil that $\P^1_t$ is a line in $\P |\O_{\P^n}(d)|$, and thus, has degree $1$ with respect to the ample line bundle $\O(1)$ on $\P |\O_{\P^n}(d)|$.

Next, we prove the equivalence between (2) and (3). It is immediate that (2) implies (1), so it suffices to show that (3) implies (2). By \cite[Theorem 3.4]{Mordant25}, we have
$$
\mathrm{ht}_{GIT}(\X_\eta)\le \int_{\P^n\times\P^1_t}h_1^n\cdot(dh_1+h_2)
$$
where $h_1$ is a class of $\O_{\P^n}(1)$ and $h_2$ is a class of $\O_{\P^1_t}(1)$ pulled back to $\P^n\times\P^1_t$. In particular,
$$
\mathrm{ht}_{GIT}(\X_\eta)\le1
$$
and the equality holds if and only if every fiber $\X_t$ is GIT semistable. Additionally, Theorem~\ref{thm:Mordant question and indeterminacy} implies that
$$
\mathrm{ht}_{GK}(\mathbb H^{n-1}(\X_\eta/\P^1_\eta))\le F(n,d)\,\mathrm{ht}_{GIT}(\X_\eta)
$$
where the equality holds if and only if the moduli image of $\P^1_t$ in $\overline\M_{(n,d)}^{\rm GIT}$ is contained in the domain of definition of $\overline{\mathcal P}$. Therefore, the equality
$$
\mathrm{ht}_{GK}(\mathbb H^{n-1}(\X_\eta/\P^1_\eta))=F(n,d)
$$
implies that both preceding inequalities must be equalities, and thus, we have (2).
\end{proof}

\begin{rmk}
It is easy to check the following inequality
$$
\lfloor\frac{n}{2}\rfloor\ge \frac{n+1}{d}
$$
for $n\ge 2,d\ge 3$ and $(n,d)\neq (3,3)$. From the GIT stability criterion via minimal exponent \cite[Theorem A]{Park25}, every hypersurface with $(\lfloor\frac{n}{2}\rfloor-1)$-Du Bois singularities is GIT semistable for such $n$ and $d$. This can also be obtained as a consequence of Corollary \ref{cor:numerical criterion for pencil}.
\end{rmk}

\section{Limiting mixed Hodge structures and Griffiths heights}
\label{sec:LMHS and Griffiths}

Let $\mathbb V=(\mathbb V_\Z,F^\bullet, Q)$ be a polarized variation of $\Z$-Hodge structures on a punctured disk
$$
\Delta_t^*:=\Delta_t\sm\{t=0\}.
$$
Denote by $j:\Delta_t^*\to \Delta_t$ the open embedding. Schmid \cite[Theorem 6.16]{Schmid73} defined the limiting (complex) mixed Hodge structure of $\mathbb V$. Although \emph{loc. cit.} is stated in the setting of geometric families, the same construction applies to an abstract polarized variation of $\Z$-Hodge structure.

The construction of the limiting mixed Hodge structure is as follows. After taking a cyclic cover 
$$
\rho:\Delta_s\to \Delta_t
$$
with $t=s^m$ for a sufficiently divisible $m$, the pullback $\mathbb V_s:=\rho^*\mathbb V$ is a polarized variation of $\Z$-Hodge structure on $\Delta_s^*$ with a unipotent local monodromy. Denote by $\overline {\mathcal V}_s$ (resp. $\overline {\mathcal V}_t^{[0,1)}$) Deligne's (lower) canonical extension of $\mathbb V_s$ (resp. $\mathbb V$). Then the Hodge filtration extends to a filtration by subbundles
$$
F^\bullet \overline {\mathcal V}_s \quad\text{and}\quad F^\bullet\overline {\mathcal V}_t^{[0,1)}
$$
and the limiting (complex) mixed Hodge structure of $\mathbb V$ is defined by
$$
(\overline {\mathcal V}_s|_{0}, F^\bullet \overline {\mathcal V}_s|_{0}, W)
$$
where $\overline {\mathcal V}_s|_{0}$ and $F^\bullet \overline {\mathcal V}_s|_{0}$ are the fibers of the bundles at $\{s=0\}$, and $W$ is the monodromy weight filtration induced by the action of the nilpotent operator $N:=\log T_u$ on this fiber. Here, $T_u$ is the unipotent part of the local monodromy. In this paper, the monodromy weight filtration will not play a role.

Alternatively, the limiting mixed Hodge structure is constructed using Saito's theory of mixed Hodge modules \cite{Saito88, Saito90}. The construction is intrinsic to $\mathbb V$, without taking a cyclic cover. By \cite[Theorem 3.21]{Saito90}, we have a Hodge module
$$
\mathbb V^H[1]\in {\rm MHM}(\Delta_t^*)
$$
associated to $\mathbb V$, and its open pushforward
$$
j_*\mathbb V^H[1]\in {\rm MHM}(\Delta_t).
$$
Applying the nearby cycle functor (see \cite[Theorem 0.1]{Saito90}), we obtain a rational mixed Hodge structure
$$
\psi_t\left(j_*\mathbb V^H[1]\right)\in {\rm MHM}(\{0\}).
$$
Here, the category of mixed Hodge modules on a point $\{0\}$ is equivalent to the category of (graded polarizable) rational mixed Hodge structures. It follows from Saito's construction of the nearby cycle functor that the underlying complex mixed Hodge structure agrees with Schmid's construction above.

Note that the underlying complex mixed Hodge structure admits a direct sum decomposition
$$
\psi_t\left(j_*\mathbb V^H[1]\right)=\bigoplus_{|\lambda|=1}\psi_{t,\lambda}\left(j_*\mathbb V^H[1]\right)
$$
into generalized eigenspaces with respect to the monodromy operator \cite[(2.2.6)]{Saito90}. The following lemma describes the discrepancy between $F^\bullet \overline {\mathcal V}_s$ and $F^\bullet\overline {\mathcal V}_t^{[0,1)}$ using the above decomposition.

This is known to some experts, but we include a proof for completeness. See, for example, \cite[Section 2]{EFiMM21} for a description in the setting of families of varieties over a punctured disk.

\begin{lem}
\label{lem:Deligne's canonical extension and finite cover}
In the above setting, we have a natural inclusion
$$
\rho^*F^p\overline {\mathcal V}_t^{[0,1)}\subset F^p\overline {\mathcal V}_s
$$
which is an isomorphism over $\Delta_s^*$. The dimension of the cokernel is
$$
\dim_\C \left[F^p\overline {\mathcal V}_s/\rho^*F^p\overline {\mathcal V}_t^{[0,1)}\right]
=\sum_{k=0}^{m-1}k\dim_\C F^p\left[\psi_{t,e^{-2\pi i\frac{k}{m}}}\left(j_*\mathbb V^H[1]\right)\right].$$
\end{lem}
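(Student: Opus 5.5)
The plan is to work locally with explicit multivalued flat frames, decomposing by eigenvalues of the semisimple part of the monodromy. Let $T=T_sT_u$ be the Jordan decomposition of the local monodromy of $\mathbb V$ on $\Delta_t^*$. Since $T^m$ is unipotent, every eigenvalue of $T_s$ has the form $\lambda=e^{-2\pi i k/m}$ with $0\le k\le m-1$; set $\alpha_\lambda=k/m\in[0,1)$. Write $V=\bigoplus_\lambda V_\lambda$ for the generalized eigenspace decomposition of a nearby fiber.

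First, describe the lower canonical extension. For a multivalued flat section $v\in V_\lambda$, the twisted section
$$\tilde v(t)=\exp\left(\log t\,(\alpha_\lambda-N/2\pi i)\right)v$$
is single valued, with suitable sign conventions. Such sections for $v\in V_\lambda$ and all $\lambda$ form a frame of $\overline{\mathcal V}_t^{[0,1)}$, on which the residue acts by $\alpha_\lambda-N/2\pi i$ with eigenvalues in $[0,1)$.

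Second, describe the unipotent extension on $\Delta_s$. It has frame $\exp(-\log s\,N_s/2\pi i)v$ with $N_s=mN$, so $\log s\,N_s=\log t\,N$. Comparing the two frames gives
$$\rho^*\tilde v=s^{k}\cdot\bigl(\text{frame element of }\overline{\mathcal V}_s\bigr).$$
Hence, compatibly with the eigen-decomposition (which is flat and therefore extends to both canonical extensions),
$$\rho^*\overline{\mathcal V}_t^{[0,1)}=\bigoplus_\lambda s^{k_\lambda}\,\overline{\mathcal V}_{s,\lambda},$$
which is an isomorphism over $\Delta_s^*$.

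Third, bring in the Hodge filtration. Since $\rho^*F^p$ and $F^p\overline{\mathcal V}_s$ agree over $\Delta_s^*$, both are the unique subbundles extending $F^p$ (Schmid), and $\rho^*F^p\overline{\mathcal V}_t^{[0,1)}=\rho^*\overline{\mathcal V}_t^{[0,1)}\cap j_*F^p$, the inclusion is natural. The point to prove is the graded statement: the filtrations split compatibly with the eigen-decomposition. For this I would use Saito's description. The nearby cycle $\psi_{t,\lambda}(j_*\mathbb V^H[1])$ is $\gr_V^{\alpha_\lambda}$ of the $V$-filtration of the $\mathcal D$-module, with Hodge filtration induced by $F_\bullet$. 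Here $V^{\alpha}$ for $\alpha\in[0,1)$ is exactly $\bigoplus_{\alpha_\lambda\ge\alpha}$ of the Deligne pieces, and Schmid/Saito give $F^p\cap V^{\alpha}$ as a subbundle.

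Then $F^p\overline{\mathcal V}_s=\bigoplus_\lambda F^p\overline{\mathcal V}_{s,\lambda}$, while
$$\rho^*F^p\overline{\mathcal V}_t^{[0,1)}=\bigoplus_\lambda s^{k_\lambda}F^p\overline{\mathcal V}_{s,\lambda}$$
at least up to a filtration whose graded pieces have this shape. The quotient therefore has length $\sum_\lambda k_\lambda\,\mathrm{rk}F^p\overline{\mathcal V}_{s,\lambda}$. Finally, $\mathrm{rk}F^p\overline{\mathcal V}_{s,\lambda}=\dim F^p\psi_{t,\lambda}$, because the fiber at $0$ of $F^p\overline{\mathcal V}_s$ is the limit Hodge filtration and the eigen-decomposition is compatible with it (Saito's $\psi_{t,\lambda}$ agrees with Schmid's). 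This gives the stated formula.

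The main obstacle is the compatibility of $F^\bullet$ with the $\lambda$-decomposition near $s=0$. The decomposition of $\overline{\mathcal V}_s$ into $\lambda$-pieces is not a priori compatible with $F$, since $F$ is not flat. I would handle it through the $\mathbb Z/m$ deck action on $\rho^*$. The deck group acts on $\overline{\mathcal V}_s$ preserving $F^p\overline{\mathcal V}_s$, because $F$ is pulled back from $\Delta_t$. Its isotypic decomposition is precisely the twisted-by-$s^{k}$ decomposition. Invariants of the isotypic pieces recover $\rho^*$ of the lower extension, so each $F^p$ splits into isotypic summands of ranks $\dim F^p\psi_{t,\lambda}$, and the length count becomes a direct computation on each summand.
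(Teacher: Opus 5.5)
Your route ends up being the paper's: pass to the deck group $G=\Z/m\Z$, use that $F^p\overline{\mathcal V}_s$ is $G$-stable, identify the $g$-eigenspaces of the fiber at $s=0$ with the $\psi_{t,\lambda}$, and count $G$-invariants. Your Step 2 also makes explicit what the paper calls ``easy from the local description''. Since $\rho^*\tilde v=s^k\hat v$ is invariant, the flat frame element $\hat v$ attached to $v\in V_\lambda$, $\lambda=e^{-2\pi ik/m}$, is a $G$-eigensection of character $\zeta^{-k}$. This gives $(\rho_*\overline{\mathcal V}_s)^G=\overline{\mathcal V}_t^{[0,1)}$ and matches $g$-eigenvalues with monodromy eigenvalues.

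The decisive step, however, is wrong as written. The claim $F^p\overline{\mathcal V}_s=\bigoplus_\lambda F^p\overline{\mathcal V}_{s,\lambda}$ along the flat pieces $\overline{\mathcal V}_{s,\lambda}$ is false. So is its consequence $\rho^*F^p\overline{\mathcal V}_t^{[0,1)}=\bigoplus_\lambda s^{k_\lambda}F^p\overline{\mathcal V}_{s,\lambda}$. $G$-stability does not imply it, since on $\Delta_s^*$ it would force the Hodge filtration to be compatible with the flat decomposition $V=\bigoplus_\lambda V_\lambda$. Take a non-isotrivial family of elliptic curves with monodromy of order $3$ (Kodaira type IV). Then $\overline{\mathcal V}_s=\mathcal O_{\Delta_s}\otimes V$ and $F^1$ is a non-flat line, so $F^1\cap\overline{\mathcal V}_{s,\lambda}=0$ for both eigenvalues. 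A $G$-stable line such as $\langle \hat v_1+s^{j}h(s^m)\hat v_2\rangle$ with $j\equiv k_2-k_1 \pmod m$ has its fiber at $0$ in one eigenspace, yet it does not split. Hence ``so each $F^p$ splits into isotypic summands'' does not follow from the isotypic decomposition of $\overline{\mathcal V}_s$, and the ``up to a filtration'' hedge is unsupported. The $V$-filtration detour is unnecessary; also, for $\alpha\in(0,1)$, $V^\alpha$ contains $t$ times the Deligne pieces with $\alpha_\lambda<\alpha$.

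What is missing is an eigenframe of $F^p\overline{\mathcal V}_s$ itself, which is what the paper constructs.
\begin{itemize}
\item Choose a basis of each $\zeta^{-k}$-eigenspace of $F^p\overline{\mathcal V}_s|_0$; this eigenspace has dimension $a_k=\dim F^p\psi_{t,e^{-2\pi ik/m}}(j_*\mathbb V^H[1])$.
\item Lift the basis to sections $v_{k,i}$ of $F^p\overline{\mathcal V}_s$ and replace them by $V_{k,i}=\frac1m\sum_b\zeta^{bk}g^b(v_{k,i})$. These stay in $F^p\overline{\mathcal V}_s$ because $F^p$ is $G$-stable, satisfy $g(V_{k,i})=\zeta^{-k}V_{k,i}$, and have the same values at $0$.
\item By Nakayama they form a frame. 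In general they are not flat and have components in several $\overline{\mathcal V}_{s,\lambda'}$.
\item The $G$-invariant sections of $F^p\overline{\mathcal V}_s$ are then exactly $\bigoplus_{k,i}s^kV_{k,i}\,\mathcal O_{\Delta_t}$.
\end{itemize}
Your saturation identity $\rho^*F^p\overline{\mathcal V}_t^{[0,1)}=\rho^*\overline{\mathcal V}_t^{[0,1)}\cap j_*F^p$, together with $(\rho_*\overline{\mathcal V}_s)^G=\overline{\mathcal V}_t^{[0,1)}$, shows that $\rho^*F^p\overline{\mathcal V}_t^{[0,1)}$ is the $\mathcal O_{\Delta_s}$-span of these invariant sections. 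The paper states this as $(\rho_*F^p\overline{\mathcal V}_s)^G\isom F^p\overline{\mathcal V}_t^{[0,1)}$. That span is $\bigoplus_{k,i}s^kV_{k,i}\,\mathcal O_{\Delta_s}$, whose cokernel in $F^p\overline{\mathcal V}_s$ has length $\sum_k k\,a_k$.
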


\begin{proof}
Note that the cyclic group $G:=\Z/m\Z$ acts on $\Delta_s$ over $\Delta_t$. We choose a generator $g\in G$ so that $g(s)=\zeta s$ where $\zeta:=e^{\frac{2\pi i}{m}}$ and $s$ is a coordinate function. The pullback
$$
\mathbb V_s:=\rho^*\mathbb V
$$
has an invariant $G$-action over $\Delta_t$, and thus, Deligne's canonical extension
$$
(\overline {\mathcal V}_s, F^\bullet \overline {\mathcal V}_s)
$$
has an invariant $G$-action, and its fiber at $\{s=0\}$ inherits a $G$-action.

From the local analytic description of Deligne's lower canonical extension (see, for example, the proof of Lemma \ref{lem:Griffiths bundle alternative}), it is easy to obtain the following isomorphism of vector bundles
$$
\left(\rho_*\overline {\mathcal V}_s\right)^G\isom\overline {\mathcal V}_t^{[0,1)}
$$
where the left hand side is the sheaf of $G$-invariant sections of the pushforward $\rho_*\overline {\mathcal V}_s$. Since
$$
\left(\rho_*F^p\overline {\mathcal V}_s\right)^G \quad {\rm and}\quad F^p\overline {\mathcal V}_t^{[0,1)}
$$
are subbundles of $\left(\rho_*\overline {\mathcal V}_s\right)^G$ and $\overline {\mathcal V}_t^{[0,1)}$ respectively (i.e., each quotient sheaf is also a vector bundle), we obtain an isomorphism
\begin{equation}
\label{eqn:G-invariant canonical extension}
\left(\rho_*F^p\overline {\mathcal V}_s\right)^G \isom F^p\overline {\mathcal V}_t^{[0,1)} 
\end{equation}
as subbundles. Indeed, they are naturally isomorphic over $\Delta_t^*$, and they both uniquely extend as subbundles. This establishes a natural inclusion
$$
\rho^*F^p\overline {\mathcal V}_t^{[0,1)}\subset F^p\overline {\mathcal V}_s.
$$

Under the identification of Saito's and Schmid's limiting mixed Hodge structures
$$
\psi_t\left(j_*\mathbb V^H[1]\right)\isom(\overline {\mathcal V}_s|_{0}, F^\bullet \overline {\mathcal V}_s|_{0}, W),
$$
the generalized $\lambda$-eigenspace for the local monodromy operator $T$ on the left hand side is equal to the $\lambda$-eigenspace for the action of the generator $g\in G$ on the right hand side. Denote by
$$
(\overline {\mathcal V}_s|_{0}, F^\bullet \overline {\mathcal V}_s|_{0}, W)_\lambda
$$
the $\lambda$-eigenspace with respect to the action of $g$. This is identified with the complex mixed Hodge structure
$$
\psi_{t,\lambda}\left(j_*\mathbb V^H[1]\right).
$$
When $\lambda=e^{-2\pi i\frac{k}{m}}$ for $0\le k\le m-1$, choose a basis
$$
\overline v_{k,1},\dots,\overline v_{k,a_k}
$$
of the $\lambda$-eigenspace $(F^p\overline {\mathcal V}_s|_{0})_\lambda=(F^p\overline {\mathcal V}_s/s\cdot F^p\overline {\mathcal V}_s)_\lambda$, and a lift
$$
v_{k,1},\dots, v_{k,a_k}
$$
as sections of $F^p\overline {\mathcal V}_s$. In particular, we have
$$
a_k=\dim_\C F^p\left[\psi_{t,e^{-2\pi i\frac{k}{m}}}\left(j_*\mathbb V^H[1]\right)\right].
$$
Let
$$
V_{k,i}:=\frac{1}{m}\sum_{b=0}^{m-1}\zeta^{bk}g^b(v_{k,i})
$$
be a section of $F^p\overline {\mathcal V}_s$ for all $1\le i\le a_k$. Then 
$$
g(V_{k,i})=\zeta^{-k}V_{k,i}
$$
and the image of $V_{k_i}$ in $F^p\overline {\mathcal V}_s|_{0}$ is $v_{k,i}$. In other words, for each $0\le k\le m-1$, the images of
$$
V_{k,1},\dots, V_{k,a_k}
$$
in $F^p\overline {\mathcal V}_s|_{0}$ form a basis of its $\zeta^{-k}$-eigenspace. Hence, by Nakayama's Lemma, we have the local isomorphism
$$
F^p\overline {\mathcal V}_s=\bigoplus_{k=0}^{m-1}\bigoplus_{i=1}^{a_k}V_{k,i}\O_{\Delta_s}
$$
near the origin $\{s=0\}$. Note that $g(s^jV_{k,i})=\zeta^{j-k}s^jV_{k,i}$. This implies that
$$
\left(\rho_*F^p\overline {\mathcal V}_s\right)^G=\left(\bigoplus_{k=0}^{m-1}\bigoplus_{i=1}^{a_k}V_{k,i}\O_{\Delta_s}\right)^G=\bigoplus_{k=0}^{m-1}\bigoplus_{i=1}^{a_k}s^kV_{k,i}\O_{\Delta_t}.
$$
From the isomorphism \eqref{eqn:G-invariant canonical extension}, we have 
$$
\dim_\C\left[F^p\overline {\mathcal V}_s/\rho^*F^p\overline {\mathcal V}_t^{[0,1)}\right]=\sum_{k=0}^{m-1}ka_k,
$$
which is the desired equality.
\end{proof}

This lemma allows us to compute the difference between the Griffiths height and the lower Griffiths height in terms of limiting mixed Hodge structures.

\begin{prop}
\label{prop:Griffiths height via LMHS}
Let $C$ be a smooth projective curve, and let $\mathbb V_\eta$ be a polarized variation of (effective) Hodge structures of weight $w$, defined on an open subset $C\sm\{c_1,\dots,c_s\}$ of $C$. Then
$$
\mathrm{ht}_{GK}(\mathbb V_\eta)-\mathrm{ht}_{GK,-}(\mathbb V_\eta)=\sum_{\gamma=1}^s\sum_{\alpha\in[0,1)}\sum_{p=1}^w\alpha\dim_\C F^p\left[\psi_{t_\gamma,e^{-2\pi i\alpha}}\left(j_*\mathbb V^H[1]\right)\right],
$$
where $t_\gamma$ is a local coordinate function near the point $c_\gamma$ and $j:C\sm\{c_1,\dots,c_s\}\to C$ is an open embedding.
\end{prop}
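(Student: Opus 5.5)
The plan is to reduce the statement to a local computation at each puncture $c_\gamma$ and then apply Lemma \ref{lem:Deligne's canonical extension and finite cover}. First, choose a finite morphism $\rho:C'\to C$ from a smooth projective curve, ramified at each $c_\gamma$ with ramification index divisible by a sufficiently divisible $m$, such that $\rho^*\mathbb V_\eta$ has unipotent local monodromy at every point of $\rho^{-1}(\{c_1,\dots,c_s\})$. Kawamata's covering trick provides such a cover; alternatively, take a cyclic cover branched at the $c_\gamma$ and at auxiliary points where the monodromy is trivial, where ramification introduces no correction. By definition of the Griffiths $\Q$-bundle,
$$
\deg_C\G_C=\frac{1}{\deg\rho}\deg_{C'}\G_{C'}.
$$
Since $\G^{[0,1)}_C$ is an honest line bundle on $C$, we have $\deg_{C'}\rho^*\G^{[0,1)}_C=\deg\rho\cdot\deg_C\G^{[0,1)}_C$. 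So it suffices to compute
$$
\deg_{C'}\G_{C'}-\deg_{C'}\rho^*\G^{[0,1)}_C=\sum_{p=1}^w\Big(\deg\det F^p\overline{\mathcal V}_{C'}-\deg\rho^*\det F^p\overline{\mathcal V}^{[0,1)}_C\Big).
$$

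Next, apply Lemma \ref{lem:Deligne's canonical extension and finite cover} locally. By the local construction in that proof, the natural inclusion $\rho^*F^p\overline{\mathcal V}^{[0,1)}_C\subset F^p\overline{\mathcal V}_{C'}$ is defined globally and is an isomorphism away from the preimages of the $c_\gamma$. Hence the difference of degrees of determinants equals the total length of the cokernel, summed over the points of $\rho^{-1}(c_\gamma)$. Near each such point, $\rho$ looks like $t=s^{e}$ with $e$ the ramification index. The lemma, applied with $m=e$, gives a local cokernel length of
$$
\sum_{k=0}^{e-1}k\dim F^p\psi_{t,e^{-2\pi i k/e}}.
$$
Writing $\alpha=k/e\in[0,1)$, this equals
$$
e\sum_{\alpha\in[0,1)}\alpha\dim F^p\psi_{t_\gamma,e^{-2\pi i\alpha}}\left(j_*\mathbb V^H[1]\right).
$$
Here all eigenvalues have the form $e^{-2\pi i k/e}$, because $e$ is divisible enough to make the monodromy unipotent after pullback.

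Summing over the points above $c_\gamma$ replaces $e$ by $\sum e=\deg\rho$, which cancels the factor $1/\deg\rho$. Summing over $p$ and $\gamma$ then gives the formula.

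The main obstacle is bookkeeping rather than substance. One must check that the inclusion of Lemma \ref{lem:Deligne's canonical extension and finite cover} globalizes; this holds because it is canonical and compatible with restriction to disks. One must also ensure that the cover can be chosen with uniform divisibility at each $c_\gamma$, or else handle each point with its own ramification index, which is harmless since the formula is local and scaling-invariant. Finally, one checks that the nearby cycles at a point of $C'$ above $c_\gamma$ relate to those of $\mathbb V$ at $c_\gamma$ exactly as in the local lemma, where the cover $\Delta_s\to\Delta_t$ is the restriction of $\rho$.
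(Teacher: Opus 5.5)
Your proposal is correct and is essentially the paper's proof: pass to a finite cover with unipotent local monodromy, compare degrees of $\bigotimes_p\det F^p$ through the inclusion $\rho^*F^p\overline{\mathcal V}^{[0,1)}\subset F^p\overline{\mathcal V}'$, and evaluate the cokernel length locally with Lemma \ref{lem:Deligne's canonical extension and finite cover}. The only difference is minor. The paper takes an explicit $m$-cyclic cover branched over $c_1+\dots+c_s+R$, with $R$ an auxiliary reduced divisor on which $\mathbb V$ is smooth. That cover is totally ramified over each $c_\gamma$, so each $c_\gamma$ has a single preimage of index $m$. Your version allows several preimages and sums their ramification indices to $\deg\rho$, which gives the same computation.
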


Here, \emph{effective} means that $\gr_F^{p}\mathcal V=0$ for $p<0$, where $F^\bullet$ is a Hodge filtration on $\mathcal V:=\mathbb V_\Z\otimes_\Z\O$. Note that the sum is taken over finitely many nonzero terms, since only finitely many $e^{-2\pi i\alpha}$ appear as eigenvalues of local monodromies.

\begin{proof}
Suppose the local monodromy at $c_\gamma$ becomes unipotent after taking a cyclic cover of degree $m$ for all $1\le\gamma\le s$. Choose a sufficiently ample line bundle $A\in \Pic(C)$ such that
$$
A^m(-c_1-\dots-c_s)
$$
is very ample. In particular, we have
$$
A^m\isom\O_C(c_1+\dots+c_s+R)
$$
for a reduced divisor $R$ disjoint from $c_1,\dots,c_s$. Consequently, consider the induced $m$-cyclic cover
$$
\rho:C'\to C
$$
branched over $c_1+\dots+c_s+R$. Write $\rho^{-1}(c_i)=\{c_i'\}$ and set $\mathbb V':=\rho^*\mathbb V$ on $C'\sm\{c_1',\dots,c_s'\}$. Let $\overline{\mathcal V}'$ be Deligne's canonical extension of $\mathbb V'$. Then we have
$$
\rho^*F^p\overline{\mathcal V}^{[0,1)}\subset F^p\overline{\mathcal V}'
$$
by Lemma \ref{lem:Deligne's canonical extension and finite cover}, and from the definition of Griffiths heights, we have
$$
\deg_{C'}\bigotimes_{p=1}^w \det\left(F^p\overline{\mathcal V}'\right)=m\,\mathrm{ht}_{GK}(\mathbb V_\eta)
$$
and
$$
\deg_{C'}\bigotimes_{p=1}^w \rho^* \det\left(F^p\overline{\mathcal V}^{[0,1)}\right)=m\,\mathrm{ht}_{GK,-}(\mathbb V_\eta).
$$
Therefore, applying Lemma  \ref{lem:Deligne's canonical extension and finite cover} near each point $c_\gamma$ for $1\le\gamma\le s$, we have
$$
m\left(\mathrm{ht}_{GK}(\mathbb V_\eta)-\mathrm{ht}_{GK,-}(\mathbb V_\eta)\right)=\sum_{\gamma=1}^s\sum_{k=0}^{m-1}\sum_{p=1}^w k\dim_\C F^p\left[\psi_{t_\gamma,e^{-2\pi i\frac{k}{m}}}\left(j_*\mathbb V^H[1]\right)\right]
$$
which is equivalent to the desired equality.
\end{proof}

\section{Griffiths heights for pencils with isolated singularities}
\label{sec:pencil with isolated singularities}

Given degree $d$ homogeneous polynomials $P,Q\in\O_{\P^n}(d)$, consider a pencil
$$
\X:=\{P+tQ=0\}\subset \P^n\times\P_t^1.
$$
In this section, we provide a formula to compute the Griffiths height
$$
\mathrm{ht}_{GK}(\mathbb H^{n-1}(\X_\eta/\P^1_\eta))
$$
when $\X$ is smooth. Note that this smoothness assumption on $\X$ implies that every fiber $\X_t$ has isolated singularities. Indeed, the singular locus of $\X$ is determined by the vanishing of the Jacobian of the equation $P+tQ=0$. Hence, if a fiber $\X_0$ has a positive dimensional singular locus, then $\X$ is singular along 
$$
\Sing(\X_0)\cap\{Q=0\}\subset \X_0\subset \X.
$$

Denote by $S:=\Sing(\X/\P^1)$ the set of critical points of the morphism
$$
f:\X\to \P^1,
$$
that is, the set of singular points $\Sing(\X_t)$ for all $t\in \P^1$. For $x\in S\subset \X$, let $t$ be the local coordinate function of a disk $\Delta_t\subset\P^1$ centered at $f(x)$, and let
$$
f_t:\X_{\Delta_t}\to \Delta_t
$$
be the restriction of the morphism $f$ to $\X_{\Delta_t}:=f^{-1}(\Delta_t)$. Denote by
\begin{equation}
\label{eqn:V_lambda}
V_\lambda(x):=\phi_{f_t,\lambda}\left(\Q^H_\X[n]\right)|_{\{x\}}
\end{equation}
the complex mixed Hodge structure given by the generalized $\lambda$-eigenspace of the vanishing cycle mixed Hodge structure at $\{x\}$ 
$$
\phi_{f_t}\left(\Q^H_\X[n]\right)|_{\{x\}}\in {\rm MHM}(\{x\})
$$
with respect to the local monodromy operator at $f(x)$. Recall that the category of mixed Hodge modules on a closed point is equivalent to the category of (graded polarizable) rational mixed Hodge structures, and the vanishing cycle $\phi_{f_t}\left(\Q^H_\X[n]\right)$ is supported on the finite set of critical points.

In other words, the complex mixed Hodge structure $V_\lambda(x)$ is the generalized $\lambda$-eigenspace of the middle cohomology of the Milnor fiber of $f_t$ near $x$, with respect to the local monodromy operator.

Applying Lemma \ref{lem:Mordant Q1 numerical criterion} to the period map 
$$
\overline \Phi:\P |\O_{\P^n}(d)|\dashrightarrow\overline \M^{\rm BB}_{(n,d)}
$$
(see the proof of Corollary \ref{cor:numerical criterion for pencil}), we obtain
$$
\mathrm{ht}_{GK}(\mathbb H^{n-1}(\X_\eta/\P^1_\eta))\le F(n,d).
$$
The following theorem allows us to compute the difference between the two sides of the above inequality, in terms of the cohomology of the Milnor fiber at each critical point $x\in S$.

\begin{thm}
\label{thm:Griffiths height of pencil}
Let $(n,d)\neq(3,3)$, where $n\geq 2$ and $d\geq 3$. In the setting above, let
$$
A_{\alpha}(p):=(n-1)+\dots+(n-p)+\alpha p
$$
and
$$
h_{\lambda}^p(x):=\dim_\C\gr_F^pV_\lambda(x)
$$
for an integer $p\ge 0$. In other words, $h_{\lambda}^p(x)$ is the dimension of the $p$-th graded Hodge piece of the generalized $\lambda$-eigenspace of the middle cohomology of the Milnor fiber at $x$. Then the difference
$$
\mathrm{ht}_{GK}(\mathbb H^{n-1}(\X_\eta/\P^1_\eta))-F(n,d)
$$
between the Griffiths height and $F(n,d)$ is equal to
$$
\sum_{x\in S}\sum_{\alpha\in[0,1)}\sum_{p\in \Z_{\ge 0}} A_{\alpha}(p)\,h_{e^{-2\pi i\alpha}}^p(x)-\frac{(n+1)(d-1)^n}{2} \lfloor\frac{n(3n-2)}{4}\rfloor
$$
where $S:=\Sing(\X/\P^1)$ is the set of critical points of $f:\X\to \P^1$.
\end{thm}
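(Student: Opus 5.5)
Since $\X$ is smooth, the plan is to compute the lower Griffiths height of $\mathbb H^{n-1}$ directly, by expressing the lower canonical extension in terms of relative log differentials on $\X$. Then Proposition \ref{prop:Griffiths height via LMHS} converts lower into upper Griffiths height. Finally, the answer is normalized against a general pencil, whose height is $F(n,d)$ by Mordant.

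\emph{Step 1: the lower height as a boundary-independent Chern computation.} I would first show that $\mathrm{ht}_{GK,-}$ is a universal expression in $n,d$ minus local contributions at the critical points. On the smooth total space $\X\subset\P^n\times\P^1$, a divisor of bidegree $(d,1)$, I would use the relative de Rham complex $\Omega^\bullet_{\X/\P^1}$, or its torsion-free quotients. Near each isolated critical point, the Brieskorn lattice and Steenbrink/Varchenko theory describe how $R^{n-1-p}f_*\Omega^p_{\X/\P^1}$ differs from $\gr_F^p\overline{\mathcal V}^{[0,1)}$. The difference is a skyscraper whose length is governed by the spectrum of $f_t$ at $x$. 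Concretely, $\gr_F^p$ of the $e^{-2\pi i\alpha}$-part of the Milnor cohomology corresponds to spectral numbers in $(n-1-p)+\alpha$ (up to convention). Summing $p$ times these discrepancies, as in the Griffiths bundle, should produce terms of the form $\sum_p((n-1)+\dots+(n-p))h^p_\lambda(x)$. The global term $\sum_p p\,\chi$-type degree of $R^{n-1-p}f_*\Omega^p_{\X/\P^1}$ would be computed by Grothendieck--Riemann--Roch on $\X\to\P^1$. This is a Chern class computation depending only on $(n,d)$ and the number of critical points counted with Milnor numbers, since the total Milnor number of a pencil with smooth total space is fixed: it is determined by $\chi(\X)$.

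\emph{Step 2: passing from the lower to the upper height.} Proposition \ref{prop:Griffiths height via LMHS} adds $\sum\alpha\dim F^p\psi_{t,e^{-2\pi i\alpha}}$ over the boundary points. For isolated singularities, $\psi$ differs from $\phi$ only by the invariant part of the cohomology of the nearby smooth fibre, which is cohomology of $\X_0$ and lies in the $\lambda=1$ (that is, $\alpha=0$) part, where it contributes nothing. So $\sum_p\dim F^p\psi_\lambda=\sum_p p\,h^p_\lambda$ reduces to the local Milnor fibre contributions $\alpha p\,h^p_\lambda(x)$. Together with Step 1, this yields $\sum_x\sum_{\alpha,p}A_\alpha(p)h^p_{e^{-2\pi i\alpha}}(x)$ plus a constant.

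\emph{Step 3: fixing the constant.} For a general pencil, all singular fibres have a single node. There the vanishing cycle is one-dimensional with $\lambda=(-1)^n$ and Hodge level $\lfloor n/2\rfloor$ (appropriately indexed), and the height equals $F(n,d)$ by \cite[Theorem 1.1]{Mordant25}. The number of nodes is $(n+1)(d-1)^n$. I would then check that the universal constant from Step 1 equals $\frac{(n+1)(d-1)^n}{2}\lfloor\frac{n(3n-2)}{4}\rfloor$, which should be exactly $(n+1)(d-1)^n$ times $A_\alpha(p)$ for the node (for example $A_{1/2}(\cdot)$ when $n$ is odd and $A_0$ when $n$ is even). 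This calibration removes the need to carry out the Riemann--Roch computation in full, provided the global term depends only on $(n,d)$.

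The main obstacle is Step 1, namely the precise local comparison between $R^{n-1-p}f_*\Omega^p_{\X/\P^1}$ (or a suitable log version) and the Hodge pieces of the lower canonical extension at an isolated critical point. I would attack it via the $V$-filtration. Using $\gr_F\DR$ of $\psi_{f}$ and $\phi_f$ of $\Q^H_\X[n]$, in the style of Proposition \ref{prop:Hodge pieces for higher Du Bois family}, one can express the pushforward of $\gr_F\DR(j_*\Q^H)$ along $f$. One then reads off the skyscraper lengths as $\dim\gr_F^p\phi_{f,\lambda}$ weighted by the shift $\lfloor\cdot\rfloor$ from Theorem \ref{thm:minimal exponent via vanishing cycle}.
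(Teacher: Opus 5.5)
Your Steps 2 and 3 match the paper. The lower-to-upper correction is Proposition \ref{prop:Griffiths height via LMHS} together with $\psi_{f_t,\lambda}=\phi_{f_t,\lambda}$ for $\lambda\neq1$ (Lemma \ref{lem:Griffiths minus lower for pencil}). The constant is calibrated on a general nodal pencil via Mordant's theorem. One slip: it is the total node contribution, not the universal constant, that equals $\frac{(n+1)(d-1)^n}{2}\lfloor\frac{n(3n-2)}{4}\rfloor$.

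The gap is Step 1, i.e.\ the content of Lemma \ref{lem:Griffiths lower for pencil}. This is the heart of the theorem, and you only assert it; as sketched it does not go through, for two reasons.

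First, a comparison of the single sheaves $R^{n-1-p}f_*\Omega^p_{\X/\P^1}$ and $\gr_F^p\overline{\mathcal V}^{[0,1)}$ ``up to a skyscraper'' is not available. Take a log resolution $\eta:\wt\X\to\X$ with $E=(f\circ\eta)^{-1}(f(S))$ simple normal crossing. The cone of the natural map $\Omega^p_{\X/\P^1}\to R\eta_*\Omega^p_{\wt\X/\P^1}(\log E)$ is a complex supported on $S$ that can change several $R^q$ at once. So what you can control is the class of the full derived pushforward in $K_0(\P^1)$. To extract $\deg\gr_F^p\overline{\mathcal V}^{[0,1)}$ from that class, you also need all non-middle direct images to have degree $0$. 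The paper gets this from relative hard Lefschetz ($\mathbb H^k$ for $k\neq n-1$ is a Tate twist of the trivial variation); you do not address it.

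Second, and more seriously, you never identify the local term, and your candidate is not the right one. The local correction to $\deg\gr_F^{p}\overline{\mathcal V}^{[0,1)}$ at $x$ is the cumulative quantity $\sum_{q\ge n-p}\sum_{\alpha}h^q_{e^{-2\pi i\alpha}}(x)$, i.e.\ $\dim F^{n-p}$ of the whole Milnor cohomology, independent of $\alpha$. Only this cumulative form yields the coefficients $(n-1)+\dots+(n-p)$ after weighting level $p$ by $p$. Reading off ``$\dim\gr_F^p\phi_{f,\lambda}$ weighted by the shift $\lfloor\cdot\rfloor$'' from Theorem \ref{thm:minimal exponent via vanishing cycle} gives something else. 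The Brieskorn-lattice/Varchenko description of $F^\bullet$ on the limit fibre does not by itself compute degrees of extended Hodge bundles against coherent data on $\X$.

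The missing mechanism is a Du Bois/Euler-characteristic computation.
\begin{enumerate}
\item Take $D\supset f(S)$ with exactly $\delta=(n+1)(d-1)^n$ points, which is possible by Lemma \ref{lem:sum of Milnor number}.
\item Use Steenbrink's isomorphism $\gr_F^p\overline{\mathcal V}^{[0,1)}\isom R^{n-1-p}\wt f_*\Omega^p_{\wt\X/\P^1}(\log E)$. Resolve $\Omega^p_{\wt\X/\P^1}(\log E)$ by the Koszul complex in $\wt f^*\omega_{\P^1}(D)$, which reduces everything to $R\wt f_*\Omega^i_{\wt\X}(\log E)$.
\item Apply $\gr_F^i\DR$ to $\iota_*\iota^!\Q^H_\X\to\Q^H_\X\to j_*\Q^H_{\X\sm\X_D}$. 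This gives the triangle $\Omega^i_\X\to R\eta_*\Omega^i_{\wt\X}(\log E)\to R\Hom(\DB^{n-i}_{\X_D},\omega_{\X_D})$. Its first term has a class independent of the pencil, and by Grothendieck--Serre duality its last term contributes $\pm e(\DB^{n-i}_{\X_D})$.
\item For $\gamma\in D$ and a general smooth member $Z$, apply $\gr_F^p\DR$ to $\Q^H_{\X_\gamma}[n-1]\to\psi_{f_t}\Q^H_\X[n]\to\phi_{f_t}\Q^H_\X[n]$. This gives $(-1)^{n-p}\bigl(e(\DB^p_{\X_\gamma})-e(\Omega^p_Z)\bigr)=\sum_{x\in S\cap f^{-1}(\gamma)}\sum_\alpha h^p_{e^{-2\pi i\alpha}}(x)$.
\item The Koszul reindexing of $\sum_p\sum_{i\le p}(-1)^ip\,e(\DB^{n-i}_{\X_D})$ is what produces $(n-1)+\dots+(n-p)$.
\end{enumerate}
You may prefer to keep the non-log $\Omega^p_{\X/\P^1}$. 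Its class is indeed independent of the pencil, by the Koszul resolution in $f^*\omega_{\P^1}$, which is exact because $df$ has isolated zeros. You would still have to compute the local Euler characteristic of the cone above at each $x$, which is essentially the same computation.
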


The effective range of $p$ is $0\le p\le n-1$, since $h_{\lambda}^p(x)=0$ for $p\ge n$ (see Theorem~\ref{thm:minimal exponent via vanishing cycle} and apply the positivity of the minimal exponent).

In the rest of this section, denote by $\mathbb V_\eta:=\mathbb H^{n-1}(\X_\eta/\P^1_\eta)$ the variation of Hodge structures on $\P^1\sm f(S)$. The proof of Theorem \ref{thm:Griffiths height of pencil} follows from several intermediate lemmas, which we prove next.

\begin{lem}
\label{lem:Griffiths minus lower for pencil}
In the setting of Theorem \ref{thm:Griffiths height of pencil}, we have
$$
\mathrm{ht}_{GK}(\mathbb V_\eta)-\mathrm{ht}_{GK,-}(\mathbb V_\eta)=\sum_{x\in S}\sum_{\alpha\in[0,1)}\sum_{p=0}^{n-1}\alpha p\, h_{e^{-2\pi i\alpha}}^p(x).
$$
\end{lem}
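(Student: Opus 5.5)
The plan is to apply Proposition \ref{prop:Griffiths height via LMHS} to $\mathbb V_\eta$ on $\P^1$ with boundary points $f(S)$, and then rewrite the limiting mixed Hodge structures in terms of the vanishing cycles at the critical points. Proposition \ref{prop:Griffiths height via LMHS} gives
$$
\mathrm{ht}_{GK}(\mathbb V_\eta)-\mathrm{ht}_{GK,-}(\mathbb V_\eta)=\sum_{c\in f(S)}\sum_{\alpha\in[0,1)}\sum_{p=1}^{n-1}\alpha\dim_\C F^p\psi_{t,e^{-2\pi i\alpha}}\left(j_*\mathbb V^H[1]\right),
$$
where $t$ is a local coordinate at $c$. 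Only the summands with $\alpha\neq 0$ contribute, so it suffices to understand the non-unipotent parts $\psi_{t,\lambda}$ with $\lambda\neq1$.

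\textbf{Step 1: identify $\psi_{t,\lambda}$ for $\lambda\neq1$.} Near $c$, the local system $\mathbb V_\eta$ is $R^{n-1}f_*\Q$ restricted to the punctured disk, possibly up to a constant Hodge--Tate summand, which is harmless. The nearby cycle functor commutes with proper pushforward. Since $\X$ is smooth, the complex $\psi_{f_t}\Q^H_\X[n]$ differs from $\iota^*\Q^H_\X[n-1]$ only by the vanishing cycles, and the vanishing cycles are supported on the finitely many critical points over $c$. For $\lambda\neq1$, the $\lambda$-part of $\iota^*\Q^H_\X$ vanishes, because the monodromy acts on it trivially. Hence
$$
\psi_{f_t,\lambda}\Q^H_\X[n]=\phi_{f_t,\lambda}\Q^H_\X[n]=\bigoplus_{x\in S\cap \X_c}V_\lambda(x)
$$
as a skyscraper. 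Pushing forward by $f$, I expect
$$
\psi_{t,\lambda}\left(j_*\mathbb V^H[1]\right)\isom\bigoplus_{x\in S\cap\X_c}V_\lambda(x)
$$
as mixed Hodge structures. To see this, note that $H^k f_*\psi_{f_t}\Q^H_\X[n]$ equals $\psi_t$ of $H^kf_*\Q^H_\X[n]$. In the middle degree this gives $\psi_t(j_*\mathbb V^H[1])$, up to contributions from $R^if_*$ for $i\neq n-1$. Those contributions are constant local systems coming from $\P^n$ (Lefschetz), so their monodromy is trivial and they have no $\lambda\neq1$ part. The skyscraper $\phi$ has cohomology only in the middle degree.

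\textbf{Step 2: convert $F^p$ to $\gr_F^p$.} This step needs some care with indexing. Substituting Step 1,
$$
\sum_{p\ge1}\dim F^pV_\lambda(x)=\sum_{p}p\,\dim\gr_F^pV_\lambda(x)=\sum_p p\,h^p_\lambda(x),
$$
since $F^0$ is everything (effective range $0\le p\le n-1$) and the $p=0$ term contributes $0$. Then the sum over $c\in f(S)$ combined with the sum over $x\in S\cap \X_c$ is the sum over $x\in S$, which gives the stated formula.

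\textbf{Main obstacle.} The main difficulty is Step 1, specifically the compatibility of Hodge filtrations and Tate-twist conventions. I need to check that the Hodge filtration on $\phi_{f_t,\lambda}$ for $\lambda\neq1$, as defined via $\Q^H_\X[n]$, matches the one on $\psi_{t,\lambda}(j_*\mathbb V^H[1])$ with no shift. This amounts to comparing the normalization of Saito's $\psi_f$ (he shifts by $[-1]$, so that $\psi_f$ of $\Q^H[n]$ lands in weight-centered degree $n-1$) with the weight $n-1$ Hodge structure $\mathbb V$ whose $F^p$ for $p\le n-1$ is being counted. I would verify this on a Lefschetz (nodal) point in odd dimension, where $V_{-1}$ is one-dimensional of known Hodge type, to fix the convention. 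I would also check the decomposition theorem argument: it confirms that the only non-unipotent contributions come from the vanishing cycles, using that the $\lambda\neq1$ part of $f_*\iota^*$ vanishes.
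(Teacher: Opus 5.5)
Your proposal is correct and follows the paper's proof essentially step for step. You apply Proposition \ref{prop:Griffiths height via LMHS}, rewrite $\sum_{p\ge1}\dim F^p$ as $\sum_p p\dim\gr_F^p$, and for $\alpha\in(0,1)$ combine three facts: nearby cycles commute with projective pushforward, $\psi_{f_t,\lambda}=\phi_{f_t,\lambda}$ for $\lambda\neq1$, and $\phi$ splits over the finitely many critical points. The filtration-normalization worry you flag is settled because this commutation holds in ${\rm MHM}$ (Saito), which is what the paper cites; also note that $\mathcal H^0(f_*\Q^H_\X[n])$ restricted to the punctured disk is exactly $R^{n-1}f_*\Q[1]$, so the other $R^if_*$ never enter in the first place.
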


\begin{proof}
By Proposition \ref{prop:Griffiths height via LMHS}, we have
$$
\mathrm{ht}_{GK}(\mathbb V_\eta)-\mathrm{ht}_{GK,-}(\mathbb V_\eta)=\sum_{\gamma\in f(S)}\sum_{\alpha\in[0,1)}\sum_{p=1}^{n-1}\alpha\dim_\C F^p\left[\psi_{t_\gamma,e^{-2\pi i\alpha}}\left(j_*\mathbb V^H[1]\right)\right],
$$
where $j:\P^1\sm f(S)\to \P^1$ is an open embedding and $t_\gamma$ is a local coordinate function near each point $\gamma\in f(S)$. In other words, we have a disk $\Delta_{t_\gamma}\subset \P^1$ centered at the point $\gamma$. Note that
$$
\sum_{p=1}^{n-1}\alpha\dim_\C F^p\left[\psi_{t_\gamma,e^{-2\pi i\alpha}}\left(j_*\mathbb V^H[1]\right)\right]=\sum_{p=0}^{n-1}\alpha p\dim_\C \gr_F^p\left[\psi_{t_\gamma,e^{-2\pi i\alpha}}\left(j_*\mathbb V^H[1]\right)\right],
$$
and thus, it suffices to prove
\begin{equation}
\label{eqn:alpha h^p(x)}
\alpha\dim_\C \gr_F^p\left[\psi_{t_\gamma,e^{-2\pi i\alpha}}\left(j_*\mathbb V^H[1]\right)\right]=\sum_{x\in S\cap f^{-1}(\gamma)}\alpha\,h_{e^{-2\pi i\alpha}}^p(x). 
\end{equation}
When $\alpha=0$, this is immediate. Suppose $\alpha\in (0,1)$. By the commutativity of the nearby and vanishing cycle functors with projective pushforwards (see \cite[Theorem 2.14]{Saito90} and \cite[Proposition 3.3.17]{Saito88}), we have
\begin{align*}
\psi_{t_\gamma,e^{-2\pi i\alpha}}\left(j_*\mathbb V^H[1]\right)&=\psi_{t_\gamma,e^{-2\pi i\alpha}}(\mathcal H^0(f_*\Q_\X^H[n]))\\
&=\mathcal H^0\left(f_*\left(\psi_{f_t,e^{-2\pi i\alpha}}\left(\Q_\X^H[n]\right)\right)\right)
\end{align*}
where
$$
f_t:\X_{\Delta_{t_\gamma}}\to \Delta_{t_\gamma}
$$
is the restriction of the morphism $f$ to $\X_{\Delta_{t_\gamma}}:=f^{-1}(\Delta_{t_\gamma})$. Since $\alpha\in (0,1)$, we have
$$
\psi_{f_t,e^{-2\pi i\alpha}}\left(\Q_\X^H[n]\right)=\phi_{f_t,e^{-2\pi i\alpha}}\left(\Q_\X^H[n]\right)
$$
by the definition of nearby and vanishing cycle functors (see, for example, \cite[(2.1.6)]{Saito17a}). Since the set of critical points is finite, we have
$$
\phi_{f_t,e^{-2\pi i\alpha}}\left(\Q_\X^H[n]\right)=\bigoplus_{x\in S\cap f^{-1}(\gamma)}\phi_{f_t,e^{-2\pi i\alpha}}\left(\Q_\X^H[n]\right)|_{\{x\}}
$$
Therefore, we obtain \eqref{eqn:alpha h^p(x)} (see \eqref{eqn:V_lambda}), which completes the proof.
\end{proof}

\begin{lem}
\label{lem:sum of Milnor number}
In the setting of Theorem \ref{thm:Griffiths height of pencil}, denote by $\mu(x)$ the Milnor number of $f$ at $x\in \X$. Then we have
$$
\mu(x)=\sum_{\alpha\in[0,1)}\sum_{p\in \Z_{\ge 0}}h^p_{e^{-2\pi i\alpha}}(x)
$$
and
$$
\sum_{x\in S} \mu(x)=(n+1)(d-1)^n.
$$
\end{lem}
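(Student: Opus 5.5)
The first identity holds pointwise at each critical point. Fix $x\in S$ and let $f_t$ be the local function defining $\X$ near $x$; since $\X$ is smooth, $f_t$ is a holomorphic function germ on $(\C^n,0)$ with an isolated critical point at $x$. By Milnor's theorem the reduced cohomology of the Milnor fiber is concentrated in degree $n-1$ and has dimension $\mu(x)$. The vanishing cycle mixed Hodge structure $\phi_{f_t}(\Q^H_\X[n])|_{\{x\}}$ has underlying vector space exactly this reduced middle cohomology. Decomposing it into generalized eigenspaces $V_\lambda(x)$, with $\lambda=e^{-2\pi i\alpha}$ and $\alpha\in[0,1)$ ranging over all eigenvalues of absolute value $1$, and then each $V_\lambda(x)$ into its Hodge graded pieces $\gr_F^p$, gives $\mu(x)=\sum_\alpha\sum_p h^p_{e^{-2\pi i\alpha}}(x)$. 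Two further facts are needed here. First, every eigenvalue of the monodromy has absolute value $1$ (monodromy theorem). Second, $F$ is a finite filtration with $\gr_F^p=0$ for $p<0$ in the normalization used in \eqref{eqn:V_lambda}; this is consistent with the effective range already noted after Theorem \ref{thm:Griffiths height of pencil}.

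For the global count I would use an Euler characteristic argument. Let $f:\X\to\P^1$ be the smooth pencil, i.e. the blow-up of $\P^n$ along the base locus $B=\{P=Q=0\}$. Since every critical point is isolated, the standard formula gives
$$\chi(\X)=\chi(\P^1)\chi(F)+(-1)^{n-1}\sum_{x\in S}\mu(x),$$
where $F$ is a smooth degree $d$ hypersurface in $\P^n$. Indeed, each singular fiber $\X_t$ satisfies $\chi(\X_t)=\chi(F)+(-1)^{n-1}\sum_{x\in \X_t\cap S}\mu(x)$, because passing from the nearby fiber to the special fiber collapses the vanishing $(n-1)$-cycles. We also have $\chi(\X)=\chi(\P^n)+\chi(B)$, since the blow-up along the codimension $2$ smooth complete intersection $B$ replaces $B$ by a $\P^1$-bundle over it.

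It then remains to compute Euler characteristics of smooth complete intersections. This can be done via generating functions, or more directly as follows. Counting the critical points of a generic pencil is the classical computation of the class of the degree $d$ hypersurface: the number of critical points equals the degree of the polar/Gauss map, namely $(n+1)(d-1)^n$. Equivalently, the singular points of fibers of the pencil are the solutions of $\partial_iP+t\partial_iQ=0$ for $i=0,\dots,n$. This is a section of $\O(d-1)^{\oplus(n+1)}\otimes\O_{\P^1}(1)$ on $\P^n\times\P^1$, whose top Chern class is $(n+1)(d-1)^n\,h_1^nh_2$, of degree $(n+1)(d-1)^n$. Since $\X$ is smooth, this zero scheme is finite, and at each point its length is the local Milnor number $\mu(x)$: the Jacobian ideal of $f_t$ in local coordinates on $\X$ matches the local equations after eliminating $t$ via Euler's relation.

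The main obstacle is justifying this last identification, namely that the zero scheme of the section is supported on $S$ and has length exactly $\mu(x)$ at each point. In particular, one must rule out contributions along the base locus $\{P=Q=0\}$, where Euler's identity forces $t$ to be consistent. I would handle this by a local computation at each point of the zero scheme. Alternatively, I would fall back on the Euler characteristic route, which avoids the issue at the cost of computing $\chi(F)$ and $\chi(B)$ by Chern class formulas.
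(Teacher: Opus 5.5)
Your argument for the first identity is the same as the paper's. For the second, your main argument takes a genuinely different route. The paper stays topological. The nearby/vanishing triangle gives $e(\X)-e(Z)e(\P^1)=(-1)^n\sum_{x\in S}\mu(x)$. Ehresmann's theorem then identifies $e(\X)$ with $e(\X(\epsilon))$ for a small generic deformation of the pencil, which has $(n+1)(d-1)^n$ nodal critical points because that is the degree of the discriminant.

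Your top Chern class count of the zeros of $(\partial_iP+t\,\partial_iQ)_{0\le i\le n}$, a section of $(\O(d-1)\boxtimes\O(1))^{\oplus(n+1)}$, needs neither the deformation nor the degree of the discriminant; it actually reproves the latter. The cost is a local identification of lengths with Milnor numbers. The step you flag as the main obstacle closes quickly:
\begin{itemize}
\item In an affine chart with coordinates $z_1,\dots,z_n$ (dehomogenizing the $0$-th variable), put $g=p+tq$. Euler's relation shows the $0$-th component equals $d\,g-\sum_j z_j\partial_{z_j}g$. So the zero scheme is $\{g=\partial_{z_1}g=\dots=\partial_{z_n}g=0\}$, which is supported on $S$.
\item At $x\in S$ one has $q(x)\neq 0$. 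Otherwise $p(x)=0$ and $\partial_t g(x)=q(x)=0$, so $\X$ would be singular at $x$. This is exactly why the base locus contributes nothing; at $t=\infty$, swap $P$ and $Q$.
\item Thus $\X$ is locally the graph of $h=-p/q$, and $f$ is $h$ in the coordinates $z$. Modulo $g$ one has $\partial_{z_j}g\equiv\partial_{z_j}p+h\,\partial_{z_j}q=-q\,\partial_{z_j}h$, with $q$ a unit. So the local ring is $\C\{z\}/(\partial_{z_1}h,\dots,\partial_{z_n}h)$, of length $\mu(x)$.
\item The zero scheme is finite on the smooth $(n+1)$-fold $\P^n\times\P^1$, so the section is regular. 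Its total length is therefore $\deg c_{n+1}=(n+1)(d-1)^n$.
\end{itemize}

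There are two slips, and neither affects this argument.
\begin{itemize}
\item In your Euler characteristic fallback the sign is wrong. Collapsing $\mu$ vanishing classes in degree $n-1$ changes $\chi$ by $(-1)^n\mu$. So $\chi(\X_t)=\chi(F)+(-1)^n\sum_{x\in \X_t\cap S}\mu(x)$; for instance, a plane curve with one node has Euler characteristic one larger than a smooth one. With your sign the count would come out negative. Your blow-up formula $\chi(\X)=\chi(\P^n)+\chi(B)$ is correct, since smoothness of $\X$ forces $dP,dQ$ to be independent along $B$. It is a valid substitute for the paper's Ehresmann step.
\item $(n+1)(d-1)^n$ is not the class of a smooth degree $d$ hypersurface; that class is $d(d-1)^{n-1}$. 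It is the degree of the discriminant, i.e.\ the class of the Veronese variety $v_d(\P^n)$.
\end{itemize}
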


\begin{proof}
The first equality is immediate from the definition. Indeed, $\mu(x)$ is the dimension of the middle cohomology of the Milnor fiber of $f$ near $x$, and $V_\lambda(x)$ is its generalized $\lambda$-eigenspace. Since
$$
h_{\lambda}^p(x):=\dim_\C\gr_F^pV_\lambda(x)
$$
by the definition, the sum of $h_{\lambda}^p(x)$ is equal to $\mu(x)$.

It remains to prove the second equality. Denote by $Z\subset\P^n$ a general smooth hypersurface of degree $d$. By Ehresmann's fibration theorem, the pencil $f$ is a topological fibration away from the discriminant locus $f(S)$. Hence, by the Mayer--Vietoris sequence, we have
$$
e(\X)=e(Z)e(\P^1)+\sum_{\gamma\in f(S)}\left[e(\X_\gamma)-e(Z)\right]
$$
where $e(\,\cdot\,)$ is the topological Euler characteristic. From the nearby and vanishing distinguished triangle
$$
\Q_{\X_\gamma}^H[n-1]\to \psi_{f_t}\Q_{\X}^H[n]\to \phi_{f_t}\Q_\X^H[n]\xrightarrow{+1}
$$
(see, for example, \cite[Corollary 2.24]{Saito90}), we have
$$
e(Z)-e(\X_\gamma)=(-1)^{n-1}\sum_{x\in S\cap f^{-1}(\gamma)}\mu(x).
$$
In the above, $f_t$ is a restriction of $f$ to the neighborhood of the fiber $\X_\gamma$ in $\X$, mapping to a disk $\Delta_{t_\gamma}\subset \P^1$ centered at $\gamma$. This implies
$$
e(\X)-e(Z)e(\P^1)=(-1)^n\sum_{x\in S}\mu(x).
$$

Now, we consider a deformation of $\X$:
$$
\X(\epsilon):=\{P_\epsilon+tQ_\epsilon=0\}\subset \P^n\times\P_t^1
$$
where $P_\epsilon$ and $Q_\epsilon$ are general small deformations of $P$ and $Q$, respectively. Recall that a general pencil has $(n+1)(d-1)^n$ non-degenerate critical points. Indeed, the discriminant divisor in $\P|\O_{\P^n}(d)|$ is a degree $(n+1)(d-1)^n$ hypersurface and its general member has a single node. Hence, we have
$$
e(\X(\epsilon))-e(Z)e(\P^1)=(-1)^n(n+1)(d-1)^n.
$$
By Ehresmann's fibration theorem, $\X(\epsilon)$ is homeomorphic to $\X$, and thus, we have
$$
e(\X(\epsilon))=e(\X).
$$
This implies that
$$
\sum_{x\in S}\mu(x)=(n+1)(d-1)^n
$$
as desired.
\end{proof}

\begin{lem}
\label{lem:Griffiths lower for pencil}
There exists a function $A(n,d)$ which only depends on the pair $(n,d)$ such that
$$
\mathrm{ht}_{GK,-}(\mathbb V_\eta)=A(n,d)+\sum_{x\in S}\sum_{\alpha\in[0,1)}\sum_{p=0}^{n-1}\left[(n-1)+\dots+(n-p)\right] h^p_{e^{-2\pi i\alpha}}(x).
$$
\end{lem}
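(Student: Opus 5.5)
The plan is to compare the lower Hodge bundles $F^p\overline{\mathcal V}^{[0,1)}$ with global coherent objects on $\P^1$ whose degrees can be computed by Grothendieck--Riemann--Roch on the smooth total space $\X$. Since $\X$ is smooth, it is a smooth divisor of bidegree $(d,1)$ in $\P^n\times\P^1_t$. Any Euler-characteristic or degree expression built from $\Omega^q_\X$, $f^*\Omega_{\P^1}$ and their Koszul pieces therefore depends only on $(n,d)$, and this will produce $A(n,d)$. The $x$-dependent term will come from a purely local discrepancy at the critical points, which I will express through the vanishing cycle Hodge numbers $h^p_\lambda(x)$.

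\textbf{Step 1 (global object).} Consider $f_*\Q^H_\X[n]$. By the decomposition theorem, its cohomology $\mathcal H^0$ splits as $j_{!*}\mathbb V^H[1]$ plus constant variations coming from hyperplane classes. The latter contribute only fixed, computable amounts depending on $(n,d)$. Since $f$ is projective and $\gr_F\DR$ commutes with proper pushforward, we have
$$
\gr_F^p\DR(f_*\Q^H_\X[n])\isom Rf_*\Omega^p_\X[n-p].
$$
Combining this with the Koszul filtration from $0\to f^*\Omega_{\P^1}\to\Omega_\X\to\Omega_{\X/\P^1}\to 0$, I will express the determinant of the Hodge filtration of the filtered $\D$-module underlying $\mathcal H^0 f_*\Q^H_\X[n]$ in terms of $\det Rf_*\Omega^q_\X$ and $\det Rf_*(\Omega^q_{\X/\P^1})$-type pieces, all lying in the fixed ambient.

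\textbf{Step 2 (local comparison).} Away from $f(S)$, the filtered $\D$-module agrees with $(\mathcal V,F)$. Near $\gamma\in f(S)$, I compare $F_\bullet$ on the minimal extension with the lower Deligne extension $\overline{\mathcal V}^{[0,1)}=V^{>-1}$-type lattice, using Saito's description of the Hodge filtration through the $V$-filtration. The quotients are governed by $\gr_V^\beta$ for $\beta\in(-1,0]$ together with the $\partial_t$-shifts. Under Saito's identification, each $\gr_V^\beta$ is $\psi_{t,\lambda}$ or $\phi_{t,1}$, and by the argument of Lemma~\ref{lem:Griffiths minus lower for pencil} these localize to $\bigoplus_{x}V_\lambda(x)$. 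Each application of $\partial_t$ shifts $F$ by one, so a class in $\gr_F^pV_\lambda(x)$ forces a length-one discrepancy in $F^q$ for a whole range of $q$ determined by $p$. The task is to track that each vanishing class of Hodge level $p$ contributes a length discrepancy whose weighted sum over the $\det F^q$ with multiplicities gives exactly $(n-1)+\dots+(n-p)$.

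\textbf{Step 3 (assembly).} Summing the local lengths over $x\in S$ and adding the global GRR degree $A(n,d)$ gives the formula. Here it is important that the $(n,d)$-dependent part does not secretly involve $\sum_x\mu(x)$ in a non-constant way. This is harmless anyway, since Lemma~\ref{lem:sum of Milnor number} shows $\sum_x\mu(x)=(n+1)(d-1)^n$, so any such term can be absorbed into $A(n,d)$.

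\textbf{Main obstacle.} I expect the main obstacle to be Step~2, namely pinning down the exact index shifts. This includes converting Saito's increasing $F_\bullet$ on the $V$-graded pieces to the decreasing $F^p$ on $\overline{\mathcal V}^{[0,1)}$. It also includes keeping the $\lambda=1$ part, which involves $\phi_{t,1}$ and the morphisms $\mathrm{can}$ and $\mathrm{var}$, consistent with $\lambda\neq1$. To calibrate these shifts, I would first check the computation on a single ordinary double point, where $V_\lambda(x)$ is one-dimensional of known Hodge level. Then I would use Theorem~\ref{thm:minimal exponent via vanishing cycle} to confirm the normalization.
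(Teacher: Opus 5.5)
\textbf{Verdict.} Your route differs from the paper's, and Step~1 works, but Step~2 as described would fail. The local count does not produce the weight $(n-1)+\dots+(n-p)$; an additional symmetry/duality input is needed.

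\textbf{Step~1.} Step~1 is sound. Because $\X$ is a smooth $(d,1)$-divisor, three facts together show that $\deg F^p\mathcal M$ depends only on $(n,d)$, where $\mathcal M:=\mathcal H^0f_*\Q^H_\X[n]$: the isomorphism $\gr_F^p\DR(f_*\Q^H_\X[n])\isom Rf_*\Omega^p_\X[n-p]$, the decomposition theorem, and the two-term shape of $\gr_F^p\DR$ on a curve. Two points there need fixing.
\begin{enumerate}
\item $\mathcal M$ is not just $j_{!*}\mathbb V^H[1]$ plus constant pieces. It can have summands supported at critical values (the kernel of $\mathrm{var}$), for example when the vanishing cycles of several nodes in one fiber are dependent in the nearby fiber. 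You must keep them, since $\phi_{t,1}\mathcal M=\bigoplus_x V_1(x)$, while $\phi_{t,1}$ of the minimal extension is only the image of $\mathrm{can}$.
\item Saito's generating lattice $V^{>-1}$ (with $t\partial_t-\alpha$ nilpotent on $\gr_V^\alpha$) is the \emph{upper} extension $\overline{\mathcal V}^{(-1,0]}$, not the lower one. The lower extension sits inside it with cokernel $\bigoplus_{\lambda\neq1}\psi_{t,\lambda}$.
\end{enumerate}

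\textbf{Step~2.} This is where the gap is. The raw local count is a negative quadratic in the Hodge level and does not involve $n$. Carry out the bookkeeping with $F^p\gr_V^\beta=\partial_t^kF^{p+k}\gr_V^{\beta+k}$ and the unit shift on $\phi_{t,1}$. At $\gamma$, the length of $F^p\mathcal M/F^p\overline{\mathcal V}^{[0,1)}$ is
$\sum_x\bigl[\sum_{\lambda\neq1}\sum_{k\ge0}\dim F^{p+k}V_\lambda(x)+\sum_{k\ge1}\dim F^{p+k}V_1(x)\bigr]$. Summing over $p\ge1$ gives
\[
\mathrm{ht}_{GK,-}(\mathbb V_\eta)=A_0-\sum_{x\in S}\sum_{q\ge 0}\Bigl[\tbinom{q+1}{2}\sum_{\lambda\neq1}h^q_\lambda(x)+\tbinom{q}{2}h^q_1(x)\Bigr],
\]
with $A_0$ depending only on $(n,d)$. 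This differs from the lemma by
$\sum_x\bigl[n\sum_{\lambda\neq1}\sum_q q\,h^q_\lambda(x)+(n-1)\sum_q q\,h^q_1(x)\bigr]$.
That expression is not a function of $\sum_x\mu(x)$ alone: it depends on how each $\mu(x)$ splits into unipotent and non-unipotent parts. So your Step~3 absorption does not apply.

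\textbf{Missing input.} What you need is the Hodge symmetry of the vanishing cycles:
\[
h^q_\lambda(x)=h^{n-1-q}_{\bar\lambda}(x)\ \text{for } \lambda\neq1,\qquad h^q_1(x)=h^{n-q}_1(x).
\]
This comes from complex conjugation plus the isomorphisms $N^k$ on $\gr^W$, as in Lemma~\ref{lem:symmetry of h_lambda}, or equivalently from self-duality of $\mathcal M$. It turns $\sum_q q\,h^q$ into $\frac{n-1}{2}$ (for $\lambda\neq1$), respectively $\frac n2$ (for $\lambda=1$), times the corresponding Milnor numbers. The discrepancy then becomes $\frac{n(n-1)}{2}\sum_x\mu(x)$, which is constant by Lemma~\ref{lem:sum of Milnor number}. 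With this added, your route does close.

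\textbf{How the paper avoids this.} Duality is built into the paper's argument. After a log resolution it uses:
\begin{enumerate}
\item Steenbrink's isomorphism $\gr_F^p\overline{\mathcal V}^{[0,1)}\isom R^{n-1-p}\wt f_*\Omega^p_{\wt\X/\P^1}(\log E)$;
\item the Koszul resolution;
\item the triangle $\Omega^i_\X\to R\eta_*\Omega^i_{\wt\X}(\log E)\to R\Hom(\DB^{n-i}_{\X_D},\w_{\X_D})$.
\end{enumerate}
The complementary index $n-i$ under Grothendieck--Serre duality is what produces $(n-1)+\dots+(n-p)$. The difference $e(\DB^p_{\X_\gamma})-e(\Omega^p_Z)$ is then read off from the nearby/vanishing cycle triangle.
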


\begin{proof}
By Lemma \ref{lem:sum of Milnor number}, the set $f(S)$ has at most $\delta:=(n+1)(d-1)^n$ points. Let $D\subset \P^1$ be a set of $\delta$ points containing $f(S)$. Denote by $\X_D:=f^{-1}(D)$, and consider a resolution
$$
\eta:\wt \X\to\X,
$$
which is an isomorphism over $\X\sm S$ and $E:=(f\circ\eta)^{-1}(D)$ is a simple normal crossing divisor. Set $\wt f:=f\circ\eta$.

As in the proof of Proposition \ref{prop:Hodge pieces for higher Du Bois family}, consider a short exact sequence
$$
0\to \wt f^*\omega_{\P^1}(D)\to \Omega_{\wt \X}(\log E)\to \Omega_{\wt\X/\P^1}(\log E)\to 0.
$$
By Steenbrink \cite{Steenbrink75} (see also Zucker \cite{Zucker84}), we have
$$
\gr_F^p\overline{\mathcal V}^{[0,1)}_k\isom R^{k-p}\wt f_*\Omega_{\wt\X/\P^1}^p(\log E)
$$
for all $p\ge 0$, where $\overline{\mathcal V}^{[0,1)}_k$ is Deligne's lower canonical extension of $\mathbb H^{k}(\X_\eta/\P^1_\eta)$.

Note that, since fibers are connected, the variation of Hodge structures $\mathbb H^{0}(\X_\eta/\P^1_\eta)$ on $\P^1\sm f(S)$ is isomorphic to the trivial variation
$$
\Q_{\P^1\sm f(S)}.
$$
By the relative hard Lefschetz theorem, when $2k\neq n-1$, the variation $\mathbb H^{2k}(\X_\eta/\P^1_\eta)$ is isomorphic to
$$
\Q_{\P^1\sm f(S)}(-k),
$$
that is, the $(-k)$-Tate twist of the trivial variation. Additionally, when $2k+1\neq n-1$, we have
$$
\mathbb H^{2k+1}(\X_\eta/\P^1_\eta)=0.
$$
This implies that
\begin{equation}
\label{eqn:degree of Hodge piece as derived pushforward}
\deg \left[R\wt f_*\Omega_{\wt\X/\P^1}^p(\log E)\right]=(-1)^{n-1-p}\deg \left[R^{n-1-p}\wt f_*\Omega_{\wt\X/\P^1}^p(\log E)\right].    
\end{equation}
Note that the degree is well defined in the Grothendieck group of $\P^1$.

Now, we consider the following Koszul resolution of $\Omega_{\wt\X/\P^1}^p(\log E)$:
$$
0\to (\wt f^*\omega_{\P^1}(D))^p\to\dots\to \wt f^*\omega_{\P^1}(D)\otimes\Omega^{p-1}_{\wt\X}(\log E)\to \Omega^{p}_{\wt\X}(\log E)\to \Omega_{\wt\X/\P^1}^p(\log E)\to 0.
$$
This implies that
$$
\deg \left[R\wt f_*\Omega_{\wt\X/\P^1}^p(\log E)\right]=\sum_{i=0}^p(-1)^{p-i}\deg\left[R\wt f_*\Omega^i_{\wt \X}(\log E)\otimes\O_{\P^1}((p-i)(\delta-2))\right].
$$
Note that $R\wt f_*\Omega^i_{\wt \X}(\log E)=Rf_*R\eta_*\Omega^i_{\wt \X}(\log E)$, and we have a distinguished triangle
$$
\Omega^i_{\X}\to R\eta_*\Omega^i_{\wt \X}(\log E)\to R\Hom_{\O_{\X_D}}(\DB_{\X_D}^{n-i},\omega_{\X_D})\xrightarrow{+1}.
$$
Indeed, this is easily induced by applying the graded de Rham functor $\gr_F^i\DR(\,\cdot\,)$ to the distinguished triangle
$$
\iota_*\iota^!\Q^H_\X\to\Q_\X^H\to j_*\Q^H_{\X\sm\X_D}\xrightarrow{+1}
$$
via Saito's theory of mixed Hodge modules, where $\iota:\X_D\to \X$ and $j:\X\sm\X_D\to \X$ are closed and open embeddings. Additionally, we have the short exact sequence of vector bundles
$$
0\to\O_{\X}(-\X)\to \Omega_{\P^n\times\P^1}|_{\X}\to\Omega_\X\to 0,
$$
and the classes $[\O_{\X}(-\X)]$ and $[\Omega_{\P^n\times\P^1}|_{\X}]$ in the Grothendieck group of $\P^n\times\P^1$ are independent of the choice of a pencil $\X$. Consequently, the degree
$$
\deg\left[Rf_*\Omega^i_{\X}\otimes\O_{\P^1}((p-i)(\delta-2))\right]
$$
is independent of the choice of a pencil $\X$. Hence, as a result of Grothendieck--Serre duality, we have
$$
\deg \left[R\wt f_*\Omega_{\wt\X/\P^1}^p(\log E)\right]=A+\sum_{i=0}^p(-1)^{p-i}(-1)^{n-1}e(\DB_{\X_D}^{n-i})
$$
for some constant $A$, independent of the choice of $\X$. Here, $e(\DB_{\X_D}^{n-i})$ is the Euler characteristic of the cohomology of the coherent complex $\DB_{\X_D}^{n-i}$.

From \eqref{eqn:degree of Hodge piece as derived pushforward}, we have
$$
\mathrm{ht}_{GK,-}(\mathbb V_\eta)=\sum_{p=0}^{n-1}(-1)^{n-1-p}p\deg\left[R\wt f_*\Omega_{\wt\X/\P^1}^p(\log E)\right],
$$
and thus, from the argument above, we have
\begin{align*}
\mathrm{ht}_{GK,-}(\mathbb V_\eta)&=A'+\sum_{p=0}^{n-1}\sum_{i=0}^p(-1)^ip\,e(\DB_{\X_D}^{n-i})\\&=A'+\sum_{p=0}^{n-1}(-1)^{n-p}\left[(n-1)+\dots+(n-p)\right]e(\DB_{\X_D}^{p})
\end{align*}
for some constant $A'$, independent of the choice of $\X$. For a general smooth hypersurface $Z\subset \P^n$ of degree $d$, we may therefore write
$$
\mathrm{ht}_{GK,-}(\mathbb V_\eta)=A(n,d)+\sum_{p=0}^{n-1}(-1)^{n-p}\left[(n-1)+\dots+(n-p)\right]\left(e(\DB_{\X_D}^{p})-\delta\,e(\Omega_{Z}^{p})\right)
$$
for some constant $A(n,d)$. It suffices to prove
\begin{equation}
\label{eqn:difference of euler char of DB}
(-1)^{n-p}\left(e(\DB_{\X_\gamma}^{p})-e(\Omega_{Z}^{p})\right)=\sum_{x\in S\cap f^{-1}(\gamma)}\sum_{\alpha\in[0,1)} h^p_{e^{-2\pi i\alpha}}(x).  
\end{equation}
Recall that we have the equalities
$$
\gr_F^p\DR(\Q_{\X_\gamma}^H)=\DB_{\X_\gamma}^p[-p], \quad \gr_F^p\DR(\phi_{f_t}\Q_\X^H[n])=\bigoplus_{x\in S\cap f^{-1}(\gamma)}\bigoplus_{|\lambda|=1}\gr_F^pV_\lambda(x).
$$
Applying the graded de Rham functor $\gr_F^p\DR(\,\cdot\,)$ to the distinguished triangle
$$
\Q_{\X_\gamma}^H[n-1]\to \psi_{f_t}\Q_{\X}^H[n]\to \phi_{f_t}\Q_\X^H[n]\xrightarrow{+1}
$$
appearing in the proof of Lemma \ref{lem:sum of Milnor number}, we obtain \eqref{eqn:difference of euler char of DB} and complete the proof.
\end{proof}

Combining the above lemmas, we prove Theorem \ref{thm:Griffiths height of pencil}.

\begin{proof}[Proof of Theorem \ref{thm:Griffiths height of pencil}]
Combining Lemma \ref{lem:Griffiths minus lower for pencil} and Lemma \ref{lem:Griffiths lower for pencil}, we have
$$
\mathrm{ht}_{GK}(\mathbb H^{n-1}(\X_\eta/\P^1_\eta))=A(n,d)+\sum_{x\in S}\sum_{\alpha\in[0,1)}\sum_{p=0}^{n-1}A_\alpha(p)\, h^p_{e^{-2\pi i\alpha}}(x).
$$

From the classical computation of the vanishing cycle $\phi_f$ for nodal singularities
$$
\{f:=x_1^2+\dots+x_n^2=0\}\subset\C^n,
$$
we have
$$
\phi_f=\Q\left(-\frac{n-1}{2}\right)
$$
if $n$ is odd with eigenvalue $(-1)$, and
$$
\phi_f=\Q\left(-\frac{n}{2}\right)
$$
if $n$ is even with eigenvalue $1$. Indeed, the Milnor fiber is homotopy equivalent to an $(n-1)$-sphere, and thus, the vanishing cycle is $1$-dimensional and of Hodge--Tate type. Since the minimal exponent of $f$ is $\frac{n}{2}$, we can easily conclude the above from Theorem \ref{thm:minimal exponent via vanishing cycle}.

Consider a general pencil $\mathcal Y\to \P^1$. Then the set of critical points consists of $(n+1)(d-1)^n$ nodal singularities. Therefore, when $n$ is odd, we have
$$
\mathrm{ht}_{GK}(\mathbb H^{n-1}(\mathcal Y_\eta/\P^1_\eta))=A(n,d)+(n+1)(d-1)^n\left((n-1)+\dots+(n-\frac{n-1}{2})+\frac{n-1}{4}\right),
$$
and when $n$ is even, we have
$$
\mathrm{ht}_{GK}(\mathbb H^{n-1}(\mathcal Y_\eta/\P^1_\eta))=A(n,d)+(n+1)(d-1)^n\left((n-1)+\dots+(n-\frac{n}{2})\right).
$$
This can be alternatively written as
$$
\mathrm{ht}_{GK}(\mathbb H^{n-1}(\mathcal Y_\eta/\P^1_\eta))=A(n,d)+\frac{(n+1)(d-1)^n}{2} \lfloor\frac{n(3n-2)}{4}\rfloor,
$$
and \cite[Theorem 1.1]{Mordant25} implies $\mathrm{ht}_{GK}(\mathbb H^{n-1}(\mathcal Y_\eta/\P^1_\eta))=F(n,d)$. Therefore, we obtain the desired formula.
\end{proof}

The formula in Theorem \ref{thm:Griffiths height of pencil} is used to determine the domain of definition of the period map over the locus of hypersurfaces with isolated singularities. We first define some additional invariants associated to the mixed Hodge structure $V_\lambda(x)$:
$$
h_\lambda^{p,q}(x):=\dim_\C\gr_F^p\gr^W_{p+q}V_\lambda(x).
$$
In particular, the sum of $h_\lambda^{p,q}(x)$ over $q\in \Z_{\ge 0}$ is $h_\lambda^{p}(x)$. This invariant satisfies interesting symmetries.

\begin{lem}
\label{lem:symmetry of h_lambda}
When $\lambda\neq 1$, we have
$$
h_\lambda^{p,q}(x)=h_{\overline\lambda}^{q,p}(x)=h_\lambda^{n-1-q,n-1-p}(x)=h_{\overline\lambda}^{n-1-p,n-1-q}(x).
$$
When $\lambda=1$, we have
$$
h_1^{p,q}(x)=h_1^{q,p}(x)=h_1^{n-q,n-p}(x)=h_1^{n-p,n-q}(x).
$$
\end{lem}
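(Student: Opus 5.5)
The plan is to deduce both sets of symmetries from Steenbrink's description of the mixed Hodge structure on the vanishing cohomology of an isolated hypersurface singularity. Since $x$ is an isolated critical point of $f_t$ on the smooth $n$-dimensional germ $(\X,x)$, the space $V(x)=\bigoplus_\lambda V_\lambda(x)$ is the reduced middle cohomology $\wt H^{n-1}(F_x,\C)$ of the Milnor fiber. It carries a $\Q$-mixed Hodge structure (Saito's $\phi_{f_t}\Q^H_\X[n]|_{\{x\}}$, which agrees with Steenbrink's) whose weight filtration on each $V_\lambda(x)$ is the monodromy weight filtration of $N=\log T_u$, centered at $n-1$ for $\lambda\neq1$ and at $n$ for $\lambda=1$.

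\emph{Step 1 (conjugation symmetry).} The monodromy $T$ is defined over $\Q$, so complex conjugation maps $V_\lambda(x)$ to $V_{\overline\lambda}(x)$ and respects $W$. Since $\gr^W_k$ of a $\Q$-mixed Hodge structure is a pure $\Q$-Hodge structure of weight $k$, conjugation exchanges $h^{p,q}$ and $h^{q,p}$. Because $T_s$ preserves the Hodge and weight filtrations, each $\gr^W_kV$ splits as a direct sum of its $\lambda$-eigenspaces as a complex Hodge structure. This gives $h^{p,q}_\lambda(x)=h^{q,p}_{\overline\lambda}(x)$ for all $\lambda$, and in particular $h^{p,q}_1=h^{q,p}_1$.

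\emph{Step 2 (weight symmetry).} Next use that $N:V_\lambda\to V_\lambda(-1)$ is a morphism of mixed Hodge structures, and that $N^k:\gr^W_{c+k}V_\lambda\to \gr^W_{c-k}V_\lambda(-k)$ is an isomorphism, with $c=n-1$ for $\lambda\ne1$ and $c=n$ for $\lambda=1$. The result is $h^{p,q}_\lambda=h^{p-k,q-k}_\lambda$ whenever $p+q=c+k$. Taking $k=p+q-c$ gives $h^{p,q}_\lambda=h^{c-q,c-p}_\lambda$, which is the second equality in each case. Composing this with Step 1 yields the remaining equalities $h^{p,q}_\lambda=h^{c-p,c-q}_{\overline\lambda}$.

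\emph{Main obstacle.} The hard part is justifying the centering of the weight filtration in Step 2: it is $n-1$ on $\lambda\neq1$ but $n$ on $\lambda=1$, and it must hold for Saito's mixed Hodge module normalization rather than just Steenbrink's. I would cite Saito's theorem that $\psi_f$ and $\phi_{f,1}$ carry the relative monodromy filtration. For an isolated singularity on a smooth total space, $\phi_{f,1}\Q^H_\X[n]$ is the image of $\mathrm{can}$ from $\psi_{f,1}$, whose weight is centered at $n-1$. Equivalently, $\phi_{f,1}$ with the shift/twist of the vanishing cycle functor is centered at $n$, consistent with the nodal computation in the proof of Theorem \ref{thm:Griffiths height of pencil}, where $\phi_f=\Q(-\frac n2)$ for even $n$ has weight $n$. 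I would pin down the centers by checking against that node example.
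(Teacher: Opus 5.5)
Your proposal is correct and follows essentially the same route as the paper: complex conjugation (via the real structure on $V_\lambda(x)\oplus V_{\overline\lambda}(x)$) gives $h^{p,q}_\lambda=h^{q,p}_{\overline\lambda}$, and the isomorphisms $N^k$ between graded pieces of the monodromy weight filtration, centered at $n-1$ for $\lambda\neq 1$ and at $n$ for $\lambda=1$, give the remaining equalities. The centering you flag as the main obstacle is what the paper takes directly from Saito \cite[Section 5.1.6]{Saito88} for the weight-$n$ pure Hodge module $\Q^H_\X[n]$ ($\psi_f$ centered at $n-1$, $\phi_{f,1}$ at $n$), so you do not need the argument via $\mathrm{can}$ or the check against the node example.
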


\begin{proof}
If the local monodromy operator is denoted by $T$, then the operator
$$
(T-\lambda)(T-\overline\lambda)
$$
has real coefficients. This implies that $V_\lambda(x)\oplus V_{\overline \lambda}(x)$ is a real mixed Hodge structure with
$$
\overline{V_\lambda(x)}=V_{\overline \lambda}(x).
$$
Hence, by the definition of mixed Hodge structure, we have
$$
\overline{\gr_F^p\gr^W_{p+q}\left(V_\lambda(x)\oplus V_{\overline \lambda}(x)\right)}=\gr_F^q\gr^W_{p+q}\left(V_\lambda(x)\oplus V_{\overline \lambda}(x)\right),
$$
which implies
$$
\overline{\gr_F^p\gr^W_{p+q}\left(V_\lambda(x)\right)}=\gr_F^q\gr^W_{p+q}\left(V_{\overline \lambda}(x)\right) \quad\text{and}\quad h_\lambda^{p,q}(x)=h_{\overline\lambda}^{q,p}(x).
$$
By \cite[Section 5.1.6]{Saito88}, we additionally have the nilpotent operator
$$
N:V_\lambda(x)\to V_\lambda(x)(-1)
$$
where $(-1)$ denotes the Tate twist, with an isomorphism
$$
N^{n-1-p-q}:\gr^W_{p+q}V_\lambda(x)\xrightarrow{\sim}\gr^W_{2(n-1)-p-q}V_\lambda(x)(n-1-p-q)
$$
when $\lambda\neq1$ and an isomorphism
$$
N^{n-p-q}:\gr^W_{p+q}V_1(x)\xrightarrow{\sim}\gr^W_{2n-p-q}V_1(x)(n-p-q)
$$
when $\lambda=1$. Here, we assume $p+q\le n-1$ without loss of generality. As a consequence, we have
$$
h_\lambda^{p,q}(x)=h_\lambda^{n-1-q,n-1-p}(x)\quad\text{when}\quad \lambda\neq 1
$$
and
$$
h_1^{p,q}(x)=h_1^{n-q,n-p}(x)\quad\text{when}\quad \lambda= 1,
$$
from which we obtain the desired symmetry.
\end{proof}

\begin{cor}
\label{cor:indeterminacy for isolated singularities}
Let $(n,d)\neq(3,3)$, where $n\geq 2$ and $d\geq 3$. Let $X\subset \P^n$ be a degree $d$ hypersurface with isolated singularities. Then $[X]\in\P |\O_{\P^n}(d)|$ is contained in the domain of definition of the period map $$\overline \Phi:\P |\O_{\P^n}(d)|\dashrightarrow\overline \M^{\rm BB}_{(n,d)}$$ if and only if $X$ has $(\lfloor\frac{n}{2}\rfloor-1)$-Du Bois hypersurface singularities.
\end{cor}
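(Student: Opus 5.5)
One direction is already available: if $X$ has $(\lfloor\frac{n}{2}\rfloor-1)$-Du Bois singularities, then Theorem \ref{thm:extendability}, applied to the universal family over the open locus of such hypersurfaces, shows that $\overline\Phi$ is regular at $[X]$. For the converse I will use Corollary \ref{cor:numerical criterion for pencil}(3) together with Theorem \ref{thm:Griffiths height of pencil}.

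Choose a general $Q$, so that the pencil $\X=\{P+tQ=0\}$ is smooth. Then $X=\X_0$ is the only fiber with non-nodal singularities. The other critical points are nodes, and those fibers lie in the domain of definition. By Corollary \ref{cor:numerical criterion for pencil}, $[X]$ lies in the domain of definition if and only if $\mathrm{ht}_{GK}=F(n,d)$.

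For a node, $h^p_\lambda$ is concentrated in a single term: $p=\frac{n-1}{2}$ with $\alpha=\frac12$ when $n$ is odd, and $p=\frac n2$ with $\alpha=0$ when $n$ is even. Its contribution is therefore the node weight $c_n:=\frac12\lfloor\frac{n(3n-2)}{4}\rfloor$. Using Lemma \ref{lem:sum of Milnor number} to distribute the constant term as $c_n\mu(x)$ over the critical points, the node contributions cancel. The difference then becomes
$$
\mathrm{ht}_{GK}-F(n,d)=\sum_{x\in\Sing X}\sum_{\alpha,p}\bigl(A_\alpha(p)-c_n\bigr)h^p_{e^{-2\pi i\alpha}}(x).
$$
This is a sum of local invariants, so the problem is local. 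I must show that for each isolated singular point $x$, the local quantity $\delta(x):=\sum(A_\alpha(p)-c_n)h^p_{e^{-2\pi i\alpha}}(x)$ is $\le0$ (this is already forced globally by Theorem \ref{thm:Mordant question and indeterminacy}), and that $\delta(x)=0$ exactly when $\wt\alpha_0\ge\lfloor\frac n2\rfloor$ at $x$.

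Here $A_\alpha(p)=\sum_{k=1}^p(n-k)+\alpha p$ is increasing and convex in $p+\alpha$. The spectrum pairs $(p,\alpha)$ are exactly the spectral numbers of the singularity, up to the normalization $\beta=n-p-\alpha$ or similar. By Theorem \ref{thm:minimal exponent via vanishing cycle}, the largest nonzero $p+\alpha$ is governed by the minimal exponent.

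I will pair the Hodge numbers using Lemma \ref{lem:symmetry of h_lambda}. Summing $h^{p,q}$ over $q$, the symmetry gives $h^p_\lambda=h^{n-1-p}_{\overline\lambda}$ for $\lambda\neq1$ and $h^p_1=h^{n-p}_1$. So the spectrum is symmetric about the node value: $(p,\alpha)\leftrightarrow(n-1-p,1-\alpha)$ for $\alpha\neq0$, and $(p,0)\leftrightarrow(n-p,0)$. In each symmetric pair, convexity of $A$ shows that the average of $A$ over the pair is at least $c_n$ as the pair moves away from the center, with the sign arranged so that $\delta(x)\le0$. Equality holds exactly when every pair sits at the center, i.e. when all of the spectrum sits at the node point $p+\alpha=\frac n2$ or $\frac{n-1}{2}+\frac12$.

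The main obstacle is this convexity computation. I will first check explicitly that $A_{\alpha}(p)+A_{1-\alpha}(n-1-p)-2c_n$ (and the corresponding expression in the $\lambda=1$ case) is a quadratic in $p+\alpha$ with a definite sign, vanishing only at the central value. Once that holds, $\delta(x)=0$ is equivalent to the spectrum being concentrated in the middle. By Theorem \ref{thm:minimal exponent via vanishing cycle}, that condition is $\wt\alpha_0(x)\ge\frac{n-1}{2}$ for $n$ odd, where the $\alpha=\frac12$ band or the adjacent $\lambda=1$ band must be matched carefully, and $\wt\alpha_0(x)\ge\frac n2$ for $n$ even. These are the minimal-exponent conditions equivalent to $(\lfloor\frac n2\rfloor-1)$-Du Bois singularities, by Theorem \ref{thm:higher singularities via minimal exponent} and Lemma \ref{lem:classification of n/2 Du Bois}.
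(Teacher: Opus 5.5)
\textbf{Verdict: right approach, but the central computation is misdescribed, and its predicted equality case is false for odd $n$.}

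Your reduction matches the paper's: Corollary \ref{cor:numerical criterion for pencil} and Theorem \ref{thm:Griffiths height of pencil}, with the constant distributed as $c_n\mu(x)$ via Lemma \ref{lem:sum of Milnor number}, then a symmetrization via Lemma \ref{lem:symmetry of h_lambda}. Your coarser pairing on $h^p_\lambda$, namely $(p,\alpha)\leftrightarrow(n-1-p,1-\alpha)$ and $(p,0)\leftrightarrow(n-p,0)$, is actually enough. The paper's $A_\alpha(p,q)$ is just the mean of your pair averages at $(p,\alpha)$ and $(q,1-\alpha)$. The problem is the computation you defer, which is the whole content of the corollary.

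\textbf{Sign and convexity.} $A_\alpha(p)=np-\frac{p(p+1)}{2}+\alpha p$ is concave in $p$, not convex. The pair average is therefore at most $c_n$, and that is exactly where $\delta(x)\le 0$ comes from; there is no sign to ``arrange.''

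\textbf{The pair sum is not a function of $p+\alpha$.} With $k=p-\frac{n-1}{2}$ one gets
\[
A_\alpha(p)+A_{1-\alpha}(n-1-p)=\tfrac{(n-1)(3n+1)}{4}-k^2-(1-2\alpha)k\quad(0<\alpha<1),\qquad A_0(p)+A_0(n-p)=\tfrac{n(3n-2)}{4}-\bigl(p-\tfrac n2\bigr)^2,
\]
while $2c_n$ equals $\frac{(n-1)(3n+1)}{4}$ for $n$ odd and $\frac{n(3n-2)}{4}$ for $n$ even.
\begin{itemize}
\item For $n$ even: the first excess is $-\frac14-k^2-(1-2\alpha)k<0$, since $k\in\frac12+\Z$. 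The second excess vanishes only at $p=\frac n2$.
\item For $n$ odd: the first excess is $-k(k+1-2\alpha)$, which vanishes at $p=\frac{n-1}{2}$ for every $\alpha\in(0,1)$. The second excess vanishes at $p=\frac{n\pm1}{2}$.
\end{itemize}

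\textbf{Consequence for the equality case.} For odd $n$, $\delta(x)=0$ does not concentrate the spectrum at $\frac n2$. Your stated equality case would single out nodes only. It is already contradicted by the suspension of an $A_2$ surface singularity: its classes $(\frac{n-1}{2},\frac13)$ and $(\frac{n-1}{2},\frac23)$ give $\delta=0$ directly, even though its spectrum is $\frac{n-1}{2}+\frac13,\ \frac{n-1}{2}+\frac23$.

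\textbf{Normalization.} By Theorem \ref{thm:minimal exponent via vanishing cycle}, a nonzero $h^p_{e^{-2\pi i\alpha}}$ carries spectral number $n-1-p+\alpha$ if $\alpha\neq 0$ and $n-p$ if $\alpha=0$, not $n-p-\alpha$. It is the minimum of these numbers, not the largest $p+\alpha$, that equals $\wt\alpha_0$. Hence $\delta(x)=0$ means all spectral numbers lie in $[\frac{n-1}{2},\frac{n+1}{2}]$ for $n$ odd, and equal $\frac n2$ for $n$ even. That is, $\wt\alpha_0\ge\lfloor\frac n2\rfloor$ at $x$ in both cases, as required.

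With these corrected formulas your argument goes through and reproduces the paper's two parity cases.
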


\begin{proof}
By Theorem \ref{thm:extendability}, it suffices to prove ``only if."
Let $\{P=0\}$ be the defining equation of $X\subset \P^n$. By Corollary \ref{cor:numerical criterion for pencil}, it suffices to prove that for a general degree $d$ hypersurface $\{Q=0\}\subset \P^n$, the pencil 
$$
\X:=\{P+tQ=0\}\subset \P^n\times\P_t^1
$$
satisfies
$$
\mathrm{ht}_{GK}(\mathbb H^{n-1}(\X_\eta/\P^1_\eta))=F(n,d)
$$
only if $X$ has $(\lfloor\frac{n}{2}\rfloor-1)$-Du Bois hypersurface singularities. Away from $\X_0\isom X$, every fiber $\X_t$ is smooth or has one nodal singularity.

When $\alpha\neq 0$, denote by
$$
A_\alpha(p,q):=\frac{1}{4}\left(A_\alpha(p)+A_{1-\alpha}(q)+A_{1-\alpha}(n-1-p)+A_\alpha(n-1-q)\right),
$$
and by
$$
A_0(p,q):=\frac{1}{4}\left(A_0(p)+A_0(q)+A_0(n-p)+A_0(n-q)\right).
$$
By Lemma \ref{lem:symmetry of h_lambda}, we have
$$
\sum_{x\in S}\sum_{\alpha\in[0,1)}\sum_{(p,q)\in \Z^2_{\ge 0}} A_{\alpha}(p)\,h_{e^{-2\pi i\alpha}}^{p,q}(x)=\sum_{x\in S}\sum_{\alpha\in[0,1)}\sum_{(p,q)\in \Z^2_{\ge 0}}A_\alpha(p,q)\,h_{e^{-2\pi i\alpha}}^{p,q}(x).
$$
Recall from Lemma \ref{lem:sum of Milnor number} that
$$
\sum_{x\in S}\sum_{\alpha\in[0,1)}\sum_{(p,q)\in \Z^2_{\ge 0}}h_{e^{-2\pi i\alpha}}^{p,q}(x)=(n+1)(d-1)^n.
$$
By Theorem \ref{thm:Griffiths height of pencil}, the difference
$$
\mathrm{ht}_{GK}(\mathbb H^{n-1}(\X_\eta/\P^1_\eta))-F(n,d)
$$
is equal to
$$
\sum_{x\in S}\sum_{\alpha\in [0,1)}\sum_{(p,q)\in \Z^2_{\ge 0}}\left(A_\alpha(p,q)-\frac{1}{2}\lfloor\frac{n(3n-2)}{4}\rfloor\right)h_{e^{-2\pi i\alpha}}^{p,q}(x).
$$

We further simplify the terms. Note that
$$
A_\alpha(p)=\frac{p(2n-p-1)}{2}+\alpha p.
$$
Thus, after expanding, we obtain
$$
A_\alpha(p,q)=\frac{1}{4}\left(n^2-1+p(n-p-1)+q(n-q-1)+(2\alpha-1)(p-q)\right)
$$
when $\alpha\neq 0$, and
$$
A_0(p,q)=\frac{1}{4}\left(n^2-n+p(n-p)+q(n-q)\right).
$$
We split the argument into two cases depending on the parity of $n$.

\textbf{Case 1.} $n$ is even. Then
$$
A_\alpha(p,q)\le \frac{1}{2}\lfloor\frac{n(3n-2)}{4}\rfloor
$$
and the equality holds if and only if $\alpha=0,p=q=\frac{n}{2}$. Indeed, when $\alpha\neq 0$, then from the inequality $|(2\alpha-1)(p-q)|<|p-q|$ when $p\neq q$, we have
$$
A_\alpha(p,q)<\frac{1}{4}\left(n^2-1+\frac{n^2}{4}+\frac{(n-2)^2}{4}\right)= \frac{1}{2}\lfloor\frac{n(3n-2)}{4}\rfloor.
$$
When $\alpha=0$, then $A_0(p,q)$ is maximized with value equal to $\frac{1}{2}\lfloor\frac{n(3n-2)}{4}\rfloor$ only when $p=q=\frac{n}{2}$. Therefore, if
$$
\mathrm{ht}_{GK}(\mathbb H^{n-1}(\X_\eta/\P^1_\eta))=F(n,d),
$$
then $h_{e^{-2\pi i\alpha}}^{p,q}(x)\neq0$ only if $\alpha=0,p=q=\frac{n}{2}$. In particular, this implies that the minimal exponent of $X$ satisfies
$$
\wt\alpha(X)\ge\frac{n}{2}
$$
by Theorem \ref{thm:minimal exponent via vanishing cycle}, which completes the proof. 

\textbf{Case 2.} $n$ is odd. Then
$$
A_\alpha(p,q)\le \frac{1}{2}\lfloor\frac{n(3n-2)}{4}\rfloor
$$
and the equality holds if and only if $\alpha\neq0,p=q=\frac{n-1}{2}$ or $\alpha=0, (p,q)=\left(\frac{n\pm 1}{2},\frac{n\pm 1}{2}\right)$. Indeed, when $\alpha\neq 0$, the inequality is equivalent to
$$
p(n-p-1)+q(n-q-1)+(2\alpha-1)(p-q)\le\frac{(n-1)^2}{2},
$$
which is equivalent to
$$
(2\alpha-1)(p-q)\le\left(\frac{n-1}{2}-p\right)^2+\left(\frac{n-1}{2}-q\right)^2.
$$
It is easy to verify that $|p-q|$ is greater (resp. less) than or equal to the left (resp. right) hand side. Hence, the equality holds if and only if $p=q=\frac{n-1}{2}$. When $\alpha=0$, $A_0(p,q)$ is maximized with value equal to $\frac{1}{2}\lfloor\frac{n(3n-2)}{4}\rfloor$ only when $(p,q)=\left(\frac{n\pm 1}{2},\frac{n\pm 1}{2}\right)$. Therefore, as in the argument above, the equality
$$
\mathrm{ht}_{GK}(\mathbb H^{n-1}(\X_\eta/\P^1_\eta))=F(n,d)
$$
implies that the minimal exponent of $X$ satisfies
$$
\wt\alpha(X)\ge\frac{n-1}{2}
$$
by Theorem \ref{thm:minimal exponent via vanishing cycle}, which completes the proof.
\end{proof}

\section{Reduction to isolated singularities}
\label{sec:reduction to isolated singularities}

In this section, we prove that every hypersurface that has singularities worse than $\left(\lfloor\frac{n}{2}\rfloor-1\right)$-Du Bois singularities can be approximated by those with isolated singularities. Using Corollary~\ref{cor:indeterminacy for isolated singularities}, this allows us to precisely determine the indeterminacy locus of the period map.

\begin{prop}
\label{prop:reduction to isolated sing}
Let $(n,d)\neq(3,3),(3,4),(5,3)$, where $n\geq 2$ and $d\geq 3$. Let $X\subset\P^n$ be a degree $d$ hypersurface satisfying
$$
\wt\alpha(X)<\lfloor\frac{n}{2}\rfloor.
$$
Then there exists a deformation
$$
f:\X\to \Delta
$$
of $X\isom\X_0$ as hypersurfaces in $\P^n$ such that the general fiber $\X_t$ of $f$ has isolated singularities and satisfies
$$
\wt\alpha(\X_t)<\lfloor\frac{n}{2}\rfloor.
$$
\end{prop}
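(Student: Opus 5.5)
Let $X=\{P=0\}$ and fix a point $x\in X$ with $\wt\alpha_0(P,x)<\lfloor\frac{n}{2}\rfloor$. The plan is to deform $P$ to $P_t=P+tR$, with $R$ a degree $d$ form chosen so that two things happen at once. First, the generic member keeps a singular point near $x$ whose minimal exponent stays below $\lfloor\frac n2\rfloor$. Second, all of its other singularities become isolated.

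The natural choice is to take $R$ vanishing to high order at $x$. Concretely, I would take $R\in H^0(\P^n,\I_x^{k}(d))$ general, for a suitable $k$.
\begin{enumerate}
\item \emph{Bertini away from $x$.} The linear system $\{P+tR\}$, together with the ambient system $|\I_x^k(d)|$, is base-point free off $x$ and separates tangent directions off $x$ as long as $k\le d-1$. By a Bertini/Kleiman-type argument on the incidence variety, a general member of $P+|\I_x^k(d)|$ is then smooth away from $x$. Rather than the pencil, I would use a one-parameter family through $X$ inside this larger linear system; any curve through $[X]$ meeting the generic member serves as $\Delta$.
\item \emph{Control at $x$.} If $k$ exceeds the order of determinacy of the germ, the germ at $x$ stays analytically unchanged; but $X$ may have non-isolated singularity at $x$, so finite determinacy fails. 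Instead I would use semicontinuity. The minimal exponent is lower semicontinuous in families (Saito, Musta\c{t}\u{a}--Popa), so the condition $\wt\alpha<\lfloor\frac n2\rfloor$ is \emph{not} automatically preserved; this is the opposite direction. So I would choose the deformation to preserve an explicit witness of small minimal exponent. The bound $\wt\alpha_0\le n/\mathrm{mult}$ from \cite[Theorem E]{MP20} handles points of multiplicity $\ge3$ when $\lfloor n/2\rfloor> n/3$, provided $k\ge3$ keeps the multiplicity at $x$. For multiplicity $2$, Lemma \ref{lem:classification of n/2 Du Bois} reduces us to the suspension $H+x_{n}^2+\dots$ with Hessian rank $r$, so that $\wt\alpha_0(P,x)=\frac{r}{2}+\wt\alpha_0(H)$ by Theorem \ref{thm:Thom-Sebastiani for minimal exponents}. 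Here $H$ has order $\ge3$ in $n-r$ variables and is non-slc/non-node as appropriate. A general perturbation of order $\ge 3$ keeps the Hessian rank and the corank, and it makes $H$ have isolated singularity with $\mathrm{mult}\ge3$. The bound $\wt\alpha_0(H)\le (n-r)/3$ then gives $\wt\alpha<\lfloor n/2\rfloor$ in most cases. The remaining case, where $H$ has multiplicity $3$ but the total is still borderline, needs the deformation to keep a specific non-slc cubic-type germ; I would pick $R\in \I_x^{k}$ with $k$ large relative to the degree of the relevant weighted-homogeneous leading part, so the Newton polygon (hence a non-degenerate quasi-homogeneous bound on $\wt\alpha$) is unchanged.
\item \emph{Degree constraints.} Step 1 needs $k\le d-1$, or at least enough sections to kill singularities off $x$, while step 2 needs $k\ge3$ or larger. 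These conflict exactly for small $d$ relative to $n$. I expect this is where the excluded pairs $(3,3),(3,4),(5,3)$ come from, so I would verify the remaining small cases by hand.
\end{enumerate}

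The main obstacle is step 2: guaranteeing that the minimal exponent at $x$ does not jump up under the generic deformation, given that it is only lower semicontinuous. I would first try to fix the tangent cone and a non-degenerate Newton polyhedron at $x$, and compute $\wt\alpha$ from the Newton polyhedron (weights summing below $\lfloor n/2\rfloor$). I would then check via the multiplicity bound and Thom--Sebastiani that this can always be arranged when $d\ge3$, outside the excluded pairs.
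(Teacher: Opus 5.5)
\textbf{Gap: odd $n$, Hessian corank $3$ and $2$, small $d$.}

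Your easy cases agree with the paper. For $n$ even, any perturbation vanishing to order $\ge3$ at $x$ keeps the Hessian rank. For $n$ odd with Hessian rank $r\le n-4$, the bound $\frac r2+\frac{n-r}3<\frac{n-1}2$ persists; the paper adds $\sum c_ix_i^d$ and uses the weights $(\frac12,\dots,\frac12,\frac13,\dots,\frac13)$. (Bertini off $x$ only needs $k\le d$, since $x_1^d,\dots,x_n^d\in H^0(\I_x^d(d))$.)

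The gap is in the odd-$n$ cases of Hessian corank $3$ and $2$, where essentially all of the paper's proof lives. There your mechanism, a \emph{general} $R\in H^0(\I_x^k(d))$ with $k\le d$, fails for small $d$. Such an $R$ preserves only the $(k-1)$-jet of $P$ at $x$ and makes the order-$k$ part general. The witness for $\wt\alpha_x<\frac{n-1}2$, however, sits in higher jets:
\begin{itemize}
\item in corank $3$ it is the condition $\wt\alpha_0(P_3)<1$ on the cubic part;
\item in corank $2$ with $P_3=0$ it is that the binary quartic $P_4-\frac14\sum_i Q_i^2$ has a triple root (Lemma \ref{lem:P3=0});
\item in corank $2$ with $P_3=x_{n-1}^3$ it is that the weight-one part of the splitting-lemma function $G$ for the weights $(\frac13,\frac16)$ is a perfect cube $(x_{n-1}+\alpha x_n^2)^3$ (Lemma \ref{lem:P3=x3^3}), which involves the $6$-jet.
\end{itemize}

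For a concrete failure, take $d=4$, any odd $n\ge5$, and $X$ locally $x_1^2+\dots+x_{n-2}^2+x_{n-1}^4$ at $x$. A general $R\in H^0(\I_x^4(4))$ turns the binary quartic into a general one, and $k<4$ is worse. So $\wt\alpha_x$ rises to $\frac{n-1}2$, while Bertini makes the new hypersurface smooth elsewhere. The generic member of your family has therefore left the locus you need to stay in.

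The same happens for $d\le4$ when $P_3=0$, for $d=3$ in corank $3$, and for every $d\le6$ in the cube case. These are infinitely many pairs $(n,d)$, not a handful of small cases to check by hand. The clash $3\le k\le d$ is also not what produces the excluded pairs.

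You further take for granted that every such $X$ has a weighted witness. In the cube case the witness is Newton-degenerate in the given coordinates. It appears only after the normal form of Lemma \ref{lem:corank 2 analytic equivalent form} and the substitution $x_{n-1}\mapsto x_{n-1}+\alpha x_n^2$. Its existence is exactly the forward direction of Lemmas \ref{lem:P3=0} and \ref{lem:P3=x3^3}, proved via weighted degenerations, semicontinuity, Thom--Sebastiani, and the classification of binary cubics and quartics.

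\textbf{Two missing ideas.}

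First, the perturbation must be adapted to the witness weight system rather than general in $\I_x^k(d)$. In coordinates where the witness is visible, the paper adds $c_1x_1^d+\dots+c_{n-1}x_{n-1}^d+c_nx_1x_n^{d-1}$. All of its monomials have weight $\ge1$, and in corank $2$ it preserves the divisibility of $P_4-\frac14\sum_iQ_i^2$ by $x_{n-1}^3$, resp.\ the weight-one part of $G$. Bertini then gives smoothness only off the line $x_1=\dots=x_{n-1}=0$, and the Jacobian criterion shows that the singular set on that line is finite. This settles corank $3$ for all $d$, and corank $2$ except when $d=3$, or $d=4$ with $P_3=x_{n-1}^3$.

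Second, in those remaining cases even adapted perturbations need not isolate the singularities. The approximants are then no longer perturbations preserving a fixed witness; they move inside the bad locus. The paper works in an affine space of normal-form coefficients: the coefficients of the $Q_i$, or the coefficients $\alpha_i$ of extra terms $x_ix_n^3$, or the $a_i,b_i,a_{ij},b_{ij},c_{ijk}$ of Lemma \ref{lem:cubic P3=x^3}.
\begin{itemize}
\item The closed locus $B$ where $\wt\alpha_x<\frac{n-1}2$ is cut out by $3$, $2$, or $5$ equations.
\item The locus $C$ where the explicit construction fails (the $Q_i$ share a linear factor, all $\alpha_i=0$, or $a_1=\dots=a_{n-2}=0$) has strictly larger codimension.
\end{itemize}
So $X$ is a limit of points of $B\setminus C$, and one concludes by closedness as in Remark \ref{rmk:reduction to isolated singularities}. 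These dimension counts require $n\ge7$ for $d=3$ and $n\ge5$ for $d=4$; that is the actual origin of the excluded pairs.
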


\begin{rmk}
\label{rmk:reduction to isolated singularities}
The lower semicontinuity of the minimal exponent \cite[Theorem E(2)]{MP20} implies that the set of hypersurfaces $X\subset \P^n$ satisfying $\wt\alpha(X)<\lfloor\frac{n}{2}\rfloor$ is a Zariski closed subset $Z$ in the parameter space $\P|\O_{\P^n}(d)|$. Additionally, the set of hypersurfaces in $Z$ with isolated singularities is open in $Z$. Proposition \ref{prop:reduction to isolated sing} is equivalent to proving that the closure of this open set is all of $Z$.
\end{rmk}

We first prove the following technical lemma to be used later in the proof.

\begin{lem}
\label{lem:corank 2 analytic equivalent form}
Let $P\in \C[x_1,\dots,x_{n-2},z_1,z_2]$ be a polynomial such that
$$
P=x_1^2+\dots+x_{n-2}^2+R
$$
where $R\in (x_1,\dots,x_{n-2},z_1,z_2)^3$. We write
$$
R=F_0+\sum_{i=1}^{n-2}F_ix_i+\sum_{i,j=1}^{n-2}F_{ij}x_ix_j+\sum_{i,j,k=1}^{n-2}F_{ijk}x_ix_jx_k+\cdots
$$
where $F_I\in\C[z_1,z_2]$ for every index $I$. By convention, we assume $F_I=F_{I'}$, if $I$ and $I'$ differ by a permutation. Then, at the origin, $P$ is locally analytically equivalent to
$$
u_1^2+\dots+u_{n-2}^2+G+R'
$$
where
$$
G:=F_0-\frac{1}{4}\sum_{i=1}^{n-2}F_i^2+\frac{1}{4}\sum_{i,j=1}^{n-2}F_{ij}F_iF_j-\frac{1}{4}\sum_{i,j,k=1}^{n-2}F_{ij}F_{ik}F_jF_k-\frac{1}{8}\sum_{i,j,k=1}^{n-2}F_{ijk}F_iF_jF_k
$$
and $R'\in (u_1,\dots,u_{n-2})^2+(u_1,\dots,u_{n-2})(z_1,z_2)^4+(z_1,z_2)^7$ with multiplicity at least $3$ at the origin.
\end{lem}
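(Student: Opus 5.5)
We will complete the square in the $x$-variables by an explicit analytic change of coordinates. Write $P=\sum_i x_i^2+F_0+\sum_i F_ix_i+\sum_{i,j}F_{ij}x_ix_j+\sum_{i,j,k}F_{ijk}x_ix_jx_k+\cdots$. Since $R\in\mathfrak m^3$, we have $F_0\in(z)^3$, $F_i\in(z)^2$, $F_{ij}\in(z)^1$ and $F_{ijk}\in(z)^0$. The plan is to substitute $x_i=u_i+c_i$, where $c_i\in\C\{z_1,z_2\}$ is a power series without constant term, chosen to kill the terms linear in $u$ modulo $(u)(z)^4$. The resulting function of $z$ alone, obtained by setting $u=0$, is then $P(c,z)$. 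This substitution is a local analytic automorphism of $(\C^n,0)$, since $c_i\in(z)^2$.

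\textbf{Step 1 (choice of $c$).} The coefficient of $u_i$ in $P(u+c,z)$ is
$$
2c_i+F_i+2\sum_j F_{ij}c_j+3\sum_{j,k}F_{ijk}c_jc_k+\cdots .
$$
We take $c_i=-\frac12F_i+\frac12\sum_j F_{ij}F_j$. With $F_i\in(z)^2$ and $F_{ij}\in(z)$, this choice cancels the coefficient through order $(z)^3$, leaving only terms in $(z)^4$ (the first being $-\frac12\sum_{j,k}F_{ij}F_{jk}F_k$, together with $\frac34\sum F_{ijk}F_jF_k$). Hence the $u$-linear part lies in $(u)(z)^4$. Any remaining parts of degree $\ge 2$ in $u$ lie in $(u)^2$, and these are absorbed into $R'$; note that $\sum u_i^2$ is untouched up to a perturbation in $(u)^2(z)$.

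\textbf{Step 2 (computing $G$).} Next we expand $P(c,z)=\sum c_i^2+F_0+\sum F_ic_i+\sum F_{ij}c_ic_j+\sum F_{ijk}c_ic_jc_k+\cdots$ and keep terms of $z$-order at most $6$. Writing $a=-\frac12F$ and $b=\frac12\mathbf F F$ (so $b\in(z)^3$), the contributions are as follows:
\begin{itemize}
\item $\sum c_i^2=\frac14\sum F_i^2-\frac12\sum F_{ij}F_iF_j+\frac14\sum F_{ij}F_{ik}F_jF_k$;
\item $\sum F_ic_i=-\frac12\sum F_i^2+\frac12\sum F_{ij}F_iF_j$;
\item $\sum F_{ij}c_ic_j=\frac14\sum F_{ij}F_iF_j-\frac12\sum F_{ij}F_{ik}F_jF_k+O(z^7)$;
\item $\sum F_{ijk}c_ic_jc_k=-\frac18\sum F_{ijk}F_iF_jF_k+O(z^7)$.
\end{itemize}
All higher terms lie in $(z)^8$. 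Summing these contributions gives exactly $G$ modulo $(z)^7$, and the remainder goes into $R'$.

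\textbf{Step 3 (multiplicity of $R'$).} We verify that $R'$ has multiplicity $\ge 3$. The $(u)^2$ part coming from $P$ beyond $\sum u_i^2$ has order $\ge 3$, because its coefficients $F_{ij}$ lie in $(z)$. The remaining pieces, $(u)(z)^4$ and $(z)^7$, are automatically of order $\ge 3$. The main obstacle is the bookkeeping of $z$-orders in Steps 1--2. The key simplification is that each cross term involving a correction of order $\ge 4$ in $c$ lands in $(z)^7$ or in $(u)(z)^4$. If precision is lacking, we iterate the completion of squares once more to push the error terms further out.
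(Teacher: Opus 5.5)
Your proposal is correct and uses exactly the paper's substitution $u_i = x_i + \frac{F_i}{2} - \sum_j \frac{F_{ij}F_j}{2}$; you simply carry out the expansion that the paper omits, and your four contributions to $G$, your $z$-order bookkeeping, and your multiplicity check all hold. One harmless slip: the leftover term in the $u_i$-coefficient coming from $2\sum_j F_{ij}c_j$ is $+\sum_{j,k}F_{ij}F_{jk}F_k$, not $-\frac12\sum_{j,k}F_{ij}F_{jk}F_k$. It still lies in $(z_1,z_2)^4$, so your conclusion is unaffected.
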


\begin{proof}
Consider a change of coordinates
$$
u_i:=x_i+\frac{F_i}{2}-\sum_{j=1}^{n-2}\frac{F_{ij}F_j}{2}
$$
for $i\in \{1,\dots, n-2\}$. By substituting $u_i-\frac{F_i}{2}+\sum_{j=1}^{n-2}\frac{F_{ij}F_j}{2}$ for $x_i$ in the polynomial $R$, we obtain the desired form. We omit this elementary computation. Note that $F_i\in (z_1,z_2)^2$ and $F_{ij}\in (z_1,z_2)$, so $\{u_1,\dots,u_{n-2},z_1,z_2\}$ is a local coordinate system at the origin.
\end{proof}

The proof of Proposition \ref{prop:reduction to isolated sing} consists of lengthy but elementary calculations, involving a case-by-case analysis and a series of reductions. After constructing a suitable deformation, we make crucial use of the following key inequality on the minimal exponent, which is well known to experts (see also \cite[Proposition 4.3]{Park25}).

\begin{prop}[{\cite[Proposition 2.1]{CDM25}}]
\label{prop:weight-minimal exponent inequality}
Let $P\in \C[x_1,\dots,x_n]$ be a nonzero polynomial with $P(0)=0$, and let $w=(w_1,\dots,w_n)$ be a nonnegative nonzero weight system associated to $(x_1,\dots,x_n)$. If the hypersurface $\{P=0\}\subset \C^{n}$ is singular at $0$, then the minimal exponent of $P$ at $0$ satisfies
$$
\wt \alpha_0(P)\le\frac{w_1+\cdots+w_n}{\mathrm{wt}_{w}(P)}.
$$
\end{prop}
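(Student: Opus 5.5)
The plan is to prove the inequality as a valuative bound for the toric divisor of a weighted blow-up. This mirrors the proof of $\wt\alpha_0(P)\le \frac{n}{\mathrm{mult}_0(P)}$ in \cite[Theorem E(3)]{MP20}, which is the case $w=(1,\dots,1)$.

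\textbf{Reduction to positive integer weights.} Both $w\mapsto w_1+\dots+w_n$ and $w\mapsto \mathrm{wt}_w(P)=\min\{\langle w,m\rangle : x^m \text{ appears in } P\}$ are continuous on $\mathbb R^n_{\ge0}$. Hence if the inequality holds for all weights with positive rational entries, it holds in the limit for all nonnegative nonzero $w$. In the limit the right-hand side may become $+\infty$ (when $\mathrm{wt}_w(P)=0$); then there is nothing to prove. Both sides are homogeneous of degree $0$ in $w$, so after rescaling we may take the $w_i$ to be positive integers. Let $\pi:Y\to\C^n$ be the weighted blow-up of the origin with weights $w$, and let $E$ be its exceptional divisor. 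Then $Y$ is toric with quotient singularities, and
$$
A(E)=w_1+\dots+w_n,\qquad \mathrm{ord}_E(P)=\mathrm{wt}_w(P).
$$
So the claim becomes $\wt\alpha_0(P)\le A(E)/\mathrm{ord}_E(P)$.

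\textbf{Case $\wt\alpha_0(P)\le 1$.} Here $\wt\alpha_0(P)=\mathrm{lct}_0(P)$ by \cite[Theorem 10.6]{Kollar97} (as used in Lemma \ref{lem:classification of n/2 Du Bois}). The valuative characterization of the log canonical threshold then gives $\mathrm{lct}_0(P)\le A(E)/\mathrm{ord}_E(P)$ directly.

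\textbf{Case $\wt\alpha_0(P)>1$.} Write $\wt\alpha_0(P)\ge q+\gamma$ with $q\ge1$ an integer and $\gamma\in(0,1]$. I would use the Musta\c t\u a--Popa characterization of the minimal exponent through Hodge ideals: this bound is equivalent to the triviality near $0$ of $I_q(\gamma D)$, where $D=\{P=0\}$. I would then compute these Hodge ideals on $Y$, in the same way \cite{MP20} uses the ordinary blow-up. The part of $\pi^*D$ along $E$ appears with coefficient $\mathrm{ord}_E(P)$, and the twist by $K_{Y/\C^n}$ contributes $A(E)-1$. Triviality of $I_q(\gamma D)$ at $0$ should then force an inequality of the form
$$
(q+\gamma)\,\mathrm{ord}_E(P)\le A(E).
$$
Letting $q+\gamma\uparrow\wt\alpha_0(P)$ gives the claim. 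This is the step that must use that $D$ is singular at $0$. If $D$ were smooth, then $\wt\alpha_0=\infty$, and the inequality would fail for $P=x_1$ with $w=(1,1)$, where the right-hand side is $2$. The singularity hypothesis should enter by guaranteeing that $E$ lies over the non-snc locus of $D$, so that the Hodge-ideal condition genuinely constrains $\mathrm{ord}_E(P)$.

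\textbf{Main obstacle and fallback.} The main obstacle is carrying out the Hodge-ideal (or Hodge-filtration) computation on the singular toric variety $Y$ rather than on a smooth blow-up. The plan is to pass to a toric log resolution dominating $Y$ on which $E$ survives as a divisor, and to track only the coefficient of $E$. If this proves too cumbersome, the fallback is an induction via restriction to general weighted hyperplanes, combined with the Thom--Sebastiani theorem (Theorem \ref{thm:Thom-Sebastiani for minimal exponents}). This should reduce to the case where the leading part of $P$ is quasi-homogeneous. In that case the bound can be checked on the spectrum, using Saito's formula $\wt\alpha_0=\sum w_i/\deg$ for isolated quasi-homogeneous singularities.
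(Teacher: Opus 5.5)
The paper gives no proof of this statement. It imports it from \cite[Proposition 2.1]{CDM25} (see also \cite[Proposition 4.3]{Park25}), so your argument has to stand on its own. Two parts of it are correct. The reduction to primitive positive integral weights works by continuity and homogeneity. The case $\wt\alpha_0(P)\le 1$ also works: there $\wt\alpha_0(P)=\mathrm{lct}_0(P)$, and the weighted blow-up divisor has $A(E)=\sum w_i$ and $\mathrm{ord}_E(P)=\mathrm{wt}_w(P)$.

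The genuine gap is the case $\wt\alpha_0(P)>1$, which is the real content of the proposition. By the Musta\c t\u a--Popa criterion, $I_q(\gamma D)$ is trivial at $0$ if and only if $\wt\alpha_0(P)\ge q+\gamma$. So your claim that triviality ``should force'' $(q+\gamma)\,\mathrm{ord}_E(P)\le A(E)$ is just the proposition restated for this $E$; no argument is supplied. The mechanism you indicate also cannot work as stated: the coefficient $\mathrm{ord}_E(P)$ of $E$ in $\pi^*D$, plus the discrepancy $A(E)-1$. These numbers do not detect whether $P$ is singular. For the smooth germ $P=x_1+x_2^2$ with $w=(2,1)$ one has $\mathrm{ord}_E(P)=2$ and $A(E)=3$, yet every Hodge ideal is trivial and $\wt\alpha_0(P)=+\infty>\frac32$. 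Any valid estimate must therefore use $P\in\mathfrak m_0^2$ in an essential way, and your sketch does not say how. Even for $w=(1,\dots,1)$ the statement is the nontrivial \cite[Theorem E(3)]{MP20}, where the singularity enters through $\mathrm{mult}_0(P)\ge 2$. Your proposed role for the hypothesis is also the wrong dichotomy. For the node $P=x_1x_2$ the origin is an snc point of $D$, yet $\wt\alpha_0(P)=1=\frac{1+1}{2}$, so the inequality is sharp there.

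The fallback does not repair this, because both tools you name give inequalities in the wrong direction. Restriction to a smooth hypersurface $H$ gives $\wt\alpha_0(P|_H)\le\wt\alpha_0(P)$. The $\mathbb G_m$-degeneration to the initial form $P_w$ gives $\wt\alpha_0(P_w)\le\wt\alpha_0(P)$ by lower semicontinuity, exactly as in the proof of Lemma \ref{lem:P3=0}. Both bound $\wt\alpha_0(P)$ from below, so an upper bound for a hyperplane section or for $P_w$ says nothing about $P$. Moreover, ``reducing to the case where the leading part is quasi-homogeneous'' is vacuous, since $P_w$ is always $w$-homogeneous. And $P_w$ is usually not an isolated singularity, so Saito's formula $\sum w_i/\deg$ is unavailable. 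For example, with $P=x_1^2+x_2^3$ and $w=(1,1)$, the initial form $P_w=x_1^2$ has $\wt\alpha_0=\frac12$. This gives no information about the claimed bound $\wt\alpha_0(P)\le 1$; the true value is $\frac56$. To close the gap you need a genuinely new input for $\wt\alpha_0(P)>1$. One option is a weighted analogue of the multiplicity bound, for instance an order estimate for Hodge ideals along the weighted blow-up that uses the singularity of $P$. Otherwise, cite \cite{CDM25} as the paper does.
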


Here, the weight $\mathrm{wt}_w(P)$ of a polynomial $P$ is the minimum of $\sum_{i=1}^nw_ie_i$ for all monomials $x_1^{e_1}\cdots x_n^{e_n}$ appearing in $P$ with nonzero coefficients. 

The proof of Proposition \ref{prop:reduction to isolated sing} is divided into two cases, according to whether $n$ is even or odd. When $n$ is odd, we reduce to the case where the Hessian of a singular point is of corank $2$. Proved using Lemma~\ref{lem:corank 2 analytic equivalent form} and Proposition \ref{prop:weight-minimal exponent inequality}, the following two lemmas are crucial for handling this case.

\begin{lem}
\label{lem:P3=0}
Let $X\subset \P^n$ be a hypersurface of degree $d$ defined by an equation $\{P=0\}$. Let $x:=[1:0:\dots:0]\in X$ be a singular point such that the Hessian of $(X,x)$ is of rank $n-2$. As in Lemma \ref{lem:corank 2 analytic equivalent form}, suppose we have
$$
P(1,x_1,\dots,x_{n})=x_1^2+\dots+x_{n-2}^2+
\left(F_0+\sum_{i=1}^{n-2}F_ix_i+\sum_{i,j=1}^{n-2}F_{ij}x_ix_j+\sum_{i,j,k=1}^{n-2}F_{ijk}x_ix_jx_k+\cdots\right)
$$
where $F_I\in\C[x_{n-1},x_{n}]$ for every index $I$. Denote by $P_3\in \C[x_{n-1},x_{n}]$ the cubic term of $F_0$ and $P_4\in \C[x_{n-1},x_{n}]$ the quartic term of $F_0$. If $P_3=0$, then we have the following equivalence
$$
\wt\alpha_x(\{P=0\})<\frac{n-1}{2}\quad\Longleftrightarrow\quad P_4-\frac{1}{4}\sum_{i=1}^{n-2}Q_i^2\quad \text{has a triple root or is identically zero}
$$
where $Q_i\in \C[x_{n-1},x_{n}]$ is the quadratic term of $F_i$.
\end{lem}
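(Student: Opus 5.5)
The plan is to put the germ $(X,x)$ into the normal form of Lemma \ref{lem:corank 2 analytic equivalent form}, split off the $n-2$ squares, and reduce the statement to a log canonical threshold computation for a plane curve germ of multiplicity at least $4$.

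\textbf{Reduction to a plane curve germ.} By Lemma \ref{lem:corank 2 analytic equivalent form}, $P$ is locally analytically equivalent at $x$ to $u_1^2+\dots+u_{n-2}^2+G+R'$ in the coordinates $(u_1,\dots,u_{n-2},z_1,z_2)=(u,x_{n-1},x_n)$.

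\begin{itemize}
\item \emph{Order of the terms of $G$.} Since $F_i\in(z)^2$ and $F_{ij}\in(z)$, the terms $F_{ij}F_iF_j$, $F_{ij}F_{ik}F_jF_k$ and $F_{ijk}F_iF_jF_k$ have order at least $5$.
\item \emph{Quartic part of $G$.} Because $P_3=0$, $F_0$ has order at least $4$. Hence $G\in(z)^4$, and its quartic part is $g_4:=P_4-\frac14\sum_i Q_i^2$.
\item \emph{Removing the mixed terms of $R'$.} The terms of $R'$ lie in $(u)^2+(u)(z)^4+(z)^7$ and have multiplicity at least $3$. I apply the splitting lemma (Morse lemma with parameters) to the quadratic form in $u$. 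Completing the square absorbs the $(u)^2$ terms into a coordinate change in $u$. It also absorbs the $(u)(z)^4$ terms, at the cost of changing the $z$-part only modulo $(z)^8$. So $P\sim u_1^2+\dots+u_{n-2}^2+g(z_1,z_2)$ with $g\equiv G \pmod{(z)^7}$, and $g$ has the same quartic part $g_4$.
\item \emph{Thom--Sebastiani.} By Theorem \ref{thm:Thom-Sebastiani for minimal exponents}, $\wt\alpha_x(\{P=0\})=\frac{n-2}{2}+\wt\alpha_0(g)$.
\end{itemize}

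So it suffices to show that $\wt\alpha_0(g)<\frac12$ if and only if $g_4$ has a triple root or $g_4=0$.

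\textbf{Plane curve analysis.} Since $g$ has multiplicity at least $4$, Proposition \ref{prop:weight-minimal exponent inequality} with weights $(1,1)$ gives $\wt\alpha_0(g)\le\frac12<1$. In this range the minimal exponent equals the log canonical threshold (\cite[Theorem 10.6]{Kollar97} and Saito), so I work with $\mathrm{lct}_0(g)$ throughout. There are three cases.

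\begin{itemize}
\item \emph{$g_4=0$.} Then $g$ has order at least $5$, and the same weight inequality gives $\wt\alpha_0(g)\le\frac25<\frac12$.
\item \emph{$g_4\neq0$.} Blow up the origin, with exceptional curve $E\isom\P^1$. The divisor $E$ has log discrepancy $2$ and $\mathrm{ord}_E(g)=4$. The strict transform $\wt C$ meets $E$ at the roots of $g_4$, with local intersection multiplicity equal to the root multiplicity.
\item \emph{$g_4$ has a triple root $q$.} Blow up $q\in E$ and get a new divisor $E'$. Its log discrepancy is $2+1=3$. Its order is $\mathrm{ord}_{E'}(g)=4+\mathrm{mult}_q(\wt C)\ge 4+3=7$, or at least $8$ if $g_4$ is a fourth power. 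This gives $\mathrm{lct}_0(g)\le\frac37<\frac12$.
\item \emph{All roots of $g_4$ have multiplicity at most $2$.} With $c=\frac12$, the log pullback of $c\,\mathrm{div}(g)$ is $c\wt C+(4c-1)E=\frac12\wt C+E$. By inversion of adjunction, this pair is lc near $E$ if and only if $(E,\frac12\wt C|_E)$ is lc. The latter holds because every point of $\wt C|_E$ has multiplicity at most $2$. Away from $E$ the pair is an isomorphic copy of the original, hence lc. So $\mathrm{lct}_0(g)=\frac12$.
\end{itemize}

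\textbf{Main obstacle.} The delicate step is the reduction: justifying that the mixed terms of $R'$ do not alter the quartic part of the $z$-part. Higher-order terms are harmless for both directions of the lct argument, since only the tangent cone $g_4$ and the multiplicity of $\wt C$ at points of $E$ are used. Still, one must make sure the splitting lemma is applied with the correct bookkeeping, namely $(u)(z)^4$ terms contributing only in $(z)^8$. Otherwise one could instead run the weight inequality directly on $u_1^2+\dots+u_{n-2}^2+G+R'$ with weights adapted to a triple root.
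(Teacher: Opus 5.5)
Your reduction step is sound. In the splitting lemma the critical point $u^*(z)$ lies in $(z)^4$, so the $z$-part changes only modulo $(z)^8$ and keeps the quartic part $g_4=P_4-\frac14\sum Q_i^2$. Your treatment of the case where every root of $g_4$ has multiplicity at most $2$ is also correct: the log pullback is $K_Y+E+\frac12\wt C$, and adjunction to $E$ gives $\wt\alpha_0(g)=\frac12$. This replaces the paper's argument for the forward implication, which degenerates $(X,x)$ to its initial form for the weights $(\frac12,\dots,\frac12,\frac14,\frac14)$ and uses lower semicontinuity plus Thom--Sebastiani. One small omission: if $g\equiv 0$ (the germ is analytically $\sum u_i^2$), neither Thom--Sebastiani nor the weight bound applies to $g$ as stated. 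In that case you should note directly that $\wt\alpha_x=\frac{n-2}{2}$.

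\textbf{The gap is in the triple-root case.} A root of multiplicity $3$ or $4$ of $g_4$ at $q$ gives $(\wt C\cdot E)_q\ge 3$, not $\mathrm{mult}_q(\wt C)\ge 3$. The strict transform can be smooth at $q$ and simply highly tangent to $E$.
\begin{itemize}
\item For $g=x^4+y^5$, the chart $x=x'y$ gives $\wt C=\{x'^4+y=0\}$, which is smooth at $q$.
\item For $g=x^3y+y^5$, the same chart gives $\wt C=\{x'^3+y=0\}$, also smooth at $q$.
\end{itemize}
In both examples $\mathrm{ord}_{E'}(g)=5$, so your divisor $E'$ only yields $\mathrm{lct}_0(g)\le\frac35$, which proves nothing. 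The true values, $\frac{9}{20}$ and $\frac{7}{15}$, appear only after further blow-ups. So the implication ``$g_4$ has a triple root $\Rightarrow \wt\alpha_x<\frac{n-1}{2}$'' is not established as written. Your remark that only the tangent cone and $\mathrm{mult}_q\wt C$ matter is exactly where this slips: $\mathrm{mult}_q\wt C$ depends on the higher-order terms of $g$.

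\textbf{The repair is short.} Either of the following closes the gap.
\begin{itemize}
\item Use the ``only if'' half of the adjunction equivalence you already state. At $c=\frac12$ the boundary $\frac12\wt C|_E$ has coefficient $\ge\frac32$ at $q$. Hence $(Y,E+\frac12\wt C)$ is not lc near $E$, so $(\C^2,\frac12\,\mathrm{div}\,g)$ is not lc at $0$. Since the set of $c$ for which the pair is lc is closed, this gives $\mathrm{lct}_0(g)<\frac12$.
\item Do what the paper does. After a linear change making $g_4\in(x_{n-1}^3)$, apply Proposition \ref{prop:weight-minimal exponent inequality} to the whole germ with weights $(\frac12,\dots,\frac12,\frac14+\epsilon,\frac14-3\epsilon)$. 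This gives $\wt\alpha_x\le\frac{n-1}{2}-2\epsilon$ directly, and it handles $g_4=0$ and $g\equiv0$ uniformly because the $u_i^2$ terms fix the weight at $1$.
\end{itemize}
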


\begin{proof}
Firstly, assume $\wt\alpha_x(X)<\frac{n-1}{2}$. Considering the weight system 
$$
w_1=\dots=w_{n-2}=\frac{1}{2}, w_{n-1}=\frac{1}{4}, w_n=\frac{1}{4},
$$
we obtain a $\mathbb G_m$-equivariant degeneration of the singularity $(X,x)$ to
$$
\sum_{i=1}^{n-2}x_i^2+\sum_{i=1}^{n-2}Q_ix_i+P_4=0.
$$
By the change of variables $u_i=x_i+\frac{Q_i}{2}$ for $i=1,\dots,n-2$, the above singularity is analytically equivalent to the singularity
$$
u_1^2+\dots+u_{n-2}^2+P_4-\frac{1}{4}\sum_{i=1}^{n-2}Q_i^2=0.
$$
The lower semicontinuity of the minimal exponent \cite[Theorem E(2)]{MP20} and the Thom--Sebastiani theorem (see Theorem \ref{thm:Thom-Sebastiani for minimal exponents}) imply
$$
\wt\alpha_0\left(P_4-\frac{1}{4}\sum_{i=1}^{n-2}Q_i^2\right)<\frac{1}{2}
$$
which is equivalent to the condition that $P_4-\frac{1}{4}\sum_{i=1}^{n-2}Q_i^2$ has a triple root, if nonzero.

Now, we prove the converse. Suppose that $P_4-\frac{1}{4}\sum_{i=1}^{n-2}Q_i^2$ is zero or has a triple root. By a linear change of coordinates within the variables $x_{n-1}$ and $x_n$, we may assume  
$$
P_4-\frac{1}{4}\sum_{i=1}^{n-2}Q_i^2=x_{n-1}^3x_n \quad \text{or} \quad x_{n-1}^4 \quad \text{or} \quad 0.
$$
Set $z_1:=x_{n-1}$ and $z_2:=x_n$ and apply Lemma \ref{lem:corank 2 analytic equivalent form}. For small enough $\epsilon>0$, consider a weight system
$$
w_1=\dots=w_{n-2}=\frac{1}{2}, w_{n-1}=\frac{1}{4}+\epsilon, w_n=\frac{1}{4}-3\epsilon
$$
associated to the variables $(u_1,\dots,u_{n-2},z_1,z_2)$. Since the weights of degree $\ge 5$ terms of $G$ are greater than $1$ and the degree $4$ term of $G$ (if not zero) is either $x_{n-1}^3x_n$ or $x_{n-1}^4$, we obtain
$$
\wt\alpha_x(X)\le \frac{n-1}{2}-2\epsilon
$$
by Proposition \ref{prop:weight-minimal exponent inequality} as desired.
\end{proof}

\begin{lem}
\label{lem:P3=x3^3}
In the setting of Lemma \ref{lem:P3=0}, denote by
$$
G(x_{n-1},x_n):=F_0-\frac{1}{4}\sum_{i=1}^{n-2}F_i^2+\frac{1}{4}\sum_{i,j=1}^{n-2}F_{ij}F_iF_j-\frac{1}{4}\sum_{i,j,k=1}^{n-2}F_{ij}F_{ik}F_jF_k-\frac{1}{8}\sum_{i,j,k=1}^{n-2}F_{ijk}F_iF_jF_k.
$$
If $P_3=x_{n-1}^3$, then we have the following equivalence
$$
\wt\alpha_x(\{P=0\})<\frac{n-1}{2}\quad\Longleftrightarrow\quad \mathrm{wt}_w(G)=1 \text{ and } G_{\mathrm{wt}_w=1}(x_{n-1},x_n)=(x_{n-1}+\alpha x_n^2)^3
$$
for some $\alpha\in \C$, with respect to the weight system $w:=(\frac{1}{3},\frac{1}{6})$ associated to $(x_{n-1},x_n)$. In other words, the right hand side means that $G$ does not contain nonzero monomials of weight $<1$, and the weight $1$ term $G_{\mathrm{wt}_w=1}(x_{n-1},x_n)$ consisting of monomials
$$x_{n-1}^3,x_{n-1}^2x_n^2,x_{n-1}x_n^4,x_n^6$$
is of the form $(x_{n-1}+\alpha x_n^2)^3$.
\end{lem}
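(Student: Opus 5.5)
By Lemma \ref{lem:corank 2 analytic equivalent form}, the singularity $(X,x)$ is analytically equivalent to $u_1^2+\dots+u_{n-2}^2+G+R'$, with $R'\in(u)^2+(u)(z_1,z_2)^4+(z_1,z_2)^7$. By the Thom--Sebastiani theorem (Theorem \ref{thm:Thom-Sebastiani for minimal exponents}), the target inequality $\wt\alpha_x<\frac{n-1}{2}$ should then reduce to $\wt\alpha_0(G)<\frac12$ for a plane curve germ, modulo the error terms. The plan is to follow the two directions of Lemma \ref{lem:P3=0}, but with the weight system $w=(\frac13,\frac16)$ on $(z_1,z_2)=(x_{n-1},x_n)$ and weight $\frac12$ on each $u_i$. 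Note that $\mathrm{wt}_w(P_3)=\mathrm{wt}_w(x_{n-1}^3)=1$. Since $P_3=x_{n-1}^3$ is the full cubic part of $F_0$, and each correction term in $G$ has degree at least $4$ in $(z_1,z_2)$, the cubic part of $G$ is exactly $x_{n-1}^3$.

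For ($\Leftarrow$), assume $\mathrm{wt}_w(G)=1$ with $G_{\mathrm{wt}_w=1}=(x_{n-1}+\alpha x_n^2)^3$. Apply the analytic change $z_1'=z_1+\alpha z_2^2$. This is homogeneous for $w$, so it preserves weights, and the leading part becomes $z_1'^3$. Then perturb the weights to $w_{n-1}=\frac13+\epsilon$ and $w_n=\frac16-c\epsilon$, keeping $\mathrm{wt}(z_1'^3)\ge1$. Choose $c$ small, say $c<1$, so that the total weight increases, $\frac12+2\epsilon-\ldots$; more precisely take $c=\frac12$, which gives sum $\frac12+\frac\epsilon2$. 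All other terms of $G$ have weight strictly greater than $1$ for $w$, so they stay above $1$ for small $\epsilon$. For the terms of $R'$: monomials in $(u)^2$ have weight $\ge1$, monomials in $(u)(z)^4$ have weight $\ge\frac12+\frac46>1$, and monomials in $(z)^7$ have weight $\ge\frac76>1$. Proposition \ref{prop:weight-minimal exponent inequality} then yields $\wt\alpha_x\le\frac{n-2}{2}+\frac{1/2+\epsilon/2}{1}\cdot\ldots$. I would need to arrange the perturbation so that this bound is strictly less than $\frac{n-1}{2}$. The natural choice is to raise $\mathrm{wt}(P)$ above $1$ while keeping the weight sum at most $\frac{n-1}{2}$: take $w_{n-1}=\frac13+\epsilon$ and $w_n=\frac16-\epsilon$, so the sum is unchanged. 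Then $z_1'^3$ has weight $1+3\epsilon$, and $u_i^2$ has weight $1$, so the minimum is still $1$. Instead I would scale all $u$-weights and $z$-weights jointly, exactly as in the proof of Lemma \ref{lem:P3=0}, where the key was that the leading form is degenerate. Concretely, $z_1'^3$ lies in the boundary of the Newton polygon, and a weight with $w_n\to$ smaller gives $\sum w/\mathrm{wt}<$, as in the earlier lemma.

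For ($\Rightarrow$), use the $\mathbb G_m$-degeneration for the weight $(\frac12,\dots,\frac12,\frac13,\frac16)$, together with lower semicontinuity \cite[Theorem E(2)]{MP20}. The initial form is $\sum u_i^2+G_{\mathrm{wt}=1}$ if $\mathrm{wt}_w(G)=1$. The quasi-homogeneous curve $G_{\mathrm{wt}=1}$ has $\wt\alpha_0=\frac12$ exactly when it is reduced, by the weighted formula. Reducedness fails only for the cube $(x_{n-1}+\alpha x_n^2)^3$, since the coefficient of $x_{n-1}^3$ is $1$. So if $G$ is not of this form, the singularity is semi-quasihomogeneous with $\wt\alpha=\frac{n-2}{2}+\frac12$. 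Thus $\wt\alpha_x\ge\frac{n-1}{2}$, which contradicts the hypothesis.

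The main obstacle is the ($\Leftarrow$) direction. Once the leading form is a cube, one must show that the higher terms cannot restore $\wt\alpha\ge\frac12$. I would attempt a second weight adapted to $z_1'$, for instance $(\frac13,\frac16-\delta)$ or a finer Newton-polygon weight, and control the $R'$ error via Lemma \ref{lem:corank 2 analytic equivalent form}.
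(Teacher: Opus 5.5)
Your framework is the right one: the normal form of Lemma~\ref{lem:corank 2 analytic equivalent form}, weighted degenerations with lower semicontinuity for ($\Rightarrow$), and Proposition~\ref{prop:weight-minimal exponent inequality} for ($\Leftarrow$). However, both directions have gaps.

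In ($\Rightarrow$) you only treat the case $\mathrm{wt}_w(G)=1$. Half of what must be proved is that $\mathrm{wt}_w(G)<1$ cannot occur. That is, $G$ must contain none of $x_n^4$, $x_{n-1}x_n^3$, $x_n^5$; these are the only monomials of $w$-weight $<1$ that can appear, since the cubic part of $G$ is $x_{n-1}^3$. If one of them is present, the initial form of $\sum u_i^2+G+R'$ for the weight $(\frac12,\dots,\frac12,\frac13,\frac16)$ is just the lowest-weight part of $G$ (e.g.\ $c\,x_n^4$). The squares $u_i^2$ disappear, and lower semicontinuity yields nothing useful. The missing step is a separate weight for each monomial. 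Keep $w_{u_i}=\frac12$ and $w_{n-1}=\frac13$, and take $w_n=\frac14,\frac29,\frac15$, handling $x_n^4$, then $x_{n-1}x_n^3$, then $x_n^5$. This degenerates the germ to $\sum u_i^2+x_{n-1}^3+c\,x^I$ with $c\neq0$. The curve parts are isolated quasi-homogeneous with minimal exponents $\frac7{12},\frac59,\frac8{15}>\frac12$, so lower semicontinuity and Thom--Sebastiani give $\wt\alpha_x\ge\frac{n-1}{2}$.

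Within the case $\mathrm{wt}_w(G)=1$, your assertion that $G_{\mathrm{wt}_w=1}$ is non-reduced only when it is a cube is false. The form $(x_{n-1}+\alpha x_n^2)^2(x_{n-1}+\beta x_n^2)$ with $\alpha\ne\beta$ is non-reduced with non-isolated singularity. So neither the weighted-homogeneous formula nor semi-quasihomogeneity applies to it. This case needs the separate computation $\wt\alpha_0\bigl(y^2(y+x^2)\bigr)=\frac12$: its log canonical threshold is $\frac12$, computed by $\{y=0\}$ and by the $(1,2)$-weighted blow-up. After that, lower semicontinuity again gives $\wt\alpha_x\ge\frac{n-1}{2}$.

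In ($\Leftarrow$) you never settle on a weight. Raising $w_{n-1}$ either increases the weight sum or gives only the non-strict bound $\frac{n-1}{2}$. Your worry that higher terms might ``restore'' $\wt\alpha\ge\frac12$ is misplaced: Proposition~\ref{prop:weight-minimal exponent inequality} is an upper bound that only requires every monomial to have weight at least $1$. The candidate $(\frac13,\frac16-\delta)$ you mention at the end is the right one and finishes the proof at once. Work in coordinates $(u,y_{n-1},y_n)$ with $y_{n-1}=x_{n-1}+\alpha x_n^2$ and $y_n=x_n$, and use weights $(\frac12,\dots,\frac12,\frac13,\frac16-\epsilon)$. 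The substitution is $w$-homogeneous, so $G=y_{n-1}^3+{}$(monomials $y_{n-1}^ay_n^b$ with $2a+b\ge7$), and these have weight $\ge7(\frac16-\epsilon)\ge1$. The terms $u_i^2$ and $y_{n-1}^3$ have weight exactly $1$. The monomials of $R'$ have degree $\ge3$ and lie in $(u)^2$, $(u)(y)^4$, or $(y)^7$, so they also have weight $\ge1$. The weight sum is $\frac{n-1}{2}-\epsilon$, hence $\wt\alpha_x\le\frac{n-1}{2}-\epsilon$.

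The point is to keep $w_{n-1}=\frac13$, so the cube stays at weight exactly $1$, and lower only $w_n$. The cube hypothesis is exactly what removes $y_{n-1}^2y_n^2$, $y_{n-1}y_n^4$ and $y_n^6$, which would otherwise fall below weight $1$.
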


\begin{proof}
Set $z_1:=x_{n-1}$ and $z_2:=x_n$. Applying Lemma \ref{lem:corank 2 analytic equivalent form} to $(X,x)$, this singularity is locally analytically equivalent to
\begin{equation}
\label{eqn:P3=x_n-1^3 reduction}
u_1^2+\dots+u_{n-2}^2+G(x_{n-1},x_n)+R'=0 
\end{equation}
where $R'\in (u_1,\dots,u_{n-2})^2+(u_1,\dots,u_{n-2})(x_{n-1},x_n)^4+(x_{n-1},x_n)^7$. Note that every term of $F_i$ (resp. $F_{ij}$) has degree $\ge 2$ (resp. $\ge 1$).

We first prove the backward implication. Suppose the condition on the right hand side is satisfied. Consider a change of coordinates
$$
y_{n-1}:=x_{n-1}+\alpha x_n^2,\quad y_n:=x_n,
$$
and a weight system
$$
w_1=\dots=w_{n-2}=\frac{1}{2}, w_{n-1}=\frac{1}{3}, w_n=\frac{1}{6}-\epsilon
$$
associated to the variables $(u_1,\dots,u_{n-2},y_{n-1},y_n)$ for small enough $\epsilon>0$. Then by Proposition~\ref{prop:weight-minimal exponent inequality}, we obtain
$$
\wt\alpha_x(X)\le \frac{n-1}{2}-\epsilon
$$
as desired.

For the forward implication, suppose $G$ contains nonzero monomials
$$
x^I:=x_n^4 \quad\text{or}\quad x_{n-1}x_n^3\quad \text{or}\quad x_n^5.
$$
Note that they are the only possible nonzero monomials of $G$ with weight $<1$ with respect to the weight system $w:=(\frac{1}{3},\frac{1}{6})$. Considering the weight system 
$$
w_1=\dots=w_{n-2}=\frac{1}{2}, w_{n-1}=\frac{1}{3}, w_n=\frac{1}{4} \quad\text{or}\quad \frac{2}{9}\quad \text{or}\quad \frac{1}{5},
$$
we obtain a $\mathbb G_m$-equivariant degeneration of the singularity \eqref{eqn:P3=x_n-1^3 reduction} to
$$
u_1^2+\dots+u_{n-2}^2+x_{n-1}^3+cx^I=0
$$
for some nonzero $c\in \C$. By the lower semicontinuity and the Thom--Sebastiani theorem, this implies that
$$
\wt\alpha_x(X)\ge\frac{n-1}{2}.
$$
Hence, if the minimal exponent is $<\frac{n-1}{2}$, then we have $\mathrm{wt}_w(G)=1$. Likewise, considering the weight system
$$
w_1=\dots=w_{n-2}=\frac{1}{2}, w_{n-1}=\frac{1}{3}, w_n=\frac{1}{6},
$$
we obtain a $\mathbb G_m$-equivariant degeneration to
$$
u_1^2+\dots+u_{n-2}^2+G_{\mathrm{wt}_w=1}(x_{n-1},x_n)=0.
$$
Note that $G_{\mathrm{wt}_w=1}(x_{n-1},x_n)$ is a cubic polynomial in $x_{n-1}$ and $x_n^2$, with the leading term $x_{n-1}^3$. Hence, we have a factorization
$$
G_{\mathrm{wt}_w=1}(x_{n-1},x_n)=(x_{n-1}+\alpha x_n^2)(x_{n-1}+\beta x_n^2)(x_{n-1}+\gamma x_n^2)
$$
for some $\alpha,\beta,\gamma\in \C$. Again, by the lower semicontinuity and the Thom--Sebastiani theorem, this has the minimal exponent $<\frac{1}{2}$, or equivalently, the log canonical threshold $<\frac{1}{2}$. One can easily check that
$$
\wt\alpha_0(x_{n-1}^2(x_{n-1}+x_n^2))=\frac{1}{2},
$$
which is the most degenerate case up to a change of variables, unless $\alpha=\beta=\gamma$. Hence, we conclude $\alpha=\beta=\gamma$, as desired.
\end{proof}

We now prove Proposition \ref{prop:reduction to isolated sing}.

\begin{proof}[Proof of Proposition \ref{prop:reduction to isolated sing}]
Without loss of generality, let $x:=[1:0:\dots:0]\in X$ be a singular point such that
$$
\wt\alpha_x(X)<\lfloor\frac{n}{2}\rfloor.
$$
Denote by $\{P=0\}$ the defining equation of $X\subset \P^n$.

When $n$ is even, $\wt\alpha_x(X)<\frac{n}{2}$ if and only if $(X,x)$ is not a nodal singularity, that is, the Hessian at $(X,x)$ is not of full rank (see Lemma \ref{lem:classification of n/2 Du Bois}). Consider the affine open chart $\{x_0\neq 0\}$ and write
$$
P(1,x_1,\dots,x_n)=P_2(x_1,\dots,x_n)+R
$$
where $P_2$ is the quadratic term of $P(1,x_1,\dots,x_n)$. For general $c_1,\dots,c_n\in \C$, the hypersurface
$$
X_c:=\{P+c_1x_1^d+\dots+c_nx_n^d=0\}\subset\P^n
$$
is smooth away from $x$ by the Bertini theorem, and the rank of its Hessian at $x$ is equal to the rank of the form $P_2$. Therefore, there exists a small deformation $X_c$ of $X$ such that
$$
\wt\alpha_x(X_c)<\frac{n}{2}.
$$

From now on, we assume $n$ is odd and
$$
\wt\alpha_x(X)<\frac{n-1}{2}.
$$
As above, consider the affine open chart $\{x_0\neq 0\}$ and write
$$
P(1,x_1,\dots,x_n)=P_2(x_1,\dots,x_n)+R.
$$
Let $r$ be the rank of the quadratic form $P_2$. By a linear change of coordinates within the variables $x_1,\dots,x_n$, we may assume
$$
P_2=x_1^2+\dots+x_r^2.
$$
Note that, by \cite[Theorem E(3)]{MP20}, we have
$$
\frac{r}{2}\le\wt\alpha_x(X)<\frac{n-1}{2},
$$
and thus, $r\le n-2$. If $r\le n-4$, then for general $c_1,\dots,c_n\in \C$, the hypersurface
$$
X_c:=\{P+c_1x_1^d+\dots+c_nx_n^d=0\}\subset\P^n
$$
is smooth away from $x$ by the Bertini theorem, and
$$
\wt\alpha_x(X_c)\le\frac{r}{2}+\frac{n-r}{3}<\frac{n-1}{2}.
$$
Indeed, the first inequality is obtained by applying Proposition \ref{prop:weight-minimal exponent inequality} to the weight system
\begin{equation}
\label{eqn:weight 1/2 and 1/3}
w_1=\dots=w_r=\frac{1}{2},w_{r+1}=\dots=w_n=\frac{1}{3}.  
\end{equation}
Hence, it remains to consider the cases $r=n-3$ and $r=n-2$.

When $r=n-3$, we write
$$
P(1,x_1,\dots,x_n)=x_1^2+\dots+x_{n-3}^2+P_3(x_{n-2},x_{n-1},x_n)+R
$$
where $R\in (x_1,\dots,x_n)^3$ does not contain nonzero degree $3$ monomials in $x_{n-2},x_{n-1},x_n$. Considering the weight system \eqref{eqn:weight 1/2 and 1/3}, we obtain a $\mathbb G_m$-equivariant degeneration of the singularity $(X,x)$ to
$$
x_1^2+\dots+x_{n-3}^2+P_3(x_{n-2},x_{n-1},x_n)=0,
$$
and the lower semicontinuity of the minimal exponent \cite[Theorem E(2)]{MP20} and the Thom--Sebastiani theorem (see Theorem \ref{thm:Thom-Sebastiani for minimal exponents}) imply
$$
\wt\alpha_0(P_3)<1.
$$
This is equivalent to the inequality $\wt\alpha(\{P_3=0\}\subset\P^2)<1$, where $\{P_3=0\}\subset\P^2$ is the associated plane cubic (see \cite[Theorem 6.1]{Park25}). From the classical analysis of plane cubics in $\P^2$ (see, for example, \cite[Figure 2.7]{3264}), this is only possible when $P_3$ (if not zero) is cuspidal, a union of a conic and a tangent line, or a union of three lines passing through a point. In other words, we have
$$
P_3=x_{n-2}^3+x_{n-1}^2x_n \quad \text{or}\quad x_{n-1}(x_{n-1}x_n+x_{n-2}^2)\quad \text{or}\quad P_3(x_{n-2},x_{n-1}) \quad \text{or}\quad 0
$$
by a linear change of coordinates within the variables $x_{n-2},x_{n-1},x_{n}$. For general $c_1,\dots,c_n\in \C$, the hypersurface
\begin{equation}
\label{eqn:x_1x_n^d-1}
X':=\{P+c_1x_1^d+\dots+c_{n-1}x_{n-1}^d+c_nx_1x_n^{d-1}=0\}\subset\P^n
\end{equation}
is smooth away from $x_1=\dots=x_{n-1}=0$ by the Bertini theorem, and the Jacobian criterion implies that
$$
\Sing(X')\subset\{x_1=\dots=x_{n-1}=0,\partial_{x_1}P+c_nx_n^{d-1}=0\}.
$$
Hence, $X'$ has isolated singularities. Additionally, we obtain
$$
\wt\alpha_x(X')\le\frac{n-1}{2}-\epsilon<\frac{n-1}{2}
$$
by applying Proposition \ref{prop:weight-minimal exponent inequality} to the weight system
$$
w_1=\dots=w_{n-3}=\frac{1}{2},w_{n-2}=\frac{1}{3},w_{n-1}=\frac{1}{3}+\epsilon,w_n=\frac{1}{3}-2\epsilon
$$
for small enough $\epsilon>0$. Therefore, the only remaining case is when $r=n-2$.

To summarize the above, we have reduced to the case where $n$ is odd and
$$
P(1,x_1,\dots,x_n)=x_1^2+\dots+x_{n-2}^2+P_3(x_{n-1},x_n)+R    
$$
where $P_3$ is a homogeneous cubic form in $x_{n-1}, x_n$, and $R\in (x_1,\dots,x_n)^3$ does not contain nonzero degree $3$ monomials in $x_{n-1},x_n$.
As above, from the weight system \eqref{eqn:weight 1/2 and 1/3}, we obtain a $\mathbb G_m$-equivariant degeneration of the singularity $(X,x)$ to
$$
x_1^2+\dots+x_{n-2}^2+P_3(x_{n-1},x_n)=0,
$$
and the lower semicontinuity and the Thom--Sebastiani theorem imply
$$
\wt\alpha_0(P_3)<\frac{1}{2}
$$
if $P_3\neq 0$. In particular, if $P_3\neq 0$, the subscheme $\{P_3=0\}\subset\P^1$ must be a triple point. Hence, through a linear change of coordinates within the variables $x_{n-1}$ and $x_n$, we may assume
$$
P_3=x_{n-1}^3 \quad\text{or}\quad 0.
$$
We treat these in two separate cases.

\textbf{Case 1.} $P_3=0$.

We apply Lemma \ref{lem:P3=0} and follow its notation. We may assume
$$
P_4-\frac{1}{4}\sum_{i=1}^{n-2}Q_i^2
$$
is a multiple of $x_{n-1}^3$, by a linear change of coordinates within the variables $x_{n-1}$ and $x_n$.

When $d\ge 4$, for general $c_1,\dots,c_n\in \C$, the hypersurface \eqref{eqn:x_1x_n^d-1} still satisfies the above divisibility condition. Thus we have $\wt\alpha_x(X')<\frac{n-1}{2}$ by Lemma \ref{lem:P3=0}. As in the argument following \eqref{eqn:x_1x_n^d-1}, we conclude that $X'$ has isolated singularities.

When $d=3$, the assumption implies that $n$ is an odd integer at least $7$. We additionally have $P_4=0$, and $\sum_{i=1}^{n-2}Q_i^2$ is a multiple of $x_{n-1}^3$. For simplicity, we write
$$
P(1,x_1,\dots,x_n)=\sum_{i=1}^{n-2}x_i^2+\sum_{i=1}^{n-2}x_iQ_i(x_{n-1},x_n)+R,
$$
where $R$ is a cubic in the ideal $(x_1,\dots,x_{n-2})^2$. For general $c_1,\dots,c_{n-2}\in \C$, the hypersurface
$$
X':=\{P+c_1x_1^3+\dots+c_{n-2}x_{n-2}^3=0\}\subset\P^n
$$
satisfies $\wt\alpha_x(X')<\frac{n-1}{2}$ by Lemma \ref{lem:P3=0}, and is smooth away from $x_1=\dots=x_{n-2}=0$ by the Bertini theorem. The Jacobian criterion implies that 
$$
\Sing(X')\subset\{x_1=\dots=x_{n-2}=0,Q_1=\dots=Q_{n-2}=0\},
$$
and thus, if $Q_1,\dots,Q_{n-2}$ do not have a common linear factor, then $\Sing(X')=\{x\}$. Hence, $X$ satisfies our conclusion unless $Q_1,\dots,Q_{n-2}$ have a common linear factor.

If $Q_1,\dots,Q_{n-2}$ have a common linear factor, we vary coefficients of $Q_1,\dots,Q_{n-2}$. Let $A$ be the affine parameter space of coefficients of $Q_1,\dots,Q_{n-2}$. In particular, $A$ has dimension $3(n-2)$. The condition that $\sum_{i=1}^{n-2}Q_i^2$ is a multiple of $x_{n-1}^3$ is given by the vanishing of the coefficients of $x_{n-1}^2x_n^2$, $x_{n-1}x_n^3$, and $x_n^4$. Hence, the subset $B$ satisfying this condition is given by three equations on $A$. In particular, every irreducible component of $B$ is of dimension $\ge 3(n-3)$.

On the other hand, consider the subset $C$ of $A$ such that $Q_1,\dots,Q_{n-2}$ have a common linear factor. When $Q_1$ is fixed, the space of quadrics that share a factor with $Q_1$ is of dimension $2$. Hence, the subspace $C$ has dimension $3+2(n-3)$. Note that
$$
3(n-3)>3+2(n-3),
$$
which implies that the closure of $B\setminus C$ is all of $B$. Since a hypersurface $\{P=0\}$ with the coefficients of the quadrics in the set $B\setminus C$ satisfies the desired conclusion, every hypersurface in $B$ satisfies the desired conclusion (see Remark \ref{rmk:reduction to isolated singularities}). This completes the proof for Case 1.

\textbf{Case 2.} $P_3=x_{n-1}^3$.

We apply Lemma \ref{lem:P3=x3^3} and follow its notation. In particular, the condition on the right hand side of the equivalence in Lemma \ref{lem:P3=x3^3} holds for $X$. Suppose $d\ge 4$. For general $c_1,\dots,c_{n-1}\in \C$, the hypersurface
$$
\{P+c_1x_1^d+\dots+c_{n-1}x_{n-1}^d=0\}\subset\P^n
$$
is smooth away from $x_1=\dots=x_{n-1}=0$, and the Jacobian criterion implies that this hypersurface has isolated singularities unless $x_{n-1}$ is a factor of $F_i$ for all $i=1,\dots, n-2$. Additionally, we have
$$
\wt\alpha_x(\{P+c_1x_1^d+\dots+c_{n-1}x_{n-1}^d=0\})<\frac{n-1}{2}
$$
by Lemma \ref{lem:P3=x3^3}. Hence, it suffices to assume that $x_{n-1}$ is a factor of $F_i$ for all $i=1,\dots, n-2$. In particular, the weight $\mathrm{wt}_w(F_i)\ge\frac{1}{2}$.

If $d\ge 5$, then \eqref{eqn:x_1x_n^d-1} has isolated singularities and Lemma \ref{lem:P3=x3^3} concludes the proof. Indeed, the term $F_1$ is changed to $F_1+c_nx_n^{d-1}$, and thus, the term $G'$ of $X'$ in Lemma \ref{lem:P3=x3^3} satisfies $\mathrm{wt}_w(G')=1$ and $G_{\mathrm{wt}_w=1}=G_{\mathrm{wt}_w=1}'$.

If $d=4$, then $n\ge 5$ from the assumption. For general $c_1,\dots,c_{n-1}\in \C$, consider a hypersurface
$$
X'':=\{P+c_1x_1^4+\dots+c_{n-1}x_{n-1}^4+\alpha_1x_1x_n^3+\dots+\alpha_{n-2}x_{n-2}x_n^3=0\}\subset\P^n
$$
for $\alpha_1,\dots,\alpha_{n-2}\in \C$. When $(\alpha_1,\dots,\alpha_{n-2})\neq(0,\dots,0)$, this hypersurface $X''$ has isolated singularities by the argument above. Note that the term $G''$ of $X''$ in Lemma \ref{lem:P3=x3^3} still satisfies $\mathrm{wt}_w(G'')=1$, because $\mathrm{wt}_w(F_i+\alpha_i x_n^{3})\ge\frac{1}{2}$. The condition
$$
G_{\mathrm{wt}_w=1}''(x_{n-1},x_n)=(x_{n-1}+\alpha x_n^2)^3
$$
for some $\alpha\in \C$ is given by two equations in terms of $\alpha_1,\dots,\alpha_{n-2}$. Indeed, the condition that a monic cubic polynomial have a triple root imposes two equations on its coefficients. On the other hand, the space of $\alpha_1,\dots,\alpha_{n-2}$ is $(n-2)$-dimensional, which implies that there exists a $1$-dimensional family of $(\alpha_1,\dots,\alpha_{n-2})$ through $(0,\dots,0)$ satisfying the right hand side condition in Lemma \ref{lem:P3=x3^3}. Therefore, $X$ is a limit of $X''$ with isolated singularities satisfying $\wt\alpha_x(X'')<\frac{n-1}{2}$.

The only remaining case is $d=3$, and we have $n\ge 7$ from the assumption. Then we write $P$ as in Lemma \ref{lem:cubic P3=x^3} below. A priori, for $1\le i\le n-2$, the quadratic form $Q_i$ may have a nonzero $x_{n-1}^2$ term:
$$
Q_i=a_ix_n^2+b_ix_{n-1}x_n+c_ix_{n-1}^2.
$$
However, after the linear change of coordinates
$$
x_{n-1}\mapsto x_{n-1}+\frac{1}{3}(c_1x_1+\dots+c_{n-2}x_{n-2}),
$$
we may assume that $c_i=0$ for all $1\le i \le n-2$.

Now, we apply Lemma \ref{lem:cubic P3=x^3} below and follow its notation. Firstly, when $a_i\neq 0$ for some $1\le i\le n-2$, we prove that $X_\gamma$ has isolated singularities. It suffices to prove that
$$
\Sing(X_\gamma)\cap\{x_1=\dots=x_{n-2}=0\}=\{[1:0:\dots:0]\}.
$$
When $x_1=\dots=x_{n-2}=0$, the Jacobian criterion $\partial_{x_{n-1}}P_\gamma=0$
implies $x_{n-1}=0$. Furthermore, $\partial_{x_i}P_\gamma=0$ implies $a_ix_n^2=0$. This completes the proof when $a_i\neq 0$ for some $i$.

Consider the space of all polynomials $P$ of the form in Lemma \ref{lem:cubic P3=x^3}, that is, the affine space $A$ of coefficients $a_i, b_i, a_{ij}, b_{ij}, c_{ijk}$. Among these, the space $B$ of hypersurfaces satisfying
$$
\wt\alpha_x(\{P=0\})<\frac{n-1}2{}
$$
is the vanishing locus of $5$ equations in \eqref{eqn:cubic condition for < n-1/2}. In particular, every irreducible component of $B$ has codimension $\le 5$ in the affine space $A$. On the other hand, the complement $B\setminus C$ of the locus $C\subset B$ defined by $a_1=\dots=a_{n-2}=0$ satisfies our conclusion by the above argument. Note that the locus $C$ satisfies the additional vanishing
$$
\sum_{1\le i\le n-2}b_i^2=0
$$
by \eqref{eqn:cubic condition for < n-1/2}. Hence, $C$ has codimension $\ge n-1$ in the affine space $A$. Since $n-1>5$, the closure of $B\setminus C$ is all of $B$. The set of hypersurfaces satisfying the desired conclusion is closed (see Remark \ref{rmk:reduction to isolated singularities}), and thus, every hypersurface in $B$ satisfies the desired conclusion. This completes the proof for Case 2 and the proposition.
\end{proof}

\begin{lem}
\label{lem:cubic P3=x^3}
For $n\ge 7$, let $X\subset \P^n$ be a cubic hypersurface defined by an equation $\{P=0\}$. Suppose
$$
P=x_0(x_1^2+\dots+x_{n-2}^2)+x_{n-1}^3+\sum_{1\le i\le n-2}Q_ix_i+\sum_{1\le i,j\le n-2}L_{ij}x_ix_j+\sum_{1\le i,j,k\le n-2}c_{ijk}x_ix_jx_k.
$$
Here, $Q_i$ is a quadratic form in $x_{n-1}$ and $x_n$, $L_{ij}$ is a linear form in $x_{n-1}$ and $x_n$, and $c_{ijk}\in \C$, such that $L_{ij}=L_{ji}$ and $c_{ijk}$ is invariant under all permutations of $i,j,k$. Suppose
$$
Q_i=a_ix_n^2+b_ix_{n-1}x_n\quad\text{and}\quad L_{ij}=a_{ij}x_n+b_{ij}x_{n-1}.
$$
Then $\wt\alpha_x(X)<\frac{n-1}{2}$ if and only if
\begin{equation}
\label{eqn:cubic condition for < n-1/2}
\begin{gathered}
\sum_{1\le i\le n-2}a_i^2=0,\quad\sum_{1\le i\le n-2}a_ib_i=0,\quad\sum_{1\le i,j\le n-2}a_{ij}a_ia_j=0,\\
\sum_{1\le i,j\le n-2}(a_{ij}b_ia_j+a_{ij}a_ib_j+b_{ij}a_ia_j)=\frac{1}{12}\left(\sum_i b_i^2\right)^2,\\
2\sum_{1\le i,j,k\le n-2}a_{ij}a_{ik}a_ja_k+\sum_{1\le i,j,k\le n-2}c_{ijk}a_ia_ja_k=\frac{1}{216}\left(\sum_ib_i^2\right)^3.
\end{gathered}
\end{equation}
Furthermore, suppose that $\wt\alpha_x(X)<\frac{n-1}{2}$. For general $\gamma_{ijk}\in \C$ invariant under permutations of the indices and satisfying
\begin{equation}
\label{eqn:general gamma condition}
\sum_{1\le i,j,k\le n-2}\gamma_{ijk}a_ia_ja_k=0,
\end{equation}
the hypersurface
$$
X_\gamma:=\{P+\sum_{1\le i,j,k\le n-2}\gamma_{ijk}x_ix_jx_k=0\}\subset\P^n
$$
satisfies $\wt\alpha_x(X_\gamma)<\frac{n-1}{2}$ and
$$
\Sing(X_\gamma)\subset\{x_1=\dots=x_{n-2}=0\}\cup\{\text{finite points}\}.
$$
\end{lem}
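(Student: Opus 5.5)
Throughout we work in the chart $x_0=1$. There $P(1,x_1,\dots,x_n)$ has exactly the shape required by Lemma \ref{lem:P3=0}, with $F_0=x_{n-1}^3$, $F_i=Q_i$, $F_{ij}=L_{ij}$, $F_{ijk}=c_{ijk}$, and all higher $F_I$ equal to zero. In particular $P_3=x_{n-1}^3$. The plan for the first equivalence is therefore to apply Lemma \ref{lem:P3=x3^3} and turn its criterion into explicit equations in the coefficients.

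\textbf{Step 1: expand $G$ in the weight $w=(\frac13,\frac16)$ on $(x_{n-1},x_n)$.} I would expand $G$ term by term.
\begin{itemize}
\item The term $F_0$ contributes $x_{n-1}^3$, of weight $1$.
\item The term $-\frac14\sum_i F_i^2$ contributes
$$-\tfrac14\Big(\sum_i a_i^2\,x_n^4+2\sum_i a_ib_i\,x_{n-1}x_n^3+\sum_i b_i^2\,x_{n-1}^2x_n^2\Big),$$
with weights $\frac23,\frac56,1$ respectively.
\item The term $\frac14\sum_{i,j} L_{ij}F_iF_j$ contributes monomials of degree $5$. Its $x_n^5$ coefficient is $\sum_{i,j} a_{ij}a_ia_j$, of weight $\frac56$. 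Its $x_{n-1}x_n^4$ coefficient is $\sum_{i,j}(a_{ij}b_ia_j+a_{ij}a_ib_j+b_{ij}a_ia_j)$, of weight $1$. The remaining monomials have weight $>1$.
\item The last two terms of $G$ are of degree $6$. Their only weight-$1$ monomial is $x_n^6$, with coefficient $-\frac14 U-\frac18 V$, where
$$U=\sum_{i,j,k} a_{ij}a_{ik}a_ja_k,\qquad V=\sum_{i,j,k} c_{ijk}a_ia_ja_k.$$
\end{itemize}

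\textbf{Step 2: match with Lemma \ref{lem:P3=x3^3}.} The condition $\mathrm{wt}_w(G)=1$ is exactly the vanishing of the coefficients of the weight-$<1$ monomials $x_n^4$, $x_{n-1}x_n^3$, $x_n^5$. This gives the first three equations of \eqref{eqn:cubic condition for < n-1/2}.

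Write $S=\sum_i b_i^2$ and $T$ for the $x_{n-1}x_n^4$ coefficient above. The weight-$1$ part is then
$$x_{n-1}^3-\tfrac S4\,x_{n-1}^2x_n^2+\tfrac T4\,x_{n-1}x_n^4-\big(\tfrac U4+\tfrac V8\big)x_n^6 .$$
Comparing with $(x_{n-1}+\alpha x_n^2)^3$:
\begin{itemize}
\item the $x_{n-1}^2x_n^2$ coefficient forces $\alpha=-S/12$;
\item the $x_{n-1}x_n^4$ coefficient then requires $T=S^2/12$;
\item the $x_n^6$ coefficient requires $2U+V=S^3/216$.
\end{itemize}
These are the last two equations of \eqref{eqn:cubic condition for < n-1/2}, so the first part of the lemma follows directly from Lemma \ref{lem:P3=x3^3}.

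\textbf{Step 3: $X_\gamma$ still satisfies the criterion.} Passing from $X$ to $X_\gamma$ replaces $c_{ijk}$ by $c_{ijk}+\gamma_{ijk}$ and leaves all other coefficients unchanged. The only equation involving the $c_{ijk}$ is the last one, and only through $\sum_{i,j,k}c_{ijk}a_ia_ja_k$. By \eqref{eqn:general gamma condition} this sum is unchanged, so the equations continue to hold and $\wt\alpha_x(X_\gamma)<\frac{n-1}{2}$.

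\textbf{Step 4: the singular locus of $X_\gamma$ (main obstacle).} The added cubics $\sum_{i,j,k}\gamma_{ijk}x_ix_jx_k$ form a linear system consisting of the cubic forms in $x_1,\dots,x_{n-2}$ that vanish at the point $[a_1:\dots:a_{n-2}]\in\P^{n-3}$. If $a=0$, there is no constraint.

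By Bertini, a general member $X_\gamma$ is smooth away from the base locus of this system. If $a=0$, the base locus is $\{x_1=\dots=x_{n-2}=0\}$ and we are done. If $a\neq0$, the base locus is
$$\{x_1=\dots=x_{n-2}=0\}\cup L,$$
where $L$ is the $3$-dimensional linear space $\{(x_1,\dots,x_{n-2})\in\C a\}$.

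It remains to show that $\Sing(X_\gamma)\cap L$ is finite away from $\{x_1=\dots=x_{n-2}=0\}$. I would parametrize $L$ by $x_i=s a_i$ and write out the partial derivatives $\partial_{x_i}P_\gamma$ restricted to $L$. These involve
$$2x_0 s a_i+Q_i+2s\sum_j L_{ij}a_j+3s^2\sum_{j,k}(c+\gamma)_{ijk}a_ja_k .$$
The coefficients $\sum_{j,k}\gamma_{ijk}a_ja_k$ remain free, subject only to the single condition that their pairing with $a$ vanishes. I would then show that, for general $\gamma$, these equations together with $\partial_{x_{n-1}}$, $\partial_{x_n}$ and $\partial_{x_0}$ cut out only finitely many points with $s\neq0$.

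The approach is to pick an index $i$ with $\sum_{j,k}\gamma_{ijk}a_ja_k$ free and independent of the other partials. One should check that this freedom is not killed by the constraint; this is where $n\ge 7$, which gives enough indices, is used. Then a dimension count on the incidence variety of pairs $(\gamma,\text{point})$ gives finiteness for general $\gamma$. I expect this finiteness on $L$ to be the only delicate point of the argument.
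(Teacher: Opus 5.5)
Your Steps 1--3 are the paper's proof. You use the same identification $F_0=x_{n-1}^3$, $F_i=Q_i$, $F_{ij}=L_{ij}$, $F_{ijk}=c_{ijk}$, and the same coefficients of $x_n^4,x_{n-1}x_n^3,x_n^5$ and of $x_{n-1}^2x_n^2,x_{n-1}x_n^4,x_n^6$ in $G$. The matching that forces $\alpha=-\frac1{12}\sum_ib_i^2$ is the same, as is the remark that \eqref{eqn:general gamma condition} leaves $\sum c_{ijk}a_ia_ja_k$ unchanged.

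You differ in the finiteness on the cone $L$ over $[a_1:\dots:a_{n-2}]$. The paper eliminates by hand at a point with $x_i=a_i$:
\begin{itemize}
\item The equations $P_\gamma=\partial_{x_{n-1}}P_\gamma=0$ force $x_{n-1}$ to be the double root of $x^3+(\sum b_{ij}a_ia_j)x+\sum c_{ijk}a_ia_ja_k$.
\item The combination $a_2\partial_{x_1}P_\gamma-a_1\partial_{x_2}P_\gamma$ is linear in $x_n$. Its constant term contains $a_2g_1-a_1g_2$, where $g_i:=\sum_{j,k}\gamma_{ijk}a_ja_k$. Comparing coefficients of $\gamma_{111},\gamma_{222}$ with those in $\sum\gamma_{ijk}a_ia_ja_k$ shows this term stays free under \eqref{eqn:general gamma condition} unless $a_1^2=-a_2^2$.
\item A short case analysis finishes: either $a_i=0$ for some $i$, or $a_i^2=-a_j^2$ for all pairs, which is impossible with $n-2\ge 3$ indices. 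Then $\partial_{x_1}P_\gamma$ determines $x_0$.
\end{itemize}
So the paper needs only one surviving linear functional of $\gamma$, plus the special linear-in-$x_n$ structure.

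Your incidence-variety count is a valid alternative. It even gives that, for general $\gamma$, $\Sing(X_\gamma)\cap L$ is empty off $\{x_1=\dots=x_{n-2}=0\}$. As written, though, the key input is only asserted and the mechanism is stated loosely.

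What you need is that $\gamma\mapsto(g_i)_i$ maps the space $\mathcal S$ of admissible $\gamma$ \emph{onto} the hyperplane $\{v:\sum_i a_iv_i=0\}$. This holds for two reasons:
\begin{itemize}
\item Euler's identity gives $\sum_i a_ig_i=\sum\gamma_{ijk}a_ia_ja_k$.
\item For such $v$ and any $\ell$ with $\sum_i\ell_ia_i=1$, the cubic $3(\sum_i\ell_ix_i)^2(\sum_i v_ix_i)$ is admissible and realizes $v$.
\end{itemize}
At a point with $x_i=sa_i$, $s\neq 0$, the $\gamma$-dependence of $\partial_{x_i}P_\gamma$ is exactly $3s^2g_i$. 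So each such point is singular either for no admissible $\gamma$, or for $\gamma$ in an affine subspace of codimension $n-3$ in $\mathcal S$. Over the $3$-dimensional set $L\setminus\{x_1=\dots=x_{n-2}=0\}$, your incidence variety $I$ then satisfies $\dim I\le \dim\mathcal S+6-n<\dim\mathcal S$ for $n\ge 7$.

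Two points about your wording:
\begin{itemize}
\item Picking a single free index, as you describe, would cut $I$ by only one dimension. That is not enough against a $2$- or $3$-dimensional family of candidate points; you need the full $(n-3)$-dimensional freedom.
\item $\partial_{x_0}P_\gamma$ and $\partial_{x_n}P_\gamma$ vanish identically on $L$ by the first three equations ($\sum a_i^2=\sum a_ib_i=\sum a_{ij}a_ia_j=0$). So they contribute nothing to the count.
\end{itemize}
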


\begin{proof}
We apply Lemma \ref{lem:P3=x3^3} and follow its notation. Note that the only possible nonzero monomials of $G$ with weight $<1$ with respect to the weight system $w$ are
$$
x_n^4,\quad x_{n-1}x_n^3,\quad x_n^5
$$
and their coefficients of $G(x_{n-1},x_n)$ are
$$
-\frac{1}{4}\sum_{i}a_i^2,\quad-\frac{1}{2}\sum_{i}a_ib_i,\quad\frac{1}{4}\sum_{i,j}a_{ij}a_ia_j,
$$
respectively (we omit the index ranges $1\le i,j,k\le n-2$ for brevity). Hence, the condition $\mathrm{wt}_w(G)=1$ is equivalent to the vanishing of the above coefficients, as expressed by the three equalities in the first line of \eqref{eqn:cubic condition for < n-1/2}. Additionally, the coefficients of the monomials
$$
x_{n-1}^2x_n^2,\quad x_{n-1}x_n^4,\quad x_n^6
$$
of $G(x_{n-1},x_n)$ are
$$
-\frac{1}{4}\sum_{i}b_i^2,\quad \frac{1}{4}\sum_{i,j}(a_{ij}b_ia_j+a_{ij}a_ib_j+b_{ij}a_ia_j),\quad -\frac{1}{4}\sum_{ i,j,k}a_{ij}a_{ik}a_ja_k-\frac{1}{8}\sum_{i,j,k}c_{ijk}a_ia_ja_k,
$$
respectively. In particular,
$$
G_{\mathrm{wt}_w=1}(x_{n-1},x_n)=(x_{n-1}+\alpha x_n^2)^3
$$
is satisfied for some $\alpha\in \C$ if and only if
$$
\alpha=-\frac{1}{12}\sum_{i}b_i^2
$$
and the equalities in the second and third lines of \eqref{eqn:cubic condition for < n-1/2} are satisfied. Therefore, Lemma \ref{lem:P3=x3^3} implies the equivalence between $\wt\alpha_x(X)<\frac{n-1}{2}$ and \eqref{eqn:cubic condition for < n-1/2}.

We now prove the last statement. Note that if $X$ satisfies \eqref{eqn:cubic condition for < n-1/2}, then so does $X_\gamma$. When $a_1=\dots=a_{n-2}=0$, the Bertini theorem implies
$$
\Sing(X_\gamma)\subset\{x_1=\dots=x_{n-2}=0\}\subset\P^n.
$$
It suffices to consider when $(a_1,\dots,a_{n-2})\neq 0$. From the condition that
$$
\sum_{1\le i\le n-2}a_i^2=0
$$
in \eqref{eqn:cubic condition for < n-1/2}, we may assume $a_1$ and $a_2$ are nonzero, without loss of generality. The linear system of cubics
$$
\sum_{1\le i,j,k\le n-2}\gamma_{ijk}x_ix_jx_k
$$
satisfying \eqref{eqn:general gamma condition} is the linear system of cubics in $\P^{n-3}$ with coordinates $x_1,\dots,x_{n-2}$ vanishing at the point 
$$
[a_1:\dots:a_{n-2}]\in \P^{n-3}.
$$
By the Bertini theorem, we have
$$
\Sing(X_\gamma)\subset\{[x_1:\dots:x_{n-2}]=[a_1:\dots:a_{n-2}]\}\cup\{x_1=\dots=x_{n-2}=0\}\subset\P^n
$$
for general such $\gamma_{ijk}\in \C$.

Suppose $[x_0:a_1:\dots:a_{n-2}:x_{n-1}:x_n]\in \Sing(X_\gamma)$ and denote by 
$$
P_\gamma:=P+\sum_{1\le i,j,k\le n-2}\gamma_{ijk}x_ix_jx_k.
$$
Then by \eqref{eqn:cubic condition for < n-1/2}, we have
$$
P_\gamma(x_0,a_1,\dots,a_{n-2},x_{n-1},x_n)=x_{n-1}^3+\sum_{1\le i,j\le n-2}a_ia_jb_{ij}x_{n-1}+\sum_{1\le i,j,k\le n-2}c_{ijk}a_ia_ja_k=0
$$
and
$$
\partial_{x_{n-1}}P_\gamma(x_0,a_1,\dots,a_{n-2},x_{n-1},x_n)=3x_{n-1}^2+\sum_{1\le i,j\le n-2}a_ia_jb_{ij}=0.
$$
Therefore, $x_{n-1}$ is uniquely determined as a double root of the cubic polynomial
$$
x^3+\left(\sum_{1\le i,j\le n-2}a_ia_jb_{ij}\right)x+\sum_{1\le i,j,k\le n-2}c_{ijk}a_ia_ja_k=0.
$$
Hence, we denote the double root by $a_{n-1}$, and set $x_{n-1}=a_{n-1}$. The Jacobian criterion implies
$$
\partial_{x_i}P_\gamma(x_0,a_1,\dots,a_{n-2},a_{n-1},x_n)=0
$$
for $1\le i\le n-2$. By an elementary calculation, we have
$$
\partial_{x_i}P_\gamma=2a_ix_0+a_ix_n^2+\left(b_ia_{n-1}+2\sum_{j}a_{ij}a_j\right)x_n+2\sum_{j}b_{ij}a_ja_{n-1}+3\sum_{j,k}(c_{ijk}+\gamma_{ijk})a_ja_k
$$
(we omit the index ranges $1\le i,j,k\le n-2$ for brevity) at the point $(x_0,a_1,\dots,a_{n-2},a_{n-1},x_n)$.

Recall that $a_1$ and $a_2$ are nonzero. Consider
$$
a_2\partial_{x_1}P_\gamma-a_1\partial_{x_2}P_\gamma=0
$$
at the point $(x_0,a_1,\dots,a_{n-2},a_{n-1},x_n)$. This is a linear equation for $x_n$. The coefficients of $\gamma_{111}$ and $\gamma_{222}$ in this equation are $3a_2a_1^2$ and $-3a_1a_2^2$. However, the coefficients for \eqref{eqn:general gamma condition} are $a_1^3$ and $a_2^3$. In particular, by the generality of $\gamma_{ijk}$ satisfying \eqref{eqn:general gamma condition}, the constant term of the linear equation for $x_n$ is nonzero unless
$$
\frac{a_1^3}{a_2^3}=-\frac{3a_2a_1^2}{3a_1a_2^2},
$$
or equivalently $a_1^2=-a_2^2$. Thus, unless $a_1^2=-a_2^2$, the value of $x_n$ is uniquely determined. The equation $\partial_{x_1}P_\gamma=0$ then uniquely determines $x_0$. Therefore, it suffices to consider $a_i^2=-a_j^2$ whenever $i\neq j$ and $a_i, a_j$ are nonzero.

If $a_i=0$ for some $1\le i \le n-2$, then the above equation $\partial_{x_i}P_\gamma=0$ is a linear equation for $x_n$ and the coefficient of $\gamma_{111}$ is zero. Hence, we likewise conclude that $x_n$ and $x_0$ are uniquely determined. Consequently, we may assume $a_i\neq 0$ for all $1\le i \le n-2$, and thus,
$$
a_i^2=-a_j^2
$$
for all $1\le i,j\le n-2$ with $i\neq j$. This is not possible under the assumption that $(a_1,\dots,a_{n-2})\neq 0$, so there is no singular point of the form $[x_0:a_1:\dots:a_{n-2}:x_{n-1}:x_n]$ in this case.
\end{proof}

\section{Proof of the main theorem}
\label{sec:Proof of main theorem}

In this section, we prove our main result, Theorem \ref{thm:indeterminacy}, on the indeterminacy locus of the compactified period map $\overline {\mathcal P}$. 

\begin{proof}[Proof of Theorem \ref{thm:indeterminacy}]
For cubic fourfolds $(n,d)=(5,3)$, Laza established the indeterminacy locus in \cite{Laza10}. For quartic K3s $(n,d)=(3,4)$, Laza and O'Grady \cite{LOG18, LOG19} predicted the indeterminacy locus, and Ascher, DeVleming, and Liu \cite{ADL23} resolved this indeterminacy locus via the K-moduli wall-crossing of pairs. See also the proof of Theorem \ref{thm:BBH indeterminacy} in Section \ref{sec:BB versus BBH for Calabi--Yau}.

Now, we prove the theorem for $(n,d)\neq (3,3),(3,4),(5,3)$, where $n\ge 2$ and $d\ge 3$.
We consider the period map
$$
\overline \Phi:\P |\O_{\P^n}(d)|\dashrightarrow\overline \M^{\rm BB}_{(n,d)}.
$$
Denote by $U$ the domain of definition of $\overline \Phi$, and by $V$ the locus of hypersurfaces with $(\lfloor\frac{n}{2}\rfloor-1)$-Du Bois singularities. By Theorem \ref{thm:extendability}, $U$ contains $V$. By Corollary \ref{cor:numerical criterion for pencil}, it suffices to prove that $U=V$.

Denote by $W\subset \P |\O_{\P^n}(d)|$ the locus of hypersurfaces with isolated singularities. By Corollary~\ref{cor:indeterminacy for isolated singularities}, we have $U\cap W=V\cap W$. Additionally, by Proposition \ref{prop:reduction to isolated sing}, the complement $V^c$ is the closure of $W\setminus V$. Consequently, $U\setminus V$ is the closure of $(U\cap W)\setminus V$, which is an empty set. Therefore, $U=V$ as desired.
\end{proof}

\begin{proof}[Proof of Corollary \ref{cor:indeterminacy}]
In \cite[Corollary 8.1]{Park25}, part (1) is established when $n$ is even. When $n$ is odd, the domain of definition of $\mathcal P$ contains the locus of hypersurfaces with simple ADE singularities. We prove that they are equal.

Note that the domain of definition of $\mathcal P$ is contained in the domain of definition of $\overline {\mathcal P}$. By Theorem \ref{thm:indeterminacy}, it suffices to prove that $\mathcal P$ is not defined for a hypersurface $X\subset \P^n$ with $(\frac{n-3}{2})$-Du Bois singularities but not simple ADE singularities. In terms of \cite{Park25}, this hypersurface has $(\frac{n-3}{2})$-liminal singularities (see \cite[Lemma 8.2]{Park25} for the fact that simple ADE singularities are equivalent to $(\frac{n-3}{2})$-rational singularities).

By Griffiths' Removable Singularity Theorem (see \cite[Theorem 9.5]{Griffiths70} or \cite[Application 16]{GT84}), it suffices to prove the following:

\smallskip
\noindent
\textit{Claim.} Let $n$ be an odd integer, and $X\subset\P^n$ be a hypersurface of degree $d$ with $(\frac{n-3}{2})$-liminal singularities. For any one-parameter smoothing $f:\X\to \Delta$ of $X$, the local monodromy of the middle cohomology $\mathbb H^{n-1}(\X_\eta/\Delta_\eta)$ at the origin is not finite.
\smallskip

Recall that the local monodromy is finite if and only if the limiting mixed Hodge structure $H^{n-1}(\X_\infty,\Q)$ is pure of weight $n-1$. By \cite[Theorem 11.1]{Park25}, the specialization map
$$
\mathrm{sp}^{n-1}:H^{n-1}(X,\Q)\to H^{n-1}(\X_\infty,\Q)
$$
induces isomorphisms of graded Hodge pieces
\begin{equation}
\label{eqn:cohomological insignificance}
\gr_F^p(\sp^{n-1}): \gr_F^pH^{n-1}(X,\C)\xrightarrow{\sim} \gr_F^pH^{n-1}(\X_\infty,\C) 
\end{equation}
for all $p\le \frac{n-3}{2}$. Suppose $H^{n-1}(\X_\infty,\Q)$ is pure of weight $n-1$. Then the weight $\le n-2$ subspace
$$
W_{n-2}H^{n-1}(X,\Q)
$$
is contained in the kernel of $\mathrm{sp}^{n-1}$. Hence, if $W_{n-2}H^{n-1}(X,\Q)$ is nonzero, then
$$
\gr_F^p\left(W_{n-2}H^{n-1}(X,\C)\right)\neq0
$$
for some $p\le \frac{n-3}{2}$, which is not possible by \eqref{eqn:cohomological insignificance}. This implies that $H^{n-1}(X,\Q)$ is also pure of weight $n-1$.

Hence, it suffices to prove
$$
\dim_\C\gr_F^{\frac{n-3}{2}}H^{n-1}(X,\Q)\neq\dim_\C\gr_F^{\frac{n+1}{2}}H^{n-1}(X,\Q),
$$
for a hypersurface $X\subset \P^n$ with $(\frac{n-3}{2})$-liminal singularities. This implies that $H^{n-1}(X,\Q)$ is not pure of weight $n-1$. Using the terminology of \cite{PP25a}, this statement is equivalent to
$$
\h^{\frac{n-3}{2},\frac{n+1}{2}}(X)\neq\h^{\frac{n+1}{2},\frac{n-3}{2}}(X).
$$
Note that $X$ has $(\frac{n-5}{2})$-rational singularities (see \cite[Appendix]{FL24} or \cite[Theorem E]{MP25}), which implies
$$
\h^{\frac{n+1}{2},\frac{n-3}{2}}(X)=I\h^{\frac{n+1}{2},\frac{n-3}{2}}(X)=I\h^{\frac{n-3}{2},\frac{n+1}{2}}(X)
$$
by \cite[Theorem 7.1 and Remark 7.2]{PP25a}. We instead prove
\begin{equation}
\label{eqn:h neq Ih}
\h^{\frac{n-3}{2},\frac{n+1}{2}}(X)\neq I\h^{\frac{n-3}{2},\frac{n+1}{2}}(X).
\end{equation}

Consider the RHM defect object $\K^\bullet_X\in D^b{\rm MHM}(X)$ sitting in the distinguished triangle
$$
\K^\bullet_X\to \Q_X^H[n-1]\to \IC_X^H\xrightarrow{+1}
$$
(see \cite[Section 6]{PP25a} for details). Since $X$ is a hypersurface, $\K^\bullet_X$ is a single mixed Hodge module. Since $X$ is $(\frac{n-3}{2})$-liminal, the index of the first nonzero (increasing) Hodge filtration of its dual $\dual\K^\bullet_X$ is $\frac{n-3}{2}$ (see discussion following \cite[Definition 3.1]{Park25}).

We apply \cite[Proposition 10.6]{Park25}, ignoring the assumption $\frac{n+1}{d}=m+1$ and setting $m=\frac{n-3}{2}$. The same proof applies in our case, and we obtain a surjective composition map
\begin{equation}
\label{eqn:composition map}
\O_{X}\left(-n-1+\frac{n-1}{2}d\right)\to R\Hom_{\O_X}\left(\Omega_X^\frac{n-3}{2},\w_X\right)\left[\frac{n-3}{2}\right]\to F_\frac{n-3}{2}\dual\K_X^\bullet.
\end{equation}
In particular, $F_{\frac{n-3}{2}}\dual\K_X^\bullet\isom \O_S(-n-1+\frac{n-1}{2}d)$ where $S$ is the $\frac{n-3}{2}$-liminal locus of $X$. Since we can always find a nonzero hypersurface section on $S$, the map of global sections
$$
H^0\left(\O_{X}\left(-n-1+\frac{n-1}{2}d\right)\right)\to H^0\left(\O_S\left(-n-1+\frac{n-1}{2}d\right)\right)
$$
is nonzero. This is the composition map obtained by applying the hypercohomology $\HH^0(\,\cdot\,)$ to \eqref{eqn:composition map}. Note that, after applying $\HH^0(\,\cdot\,)$, the dual of the second map in \eqref{eqn:composition map} is the natural map
$$
\gr_F^\frac{n-3}{2}\HH^0(\K_X^\bullet)\to \gr_F^\frac{n-3}{2}H^{n-1}(X,\C)
$$
(see, for example, the proof of \cite[Corollary 10.7]{Park25}). Hence, we conclude that this map is nonzero.

Recall that $F_{\frac{n-3}{2}}\dual\K_X^\bullet$ is a sheaf, and thus, we have the vanishing
$$
H^{<0}\left(F_{\frac{n-3}{2}}\dual\K_X^\bullet\right)=0.
$$
This implies the vanishing of its dual
$$
\gr_F^\frac{n-3}{2}\HH^{>0}(\K_X^\bullet)=0.
$$
Therefore, we have an exact sequence
$$
\gr_F^\frac{n-3}{2}\HH^0(\K_X^\bullet)\to \gr_F^\frac{n-3}{2}H^{n-1}(X,\C)\to\gr_F^\frac{n-3}{2}\IH^{n-1}(X,\C)\to 0
$$
and the first map being nonzero implies \eqref{eqn:h neq Ih}.

Finally, we prove the last statement of Corollary \ref{cor:indeterminacy}.
For Calabi--Yau hypersurfaces, the birational map
$$
\overline {\mathcal P}^{\rm BBH}:\overline\M^{\rm GIT}_{(n,n+1)} \dashrightarrow \overline \M^{\rm BBH}_{(n,n+1)}
$$
factors through the Stein factorization
$$
\overline\M^{\rm GIT}_{(n,n+1)}\dashrightarrow \left(\overline \M^{\rm BBH}_{(n,n+1)}\right)^\nu
$$
of the period map $\overline {\mathcal P}$. Recall that $\overline {\mathcal P}^{\rm BBH}$ is an isomorphism over the locus $\M^{\rm can}_{(n,n+1)}$ of Calabi--Yau hypersurfaces with canonical singularities. We have proven that the domain of definition of $\mathcal P$ is contained in $\M^{\rm can}_{(n,n+1)}$. Therefore, the Stein factorization of the period map $\mathcal P$ to its period image is an open embedding on the domain of definition of $\mathcal P$.
\end{proof}

\section{Baily--Borel versus BBH for Calabi--Yau hypersurfaces}
\label{sec:BB versus BBH for Calabi--Yau}

In this section, we prove Theorem \ref{thm:BBH indeterminacy} and compare the Baily--Borel and BBH compactifications for Calabi--Yau hypersurfaces. The strategy of the proof follows that of Theorem \ref{thm:indeterminacy}, with the Griffiths height replaced by the \emph{CY height}.

\begin{defn}
Let $C$ be a smooth projective curve, and let $\mathbb V_\eta$ be a polarized Calabi--Yau variation of $\Z$-Hodge structures defined on a nonempty Zariski open subset of $C$. We define its \emph{CY height}, \emph{lower CY height}, and \emph{upper CY height}, respectively, by
$$
\mathrm{ht}_{CY}(\mathbb V_\eta):=\deg_{C}\Lambda_C,\quad \mathrm{ht}_{CY,-}(\mathbb V_\eta):=\deg_{C}\Lambda_{C}^{[0,1)},\quad \mathrm{ht}_{CY,+}(\mathbb V_\eta):=\deg_{C}\Lambda_{C}^{(-1,0]}.
$$
\end{defn}

Here, a polarized variation of $\Z$-Hodge structures $\mathbb V=(\mathbb V_\Z,F^\bullet, Q)$ of weight $w$ is \emph{Calabi--Yau} if $F^0\mathcal V=\mathcal V$ and $\gr_F^{0}\mathcal V$ is a line bundle. The Hodge line bundle of $\mathbb V$ is $F^w\mathcal V$, the last nonzero graded piece of the Hodge filtration. As explained in Section \ref{sec:Deligne's canonical extension and Griffiths}, $\mathbb V_\eta$ on a nonempty Zariski open subset of $C$ admits Deligne's lower and upper canonical extensions
$$
\overline{\mathcal V}^{[0,1)} \quad{\rm and}\quad \overline{\mathcal V}^{(-1,0]}
$$
on $C$, and we have the induced lower and upper Hodge bundles
$$
\Lambda_{C}^{[0,1)}:=F^w\overline{\mathcal V}^{[0,1)} \quad{\rm and}\quad \Lambda_{C}^{(-1,0]}:=F^w\overline{\mathcal V}^{(-1,0]}.
$$
As explained after Definition \ref{defn:lower Griffiths bundle}, we may consider a finite flat morphism $\pi:T\to C$ such that the local monodromy of $\pi^*\mathbb V_\eta$ is unipotent. Then the Hodge bundle $\Lambda_{T}^{[0,1)}=\Lambda_{T}^{(-1,0]}$ descends to a class
$$
\Lambda_C\in \mathrm{Pic}(C)\otimes_\Z \Q
$$
which we call the Hodge $\Q$-bundle.

To determine the indeterminacy locus of the birational map
$$
\overline {\mathcal P}^{\rm BBH}:\overline\M^{\rm GIT}_{(n,n+1)} \dashrightarrow \overline \M^{\rm BBH}_{(n,n+1)},
$$
we establish an analogue of Theorem \ref{thm:Mordant question and indeterminacy}. The proof is essentially the same.

\begin{thm}\label{thm:Mordant question for Calabi Yau}
Let $n\geq 2$ and let $H_\eta\subset \P^n_{C_\eta}$ be a smooth hypersurface of degree $n+1$ over the generic point $C_\eta$ of a smooth projective curve $C$. Then we have
$$
\mathrm{ht}_{CY}(\mathbb H^{n-1}(H_\eta/C_\eta))\le \mathrm{ht}_{GIT}(H_\eta),
$$
and the equality holds if and only if the image of the moduli morphism $C\to\overline\M^{\rm GIT}_{(n,n+1)}$ is contained in the domain of definition of $\overline{\mathcal P}^{\rm BBH}$.
\end{thm}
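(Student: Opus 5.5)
We follow the proof of Theorem \ref{thm:Mordant question and indeterminacy}, replacing the Griffiths bundle and $\overline \M^{\rm BB}_{(n,d)}$ with the Hodge bundle and $\overline \M^{\rm BBH}_{(n,n+1)}$. The first step is to set $X=\overline\M^{\rm GIT}_{(n,n+1)}$, $Y=\overline \M^{\rm BBH}_{(n,n+1)}$ and $\L_Y=\O_{\overline \M^{\rm BBH}_{(n,n+1)}}(1)$, and to apply Lemma \ref{lem:Mordant Q1 numerical criterion} to $f=\overline{\mathcal P}^{\rm BBH}$. By Proposition \ref{prop:GIT is Q factorial picard rank 1}, $X$ is a $\Q$-factorial normal projective variety of Picard rank $1$. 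Hence the reflexive hull $\L_X$ of $f^*\L_Y$ on the domain of definition is $\Q$-Cartier, and it equals $\O_{\overline\M^{\rm GIT}_{(n,n+1)}}(\beta)$ for some $\beta\in\Q$.

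Next we relate both sides of the desired inequality to these bundles. By Theorem \ref{thm:BFMT BBH}(2), applied to the family over $C$ (after shrinking $C$ to the locus of smooth fibers and using $C$ itself as the log smooth compactification), the moduli map extends to $\overline\Phi_{\rm CY}:C\to Y$ with $\overline\Phi_{\rm CY}^*\L_Y\isom\Lambda_C$. Since $\Lambda_C$ is intrinsic to the variation $\mathbb H^{n-1}$, we get $\deg_C\Lambda_C=\mathrm{ht}_{CY}(\mathbb H^{n-1}(H_\eta/C_\eta))$. Moreover $\overline\Phi_{\rm CY}$ agrees generically with $f\circ\overline\Phi_{\rm GIT}$, so $\deg_C(f\circ\overline\Phi_{\rm GIT})^*\L_Y=\mathrm{ht}_{CY}$. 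On the other side, $\deg_C\overline\Phi_{\rm GIT}^*\L_X=\beta\,\mathrm{ht}_{GIT}(H_\eta)$. Lemma \ref{lem:Mordant Q1 numerical criterion} then gives
$$
\mathrm{ht}_{CY}\le\beta\,\mathrm{ht}_{GIT},
$$
with equality if and only if $\overline\Phi_{\rm GIT}(C)$ lies in the domain of definition.

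It remains to show $\beta=1$, and we expect this normalization to be the main obstacle. We would compute $\beta$ on a general pencil $\{P+tQ=0\}\subset\P^n\times\P^1_t$. Its fibers have at worst one node, so they have canonical singularities and are GIT stable by \cite{Tian94}. Thus the image of the pencil lies in $\M^{\rm can}_{(n,n+1)}$, where $\overline{\mathcal P}^{\rm BBH}$ is an isomorphism, and equality holds in Lemma \ref{lem:Mordant Q1 numerical criterion}. Since $\mathrm{ht}_{GIT}=1$ for the pencil, as in the proof of Corollary \ref{cor:numerical criterion for pencil}, we get $\beta=\deg_{\P^1}\Lambda$.

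To compute $\deg_{\P^1}\Lambda$, we note that nodes are rational singularities with $0$-Du Bois fibers. By Proposition \ref{prop:Hodge pieces for higher Du Bois family} with $m=0$ and Lemma \ref{lem:Griffiths bundle alternative}, the Hodge bundle is $\Lambda\isom (R^{n-1}f_*\O_\X)^\vee\isom f_*\omega_{\X/\P^1}$ by relative duality. The family $\X$ is a smooth divisor of bidegree $(n+1,1)$ in $\P^n\times\P^1$, so adjunction gives
$$
\omega_{\X/\P^1}=\bigl(\O(-n-1,0)\otimes\O(n+1,1)\bigr)|_\X=\O(0,1)|_\X.
$$
Hence $f_*\omega_{\X/\P^1}=\O_{\P^1}(1)$ and $\deg\Lambda=1$. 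We must check that the normalizations of $\O(1)$ on both moduli spaces match these conventions, namely that the GIT $\O(1)$ pulls back to $\O_{\P V}(1)$ and that the BBH polarization is the Hodge bundle rather than a power of it. Granting this, $\beta=1$, and the statement follows.
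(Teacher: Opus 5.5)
\textbf{Gap: the Hodge $\Q$-bundle versus the upper Hodge bundle when $n$ is odd.}

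Your reduction is the same as the paper's. Proposition \ref{prop:GIT is Q factorial picard rank 1} and Lemma \ref{lem:Mordant Q1 numerical criterion} reduce everything to showing $\deg_{\P^1}\Lambda=1$ for a general pencil. Your normalization worry is settled by the conventions of Theorem \ref{thm:BFMT BBH} and by the pencil being a line in $\P|\O_{\P^n}(n+1)|$.

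The gap is in the computation of $\deg\Lambda$. Lemma \ref{lem:Griffiths bundle alternative} assumes unipotent local monodromy. This holds for $n$ even, where the nodal monodromy is a transvection. It fails for $n$ odd: a node on an even-dimensional fiber has Picard--Lefschetz monodromy a reflection, with eigenvalue $-1$ on the vanishing cycle.

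Applied directly on $\P^1$, Proposition \ref{prop:Hodge pieces for higher Du Bois family} identifies $R^{n-1}f_*\O_\X$ with $\gr_F^0$ of the \emph{lower} extension $\overline{\mathcal V}^{[0,1)}$. The dual of the lower extension is the \emph{upper} extension. So your chain $(R^{n-1}f_*\O_\X)^\vee\isom f_*\w_{\X/\P^1}$ computes $\Lambda^{(-1,0]}_{\P^1}$, i.e.\ it shows $\mathrm{ht}_{CY,+}=1$. This is exactly what the paper gets from \cite[Theorem 2.6]{Kollar86}.

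The statement, however, concerns $\mathrm{ht}_{CY}=\deg\Lambda_{\P^1}$, the $\Q$-bundle defined through a unipotent cover, which is what $\O_{\overline\M^{\rm BBH}_{(n,n+1)}}(1)$ pulls back to. In general $\mathrm{ht}_{CY,+}-\mathrm{ht}_{CY}\ge 0$, with the difference given by Proposition \ref{prop:CY height via LMHS}.

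Proving $\mathrm{ht}_{CY}=\mathrm{ht}_{CY,+}$ is the entire second half of the paper's proof. It uses the isomorphism $\gr_F^0(\sp^{n-1})$ onto the unipotent part of the limit, the fact that $\dim\gr_F^0H^{n-1}(\X_c,\C)=1$ for Du Bois fibers, invariance and symmetry of limit Hodge numbers, and then Proposition \ref{prop:CY height via LMHS}. Your proposal does not distinguish $\Lambda$ from $\Lambda^{(-1,0]}$, so this step is missing.

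\textbf{How to repair it, and how the repaired route compares.}

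Your route can be repaired without the limiting-mixed-Hodge-structure analysis.
\begin{enumerate}
\item Take a finite cover $\pi:T\to\P^1$ making the monodromy unipotent, as in the proof of Theorem \ref{thm:extendability}.
\item The pulled-back family $g:\X_T\to T$ still has nodal, hence Du Bois lci, fibers. So Proposition \ref{prop:Hodge pieces for higher Du Bois family} with $m=0$ gives $\gr_F^0\overline{\mathcal V}_T\isom R^{n-1}g_*\O_{\X_T}$.
\item Flat base change gives $R^{n-1}g_*\O_{\X_T}\isom\pi^*R^{n-1}f_*\O_\X$.
\item Now Lemma \ref{lem:Griffiths bundle alternative} legitimately applies on $T$ and gives $\Lambda_T\isom\pi^*(R^{n-1}f_*\O_\X)^\vee$.
\item Hence $\Lambda_{\P^1}\isom_\Q f_*\w_{\X/\P^1}\isom\O_{\P^1}(1)$.
\end{enumerate}

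With this fix your route is a valid and shorter alternative to the paper's: the Du Bois input is absorbed into Proposition \ref{prop:Hodge pieces for higher Du Bois family}. The paper's argument has the advantage that it also detects the \emph{failure} of Du Bois singularities, and it is reused in exactly that way in the proof of Theorem \ref{thm:BBH indeterminacy}.
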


\begin{proof}
By Proposition \ref{prop:GIT is Q factorial picard rank 1}, the reflexive hull of the pullback of the Hodge bundle $\O_{\overline \M^{\rm BBH}_{(n,n+1)}}(1)$ is a rational multiple of the GIT polarization $\O_{\overline\M^{\rm GIT}_{(n,d)}}(1)$:
$$
\O_{\overline\M^{\rm GIT}_{(n,d)}}(\alpha)\isom_\Q \left[\overline {\mathcal P}^*\O_{\overline \M^{\rm BBH}_{(n,n+1)}}(1)\right]^{**} \quad \text{for}\quad \alpha\in\Q.
$$
Then Lemma \ref{lem:Mordant Q1 numerical criterion} completes the proof if we additionally have $\alpha=1$.

Note that $\overline{\mathcal P}^{\rm BBH}$ is defined over a codimension $1$ point. Consider a general pencil
$$
\X:=\{P+tQ=0\}\subset \P^n\times\P_t^1
$$
over a rational curve $\P^1_t$. Then the GIT moduli image $\overline\Phi_{\rm GIT}(\P^1_t)$ is contained in the domain of definition of $\overline{\mathcal P}^{\rm BBH}$. Therefore, it suffices to prove
$$
\mathrm{ht}_{CY}(\mathbb H^{n-1}(\X_\eta/\P^1_\eta))=1.
$$
Note that $\X$ is smooth. Hence, by \cite[Theorem 2.6]{Kollar86}, we have
$$
\Lambda_{\P^1}^{(-1,0]}\isom f_*\w_{\X/\P^1}
$$
where $f:\X\to \P^1$. We have $\w_\X\isom \w_{\P^n\times\P^1}(n+1,1)|_\X$ by the adjunction formula, which implies the isomorphism
$$
f_*\w_{\X/\P^1}\isom\O_{\P^1}(1).
$$
Therefore, we have
$$
\mathrm{ht}_{CY,+}(\mathbb H^{n-1}(\X_\eta/\P^1_\eta))=\deg_{\P^1}f_*\w_{\X/\P^1}=1.
$$

Now, it suffices to prove
$$
\mathrm{ht}_{CY}(\mathbb H^{n-1}(\X_\eta/\P^1_\eta))=\mathrm{ht}_{CY,+}(\mathbb H^{n-1}(\X_\eta/\P^1_\eta)).
$$
For any disk $\Delta\subset \P^1$ centered at $c\in \P^1$, we have the isomorphism of the graded specialization map
$$
\gr_F^0(\sp^{n-1}): \gr_F^0H^{n-1}(\X_c,\C)\xrightarrow{\sim} \gr_F^0H^{n-1}(\X_\infty,\C)_1
$$
where $H^{n-1}(\X_\infty,\Q)_1$ is the unipotent monodromy eigenspace of the limiting mixed Hodge structure over $\Delta^*$ at $c$ (see \cite[Remark 11.6]{Park25}). Since the fiber $\X_c$ has Du Bois singularities, we have
$$
\dim_\C\gr_F^0H^{n-1}(\X_c,\C)=1
$$
(see \cite{Dolgachev81,Steenbrink81}), which implies
$$
\dim_\C\gr_F^0H^{n-1}(\X_\infty,\C)_1=1.
$$
By the invariance of Hodge numbers for limiting mixed Hodge structures, we have
$$
\dim_\C\gr_F^0H^{n-1}(\X_\infty,\C)=1,
$$
and thus, the monodromy $\lambda$-eigenspace $H^{n-1}(\X_\infty,\C)_\lambda$ satisfies
$$
\dim_\C\gr_F^0H^{n-1}(\X_\infty,\C)_\lambda=0
$$
for all $\lambda\neq 1$. By the symmetry of Hodge numbers (see \cite[Theorem 6.16]{Schmid73} or \cite[Section 5.1.6]{Saito88}), we obtain
$$
\dim_\C F^{n-1}H^{n-1}(\X_\infty,\C)_\lambda=0.
$$
Therefore, Proposition \ref{prop:CY height via LMHS} applied to $\mathbb V_\eta:=\mathbb H^{n-1}(\X_\eta/\P^1_\eta)$ implies the desired conclusion.
\end{proof}

As in Section \ref{sec:LMHS and Griffiths}, we describe the difference between the CY height and the upper CY height in terms of limiting mixed Hodge structures.

\begin{prop}
\label{prop:CY height via LMHS}
Let $C$ be a smooth projective curve, and let $\mathbb V_\eta$ be a Calabi--Yau variation of $\Z$-Hodge structures of weight $w$, defined on an open subset $C\sm\{c_1,\dots,c_s\}$ of $C$. Then
$$
\mathrm{ht}_{CY,+}(\mathbb V_\eta)-\mathrm{ht}_{CY}(\mathbb V_\eta)=\sum_{\gamma=1}^s\sum_{\alpha\in[0,1)}\alpha\dim_\C F^w\left[\psi_{t_\gamma,e^{2\pi i\alpha}}\left(j_*\mathbb V^H[1]\right)\right],
$$
where $t_\gamma$ is a local coordinate function near the point $c_\gamma$ and $j:C\sm\{c_1,\dots,c_s\}\to C$ is an open embedding.
\end{prop}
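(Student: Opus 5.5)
The plan is to mirror the proof of Proposition \ref{prop:Griffiths height via LMHS}, with the upper canonical extension $\overline{\mathcal V}^{(-1,0]}$ in place of the lower one and only the top Hodge piece $F^w$ tracked. First, I will prove a local analogue of Lemma \ref{lem:Deligne's canonical extension and finite cover} on a disk $\Delta_t$. Take the cyclic cover $\rho:\Delta_s\to\Delta_t$ with $t=s^m$ and $G=\Z/m\Z$ generated by $g(s)=\zeta s$. The same local description of Deligne's extensions gives
$$
\left(\rho_*\overline{\mathcal V}_s\right)^G\isom \overline{\mathcal V}_t^{[0,1)} .
$$
For the upper extension, the invariant sections must instead be twisted by $s^{-1}$. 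Concretely, I expect $\rho^*\overline{\mathcal V}_t^{(-1,0]}$ to be spanned near $0$ by sections of the form $s^{k-m}V_{k,i}$ for $k\neq0$ and $V_{0,i}$ for $k=0$. Here $V_{k,i}$ are the eigen-lifts constructed in that lemma, with $g(V_{k,i})=\zeta^{-k}V_{k,i}$.

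Hence there is a natural inclusion
$$
F^w\overline{\mathcal V}_s\subset \rho^*F^w\overline{\mathcal V}_t^{(-1,0]},
$$
which is an isomorphism over $\Delta_s^*$. By the uniqueness of subbundle extensions of the Hodge filtration, exactly as in \eqref{eqn:G-invariant canonical extension}, its cokernel has length
$$
\sum_{k=1}^{m-1}(m-k)\,a_k,\qquad a_k:=\dim_\C F^w\psi_{t,e^{-2\pi i k/m}}\left(j_*\mathbb V^H[1]\right).
$$
Next I reindex by $\alpha=(m-k)/m\in(0,1)$. Then $e^{-2\pi i k/m}=e^{2\pi i\alpha}$, so the length equals
$$
m\sum_{\alpha\in(0,1)}\alpha\dim_\C F^w\psi_{t,e^{2\pi i\alpha}}\left(j_*\mathbb V^H[1]\right).
$$
The $\alpha=0$ term contributes nothing, which is consistent with the sum in the statement running over $[0,1)$.

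Second, I globalize exactly as in Proposition \ref{prop:Griffiths height via LMHS}. I choose the global $m$-cyclic cover $\rho:C'\to C$ totally ramified over $c_1,\dots,c_s$, as constructed there. On $C'$ the degree of $F^w\overline{\mathcal V}'$ is $m\,\mathrm{ht}_{CY}(\mathbb V_\eta)$, since the Hodge $\Q$-bundle $\Lambda_C$ is defined via unipotent pullback. The degree of $\rho^*\Lambda_C^{(-1,0]}$ is $m\,\mathrm{ht}_{CY,+}(\mathbb V_\eta)$. Summing the local cokernel lengths over the points $c'_\gamma$ and dividing by $m$ gives the formula.

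The only subtle step is the sign and eigenvalue bookkeeping in the first paragraph. I need to confirm that the upper extension corresponds to residue eigenvalues in $(-1,0]$, so that eigenvalue $e^{-2\pi i k/m}$ pairs with the exponent $s^{k-m}$. I would double-check this on a rank-one example with monodromy $e^{2\pi i\alpha}$.
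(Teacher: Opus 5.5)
Your proposal is correct and follows the paper's route: the paper proves the upper-extension analogue of the cyclic-cover lemma using $(\rho_*(s^{-m+1}\overline{\mathcal V}_s))^G\isom\overline{\mathcal V}_t^{(-1,0]}$, obtains the local basis $V_{0,i}$, $s^{k-m}V_{k,i}$ and the cokernel length $\sum_{k=1}^{m-1}(m-k)a_k$, reindexes to eigenvalues $e^{2\pi i\alpha}$, and then globalizes exactly as in Proposition \ref{prop:Griffiths height via LMHS}. The one imprecision is your phrase ``twisted by $s^{-1}$'': the correct twist is by $s^{-m+1}$, and that is what the explicit spanning set you wrote down (which is correct) actually encodes.
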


\begin{proof}
The proof of Proposition \ref{prop:Griffiths height via LMHS} applies with the Griffiths height replaced by the CY height, using Lemma \ref{lem:Deligne's upper canonical extension and finite cover} below.
\end{proof}

\begin{lem}
\label{lem:Deligne's upper canonical extension and finite cover}
In the setting of Lemma \ref{lem:Deligne's canonical extension and finite cover}, we have a natural inclusion
$$
F^p\overline {\mathcal V}_s\subset\rho^*F^p\overline {\mathcal V}_t^{(-1,0]} 
$$
and the dimension of the cokernel is
$$
\dim_\C \left[\rho^*F^p\overline {\mathcal V}_t^{(-1,0]}/F^p\overline {\mathcal V}_s\right]
=\sum_{k=0}^{m-1}k\dim_\C F^p\left[\psi_{t,e^{2\pi i\frac{k}{m}}}\left(j_*\mathbb V^H[1]\right)\right].$$
\end{lem}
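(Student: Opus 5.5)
I will mirror the proof of Lemma \ref{lem:Deligne's canonical extension and finite cover}, replacing the lower canonical extension by the upper one.

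\emph{Step 1: the upper extension as $G$-invariants.} Let $G=\Z/m\Z$ act on $\Delta_s$ by $g(s)=\zeta s$. The key identification is
$$
\left(\rho_*\left(\overline{\mathcal V}_s(\mathrm{log\ twist})\right)\right)^G\isom\overline{\mathcal V}_t^{(-1,0]},
$$
where the twist is chosen so that invariant sections acquire residues in $(-1,0]$ rather than $[0,1)$. Concretely, I expect $\left(\rho_*(s^{-(m-1)}\overline{\mathcal V}_s)\right)^G\isom\overline{\mathcal V}_t^{(-1,0]}$ near the origin. To verify it, write a local frame of $\overline{\mathcal V}_s$ whose sections $V$ satisfy $g(V)=\zeta^{-k}V$ with $0\le k\le m-1$. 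The invariant sections of $s^{-(m-1)}\overline{\mathcal V}_s$ are then generated by $s^{-(m-k)}V=t^{-1}s^{k}V$ for $k\neq 0$, and by $V$ for $k=0$. In terms of $t$, these have residues $-(m-k)/m\in(-1,0)$ and $0$, which is the defining property of the upper extension. Since both sides are subbundles extending the same object over $\Delta_t^*$, they agree, and the same holds for $F^p$, using the uniqueness of the subbundle extension of the Hodge filtration.

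\emph{Step 2: the inclusion.} Pick a basis $\overline v_{k,i}$ of the $\zeta^{-k}$-eigenspace of $F^p\overline{\mathcal V}_s|_0$ and average the lifts to get $G$-eigensections $V_{k,i}$. Nakayama's lemma gives
$$
F^p\overline{\mathcal V}_s=\bigoplus_{k,i}V_{k,i}\O_{\Delta_s}.
$$
By Step 1, $\rho^*F^p\overline{\mathcal V}_t^{(-1,0]}$ is generated by $V_{0,i}$ and by $s^{-(m-k)}V_{k,i}$ for $k\ge1$. This immediately gives the inclusion $F^p\overline{\mathcal V}_s\subset\rho^*F^p\overline{\mathcal V}_t^{(-1,0]}$, and the cokernel has length
$$
\sum_{k\ge1}(m-k)a_k,\qquad a_k=\dim F^p\bigl[\psi_{t,e^{-2\pi ik/m}}\bigr].
$$

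\emph{Step 3: reindexing.} Set $k'=m-k$. Then $e^{-2\pi i k/m}=e^{2\pi i k'/m}$, and as $k$ runs over $1,\dots,m-1$, so does $k'$. The $k'=0$ term contributes zero, so the length becomes $\sum_{k'=0}^{m-1}k'\dim F^p\bigl[\psi_{t,e^{2\pi ik'/m}}\bigr]$, which is the claimed formula.

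The main obstacle is getting the twist and the sign conventions in Step 1 right. This requires checking, via the local description in Lemma \ref{lem:Griffiths bundle alternative}, that the eigenvalue $\zeta^{-k}$ of $g$ on the fiber corresponds to the monodromy eigenvalue $e^{-2\pi ik/m}$ exactly as in Lemma \ref{lem:Deligne's canonical extension and finite cover}. Everything after that is bookkeeping.
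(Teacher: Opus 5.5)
Your proposal is correct and follows essentially the same route as the paper's proof: twist by $s^{-(m-1)}$ and take $G$-invariants, then use the averaged eigenframe $V_{k,i}$ with invariant generators $V_{0,i}$ and $s^{k-m}V_{k,i}$, which gives length $\sum_{k\ge1}(m-k)a_k$, and reindex via $k\mapsto m-k$. Your residue check in Step 1 supplies the verification that the paper only describes as ``easy to deduce from the local analytic description.''
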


\begin{proof}
We follow the proof of Lemma \ref{lem:Deligne's canonical extension and finite cover} for Deligne's upper canonical extension. From the local analytic description, it is easy to deduce the following isomorphism of vector bundles
$$
(\rho_*(s^{-m+1}\overline{\mathcal V_s}))^G\isom \overline{\mathcal V_t}^{(-1,0]}.
$$
Consequently, we obtain an isomorphism
$$
(\rho_*(s^{-m+1}F^p\overline{\mathcal V_s}))^G\isom F^p\overline{\mathcal V_t}^{(-1,0]}.
$$

As in the proof of Lemma \ref{lem:Deligne's canonical extension and finite cover}, we consider the sections
$$
V_{k,1},\dots, V_{k,a_k}
$$
of $F^p\overline{\mathcal V}_s$ for each $0\le k\le m-1$, which together form a basis of $F^p\overline{\mathcal V}_s$ as an $\O_{\Delta_s}$-module. Recall that the images of the above sections in $F^p\overline{\mathcal V}_s|_0$ form a basis of its $\zeta^{-k}$-eigenspace. Since
$$g(s^jV_{k,i})=\zeta^{j-k}s^jV_{k,i},$$
we have
$$
(\rho_*(s^{-m+1}F^p\overline{\mathcal V_s}))^G=\left(\bigoplus_{k=0}^{m-1}\bigoplus_{i=1}^{a_k}s^{-m+1}V_{k,i}\O_{\Delta_s}\right)^G=\bigoplus_{i=1}^{a_0}V_{0,i}\O_{\Delta_t}\oplus\bigoplus_{k=1}^{m-1}\bigoplus_{i=1}^{a_k}s^{-m+k}V_{k,i}\O_{\Delta_t}.
$$
This implies
$$
\dim_\C \left[\rho^*F^p\overline {\mathcal V}_t^{(-1,0]}/F^p\overline {\mathcal V}_s\right]=\sum_{k=1}^{m-1}(m-k)a_k=\sum_{k=1}^{m-1}ka_{m-k},
$$
which is the desired equality.
\end{proof}

In the remainder of this section, we finish the proof of Theorem \ref{thm:BBH indeterminacy}. For this purpose, we prove the following lemma on the singularities of a general pencil through a hypersurface.

\begin{lem}
\label{lem:general pencil is rational}
Let $X\subset \P^n$ be a degree $d$ hypersurface with a defining equation $\{P=0\}$. Then for a general degree $d$ hypersurface $\{Q=0\}\subset \P^n$, the pencil
$$
\X:=\{P+tQ=0\}\subset \P^n\times\P_t^1
$$
has rational singularities.
\end{lem}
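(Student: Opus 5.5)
The plan is to identify the singular locus of $\X$ first, and then show that its singularities are rational there. $\X\subset\P^n\times\P^1_t$ is a divisor of bidegree $(d,1)$; away from $t=\infty$ it is $\{P+tQ=0\}$, and near $t=\infty$ it is $\{sP+Q=0\}$ with $s=1/t$, which is smooth for general $Q$ (smooth fiber $\{Q=0\}$). The Jacobian criterion in the chart $\A^1_t$ gives the $t$-derivative $Q=0$. Hence
$$
\Sing(\X)=\{(x,t): P(x)=Q(x)=0,\ \nabla P(x)+t\nabla Q(x)=0\}.
$$
So $\Sing(\X)$ lies over the base locus $B:=\{P=Q=0\}$. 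At a point $x\in B$ with $x\notin \Sing(X)$, the condition forces $\nabla P(x)\in\C\cdot\nabla Q(x)$. For general $Q$, the subvariety $B$ is smooth away from $\Sing(X)\cap\{Q=0\}$, so at such points $\nabla P$ and $\nabla Q$ are independent and no singularity occurs. Thus $\Sing(\X)\subset (\Sing(X)\cap\{Q=0\})\times\{0\}$, and this is a set of codimension $\ge 1$ in $\Sing(X)$.

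Near a point $(x,0)$ with $x\in\Sing(X)\cap\{Q=0\}$, I will use $Q$ as a local coordinate. Since $Q$ is general, it has nonvanishing differential at $x$ by Bertini applied to the base-point-free system restricted to $\Sing(X)$. Writing $Q=y_1$ in local coordinates $(y_1,\dots,y_n)$ of $\P^n$, the local equation of $\X$ becomes $P(y)+ty_1=0$ in $\C^{n+1}$ with coordinates $(y,t)$. The key step is that this hypersurface germ has rational singularities, via the minimal exponent. By Theorem~\ref{thm:higher singularities via minimal exponent}-type results (Saito: rational $\iff\wt\alpha>1$), it suffices to show $\wt\alpha_0(P+ty_1)>1$.

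Plan for that estimate: perform the change of variables $t'=t+P/y_1$ only formally. More robustly, I note that $g:=P+ty_1$ is linear in $t$, so $\{g=0\}$ is the blow-up-like graph: on $y_1\neq0$ it is smooth, being the graph of $t=-P/y_1$. I would first try to show that $\X$ is a normal hypersurface (it is $S_2$, and its singular locus has codimension $\ge 2$ since it lies over $\{y_1=0\}\cap\Sing(X)$, of dimension $\le n-2$ in the $n$-fold $\X$). I would then show that $\X$ is rational using the projection $\X\to\P^n$. This map is birational, an isomorphism off $B$, with fibers $\P^1$ over $B$; that is, $\X$ is the blow-up of $\P^n$ along the base scheme $B=\{P=Q=0\}$. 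The blow-up of a smooth variety along a complete intersection of codimension $2$ has rational singularities whenever that complete intersection has rational singularities, or more generally via an inductive argument of Elkik type.

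The main obstacle is that $B=X\cap\{Q=0\}$ need not have rational singularities when $X$ is very singular, so I would replace this route by a direct minimal exponent bound. Since $\X$ is a general member of the linear system $|\O(d,1)|$ restricted over a fixed complement of the base locus $X\times\{\infty\}$, I would apply a Bertini-type statement for minimal exponents / multiplier ideals. The linear system $\{P+tQ\}_Q$ on $\P^n\times\A^1_t$ has base locus $\{P=0,t=0\}=X\times\{0\}$. I would compute the log canonical threshold along this base locus via the ideal $(P,t)$. Its log resolution is controlled by blowing up $t=P=0$, which gives discrepancy $\ge 1$, so $\mathrm{lct}>1$ in the relevant sense. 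Combined with the known inequality that $\mathrm{lct}$ computations of a general member of a linear system with base ideal $\mathfrak a$ give $\wt\alpha>1$ when the pair $(\P^n\times\A^1,\mathfrak a)$ is klt with coefficient slightly above $1$, this would show rationality. I expect verifying this last discrepancy bound for $\mathfrak a=(P,t)$ to be the crux: $(P,t)$ has $\mathrm{lct}\ge 1+\mathrm{lct}(P)>1$ by adding the variable $t$, which is exactly what is needed.
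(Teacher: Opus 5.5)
Your final route is sound but genuinely different from the paper's, and the substitution you discarded is essentially the paper's proof. The paper argues locally. It uses coordinates in which $\{Q=0\}=\{v=0\}$, chosen transverse to a stratum of $\Sing(X)$, so that the local equation of $\X$ is $p+tv$. Writing $p=p_0+vh$ with $p_0=p|_{v=0}\neq 0$ and setting $\tilde t=t+h$, Thom--Sebastiani gives $\wt\alpha_x(\X)=\wt\alpha_0(p_0)+\wt\alpha_0(v\tilde t)=\wt\alpha_0(p_0)+1>1$. Your $t'=t+P/y_1$ becomes legitimate once you divide only the $y_1$-divisible part:
\begin{itemize}
\item write $P=P_0(y_2,\dots,y_n)+y_1h(y)$;
\item then $t'=t+h$ is a triangular coordinate change and $P+ty_1=P_0+y_1t'$;
\item $P_0\neq 0$ because $P$ does not vanish identically on the irreducible hypersurface $\{Q=0\}$.
\end{itemize}
This would have finished the proof in two lines.

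Your actual route treats $\X$ as a general member of the linear system $\langle P\rangle\oplus t\cdot H^0(\O_{\P^n}(d))$; a general member $aP+tQ$ has $a\neq 0$ and rescales to the pencil for general $Q$. Its base ideal is $(P,t)$, and $\mathrm{lct}(P,t)=\mathrm{lct}(P)+1>1$ by the summation formula for ideals in disjoint variables. To make this a proof, two repairs are needed.
\begin{itemize}
\item Drop the sentence about blowing up $\{t=P=0\}$. That blow-up is not a log resolution when $X$ is singular, and a single divisor does not compute an lct.
\item Replace the loosely quoted ``known inequality'' by the standard argument. Take a log resolution $\mu$ of $(P,t)$, isomorphic off the codimension-$2$ base locus, with $(P,t)\cdot\O=\O(-F)$. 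A general member satisfies $\mu^*\X=\X'+F$ with $\X'$ smooth and transverse, so each exceptional $E$ has discrepancy $k_E-\mathrm{ord}_E F>-1$. Hence $(\P^n\times\A^1,\X)$ is plt, and $\X$ is normal and klt by inversion of adjunction. Being Gorenstein, it is canonical and therefore rational, i.e.\ $\wt\alpha>1$.
\end{itemize}

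As for what each approach buys: the paper's computation is elementary given Thom--Sebastiani and Saito's criterion, and it yields the exact value $\wt\alpha_x(\X)=\wt\alpha_0(P|_{\{Q=0\}})+1$. Yours is more conceptual and applies to general members of any linear system whose base ideal has lct $>1$. The cost is heavier birational machinery and only a qualitative conclusion.
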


\begin{proof}
We stratify the singular locus of $X$:
$$
\Sing(X)=\bigsqcup_{\alpha}X_\alpha
$$
so that $X_\alpha$ is a locally closed smooth subvariety of $X$. Since $Q$ is general, the intersection
$$
X_\alpha\cap\{Q=0\}
$$
is smooth for all $\alpha$ by the Bertini theorem. By the Jacobian criterion, $\X$ is smooth away from $\Sing(X)\cap\{Q=0\}$. Hence, it suffices to prove that $\X$ has rational singularities near every point in
$$
\Sing(X)\cap\{Q=0\}\subset X\subset\X.
$$

Consider a point $x\in X_\alpha\cap\{Q=0\}$. Choose a local coordinate system in a neighborhood of $x$ in $\P^n$ as follows:
$$
u_1,\dots,u_k,v,u_{k+1},\dots,u_{n-1}
$$
where $X_\alpha$ (resp. $\{Q=0\}$) is locally $\{u_1=\dots=u_k=0\}$ (resp. $\{v=0\}$). Then we have a local coordinate system in a neighborhood of $x$ in $\P^n\times\P_t^1$ as follows:
$$
t, u_1,\dots,u_k,v,u_{k+1},\dots,u_{n-1}.
$$
In terms of this local coordinate system, $\X$ is locally defined by
$$
p+tv=0
$$
where $p\in (u_1,\dots,u_{k})^2$, since $X$ is singular along $X_\alpha$. Write
$$
p=p_0+vh,
$$
so that $p_0\in \C\{u_1,\dots,u_{n-1}\}$, the ring of convergent power series in $u_1,\dots, u_{n-1}$. Since $P$ is not a multiple of $Q$, the term $p_0$ is nonzero. Consequently, $\X$ is locally defined by
$$
p_0+v\tilde t=0
$$
where $\tilde t=t+h$. Since $h\in (u_1,\dots,u_{k})^2$, the functions
$$
u_1,\dots,u_{n-1},v, \tilde t
$$
form a local coordinate system in a neighborhood of $x$ in $\P^n\times\P_t^1$. Therefore, the Thom--Sebastiani theorem (see Theorem \ref{thm:Thom-Sebastiani for minimal exponents}) implies that
$$
\wt\alpha_x(\X)=\wt\alpha_0(p_0)+\wt\alpha_0(v\tilde t)>\wt\alpha_0(v\tilde t)=1,
$$
that is, $\X$ has rational singularities at $x$ by \cite{Saito93}.
\end{proof}

\begin{proof}[Proof of Theorem \ref{thm:BBH indeterminacy}]
As in Lemma \ref{lem:general pencil is rational}, consider a general pencil
$$
\X:=\{P+tQ=0\}\subset \P^n\times\P_t^1
$$
with $X:=\{P=0\}\subset\P^n$. Note that every fiber away from $X=\X_0$ has at worst nodal singularities. The locus of nodal hypersurfaces is contained in the domain of definition of $\overline{\mathcal P}^{\rm BBH}$. Indeed, the proof of Theorem \ref{thm:extendability} can be modified to our situation to prove that the locus of hypersurfaces with Du Bois singularities is contained in the domain of definition, or Theorem~\ref{thm:indeterminacy} can be applied since the domain of definition of $\overline {\mathcal P}$ is contained in that of $\overline{\mathcal P}^{\rm BBH}$.

In particular, the image of the moduli morphism $\P^1_t\to \overline\M^{\rm GIT}_{(n,n+1)}$ is contained in the domain of definition of $\overline{\mathcal P}^{\rm BBH}$ if and only if the image of the fiber $[X]$ is contained in the domain of definition. Hence, by Theorem \ref{thm:Mordant question for Calabi Yau}, to prove that the domain of definition is precisely the locus of hypersurfaces with Du Bois singularities, it suffices to prove that
$$
\mathrm{ht}_{CY}(\mathbb H^{n-1}(\X_\eta/\P^1_\eta))=1
$$
if and only if $X$ has Du Bois singularities (or equivalently, semi-log canonical singularities).

As in the proof of Theorem \ref{thm:Mordant question for Calabi Yau}, consider a disk $\Delta\subset \P^1$ centered at a point $c\in \P^1$. Since $\X$ has rational singularities by Lemma \ref{lem:general pencil is rational}, we have the isomorphism of the graded specialization map
$$
\gr_F^0(\sp^{n-1}): \gr_F^0H^{n-1}(\X_c,\C)\xrightarrow{\sim} \gr_F^0H^{n-1}(\X_\infty,\C)_1
$$
(see \cite[Remark 11.6]{Park25}). By the claim below,
$$
\dim_\C\gr_F^0H^{n-1}(\X_\infty,\C)_1=0
$$
if $\X_c$ has singularities worse than Du Bois singularities. In such a case, the invariance of Hodge numbers for limiting mixed Hodge structures gives
$$
\dim_\C\gr_F^0H^{n-1}(\X_\infty,\C)=1.
$$
Hence, there exists $\lambda\neq1$ such that
$$
\dim_\C\gr_F^0H^{n-1}(\X_\infty,\C)_\lambda=1,
$$
and thus, by the symmetry of Hodge numbers, we have
$$
\dim_\C F^{n-1}H^{n-1}(\X_\infty,\C)_{\bar\lambda}=1.
$$

Therefore, when $X$ has singularities worse than Du Bois singularities, Proposition \ref{prop:CY height via LMHS} applied to $\mathbb V_\eta:=\mathbb H^{n-1}(\X_\eta/\P^1_\eta)$ implies that
$$
\mathrm{ht}_{CY,+}(\mathbb H^{n-1}(\X_\eta/\P^1_\eta))-\mathrm{ht}_{CY}(\mathbb H^{n-1}(\X_\eta/\P^1_\eta))>0.
$$
Again, by Lemma \ref{lem:general pencil is rational} and \cite[Theorem 2.6]{Kollar86}, we have
$$
\mathrm{ht}_{CY,+}(\mathbb H^{n-1}(\X_\eta/\P^1_\eta))=\deg_{\P^1}f_*\w_{\X/\P^1}=1
$$
(see the proof of Theorem \ref{thm:Mordant question for Calabi Yau}).

Conversely, when $X$ has Du Bois singularities, we obtain
$$
\mathrm{ht}_{CY}(\mathbb H^{n-1}(\X_\eta/\P^1_\eta))=\mathrm{ht}_{CY,+}(\mathbb H^{n-1}(\X_\eta/\P^1_\eta))=1,
$$
which completes the proof. It remains to prove the following claim for the statement about the domain of definition.

\smallskip
\noindent
\textit{Claim.} Let $X\subset \P^n$ be a degree $n+1$ hypersurface. Then
$$
\dim_\C\gr_F^0H^{n-1}(X,\C)=0
$$
if and only if $X$ has singularities worse than Du Bois singularities.
\smallskip

Recall that
$$
\gr_F^0H^{n-1}(X,\C)\isom\HH^{n-1}(X,\DB_X^0)
$$
(see, for example, \cite{PP25a}), and $\w_X\isom\O_X$. By Grothendieck duality, we have
$$
\HH^{n-1}(X,\DB_X^0)^*\isom H^0(X,R\Hom(\DB_X^0,O_X)).
$$
By the injectivity theorem \cite[Theorem 3.3]{KS16}, we have the vanishing
$$
R^{\neq0}\Hom(\DB_X^0,O_X)=0
$$
and the injective morphism
$$
\Hom(\DB_X^0,O_X)\hookrightarrow \O_X.
$$
Therefore,
$$
\gr_F^0H^{n-1}(X,\C)\neq 0
$$
if and only if $\Hom(\DB_X^0,O_X)\isom\O_X$. This condition is equivalent to the isomorphism
$$
\O_X\isom\DB_X^0,
$$
that is, $X$ has Du Bois singularities.

Finally, we prove the last statement of Theorem \ref{thm:BBH indeterminacy}. Suppose $n\ge 4$. Recall from Proposition~\ref{prop:GIT is Q factorial picard rank 1} that $\overline \M^{\rm GIT}_{(n,n+1)}$ is $\Q$-factorial. Hence, its open subset $\M^{\rm can}_{(n,n+1)}$ is $\Q$-factorial. This implies that the morphism
$$
\pi_{CY}:\left(\overline \M^{\rm BB}_{(n,d)}\right)^\nu\to \overline \M^{\rm BBH}_{(n,n+1)}
$$
is a divisorial contraction over $\M^{\rm can}_{(n,n+1)}$.

When $n$ is even, there exists a hypersurface $X:=\{P=0\}\subset \P^n$ with a unique singularity, analytically equivalent to
$$
x_1^2+\dots+x_{n-1}^2+x_n^3=0.
$$
By Theorem \ref{thm:indeterminacy}, $[X]$ is in the indeterminacy locus of $\overline {\mathcal P}$, which implies that the fiber $\pi_{CY}^{-1}([X])$ is positive dimensional. For a general pencil $f:\X\to \P^1$ as above, the local monodromy at $X$ is finite. Indeed, by the theorem of Milnor \cite[Lemma 9.4]{Milnor68}, the monodromy on the Milnor fiber has finite order when the singularity is weighted homogeneous. By Griffiths' Removable Singularity Theorem (see \cite[Theorem 9.5]{Griffiths70} or \cite[Application 16]{GT84}), the disk $\Delta\subset\P^1$ centered at $0\in \P^1$ admits a morphism to the period space:
$$
\Delta\to \Gamma\backslash \D_{(n,n+1)}.
$$
This implies that there exists an exceptional divisor of $\pi_{CY}$ that generically maps to the interior of the period space.

When $n$ is odd, there exists a hypersurface $X:=\{P=0\}\subset \P^n$ with a unique singularity, analytically equivalent to
$$
x_1^2+\dots+x_{n-2}^2+x_{n-1}^4+x_n^5=0.
$$
Likewise, by Theorem \ref{thm:indeterminacy}, $[X]$ is in the indeterminacy locus of $\overline {\mathcal P}$. For a general pencil $f:\X\to \P^1$ as above, the local monodromy at $X$ is finite. As above, there exists an exceptional divisor of $\pi_{CY}$ that generically maps to the interior of the period space.
\end{proof}

\begin{rmk}
The difference between the Baily--Borel and BBH compactifications is captured by the associated maps from the GIT moduli space. Theorem \ref{thm:indeterminacy} and Theorem \ref{thm:BBH indeterminacy} illustrate that the indeterminacy locus of the rational map to the Baily--Borel compactification is much larger than the indeterminacy locus of the rational map to the BBH compactification from the viewpoint of singularity theory.
\end{rmk}

\bibliographystyle{alpha}
\bibliography{refs}

\end{document}